\newcommand\ZZ{\mathbb{Z}} % Ensemble des entiers relatifs.
\newcommand\QQ{\mathbb{Q}} % Ensemble des nombres rationnels.
\newcommand\RR{\mathbb{R}} % Ensemble des nombres réels.
\newcommand\CC{\mathbb{C}} % Ensemble des nombres complexes.
\newcommand\FF{\mathbb{F}} 
\newcommand\FP{\mathbb{F}_p} 
\newcommand{\Qp}{\QQ_p}
\newcommand\eps{\varepsilon} 
\newcommand\fleche{\longrightarrow}
\newcommand{\cris}{\mathrm{cris}}
\newcommand{\dR}{\mathrm{dR}}
\newcommand{\modag}{\mathrm{mod},\mathrm{ag}}
\DeclareMathOperator{\Ker}{Ker}
\DeclareMathOperator{\car}{char}
\DeclareMathOperator{\tr}{tr}
\DeclareMathOperator{\Tr}{Tr}
\DeclareMathOperator{\pr}{pr}
\DeclareMathOperator{\Hom}{Hom}
\DeclareMathOperator{\Id}{Id}
\DeclareMathOperator{\Spec}{Spec}
\DeclareMathOperator{\Spf}{Spf}
\DeclareMathOperator{\id}{id}
\DeclareMathOperator{\Gal}{Gal}
\DeclareMathOperator{\Fil}{Fil}
\DeclareMathOperator{\im}{Im}
\DeclareMathOperator{\Frob}{Frob}
\DeclareMathOperator{\Lie}{Lie}
\DeclareMathOperator{\Def}{Def}
\DeclareMathOperator{\Diag}{Diag}
\DeclareMathOperator{\End}{End}
\DeclareMathOperator{\Res}{Res}
\DeclareMathOperator{\gr}{gr}
\DeclareMathOperator{\GL}{GL}
\DeclareMathOperator{\wt}{wt}
\DeclareMathOperator{\Ad}{Ad}
\DeclareMathOperator{\Char}{Char}
\DeclareMathOperator{\ad}{ad}
\DeclareMathOperator{\SL}{SL}
\DeclareMathOperator{\Tri}{\mathrm{Tri}}
\DeclarePairedDelimiter{\vabs}{\lvert}{\rvert}
\DeclarePairedDelimiter{\scalar}{\langle}{\rangle}
\DeclarePairedDelimiter{\set}{\{}{\}}
\newcommand{\Art}{\mathrm{Art}}
\newcommand{\rig}{\mathrm{rig}}
\newcommand{\tildeg}{\tilde{\mathfrak{g}}}
\newcommand{\ab}{\mathrm{ab}}
\newcommand{\pol}{\mathrm{pol}}
\newcommand{\pdR}{\mathrm{pdR}}
\newcommand{\rhobar}{\overline{\rho}}
\newcommand{\tri}{\mathrm{tri}}
\newcommand{\qtri}{\mathrm{qtri}}
\newcommand{\nc}{\mathrm{nc}}
\definecolor{cqcqcq}{rgb}{0.752941176471,0.752941176471,0.752941176471}
\definecolor{ffqqqq}{rgb}{0.333333333333,0.333333333333,0.333333333333}
\definecolor{cqcqcq}{rgb}{0.752941176471,0.752941176471,0.752941176471}
\definecolor{qqqqff}{rgb}{0.,0.,1.}
\definecolor{cqcqcq}{rgb}{0.752941176471,0.752941176471,0.752941176471}
\definecolor{ffqqqq}{rgb}{1.,0.,0.}
 \def\dar[#1]{\ar@<2pt>[#1]\ar@<-2pt>[#1]}
 \def\tar[#1]{\ar@<4pt>[#1]\ar@<0pt>[#1]\ar@<-4pt>[#1]}
\theoremstyle{definition} % Pour tout ce qui ressemble à des définitions : définitions, notations...
\newtheorem{definen}{Definition}[section]
\newtheorem{defin}[definen]{Definition}
\newtheorem{hypothese}[definen]{Hypothesis}
\theoremstyle{plain} % Pour tous ce qui ressemble √É¬† des th√É¬©or√É¬®mes.
\newtheorem{theor}[definen]{Theorem}% D√©finit √† latex les environnements (et la fa√ßon de num√©roter avec [chapter])
\newtheorem{lemma}[definen]{Lemma}  % [theor] = num√©roter comme les th√©or√®mes
\newtheorem{prop}[definen]{Proposition}
\newtheorem{cor}[definen]{Corollary}
\theoremstyle{remark} % Pour tout ce qui ressemble √É¬† des remarques.
\newtheorem{rema}[definen]{Remark}
\newtheorem{exemple}[definen]{Example}
\definecolor{vert}{rgb}{0.09,0.62,0.40}
\begin{document}

\title{The infinite fern in higher dimensions}

\author{Valentin Hernandez}
\address{Université Paris-Saclay, CNRS, Laboratoire de mathématiques
  d’Orsay, 91405, Orsay, France}
\email{valentin.hernandez@math.cnrs.fr}
\author{Benjamin Schraen}
\address{Université Paris-Saclay, CNRS, Laboratoire de mathématiques
  d’Orsay, 91405, Orsay, France}
\address{Institut Universitaire de France}
\email{benjamin.schraen@universite-paris-saclay.fr}
\date{}

\maketitle

\section{Introduction}

% A general question in the Langlands program is the relation between automorphic representations and Galois representations. In such a generality the question is completely open, but we can restrict to an apparently simpler question : can we relate deformations on both sides ? In fact, there is a natural geometric interpretation of this question as follows.
Let $p$ be a prime number, $E$ a number field and
\[ \overline{\rho} : \Gal(\overline E/E) \fleche \GL_n(\overline\FF_p),\]
a continuous (i.e.~which factors through a finite extension)
representation. It was shown by Mazur that the set of deformations of
$\overline\rho$, with minor technical conditions, with values in
finite extensions of $\QQ_p$, can be arranged in a natural rigid space
$\mathcal X_{\overline\rho}$. Conjecturally, and now in many known
cases (\cite{GRFA,CH,Shin,HLTT,SchTor}), we can use automorphic
representations to  construct Galois representations, and in
particular points in $\mathcal X_{\overline\rho}$ that we call
\emph{of automorphic nature}, or just \emph{automorphic}. It is then
natural to wonder which structure has this set of automorphic points in $\mathcal X_{\overline\rho}$: is it an algebraic subspace ? a closed one ? is it Zariski dense ?

The first example beyond the case of characters was studied by
Gouvêa and Mazur. In this situation $E = \QQ$ and $n=2$, and
$\overline\rho$ is irreducible, modular and unobstructed, so that
$\mathcal X_{\overline\rho}$ is a 3-dimensional open ball. In that
case, automorphic points are related to modular forms. In
\cite{GouveaMazur}, Gouvêa and Mazur show that  automorphic points are
Zariski dense in $\mathcal{X}_{\overline\rho}$ using the so called
\emph{infinite fern}. Let us explain this name: up to twisting by
powers of the cyclotomic character, we can replace $\mathcal
X_{\overline\rho}$ by a two dimensional open ball. Modular forms (of
finite slope) can be interpolated by a geometric object, the
Coleman--Mazur \emph{Eigencurve} $\mathcal E$ (\cite{CM}), which is a
rigid-analytic curve, whose points are refined $p$-adic modular forms of
finite slope. Generically, a classical modular forms has two
refinements, thus gives rise to two distinct points in the
Eigencurve. Moreover the points corresponding to refined classical
modular form are Zariski-dense in the Eigencurve. By $p$-adic
interpolation, it is possible to associate a $2$-dimensional $p$-adic
representation of $\Gal(\overline{\QQ}/\QQ)$ to a point of
$\mathcal{E}$. The points giving rise to deformations of
$\overline\rho$ form a union $\mathcal{E}(\overline{\rho})$ of
connected components of $\mathcal{E}$ and the universal property of
$\mathcal{X}_{\overline{\rho}}$ implies the existence of a map
\[ \mathcal E(\overline{\rho}) \fleche \mathcal X_{\overline\rho}.\]
Generically, each modular point $f$ in $\mathcal X_{\overline\rho}$ has two preimage in $\mathcal E$, giving rise to two distincts small curves around those preimages, whose image in $\mathcal X_{\overline\rho}$ meet only at $f$. By density, each of these two small curves has a Zariski-dense set of modular points, and for each of these points there is another small curve passing through, and so on, giving a fractal-like object which we picture as follows :
\begin{center}\includegraphics[scale=0.4]{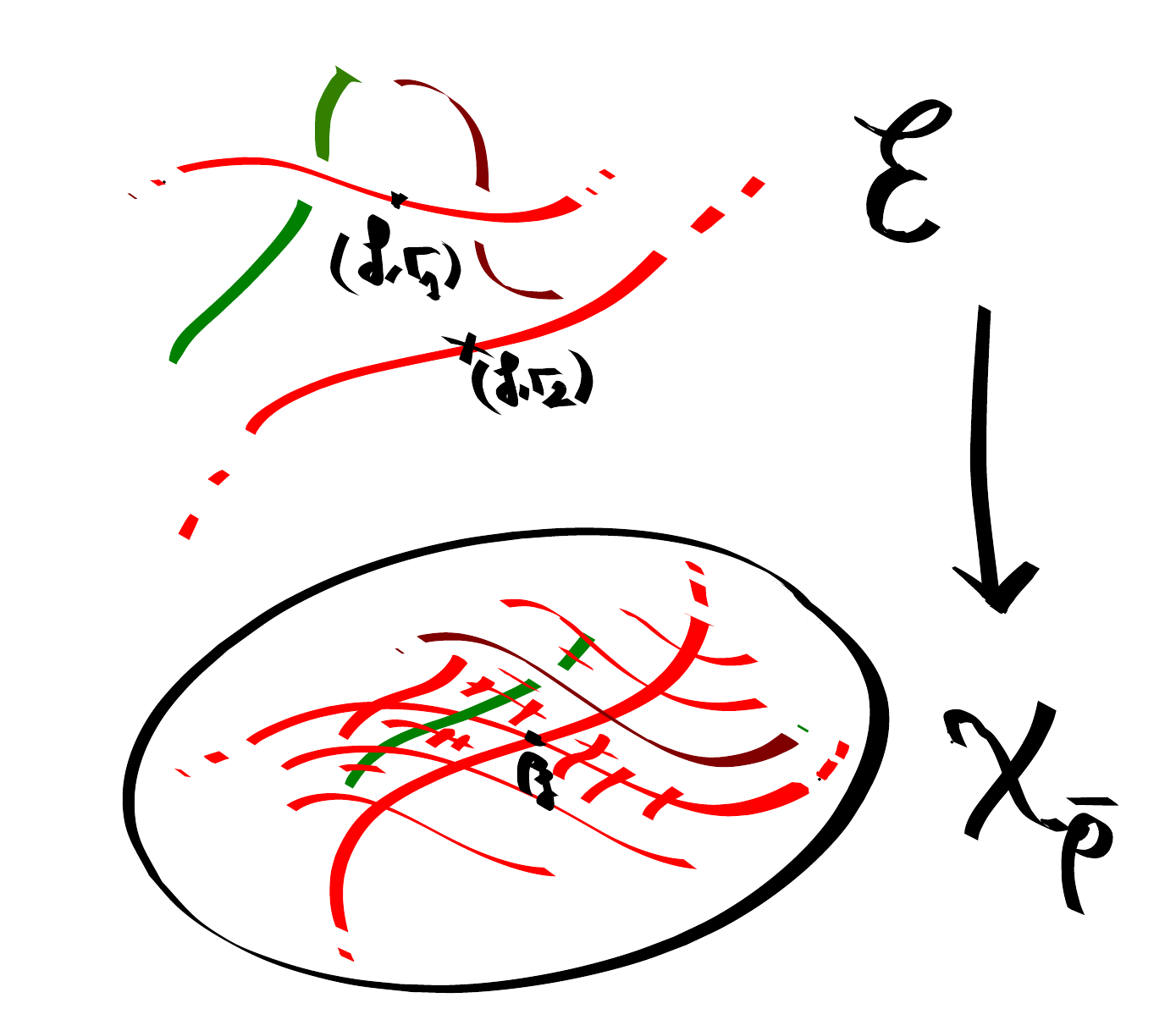}\end{center}
giving a justification for the name of the \emph{infinite fern}. 

This article deals with a generalization of this result to more
general number fields and greater values of $n$. First, we need to
assume that the number field $E$ is a CM field, with totally real
field $F$, in order to be able to associate Galois representations to
automorphic representations. Second, {it is expected} that for general
$n$ the automorphic points are not Zariski dense in
$\mathcal{X}_{\overline{\rho}}$, thus we reduce to the case of
\emph{$\chi$-polarized} Galois representations, for a character $\chi : G_E \fleche \overline{\QQ_p}^\times$,
i.e.~continuous group homomorphisms $\rho : \Gal(\overline E/E) \fleche \GL_n(\overline \QQ_p)$ such that 
\[ \rho^\vee \simeq \rho^c \otimes \chi\eps^{n-1}, \quad \text{where } \rho^c := \rho(c \cdot c^{-1}),\]
where
$\eps$ is the cyclotomic character, and $c\in\Gal(\overline E/F)$ is
a lift of the unique non trivial element of $\Gal(E/F)$. Fix $S$ a
finite set of primes of $E$ containing all primes above $p$. In this
situation, assume that $\overline\rho$ is $\chi$-polarized, absolutely
irreducible (for simplicity) and unramified away from $S$. Let
$R_{\overline\rho}^{\chi-\pol}$ be the
complete noetherian local algebra parametrizing deformations of $\overline\rho$ which are
$\chi$-polarized and unramified away from $S$. Its rigid fiber
$\mathcal X_{\overline\rho}^{\chi-\pol}$ is a rigid space of dimension
at least $[F:\QQ]\frac{n(n+1)}{2}$. A natural source of automorphic
points in $\mathcal X_{\overline\rho}^{\chi-\pol}$ is given by the
regular, algebraic, essentially polarized, cuspidal automorphic
representations of $\GL_n(\mathbb A_E)$, by work of many authors
(\cite{HT,CHT,GRFA,CH,Shin} for example). In this paper, we make the following hypothesis,
\begin{hypothese}
\begin{enumerate}
\item $\overline\rho$ is conveniently modular (see Definition \ref{defin:modular}),
\item All primes above $p$ in $F$ are unramified, and split in $E$,
\item the character $\chi$ satisfies $\chi = \chi^c$ and satisfies a sign condition (see Hypothesis \ref{hypchi} and section \ref{sect:signs})
\end{enumerate}
\end{hypothese}

Under the previous hypothesis, we prove the following result:

\begin{theor}
The Zariski-closure of automorphic points contains a (non empty) union of irreducible components of $\mathcal X_{\overline\rho}^{\chi-\pol}$, each of which are of dimension $[F:\QQ]\frac{n(n+1)}{2}$.
\end{theor}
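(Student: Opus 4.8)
The plan is to build an analogue of the infinite fern in the polarized deformation space $\mathcal X_{\overline\rho}^{\chi-\pol}$, using $p$-adic families of polarized automorphic representations of $\GL_n(\mathbb A_E)$ in place of the Coleman--Mazur eigencurve. First I would fix an auxiliary unitary group $G/F$ attached to the CM field $E$, chosen so that the relevant automorphic representations of $G$ transfer to $\GL_n(\mathbb A_E)$ (this is where ``conveniently modular'' enters, providing a suitable level structure with a non-Eisenstein maximal ideal in the Hecke algebra and thereby the freeness needed for multiplicity-one style control). To this $G$ one attaches an \emph{eigenvariety} $\mathcal E$ of some dimension $d = [F:\QQ]\,n$ (the dimension of the weight space), whose points are finite-slope $p$-adic automorphic forms for $G$ equipped with a \emph{refinement} at each place above $p$. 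The classical points — those coming from cohomological automorphic representations — are Zariski-dense in $\mathcal E$ (by the standard eigenvariety construction, small-slope classicality, and density of classical weights). Interpolating the associated Galois representations gives a morphism $\mathcal E(\overline\rho) \to \mathcal X_{\overline\rho}^{\chi-\pol}$ from the union of components of $\mathcal E$ lying over $\overline\rho$; the sign condition on $\chi$ and the hypothesis $\chi=\chi^c$ are exactly what is needed for the interpolated representations to be $\chi$-polarized, and for the whole picture to be compatible with the polarization.

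The heart of the argument is a dimension/transversality count. A generic classical point of $\mathcal X_{\overline\rho}^{\chi-\pol}$ that is automorphic and sufficiently regular (distinct Hodge--Tate weights at each place, regular semisimple crystalline Frobenius, ``non-critical'' refinements) admits exactly $(n!)^{[F:\QQ]}$ refinements, hence has that many preimages in $\mathcal E$; through each preimage passes a $d$-dimensional piece of $\mathcal E$, whose image in $\mathcal X_{\overline\rho}^{\chi-\pol}$ is a $d$-dimensional subvariety. The key local input — the computation of the ``trianguline'' or ``refined'' deformation space at a crystalline point — shows that the images of these $(n!)^{[F:\QQ]}$ local pieces at a point span the full tangent space of the generic fiber of the local polarized deformation ring at $p$, so that the union of ferns through automorphic points has dimension equal to $\dim \mathcal X_{\overline\rho}^{\chi-\pol} = [F:\QQ]\tfrac{n(n+1)}{2}$ along whichever components it meets. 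Concretely: the Galois-theoretic argument at $p$ (Kisin's, Chenevier's, or Bergdahl--Hellmann--Schraen-style global triangulations) shows the ``trianguline variety'' surjects onto the polarized deformation space with fibers of the expected dimension, so a single fern already has the right dimension; then one propagates using Zariski-density of classical points on $\mathcal E$ (infinite fern: each new automorphic point on the fern carries its own transversal fern) to conclude that the Zariski closure of automorphic points contains a whole irreducible component wherever it contains a Zariski-dense subset of a single fern.

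Making this precise requires proving that the Zariski closure of the fern is a union of irreducible components — equivalently, that it has the same dimension as $\mathcal X_{\overline\rho}^{\chi-\pol}$ and that it is a closed, reduced subvariety meeting those components in a dense set. I would argue as follows: let $Z$ be the Zariski closure of automorphic points. By the local analysis at $p$ combined with the global unobstructedness inputs coming from ``conveniently modular'' (adjoint Selmer group vanishing, or at least finiteness, forcing $\mathcal X_{\overline\rho}^{\chi-\pol}$ to be smooth of the expected dimension $[F:\QQ]\tfrac{n(n+1)}{2}$ along the relevant components), any component $C$ of $\mathcal X_{\overline\rho}^{\chi-\pol}$ that contains an automorphic point $x$ in fact contains, near $x$, the full $d$-dimensional fern through $x$; the fern's classical points are Zariski-dense in it, each lies on a new fern, and by a standard connectedness/irreducibility argument the union sweeps out an open (hence dense, $C$ being irreducible) subset of $C$. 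Therefore $C \subseteq Z$. The main obstacle is the interface between the \emph{local} statement at $p$ — that the tangent directions supplied by all refinements together fill up the local polarized deformation space, i.e. the crystalline point is a smooth point of the trianguline variety lying on the ``right'' component — and the \emph{global} statement, namely controlling the image of the eigenvariety in $\mathcal X_{\overline\rho}^{\chi-\pol}$ well enough (properness issues, non-classical boundary points, the precise shape of the polarized trianguline deformation ring) to guarantee that the fern genuinely has dimension $[F:\QQ]\tfrac{n(n+1)}{2}$ and not less; this is exactly where the unitary group's cohomology, the $p$-adic Langlands-type local model for the trianguline variety, and the sign condition on $\chi$ all have to be combined carefully.
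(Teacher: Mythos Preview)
Your broad strategy matches the paper's, but two load-bearing steps are mis-identified, and without the correct versions the argument does not close for $n\geq4$.

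The first gap is the choice of ``good'' classical points. You require points whose refinements are non-critical; if this means all (or Chenevier's $n$ well-positioned) refinements are non-critical, that is precisely what is \emph{not known} to hold Zariski-densely once $n\geq4$---the paper exhibits a $4$-dimensional irreducible crystalline example whose non-critical refinements are not well-positioned, and for which the non-critical refined tangent spaces do not span. The paper's replacement is the set of \emph{almost generic} points: crystalline, $\varphi$-generic, HT-regular, locally absolutely irreducible at each $v\mid p$, admitting \emph{one} non-critical refinement, and with \emph{enormous image}. Proving this set is Zariski-dense in the fern is itself a real argument (Sen theory forces open image in $\SL_n$ for suitable Hodge--Tate weights, and a Bella\"iche--Chenevier/Ta\"ibi accumulation trick produces the non-critical refinement). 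Correspondingly, the local input is not ``all $n!$ refined tangent spaces span'' but a sharper statement proved via the local model $\widetilde{\mathfrak g}\times_{\mathfrak g}\widetilde{\mathfrak g}$: starting from the single non-critical $\mathcal F^{\nc}$, the refinements $c\cdot\mathcal F^{\nc}$ for $c$ ranging over the full cycles $\mathfrak C_n$ already span, and each such pair is associated to a product of distinct simple reflections---exactly the condition making $X_{w_0}$ smooth there, so that its tangent space can be computed.

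The second gap is the local-to-global bridge. The vanishing $H^1_f(G_{F,S},\ad r_x)=0$ is not a consequence of ``conveniently modular'' (that hypothesis only makes the fern nonempty); it is Newton--Thorne's theorem, and it requires enormous image. This vanishing is what forces $T_{x_{\mathcal F}}\mathcal E\simeq T\mathfrak X^{\qtri,w_0}_{\rho_{x,p},\mathcal F}/T\mathfrak X^{\cris}_{\rho_{x,p}}$ and smoothness of $\mathcal E$ at $x_{\mathcal F}$; only then does the local surjectivity yield the bound $\geq\tfrac{n(n+1)}{2}[F:\QQ]$ on the image of $\bigoplus_{\mathcal F}T_{x_{\mathcal F}}\mathcal E$ in $T_x\mathcal X^{\chi-\pol}_{\overline\rho}$. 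Note also that a single leaf of the fern has dimension $\leq n[F:\QQ]$, not the target; it is the \emph{sum} over refinements at $x$ that reaches $\tfrac{n(n+1)}{2}[F:\QQ]$. The conclusion then follows not by an openness argument in $\mathcal X^{\chi-\pol}_{\overline\rho}$ but by showing (again via Newton--Thorne, as in Allen) that each almost-generic $x$ is a \emph{smooth} point of $\mathcal X^{\chi-\pol}_{\overline\rho}$ of dimension exactly $\tfrac{n(n+1)}{2}[F:\QQ]$, so the closure of the fern, having the same tangent space there, coincides locally with the irreducible component through $x$.
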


We say that our deformation problem is unobstructed if $H^2(G_{F,S},\ad(\overline r)) = \{0\}$ where $\overline r$ is some extension of $\overline\rho$ to $G_{F,S}$ the Galois group of the maximal unramified extension of $F$ (see section \ref{sect:defspaces}). In this situation, we know that $\mathcal X_{\overline\rho}^{\chi-\pol}$ is a rigid open unit ball in $[F:\QQ]\frac{n(n+1)}{2}$-variables.

\begin{cor}
Under the previous hypothesis, if moreover $\overline\rho$ is unobstructed, then automorphic points are Zariski dense in $\mathcal X_{\overline\rho}^{\chi-\pol}$.
\end{cor}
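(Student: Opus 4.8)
The plan is to deduce the Corollary from the Theorem by a purely formal argument about rigid-analytic open unit balls, combined with an analysis of the irreducible components produced by the Theorem. By the unobstructedness hypothesis and the remark preceding the Corollary, $\mathcal X_{\overline\rho}^{\chi-\pol}$ is a rigid open unit ball in $d := [F:\QQ]\frac{n(n+1)}{2}$ variables; in particular it is smooth, connected, and equidimensional of dimension $d$, hence \emph{irreducible}. The key point is that an open unit ball has exactly one irreducible component, namely itself.

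First I would invoke the Theorem: the Zariski-closure $Z$ of the set of automorphic points contains a non-empty union of irreducible components of $\mathcal X_{\overline\rho}^{\chi-\pol}$, each of dimension exactly $d$. Under the unobstructedness hypothesis the whole space is irreducible (being an open ball), so it has a single irreducible component, which is $\mathcal X_{\overline\rho}^{\chi-\pol}$ itself. Therefore the non-empty union of components furnished by the Theorem must be all of $\mathcal X_{\overline\rho}^{\chi-\pol}$, and consequently $Z = \mathcal X_{\overline\rho}^{\chi-\pol}$. By definition of the Zariski-closure, this says exactly that the automorphic points are Zariski-dense in $\mathcal X_{\overline\rho}^{\chi-\pol}$, which is the assertion of the Corollary.

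The only genuinely non-formal input is knowing that an open unit polydisc over $\Qp$ (or over $\overline{\Qp}$) is irreducible as a rigid-analytic space; this is standard — one can see it, for instance, from the fact that its ring of global functions is an integral domain (a ring of convergent power series with no zero divisors, being a domain of formal power series intersected with convergence conditions), or by noting that the polydisc is an increasing union of affinoid balls of radius $<1$, each of which is irreducible because its coordinate ring is a Tate algebra, itself an integral domain. Either way, this is already implicitly used when one says "$\mathcal X_{\overline\rho}^{\chi-\pol}$ is a rigid open unit ball", so no new difficulty arises.

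The main (and essentially only) obstacle is conceptual rather than technical: one must be careful that the Theorem produces components \emph{of the deformation space} and not of some auxiliary space, and that "irreducible component" is taken in the sense appropriate for rigid-analytic spaces (where, by a theorem of Conrad, the notion is well-behaved and components are closed analytic subsets). Granting the normalization conventions fixed earlier in the paper, the deduction is immediate, so I would present it as a short corollary-proof: unobstructed $\Rightarrow$ open ball $\Rightarrow$ irreducible $\Rightarrow$ the union of components from the Theorem exhausts the space $\Rightarrow$ Zariski-density.
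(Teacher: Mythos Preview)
Your argument is correct and is exactly the intended one: the paper does not give a separate proof of this Corollary, relying instead on the sentence just before it (that in the unobstructed case $\mathcal X_{\overline\rho}^{\chi-\pol}$ is an open unit ball in $[F:\QQ]\tfrac{n(n+1)}{2}$ variables) together with the Theorem. Your deduction---open ball $\Rightarrow$ irreducible $\Rightarrow$ the non-empty union of components from the Theorem is the whole space---is precisely what is meant.
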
 

\begin{rema}
  In \cite{Gui} Giraud proved that if $\pi$ is an extremely regular
  automorphic representation of $\GL_n(\mathbb{A}_E)$ (see
  \cite{BLGGTChange} section 2.1), then there exists a density 1 set
  of primes $\lambda$ of $E$ such that $\rho_{\pi,\lambda}$ is
  unobstructed. As we have assumed $\overline\rho$ to be conveniently
  modular, we can actually find some extremely regular $\pi$, so that
  $\overline\rho = \overline\rho_{\pi,\lambda}$ for $\lambda |p$, and
  thus using \cite{Gui}, up to change $\lambda$ in a density 1 set, we
  can assume unobstructedness. In particular, under our assumption,
  $\overline\rho$ is part of a compatible system for which in a
  density 1 set of primes, we have Zariski density of automorphic
  points in the associated deformation spaces.
\end{rema}

Before explaining the strategy of proof, let us say what was
known. The first case was the non-polarized case, $n= 2$ and
$E=F=\QQ$, when unobstructed, which was proven by Gouvêa-Mazur
\cite{GouveaMazur}, and generalised by Böckle \cite{Bockledense}. The non-polarised case for $n=2$ and totally real fields $E=F$, and the polarised case of $n=3$ (and general CM fields $E/F$) was proved by Chenevier (\cite{CheFern}). A generalisation for greater $n$; but under restrictive hypothesis  (of Taylor--Wiles type) on $E$ and $\overline\rho$, was proven recently by Hellmann--Margerin--Schraen (\cite{HMS}). All of these proofs use the analogue (in higher dimensions) of the infinite fern. Now we explain our strategy together with the relation to the previous works.

The Galois representations that we study can be viewed as $p$-adic
($L-$ or $C-$) parameters of a reductive group: $\GL_2/\QQ$ in the
situation of Gouvêa-Mazur and $U_{E/F}(n)$, or $GU_{E/F}(n)$, or one
of its inner forms, a (similitude) unitary group in $n$-variables, for
polarized deformation problems. A natural source of automorphic points
is given by automorphic representations of these groups. It turns out
that these groups give rise to Shimura data, that we can use to
construct $p$-adic refined families of automorphic forms, that is
$p$-adic automophic eigenforms together with the extra data of a
refinement. These families generalize the Eigencurve of Coleman--Mazur and are called Eigenvarieties (see \cite{CM,Che1,Urb,Eme,AIP,Her4} for example). Let us assume for simplicity $\chi = 1$ for the rest of the introduction.

For general $n$, a given automorphic form $f$ has at most $n![F:\QQ]$ refinements $f_i$, and generically exactly $n![F:\QQ]$ refinements. Moreover the Eigenvariety $\mathcal E$ has equidimension $n[F:\QQ]$, $
\mathcal X_{\overline\rho}^{\chi-\pol}$ has dimension at least (but conjecturally exactly) $\frac{n(n+1)}{2}[F:\QQ]$ and there is a map $\mathcal E(\overline\rho)\footnote{an open-closed subvariety of $\mathcal E$} \fleche 
\mathcal X_{\overline\rho}^{\chi-\pol}$ which forgets the refinement. 
\begin{defin}
The image of the map $\mathcal E(\overline\rho) \fleche \mathcal
X_{\overline\rho}^{\chi-\pol}$ is called the \emph{infinite fern} and is denoted by $\mathcal F(\overline\rho)$.
\end{defin}

Actually we can make our main theorem more precise:
\begin{theor}
\label{thr:main}
Under the previous hypothesis, the Zariski closure of the infinite fern $\mathcal F(\overline\rho)$ in $\mathcal X_{\overline\rho}^{\chi-\pol}$ is a non-empty union of irreducible components, each of which are of dimension $[F:\QQ]\frac{n(n+1)}{2}$.
\end{theor}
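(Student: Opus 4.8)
The plan is to establish a lower bound on the dimension of the Zariski closure of the infinite fern that matches the expected dimension $[F:\QQ]\frac{n(n+1)}{2}$ of $\mathcal X_{\overline\rho}^{\chi-\pol}$, and then to combine this with the fact that the automorphic (hence fern) points are smooth points of $\mathcal X_{\overline\rho}^{\chi-\pol}$ lying on components of exactly that dimension. Concretely, I would start from a single \emph{nice} classical automorphic point $x_0$ on the eigenvariety $\mathcal E(\overline\rho)$ whose associated Galois representation $\rho_{x_0}$ is crystalline at $p$, regular, with distinct Hodge--Tate weights and non-critical, generic refinement, and sufficiently generic Frobenius eigenvalues so that all $n![F:\QQ]$ refinements are defined and give distinct points of $\mathcal E$. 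Such a point exists because $\overline\rho$ is conveniently modular. Around each such point the eigenvariety is étale over the weight space, hence smooth of dimension $n[F:\QQ]$, and the Galois-to-$\mathcal X$ map is unramified; the image of a neighbourhood is a smooth germ of dimension $n[F:\QQ]$ inside $\mathcal X_{\overline\rho}^{\chi-\pol}$.

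The heart of the argument is the inductive ``fern'' construction. Given a component $Z$ of the Zariski closure of $\mathcal F(\overline\rho)$ of dimension $d$, I would exhibit a Zariski-dense set of classical automorphic points on $Z$ — this is the key density input, and comes from the density of classical points in the eigenvariety together with the fact that non-critical classical points are Zariski-dense (Coleman's classicality criterion, small-slope forms), transported along the fern. At each such classical point $x$, by varying the refinement among the $n![F:\QQ]$ available ones, I obtain $n![F:\QQ]$ distinct local branches of the fern through $\rho_x$, each smooth of dimension $n[F:\QQ]$, and the crucial transversality statement is that the \emph{union} of the tangent spaces of these branches spans a space of dimension $[F:\QQ]\frac{n(n+1)}{2}$ inside $H^1_f$-type Galois cohomology. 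The combinatorial/linear-algebra computation showing that the $n!$ ``flipped'' trianguline tangent directions at a regular crystalline point already span the full crystalline deformation space of the expected dimension is exactly the higher-dimensional analogue of Gouvêa--Mazur's ``two refinements meet only at $f$'' and of Chenevier's $n=3$ computation; it relies on analysing the tangent space to the trianguline deformation functor at a point with distinct refinement and using the action of the Weyl group on the ordering of the $\varphi$-eigenvalues. Granting this, any component $Z$ through $x$ must contain all these branches, so $d \ge [F:\QQ]\frac{n(n+1)}{2}$; since $\mathcal X_{\overline\rho}^{\chi-\pol}$ has dimension at least this, and the fern points are smooth, $Z$ is a full irreducible component of the expected dimension.

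I would then assemble the pieces: the Zariski closure $\overline{\mathcal F(\overline\rho)}$ is a finite union of irreducible closed subsets; each contains a classical point and hence, by the above, has dimension exactly $[F:\QQ]\frac{n(n+1)}{2}$ and (being the closure of a set of smooth points of $\mathcal X_{\overline\rho}^{\chi-\pol}$ lying on components of that dimension) coincides with an irreducible component of $\mathcal X_{\overline\rho}^{\chi-\pol}$. The corollary then follows since in the unobstructed case $\mathcal X_{\overline\rho}^{\chi-\pol}$ is an irreducible (smooth) open ball, so its unique component is $\overline{\mathcal F(\overline\rho)}$.

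The main obstacle I anticipate is twofold. First, the transversality computation at a classical crystalline point must be carried out with the similitude/polarization constraint and over a general totally real $F$ (so with $p$ possibly non-split in $F$, though Hypothesis assumes it split in $E$ and unramified), and one must verify that the trianguline tangent directions coming from the $n![F:\QQ]$ refinements genuinely fill the polarized crystalline tangent space of dimension $[F:\QQ]\frac{n(n+1)}{2}$ — this requires a careful choice of the starting point $x_0$ (genericity of Hodge--Tate weights and of Frobenius eigenvalues, non-criticality for \emph{all} refinements simultaneously, and the sign condition on $\chi$) and an honest linear-algebra lemma on the span of Weyl-translates of a fixed refinement. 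Second, one needs the existence of such a uniformly generic classical point and the density of such points along every component of the fern, which uses the full force of ``conveniently modular'' together with classicality and local-global results for the relevant unitary eigenvarieties; propagating genericity (in particular non-criticality of all refinements) through the inductive step is the delicate point, since a priori the new classical points produced by the fern need not be as generic as $x_0$, and one must argue that generic points remain Zariski-dense.
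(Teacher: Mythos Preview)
Your sketch is essentially Chenevier's strategy from \cite{CheFern}, and you correctly identify its weak point: you need classical points at which \emph{all} (or at least enough well-positioned) refinements are non-critical, and you need such points to be Zariski-dense in the fern. For $n\leq 3$ this is fine, but for $n\geq 4$ this fails: Remark~\ref{rema:WG} and the example following it exhibit an irreducible, $\varphi$-generic, HT-regular crystalline $4$-dimensional representation which is \emph{not} weakly generic (its non-critical refinements do not form a weakly-nested sequence), and there is no known way to separate such points from the better-behaved ones on the eigenvariety. So the density step in your inductive argument has no proof for $n\geq4$, and this is not a technicality but the central difficulty the paper is written to overcome.

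The paper replaces your requirement by a much weaker and provably dense condition. The key points you are missing are:
\begin{enumerate}
\item The transversality result is proved \emph{without} assuming all refinements are non-critical. Using the local model $X=\widetilde{\mathfrak g}\times_{\mathfrak g}\widetilde{\mathfrak g}$ of \cite{BHS} and a linear-algebra lemma on Borel envelopes (Lemma~\ref{lemmaMargerin}), Proposition~\ref{propraf} shows that if $D$ admits \emph{one} non-critical triangulation $\mathcal F^{\nc}$, then already the refinements $c\cdot\mathcal F^{\nc}$ for $c$ ranging over the full cycles $\mathfrak C_n$ (which may well be critical!) have tangent spaces summing onto $T\mathfrak X_D$. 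The point is that Proposition~\ref{prop:tangentcompw0ptcrist} computes the tangent space of $X_{w_0}$ at certain critical points, and the cycles in $\mathfrak C_n$ land in strata where $w_0w^{-1}$ is a product of distinct simple reflections, which is exactly what that proposition needs.
\item Your passage from the local transversality to the global tangent-space bound is not free: one needs the global tangent map $T\mathfrak X_{\rho_x}^{\pol}\to T\mathfrak X_{\rho_p}/T\mathfrak X_{\rho_p}^{\cris}$ to be injective on the relevant subspaces, i.e.\ the vanishing of an adjoint Bloch--Kato Selmer group. This is supplied by Newton--Thorne (Theorem~\ref{theorNT}), and it requires the Galois representation to have \emph{enormous image}. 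Hence the paper's ``almost generic'' points (Definition~\ref{defin:ag}) are required to have enormous image, and a nontrivial argument (Propositions~\ref{prop:crit-image-ouverte} and~\ref{prop:density-absirred}, using Sen's theorem and control of Hodge--Tate weights) shows such points are Zariski-dense in the fern.
\end{enumerate}
With these two ingredients in place, Corollary~\ref{cortantri} and Theorem~\ref{thr:globaltgt} give the dimension bound at almost-generic points, and Theorem~\ref{thr:dimrigid} concludes as in your final paragraph. Your overall architecture is right; what is missing is precisely the mechanism that lets the argument survive the failure of Chenevier's genericity hypothesis in higher rank.
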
 

Let us comment the various hypothesis we made. The main advantage compared to \cite{HMS} is that we don't have any hypothesis on $p$, $E/F$ or $n$, and moreover
we don't need to assume $\overline\rho$ absolutely irreducible (or has big image) if we
use Chenevier's determinants, which we do (see section
\ref{sect:defspaces}). The hypothesis of being conveniently modular is
necessary to expect the infinite fern to be non-empty, and is in practice very close to the usual modularity hypothesis which is anyway necessary. The hypothesis on the splitting of primes above $p$ is technical, and we hope to come back on this question soon. 

Following the strategy of Chenevier, the 
main goal is to prove that for a Zariski dense set of points $\rho$ in the infinite fern, the part of the tangent space at $\rho$ in $\mathcal X_{\overline\rho}^{\pol}$ coming from $\mathcal 
E$ has dimension at least $\frac{n(n+1)}{2}[F:\QQ]$.
This will imply that the closure of the infinite fern $\mathcal F(\overline\rho)$, has dimension at least $\frac{n(n+1)}{2}[F:\QQ]$. As, by construction, automorphic points
are Zariski-dense in $\mathcal E$, thus in $\mathcal F(\overline\rho)$, this will prove that the Zariski closure of automorphic points has dimension at least $\frac{n(n+1)}{2}[F:\QQ]$.
Thus to prove the assertion on the tangent space, we need to show that the tangent spaces $T_{f_i}\mathcal E$ are ``transverse'', more precisely that the sum of the images of the tangent spaces $T_{f_i}\mathcal E$ in
$T_{\rho_f}\mathcal X_{\overline\rho}^{\chi-\pol}$, for well chosen
automorphic forms $f$, has large enough dimension. But, clearly, as
soon as $n \geq 3$ it is not sufficient that these tangent
spaces are pairwise distinct, and this is the main difficulty to
extend the proof of Gouv\^ea and Mazur.
To overcome this problem, Chenevier suggested a strategy which he
applied successfully when $n =3$ and which can be sketched as follows:
\begin{enumerate}
\item find a good Zariski-dense subset $D$ of the infinite fern
  $\mathcal F(\overline\rho)$ ;
\item show that the analogue of the transversality on the tangent spaces of
  points in $D$ but for local deformation rings is valid ;
\item show that the Global situation ``embeds well'' in the local situation, and thus gives the result.
\end{enumerate}

For the first part, Chenevier suggests to look at automorphic points
$\rho$ which he calls \textit{generic}: they are crystalline at $p$
and all their refinements are \textit{non-critical}. More precisely,
if $\rho$ is crystalline at a place $v\mid p$, its restriction
$\rho_v$ to a decomposition group at $v$ is characterized by a
$n$-dimensional vector space $V = D_{\cris}(\rho|_{\Gal(\overline{E}_v/E_v)})$ with
its Hodge-Tate filtration $\mathcal F_{HT}$ (a complete flag) and
Frobenius operator $\varphi$. The refinements of $\rho$ correspond to
the complete flags of $V$ stable by $\varphi$. We say that a
refinement of $\rho$ is
non critical it is opposite to $\mathcal{F}_{HT}$. Actually,
Chenevier proved that the second step works for crystalline points which have $n$ \emph{well-positioned}
non-critical flags and call those points \emph{weakly-generic}. He moreover proved that weakly generic points (with some extra but harmless conditions) are Zariski dense in the infinite fern when $n = 3$ and uses those as the subset $D$.

Concerning the second point, the weakly generic condition is used to
carry an induction in the local situation and prove that tangent spaces of local, refined, deformations problem spawn the tangent space of the full local deformation ring. This is where the definition of \emph{well positioned} refinements comes from.

For the last point, actually it is enough to embed the situation at
the level of tangent spaces. Chenevier proves that for all preimages
of points in $D$, the map $\mathcal E \fleche \mathcal
W$ to the weight space is \'etale, and deduces that some Selmer group vanishes at those points, allowing to \emph{embed} infinitesimally the global situation into the local one, transversally to the cristalline locus, and thus deduce the result. This last argument is classical in the Taylor-Wiles method.
 
The main issue to generalise Chenevier's strategy in higher dimensions is that it is completely unclear that weakly generic points satisfies some density assumptions when $n\geq 4$ (see remark \ref{rema:WG}). 
 
The strategy of \cite{HMS} is different but shares some similarities:
for the first point they choose points which are crystalline with some
genericity assumption\footnote{precisely on the Frobenius eigenvalues
  and Hodge-Tate weights} which are less restrictive than being
\emph{generic} or \emph{weakly-generic}. Their set $D$ is then
automatically Zariski dense. For the second point, they use a local
model for the local deformations spaces, which is of purely geometric
nature, and a rather evolved but completely elementary argument allows
to conclude in the second point, using not only all refinements but
also \emph{companion points}, which are extra-points appearing when
the refinement is critical (whose existence is proved in \cite{BHS}) and relies on the Taylor-Wiles hypothesis.
 
Then the third point is the most delicate one and is proved by
Taylor--Wiles--Kisin method via ``patched eigenvarieties'' (see
\cite{BHS}). The second and third point relies deeply on the Taylor-Wiles hypothesis, in particular it crucially needs $\overline\rho$ to have adequate image.

In this article we use a strategy closer to Chenevier's, but using the
local model of \cite{BHS} as in \cite{HMS}, but we never need any Taylor-Wiles assumption. Namely, using the local
model and a careful study of its geometry, we first prove the second
point without using companion points but rather generalizing
Chenevier's transversality result at critical refinements (see section
\ref{sect:localdefrings}). For the first point, we show that setting
for $D$ the set of crystalline points satisfying genericity conditions as
in \cite{HMS} and which have moreover \emph{enormous} image are
actually Zariski dense in the infinite fern ; we call those points
\emph{almost-generic} (see Definition \ref{defin:ag}) because they
will replace Chenevier's generic points in our argument. The density
of these points is far from being automatic and the argument is originally due to Bellaïche--Chenevier and Taïbi (see section \ref{sect:aggen}). Then, for the third point, we show that using the enormous image, and a result of Newton--Thorne, we have the vanishing of the expected Selmer group at points of $D$. We then show that this can be used to relate the global situation to the local one. As a byproduct, we obtain that our Eigenvariety is smooth at those points, as it was the case in other situations (see \cite{BHS,Bergdall}) (see section \ref{sect:localglobal}).
Then, a local calculation which was previously carried out in \cite{All1}, we show that our almost generic points are smooth points of $\mathcal X_{\overline\rho}^{\chi-\pol}$ of the expected dimension.

The results of Chenevier were combined with those by Allen (\cite{All2}) (who proves that under some hypothesis every component contains an automorphic point) to prove full density of the infinite fern when $n \leq 3$, without assuming unobstructedness. We can adapt this generalisation also here,

\begin{cor}[Allen]
Assume hypothesis \ref{HypAllen+}, then the infinite fern is Zariski dense in $\mathcal X^{\chi-\pol}_{\overline\rho}$.
\end{cor}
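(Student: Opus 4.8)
The plan is to combine the main theorem (Theorem \ref{thr:main}) with Allen's result, exactly as in the $n\leq 3$ case. By Theorem \ref{thr:main} we already know that the Zariski closure $\overline{\mathcal F(\overline\rho)}$ of the infinite fern inside $\mathcal X_{\overline\rho}^{\chi-\pol}$ is a non-empty union of irreducible components, each of dimension $[F:\QQ]\frac{n(n+1)}{2}$. To upgrade this to \emph{all} of $\mathcal X_{\overline\rho}^{\chi-\pol}$ it suffices to show that \emph{every} irreducible component $Z$ of $\mathcal X_{\overline\rho}^{\chi-\pol}$ contains at least one automorphic point; indeed any such point lies on a local branch of the Eigenvariety, hence in $\mathcal F(\overline\rho)$, so $Z\cap\overline{\mathcal F(\overline\rho)}$ is a non-empty Zariski-closed subset of the irreducible $Z$ that is also (by Theorem \ref{thr:main}) a union of components, forcing $Z\subseteq\overline{\mathcal F(\overline\rho)}$. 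Since this holds for every component, $\overline{\mathcal F(\overline\rho)}=\mathcal X_{\overline\rho}^{\chi-\pol}$.

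First I would recall the precise statement of Allen's theorem (\cite{All2}): under Hypothesis \ref{HypAllen+} — which will include the conveniently modular hypothesis plus whatever local conditions at $p$ and at ramified primes Allen's automorphy-lifting input requires, together with the $\chi$-polarization sign conditions already in force — every irreducible component of the polarized deformation space $\mathcal X_{\overline\rho}^{\chi-\pol}$ contains a point that is (the Galois representation attached to) a regular algebraic essentially polarized cuspidal automorphic representation of $\GL_n(\mathbb A_E)$. The proof of this is an automorphy-lifting argument: a generic point of a given component can be deformed to a point with good reduction properties (crystalline of regular Hodge--Tate weights at $p$, and of controlled ramification elsewhere) lying on the same component, and then a Taylor--Wiles--Kisin style modularity theorem applies. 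I would simply cite this rather than reprove it.

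Next I would carefully check that the hypotheses we have in force throughout the paper, supplemented by Hypothesis \ref{HypAllen+}, are compatible with and imply the hypotheses needed both for Theorem \ref{thr:main} and for Allen's input. The only genuinely new ingredient introduced here is Hypothesis \ref{HypAllen+}; everything else is already available. I would then spell out the short topological argument above: pick an irreducible component $Z$ of $\mathcal X_{\overline\rho}^{\chi-\pol}$, invoke Allen to get an automorphic point $x\in Z$, note $x\in\mathcal F(\overline\rho)$ because the automorphic Galois representation underlies a point of the Eigenvariety $\mathcal E(\overline\rho)$ (any automorphic form admits at least one refinement over a finite extension), conclude $Z\cap\overline{\mathcal F(\overline\rho)}\neq\varnothing$, and then use that $\overline{\mathcal F(\overline\rho)}$ is a union of components (Theorem \ref{thr:main}) together with irreducibility of $Z$ to get $Z\subseteq\overline{\mathcal F(\overline\rho)}$. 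Taking the union over all components gives the corollary.

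The main obstacle — though it is entirely external to this paper, being deferred to \cite{All2} — is ensuring that the automorphy lifting theorem of Allen applies in our generality, in particular that the deformation of a generic point of an arbitrary component to a crystalline, automorphic point can be carried out under Hypothesis \ref{HypAllen+} without a Taylor--Wiles (adequate/enormous image) assumption on $\overline\rho$ itself; in practice this is where the extra conditions packaged into Hypothesis \ref{HypAllen+} are needed, and I would be explicit that our contribution here is only the combination, the density statement of Theorem \ref{thr:main} being the new input on the ``fern side''. Everything else is the elementary component-chasing argument.
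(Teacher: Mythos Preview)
Your overall strategy is correct and matches the paper's: combine Theorem \ref{thr:main} with Allen's result that every irreducible component of $\mathcal X_{\overline\rho}^{\chi-\pol}$ contains an automorphic point, and conclude by component-chasing. However, there is a genuine gap at the step you treat as trivial.

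You write ``note $x\in\mathcal F(\overline\rho)$ because the automorphic Galois representation underlies a point of the Eigenvariety $\mathcal E(\overline\rho)$ (any automorphic form admits at least one refinement).'' This is precisely the nontrivial content of the paper's proof. Allen's theorem (\cite{All2}) produces a regular algebraic polarized cuspidal automorphic representation $\Pi$ of $\GL_n(\mathbb A_E)$, \emph{not} of the similitude unitary group $G=GU(V)$. But the Eigenvariety $\mathcal E(\overline\rho)$ and hence the infinite fern are built from automorphic forms on $G$ appearing in the coherent $H^0$ of the associated Shimura variety (see Definition \ref{defin:modular} and Corollary \ref{cor:finite_slope}). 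So having a $\GL_n$-automorphic point is not the same as having a point in $\mathcal F(\overline\rho)$. The paper makes this explicit: ``we still need to check that the automorphic point in all components can be chosen to be in the infinite fern (i.e.~holomorphic at infinity automorphic representations for $GU$), whereas Allen's proof a priori only provides an (essentially) polarised automorphic representation of $\GL_n$.''

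The paper bridges this gap by: (i) analyzing the archimedean Arthur parameter of $\Pi$ and using Clozel's purity lemma plus regularity to show it is tempered and discrete; (ii) invoking Mok's classification (\cite{Mok}) to find a member $\pi_0$ of the global $A$-packet for $U$ whose archimedean component is the \emph{holomorphic} discrete series; (iii) checking $\pi_0$ is cuspidal via stability of the packet and \cite{Wallach}; (iv) extending $\pi_0$ to a cuspidal representation $\pi$ of $GU$ via \cite{Labesse-Schwermer}; and (v) concluding that $\pi$ contributes to coherent $H^0$, hence gives a point of $\mathcal E(\overline\rho)$. As a byproduct this also shows $\overline\rho_0$ is conveniently modular, which is needed to even invoke Theorem \ref{thr:main}. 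None of this is external to the paper; it is exactly what your proposal omits.
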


The only thing we need to check for this corollary is that we use classical points which are automorphic representations for a similitude unitary group, which moreover contributes to the coherent $H^0$, whereas Allen's proof a priori only provides an (essentially) polarised autormorphic representation of $\GL_n$.

 \textbf{Acknowledgements}: We would like to warmly thank Ga\"etan
 Chenevier for very helpful suggestions concerning this work. We would also like to thank George Boxer, Laurent Clozel and Olivier Schiffmann for many interresting discussions. Finally, we would like to thank Anne vaugon and Marc Mezzarobba for their help for using SAGE to compute the local tangent spaces.

\section{A lemma on Borel enveloppes}
\label{sec:Borel_enveloppes}

% This section is independant of the rest of the text, thus its notations should be considered unrelated to the rest also.
In this section we prove a technical, but essential, generalization of the linear algebra result \cite[Thm.~2.3]{HMS}. The main result of this section is Lemma \ref{lemmaMargerin}.

Fix $n$ an integer, $k$ a field, $G = \GL_{n/k}$, $B$ the upper triangular Borel, $T$ its diagonal torus, $\mathfrak{g},\mathfrak{b}, \mathfrak{t}$ their respective Lie algebras. We also set $\mathfrak{u}$ the nilpotent radical of $\mathfrak{b}$. Let $W$ be the Weyl group of the pair $(G,T)$ that we identify to $\mathfrak{S}_n$. Let $w_0 \in W \simeq \mathfrak{S}_n
$ be the longuest element for the order given by $\mathfrak{b}$. For $g,h \in G(k)$, we set $\mathfrak{b}_{g} = g^{-1}\mathfrak{b} g$ and $(\mathfrak{b}_g \cap \mathfrak{b}_h)^{gr = w_0}$ the $k$-linear subspace of $M \in \mathfrak{b}_g \cap \mathfrak{b}_h$ such that, in $\mathfrak{t}$,
\[ hMh^{-1} \pmod{\mathfrak u} = \Ad(w_0)(gMg^{-1} \pmod{\mathfrak u}) \in \mathfrak{t}.  \]
Let $\mathfrak{C}_n$ be the set of ``full'' cycles:
\[ \mathfrak C_n \coloneqq \{ c_{i,j} := (i,i-1,\dots,j+1,j) \in \mathfrak{S}_n | i \geq j\}.\]

\begin{lemma}
\label{lemmaMargerin}
For every Borel subalgebra $\mathfrak{b}'\subset\mathfrak{g}$, we have
\[ \mathfrak{b}' = \sum_{w \in \mathfrak{C}_nw'} (\mathfrak{b}' \cap \mathfrak{b}_{w})^{gr = w_0},
\]
for some element $w'\in\mathfrak{S}_n$ depending on $\mathfrak{b}'$. Moreover if $\mathfrak{b'}=\mathfrak{b}_{w_0b}$ for some $b\in B(k)$, we can choose $w'=1$.
\end{lemma}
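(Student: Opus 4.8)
The inclusion $\supseteq$ is clear since every $(\mathfrak{b}' \cap \mathfrak{b}_w)^{gr = w_0}$ sits inside $\mathfrak{b}'$; the content is that these subspaces span $\mathfrak{b}'$. The plan is first to reduce to the ``moreover'' case, i.e. to $\mathfrak{b}'$ opposite to $\mathfrak{b}$ ($\mathfrak{b}' \cap \mathfrak{b}$ a Cartan subalgebra); these are exactly the $\mathfrak{b}' = \mathfrak{b}_{w_0 b}$, $b \in B(k)$, being the $B(k)$-conjugates of $\mathfrak{b}^- = \mathfrak{b}_{w_0}$. Two ingredients enter. (i) For any Borel subalgebra $\mathfrak{b}'$, with associated complete flag $F'_\bullet$, there is $w' \in \mathfrak{S}_n$ with $\mathfrak{b}_{w'}$ opposite to $\mathfrak{b}'$: build $w'$ greedily, at step $k$ choosing a standard basis vector $e_{w'(k)}$ not yet used and lying outside the hyperplane $F'_{n-k} + \langle e_{w'(1)}, \dots, e_{w'(k-1)} \rangle$ (possible, since a hyperplane cannot contain all the $e_i$), so that $\langle e_{w'(1)}, \dots, e_{w'(k)} \rangle$ stays complementary to $F'_{n-k}$. (ii) The subspace $(\mathfrak{b}_g \cap \mathfrak{b}_h)^{gr = w_0}$ depends only on the two Borel subalgebras, and replacing $(M, g, h)$ by $(\dot{w'} M \dot{w'}^{-1}, g \dot{w'}^{-1}, h \dot{w'}^{-1})$ leaves $g M g^{-1}$ and $h M h^{-1}$ unchanged, so $\Ad(\dot{w'})$ carries $(\mathfrak{b}' \cap \mathfrak{b}_w)^{gr = w_0}$ to $(\Ad(\dot{w'})\mathfrak{b}' \cap \mathfrak{b}_{w(w')^{-1}})^{gr = w_0}$. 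Since $\Ad(\dot{w'})\mathfrak{b}'$ is then opposite to $\mathfrak{b}$ and $\mathfrak{C}_n w' \cdot (w')^{-1} = \mathfrak{C}_n$, conjugating the desired identity by $\dot{w'}$ turns it into the same identity for $\Ad(\dot{w'})\mathfrak{b}'$ with coset $\mathfrak{C}_n$; in particular $w' = 1$ works when $\mathfrak{b}'$ is already opposite to $\mathfrak{b}$, which is the ``moreover'' statement.

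It then remains to prove: $\mathfrak{b}' = \mathfrak{b}_{w_0 b}$, $b \in B(k)$, implies $\mathfrak{b}' = \sum_{c \in \mathfrak{C}_n}(\mathfrak{b}' \cap \mathfrak{b}_c)^{gr = w_0}$. I would first do $b = 1$, $\mathfrak{b}' = \mathfrak{b}^-$: a direct computation gives $\mathfrak{b}^- \cap \mathfrak{b}_{c_{i,j}} = \mathfrak{t} \oplus \langle E_{a,j} : j < a \le i \rangle$ and identifies the $gr = w_0$ condition on it with the equality of diagonal entries $M_{jj} = \dots = M_{ii}$, so that each $E_{a,j}$ with $j < a \le i$ lies in $(\mathfrak{b}^- \cap \mathfrak{b}_{c_{i,j}})^{gr = w_0}$ and $(\mathfrak{b}^- \cap \mathfrak{b})^{gr = w_0} = \mathfrak{t}$; already $c = \mathrm{id}$ and $c = c_{n,1}, \dots, c_{n,n-1}$ recover $\mathfrak{t} \oplus \mathfrak{u}^- = \mathfrak{b}^-$. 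For general $b$, which up to conjugating by its torus part may be assumed unipotent, the whole of $\mathfrak{C}_n$ is used: I would analyse $\mathfrak{b}' \cap \mathfrak{b}_{c_{i,j}}$ and the $gr = w_0$ condition in the same way and exhibit in each $(\mathfrak{b}' \cap \mathfrak{b}_{c_{i,j}})^{gr = w_0}$ a vector whose leading term for the root grading of $\mathfrak{b}'$ is $\Ad(b^{-1})E_{a,j}$, with error supported on roots of strictly larger height; a triangularity argument (treating the positive roots of $\mathfrak{b}'$ by decreasing height, together with $(\mathfrak{b}' \cap \mathfrak{b})^{gr = w_0} = \Ad(b^{-1})\mathfrak{t}$) then shows these vectors span $\mathfrak{b}'$. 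Alternatively, one can induct on $n$ along the maximal parabolic with Levi $\GL_{n-1} \times \GL_1$, which matches the decomposition $\mathfrak{C}_n = \mathfrak{C}_{n-1} \sqcup \{c_{n,j} : 1 \le j \le n-1\}$, the $c_{n,j}$ handling the last column of $\mathfrak{b}'$ and $\mathfrak{C}_{n-1}$ the induced Borel of the Levi.

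The hard part is this last analysis: when $b \ne 1$ the naive candidate $\Ad(b^{-1})E_{a,j}$ need not lie in $\mathfrak{b}_{c_{i,j}}$, so one has to determine the correct correction terms, keep precise track of which root spaces of $\mathfrak{b}'$ are reached by the various $(\mathfrak{b}' \cap \mathfrak{b}_c)^{gr = w_0}$, and verify that together with the Cartan part they exhaust $\mathfrak{b}'$ uniformly in $b$ and $n$ — this is the technical heart of the lemma. The combinatorial reduction and the conjugation bookkeeping above are, by comparison, routine once one notes that the $gr = w_0$ construction is compatible with conjugation.
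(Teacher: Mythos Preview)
Your reduction is correct and is, up to bookkeeping, the same one the paper makes: writing $\mathfrak b'=\mathfrak b_g$ and using the Bruhat decomposition $g=u(w_0bw_0)s$ with $u\in B$, $b\in B$, $s\in W$, one conjugates by $bq$ (where $q=w_0s$) exactly as you conjugate by a lift $\dot w'$, and both reduce the statement to
\[
\mathfrak b_{w_0}=\sum_{c\in\mathfrak C_n}(\mathfrak b_{w_0}\cap\mathfrak b_{cb^{-1}})^{gr=w_0}
\qquad(\text{equivalently }\ \mathfrak b_{w_0b}=\sum_{c\in\mathfrak C_n}(\mathfrak b_{w_0b}\cap\mathfrak b_{c})^{gr=w_0}).
\]
Your observation that $(\mathfrak b_g\cap\mathfrak b_h)^{gr=w_0}$ depends only on the two Borels and is carried around correctly under $\Ad(x)$ is right and is what makes this reduction work; your $b=1$ computation is also fine.

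What is missing is exactly what you flag as the ``hard part'': the construction, for general $b$, of explicit elements in $(\mathfrak b_{w_0}\cap\mathfrak b_{c_{i,j}b^{-1}})^{gr=w_0}$ forming a triangular system. The paper does not argue by height or by induction on $n$; it writes down a single rank-one endomorphism for each $(i,j)$. Concretely, take the basis
\[
\mathcal B=\bigl(b(e_1),\dots,b(e_{j-1}),\,e_j,\,b(e_{j+1}),\dots,b(e_i),\,e_{i+1},\dots,e_n\bigr)
\]
and let $a^{i,j}$ be the matrix (in the standard basis) of the endomorphism $\pi$ with $\pi(e_j)=e_i$ and $\pi|_{\mathcal B\setminus\{e_j\}}=0$. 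One checks directly that $\pi$ is lower triangular with image $k e_i$, that $b^{-1}\pi b$ stabilises $c_{i,j}^{-1}V_\bullet$ (so $a^{i,j}\in\mathfrak b_{w_0}\cap\mathfrak b_{c_{i,j}b^{-1}}$), and that the diagonals of $a^{i,j}$ and of $\Ad(c_{i,j}b^{-1})a^{i,j}$ agree (only the $(i,i)$ entry can be nonzero, and conjugation preserves the trace), giving the $gr=w_0$ condition. In the standard basis $a^{i,j}=E_{i,j}+\sum_{\ell=j+1}^{i}x_\ell E_{i,\ell}$ for some $x_\ell\in k$, so for each fixed row $i$ the family $(a^{i,j})_{j\le i}$ is unitriangular in the $E_{i,\bullet}$ and hence spans that row of $\mathfrak b_{w_0}$. (Note that the correction terms lie in the \emph{same row}, in columns $>j$, i.e.\ at \emph{smaller} height in $\mathfrak b_{w_0}$, the opposite of what you guessed; but either direction of triangularity yields the spanning, so this does not invalidate your plan.)

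In short: your outline is the paper's outline, but the substance of the lemma is precisely the explicit rank-one construction above, which replaces your unspecified ``correct correction terms'' and makes the verification of the $gr=w_0$ condition a two-line trace argument.
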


\begin{proof}
We can write $\mathfrak{b}' = \mathfrak{b}_g$ for $g \in G(k)$. Let $g = u l s$, $u \in \mathfrak{b}, l \in \mathfrak{b}_{w_0}$ lower triangular i.e.~$l = w_0 b w_0$ with $b \in B(k)$, and $s \in W$. Thus $\mathfrak{b}_g = (\mathfrak{b}_{w_0})_{bq}$, for $q = w_0s \in W$.
Up to conjugate by $bq$, we check at once that it is enough to show
\[ \mathfrak{b}_{w_0} = \sum_{w \in \mathfrak C_n} (\mathfrak{b}_{w_0}\cap\mathfrak{b}_{wb^{-1}})^{gr=w_0}.\]
Thus we reduce to show the following lemma :

\begin{lemma}
For all $i \geq j$, there exists $x_\ell \in k$ such that
\[ a^{i,j} := \delta_{i,j} + \sum_{\ell = j+1}^{i} x_\ell \delta_{\ell,j} \in (\mathfrak{b}_{w_0}\cap\mathfrak{b}_{c_{i,j}b^{-1}})^{gr=w_0}.\]
\end{lemma}

\begin{proof} We follow the proof of \cite{HMS}. For $i\geq j$, let $a^{i,j}$ be the element constructed at the beginning of the proof of Lemma 2.1 in \emph{loc.~cit.} For the convenience of the reader we recall its construction. Let $e_1,\dots,e_n$ be the standard basis of $k^n$ and let $V_\bullet$ be the standard flag of $k^n$. Let $\mathcal{B}$ be the basis
\[ b(e_1),b(e_2),\dots,b(e_{j-1}),e_j,b(e_{j+1}),\dots,b(e_i),e_{i+1},\dots,e_n\]
of $k^n$. Then $a^{i,j}$ is the matrix, in the standard basis, of the endomorphism $\pi$ of $k^n$ defined by $\pi(x)=0$ if $x\in\mathcal{B}\setminus\set{e_j}$ and $\pi(e_j)=e_i$. As in \emph{loc.~cit.} we check that
\begin{enumerate}[(i)]
\item $e_\ell\in \ker(\pi)$ if $\ell<j$ or $\ell> i$;
\item $\mathrm{Im}(\pi)\subset ke_i$ and $\pi(e_j)=e_i$;
\item the endomorphism $b^{-1}\pi b$ stabilizes the flag $c_{i,j}^{-1} V_\bullet$.
\end{enumerate}
The first two points are checked in \emph{loc.~cit.} The third point follows from the fact that
\begin{equation}\label{eq:stabilite_drapeau} b^{-1}\pi b(c_{i,j}^{-1}V_\ell)=\begin{cases} 0 & \text{if }\ell=1,\dots,i-1\\
kb^{-1}(e_i) & \text{if }\ell \geq i\end{cases}\end{equation}
and $kb^{-1}(e_i)\in V_i=c_{i,j}^{-1}V_i$. This shows that the matrix $a^{i,j}$ is an element of $\mathfrak{b}_{w_0}\cap\mathfrak{b}_{c_{i,j}b^{-1}}$ and has the form
\begin{equation}\label{eq:matrixaij} \delta_{i,j}+\sum_{\ell=j+1}^i x_{\ell}\delta_{i,k}\end{equation}
for some $x_\ell\in k$.

It remains to check that $a^{i,j}\in (\mathfrak{b}_{w_0}\cap\mathfrak{b}_{c_{j,i}b^{-1}})^{gr=w_0}$ which is equivalent to check that the diagonal elements of $a^{i,j}$ and $\Ad(c_{i,j}b^{-1})a^{i,j}$ are the same. It follows from \eqref{eq:stabilite_drapeau} that
\[ (\Ad(c_{i,j}b^{-1})\pi)(e_\ell)=\begin{cases} 0 & \text{if } k< i \\
c_{i,j}b^{-1}(e_i) & \text{if } k=i \end{cases}\]
and $(\Ad(c_{i,j}b^{-1}\pi))(e_\ell)\in V_i$ if $\ell >i$. Therefore the diagonal of $\Ad(c_{i,j}b^{-1})a^{i,j}$ is zero except for the coefficient $(i,i)$. On the other hand, we see by \eqref{eq:matrixaij} that the diagonal entries of $a^{i,j}$ are all zero except $(i,i)$. As the matrices $a^{i,j}$ and $\Ad(c_{i,j}b^{-1})a^{i,j}$ are conjugated they have the same trace and thus the same diagonal.
\end{proof}
\end{proof}

\section{Local deformation rings}
\label{sect:localdefrings}

The aim of this section is to prove Proposition \ref{propraf} which says that the tangent space of the deformation ring of a crystalline $\varphi$-regular and Hodge-Tate regular $(\varphi,\Gamma)$-module is generated by the tangent space of some quasi-trianguline deformation subspaces.

Let $k$ be a field of characteristic $0$. Let $G$ be a split reductive
group over $k$, $B\subset G$ a Borel subgroup of $G$, $T\subset B$ a
maximal split torus of $G$, $U$ the unipotent radical of $B$ and $U^-$
the unipotent radical of the opposite Borel subgroup to $B$ with
respect to $T$ (in particular $U^-\cap B=\set{1}$). Let
$W\coloneqq N_G(T)/T$ be the Weyl group of $(G,T)$. We denote $w_0$ the longest element of $W$ with respect to the Bruhat order induced by the choice of $B$. Let
$\mathfrak{g}$, $\mathfrak{b}$, $\mathfrak{t}$, $\mathfrak{u}$,
$\mathfrak{u}^-$ be the respective Lie algebras of $G$, $B$, $T$, $U$,
$U^-$. Let $\tildeg\subset\mathfrak{g}\times G/B$ be the Grothendieck
simultaneous resolution of $\mathfrak{g}$ and
$X\coloneqq\tildeg\times_{\mathfrak{g}}\tildeg$. We recall that $X$
has irreducible components $X_w$ which are indexed by the elements of
the Weyl group $W$ (see \cite[Def.~2.2.3]{BHS}). The map
$\tildeg\rightarrow\mathfrak{t}$ sending $(\psi,gB)$ to the projection
of $\Ad(g)^{-1}\psi$ on $\mathfrak{t}$ via
$\mathfrak{b}/\mathfrak{u}\simeq\mathfrak{t}$ gives rise to two
different maps $\kappa_1,\kappa_2 : X\rightarrow\mathfrak{t}$
corresponding to the two projections $X\rightarrow\tildeg$ and to a
map
$\kappa\coloneqq(\kappa_1,\kappa_2) :
X\rightarrow\mathfrak{t}\times_{\mathfrak{t}/W}\mathfrak{t}$. If
$w\in W$, we let $\mathfrak{t}^w\subset\mathfrak{t}$ be the subspace
of elements fixed by $w$ and
$T_w\subset\mathfrak{t}\times_{\mathfrak{t}/W}\mathfrak{t}$ be the
irreducible component
\[ T_w\coloneqq\set{(x_1,x_2)\in\mathfrak{t}\times\mathfrak{t}\mid x_1=\Ad(w)x_2}.\]

The space $X$ has a partition by locally closed subschemes $V_w$
defined as inverse images of the Bruhat strata $U_w\subset G/B\times
G/B$ by the map $\pi : X\rightarrow G/B\times G/B$ and $X_w = \overline{V_w}$. We have an
inclusion $\kappa(X_w)\subset T_w$ (\cite[Lem.~2.5.1]{BHS}).

\begin{prop}\label{prop:tangentcompw0ptcrist}
Let $x=(g_1B,0,g_2B)\in X_{w_0}(k)\subset G(k)/B(k)\times\mathfrak{g}\times G(k)/B(k)$ be a $k$-point. Let $w\in W$ be such that $x\in V_w$ and assume that $w_0w^{-1}$ is a product of distinct simple reflexions. Then we have an equality of $k$-vector spaces
\[ T_x X_{w_0}=T_x \kappa^{-1}(T_{w_0}).\]
\end{prop}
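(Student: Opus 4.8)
The inclusion $T_xX_{w_0}\subset T_x\kappa^{-1}(T_{w_0})$ is automatic, since $X_{w_0}\subset\kappa^{-1}(T_{w_0})$ by \cite[Lem.~2.5.1]{BHS}. So the content is the reverse inclusion, which amounts to a dimension count together with the transversality encoded in Lemma~\ref{lemmaMargerin}. First I would compute $\dim X_{w_0}$: since $X_{w_0}=\overline{V_{w_0}}$ and $V_{w_0}$ is the inverse image under $\pi$ of the big Bruhat cell $U_{w_0}\subset G/B\times G/B$, and the fibre of $\pi$ over a point $(g_1B,g_2B)\in U_w$ of $\tildeg\times_\mathfrak{g}\tildeg$ is the linear space $\mathfrak{b}_{g_1}\cap\mathfrak{b}_{g_2}$ (viewed inside $\mathfrak{g}$), which for $w=w_0$ has dimension $\dim\mathfrak{t}+\ell(w_0)=\dim\mathfrak{b}-\ell(w_0)+\dim\mathfrak t$... more precisely, over the big cell the two Borels are in general position so $\mathfrak b_{g_1}\cap\mathfrak b_{g_2}$ has dimension $\dim\mathfrak t$; hence $\dim X_{w_0}=2\dim(G/B)+\dim\mathfrak t=\dim\mathfrak g$. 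Likewise $T_{w_0}$ has dimension $\dim\mathfrak{t}$ and the relevant component computation gives $\dim\kappa^{-1}(T_{w_0})=\dim\mathfrak g$ as well (this is essentially the statement that $\kappa^{-1}(T_{w_0})$ and $X_{w_0}$ have the same dimension, the former being reducible in general but equidimensional).

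Next, the key reduction. By $G$-equivariance of $X$ and of $\kappa$ I may move $x$ by an element of $G(k)$ and assume $g_2=1$, so $x=(g_1B,0,B)$ with $x\in V_w$; the condition $x\in V_w$ means $g_1B$ is in relative position $w$ with respect to $B$, and the hypothesis is that $w_0w^{-1}$ is a product of distinct simple reflections. Writing $g_1$ in a suitable form (Bruhat decomposition relative to $w$) I would arrange, exactly as in the proof of Lemma~\ref{lemmaMargerin}, that $\mathfrak b_{g_1}=\mathfrak b_{w_0 b}$ for some $b\in B(k)$ — this is precisely the "moreover" clause of that lemma, and it is where the hypothesis on $w_0w^{-1}$ is used. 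Then I would identify the tangent space $T_x\kappa^{-1}(T_{w_0})$ explicitly: a tangent vector is a triple $(\xi_1,M,\xi_2)$ with $\xi_i\in\mathfrak g/\mathfrak b_{g_i}$ (tangent to $G/B$) and $M\in\mathfrak g$, subject to the linearized conditions that $M$ lies in the deformed intersection of the two Borels and that the two projections to $\mathfrak t$ agree after twisting by $w_0$. The genuinely linear-algebra part is the claim that the "$M$-component" of $T_x\kappa^{-1}(T_{w_0})$ is contained in $\sum_{c\in\mathfrak C_n}(\mathfrak b_{g_1}\cap\mathfrak b_{c})^{\gr=w_0}$ — and Lemma~\ref{lemmaMargerin}, applied with $\mathfrak b'=\mathfrak b_{g_1}=\mathfrak b_{w_0b}$ (so that we may take $w'=1$), says this sum is all of $\mathfrak b_{g_1}$, which is exactly the $M$-component of $T_xX_{w_0}$ (the fibre of $\pi$ over $x$). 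Combining: $T_x\kappa^{-1}(T_{w_0})$ and $T_xX_{w_0}$ have the same image under the three coordinate projections and the same kernel relations, forcing equality; alternatively, one shows $\dim T_x\kappa^{-1}(T_{w_0})\le\dim\mathfrak g=\dim X_{w_0}\le\dim T_xX_{w_0}$, and since $T_xX_{w_0}\subset T_x\kappa^{-1}(T_{w_0})$ all inequalities are equalities.

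**Main obstacle.** The delicate point is bookkeeping the linearized equations defining $\kappa^{-1}(T_{w_0})$ near $x$ and matching them cleanly against the subspaces $(\mathfrak b_{g_1}\cap\mathfrak b_c)^{\gr=w_0}$ appearing in Lemma~\ref{lemmaMargerin}; in particular one must check that the "grading" condition $hMh^{-1}\equiv\Ad(w_0)(gMg^{-1})\pmod{\mathfrak u}$ of that lemma is exactly the linearization of the condition $\kappa_1=\Ad(w_0)\kappa_2$ at the point $x$, up to the identifications $\mathfrak b/\mathfrak u\simeq\mathfrak t$ and the shift by $b$. Once this dictionary is set up, the statement follows formally from Lemma~\ref{lemmaMargerin} and the dimension count; the only other thing to verify is that $\kappa^{-1}(T_{w_0})$ is cut out, near $x$, by the expected number of equations (i.e.\ that $\kappa$ restricted to a neighbourhood of $x$ is flat/dominant onto $\mathfrak t\times_{\mathfrak t/W}\mathfrak t$ with the component $T_{w_0}$ in its image), which one can check using the explicit fibre description of $\tildeg\to\mathfrak t$ over the big cell.
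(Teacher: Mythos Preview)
Your approach has a genuine gap: you are invoking Lemma~\ref{lemmaMargerin} where it does not apply, and you have misidentified where the hypothesis on $w_0w^{-1}$ enters. With your normalisation $g_2=1$, the condition $x\in V_w$ gives $g_1\in BwB$, so $\mathfrak{b}_{g_1}=\mathfrak{b}_{wb}$ for some $b\in B(k)$, \emph{not} $\mathfrak{b}_{w_0b}$; you can only arrange the latter when $w=w_0$, which is exactly the case the hypothesis is designed to go beyond. The ``moreover'' clause of Lemma~\ref{lemmaMargerin} has nothing to do with $w_0w^{-1}$ being a product of distinct simple reflections. More fundamentally, Lemma~\ref{lemmaMargerin} expresses a Borel as a \emph{sum} of pieces $(\mathfrak{b}'\cap\mathfrak{b}_{cw'})^{\gr=w_0}$ indexed by $c\in\mathfrak{C}_n$; it is the tool used later (Proposition~\ref{propraf}) to span the full tangent space from the tangent spaces attached to \emph{many} refinements. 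Here the problem is local at the single point $x$: the $M$-part of $T_x\kappa^{-1}(T_{w_0})$ is the single subspace $(\mathfrak{b}_{g_1}\cap\mathfrak{b}_{g_2})^{\gr=w_0}$, and one needs to bound its dimension. Saying it sits inside a sum that Lemma~\ref{lemmaMargerin} evaluates to $\mathfrak{b}_{g_1}$ gives at best an upper bound of $\dim\mathfrak{b}$ on the $M$-part, hence $\dim T_x\kappa^{-1}(T_{w_0})\le 2\dim(G/B)+\dim\mathfrak{b}=\dim G+\dim\mathfrak{u}$, which is too weak. (Also, the fibre of $\pi$ over $x$ is $\mathfrak{b}_{g_1}\cap\mathfrak{b}_{g_2}$, not $\mathfrak{b}_{g_1}$, and is in general strictly larger than the $M$-part of $T_xX_{w_0}$.)

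The paper's proof is a direct dimension count and does not use Lemma~\ref{lemmaMargerin} at all. After normalising by $G$-equivariance to $g_1=1$, $g_2=w$, one computes that the $A$-part of $T_x\kappa^{-1}(T_{w_0})$ is exactly $\mathfrak{t}^{w_0w^{-1}}\oplus(\mathfrak{u}\cap\Ad(w)\mathfrak{u})$, of dimension $\dim\mathfrak{t}^{w_0w^{-1}}+\ell(w_0w^{-1})$. The hypothesis that $w_0w^{-1}$ is a product of distinct simple reflections enters precisely here, via \cite[Lem.~2 \& 3]{Carter}, to give $\dim\mathfrak{t}^{w_0w^{-1}}=\dim\mathfrak{t}-\ell(w_0w^{-1})$. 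Hence $\dim T_x\kappa^{-1}(T_{w_0})\le 2\dim(G/B)+\dim\mathfrak{t}=\dim G=\dim X_{w_0}$, and the sandwich $\dim G\le\dim T_xX_{w_0}\le\dim T_x\kappa^{-1}(T_{w_0})\le\dim G$ finishes the argument.
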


\begin{proof}
The inclusion $X_{w_0}\subset\kappa^{-1}(T_{w_0})$ induces an inclusion $T_x X_{w_0}\subset T_x\kappa^{-1}(T_{w_0})$. We will prove that these two $k$-vector spaces have the same dimension.

Let $k[\eps]\coloneqq k[X]/(X^2)$. The tangent space $T_x(\kappa^{-1}(t_{w_0}))$ is the set of $k[\eps]$-points $(\tilde{g}_1B,\eps A,\tilde{g}_2B)$ of $X$ specialising to $x$ such that moreover
\begin{equation}\label{conditionw0} \Ad(\tilde{g}_1)^{-1}(\eps A)=\Ad(w_0)\Ad(\tilde{g}_2)^{-1}(\eps A)\end{equation}
in $\mathfrak{t}\otimes_k k[\eps]$. Let $\tilde{x}=(\tilde{g}_1B,\eps A,\tilde{g}_2B)$ be such a point. We can write $\tilde{g}_i=g_i(1+\eps h_i)$, where $h_i\in\mathfrak{u}^-$. Using $\eps^2=0$, the condition $\tilde{x}\in X(k[\eps])$ is equivalent to $\Ad(g_i^{-1})A\in\mathfrak{b}$ for $i\in\set{1,2}$. The condition \eqref{conditionw0} is then equivalent to $\Ad(g_1^{-1})A=\Ad(w_0)\Ad(g_2^{-1})A$ in $\mathfrak{t}$. Note that, up to changing $x$ by a point of its $G(k)$-orbit, we can assume, without changing the dimensions of the tangent spaces, that $g_1=1$ and $g_2=w$. The conditions above are then equivalent to
\[ A\in \mathfrak{t}^{w_0w^{-1}}+(\mathfrak{u}\cap\Ad(w)\mathfrak{u})\]
which is a $k$-vector space of dimension $\dim_k \mathfrak{t}^{w_0w^{-1}}+\lg(w_0w^{-1})$. As $w_0w^{-1}$ is a product of distinct simple reflexions, we have (\cite[Lem.~2 \& 3]{Carter})
\[ \dim_k\mathfrak{t}^{w_0w^{-1}}=\dim_k\mathfrak{t}-\lg(w_0w^{-1}).\]
Namely, we have a $W$-equivariant isomorphism $\mathfrak{t}\simeq\Hom(X^*(T),k)$, so that it is sufficient to prove that
\[ \dim_\RR(X^*(T)\otimes\RR)^{w'}=\dim_\RR(X^*(T)\otimes\RR)-\lg(w')\]
when $w'$ is a product of simple reflexions. Let $V$ be the subspace of $X^*(T)\otimes\RR$ generated by the roots of $(G,T)$. It is stable under $W$ and has a direct summand on which $W$ is acting trivially. It is therefore sufficient to prove that
\[ \dim_\RR V^{w'}=\dim_\RR V-\lg(w').\]
As $W$ is a finite group, $\dim_\RR V-\dim_\RR V^{w'}$ is equal to the number of eigenvalues of $w'$ acting on $V$ which are different from $1$. By \cite[Lem.~2]{Carter}, this number is equal to the number $l(w')$ of \emph{loc.~cit.} (which is a priori \emph{not} $\lg(w')$). By \cite[Lem.~3]{Carter}, we have $l(w')=\lg(w')$ since the set of simple roots is a set of linearly independent vectors of $V$.

Finally we deduce that
\[ \dim_k T_x(\kappa^{-1}(T_{w_0}))\leq\dim_k(G/B\times G/B)+\dim_k\mathfrak{t}=\dim G.\]
On the other hand, we know that $X_{w_0}$ is irreducible of dimension $G$. Consequently we have
\[ \dim G\leq\dim_k T_x X_{w_0}\leq\dim_k T_x\kappa^{-1}(T_{w_0})\leq\dim G\]
so that $T_x X_{w_0}=\dim_k T_x\kappa^{-1}(T_{w_0})$.
\end{proof}

\begin{lemma}\label{lem:densityTorbit}
Let $w\in W$ and $b\in B$. Then the point $wB$ is in the closure of the $T$-orbit of $bwB$ in $G/B$.
\end{lemma}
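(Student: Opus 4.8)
The plan is to reduce to the case where $b$ lies in the unipotent radical $U$ of $B$, and then to contract $U$ onto the identity by conjugating with a regular dominant one-parameter subgroup of $T$. First I would write $b=t_0u$ with $t_0\in T$ and $u\in U$; reparametrising the $T$-orbit gives $T\cdot bwB=T\cdot uwB$ as subvarieties of $G/B$, so it is enough to prove $wB\in\overline{T\cdot uwB}$. I fix a representative $\dot w\in N_G(T)$ of $w$, so that the point denoted $wB$ is $\dot wB$.

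Next I would choose a cocharacter $\lambda\in X_*(T)$ with $\langle\alpha,\lambda\rangle>0$ for every positive root $\alpha$ (of $(G,T)$ relative to $B$); for instance $\lambda=\sum_{\alpha>0}\alpha^\vee=2\rho^\vee$, which lies in the coroot lattice, hence in $X_*(T)$, and satisfies $\langle\beta,\lambda\rangle=2\Ht(\beta)\geq 2$ for each positive root $\beta$. Fixing an ordering $\alpha_1,\dots,\alpha_N$ of the positive roots and using the isomorphism of varieties $\prod_i U_{\alpha_i}\xrightarrow{\sim}U$, I write $u=\prod_i u_{\alpha_i}(x_i)$. Since $\lambda(s)u_{\alpha_i}(x_i)\lambda(s)^{-1}=u_{\alpha_i}(s^{\langle\alpha_i,\lambda\rangle}x_i)$ and all the exponents are positive integers, the map $g\colon\mathbb{A}^1\to U$, $s\mapsto\prod_i u_{\alpha_i}(s^{\langle\alpha_i,\lambda\rangle}x_i)$, is a morphism of varieties with $g(0)=1$ and $g(s)=\lambda(s)u\lambda(s)^{-1}$ for $s\neq 0$. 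I then consider the morphism $f\colon\mathbb{A}^1\to G/B$, $f(s)=g(s)\dot wB$. For $s\in\mathbb{G}_m$ one has $\lambda(s)^{-1}\dot wB=\dot wB$ because $\dot w$ normalises $T$, so $f(s)=\lambda(s)u\lambda(s)^{-1}\dot wB=\lambda(s)\cdot(u\dot wB)\in T\cdot uwB$, whereas $f(0)=\dot wB=wB$. Since $\mathbb{G}_m$ is dense in $\mathbb{A}^1$ and $f$ is continuous, $wB=f(0)\in\overline{f(\mathbb{G}_m)}\subset\overline{T\cdot uwB}=\overline{T\cdot bwB}$, which is the claim.

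I do not expect any serious obstacle here: beyond formal manipulation with root subgroups, the only input is that $U$, and therefore $U\dot wB\subset G/B$, is contracted onto $\{1\}$ (resp.\ onto $\{\dot wB\}$) by the $\mathbb{G}_m$-action coming from a regular cocharacter of $T$ — the most elementary instance of a Bia\l{}ynicki--Birula decomposition. The only point that could require a word of care, namely that a regular dominant cocharacter actually belongs to $X_*(T)$ rather than merely to the coweight lattice, is sidestepped entirely by taking the explicit choice $\lambda=2\rho^\vee$.
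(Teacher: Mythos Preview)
Your proof is correct and follows essentially the same approach as the paper: both choose a regular dominant cocharacter of $T$, use it to contract the unipotent part of $b$ to the identity, and invoke that $w$ normalises $T$ to conclude. Your version is simply more explicit (first reducing to $b\in U$, giving the concrete choice $\lambda=2\rho^\vee$, and writing out the extension to $\mathbb{A}^1$ in root coordinates), but the underlying argument is identical.
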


\begin{proof}
Let $\nu$ be a cocharacter of $T$ such that $\scalar{\nu,\alpha}>0$ for all positive root $\alpha$ of $(G,B,T)$. Then the map $\mathbb{G}_m\rightarrow G$ defined by $\nu u\nu^{-1}$ for $u$ in the unipotent radical of $B$ extends to a map $\mathbb{A}^1\rightarrow G$ sending $0$ to $1$, thus as does the map $\nu b \nu^{-1}$. Consequently, as $w$ normalises $T$, $wB$ is in the closure of the image of the map
\[t\mapsto \nu(t)bwB=\nu(t)b\nu(t)^{-1}wB.\qedhere\]
\end{proof}

\begin{lemma}\label{lem:relationBruhat}
Let $(w_1,w_2)\in W^2$ and let $b\in B$. If $(w_1B,bw_2B)\in U_w\subset G/B\times G/B$, we have $w_1^{-1}w_2\leq w$ in the Bruhat order.
\end{lemma}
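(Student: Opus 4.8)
The plan is to deduce this from the closure relations among the Bruhat strata $U_w\subset G/B\times G/B$, together with Lemma~\ref{lem:densityTorbit}. Recall that the diagonal $G$-action on $G/B\times G/B$ has the strata $U_w$ as its orbits, $U_w$ being the orbit of $(B,wB)$; translating the first coordinate to the base point shows that $(g_1B,g_2B)\in U_w$ if and only if $g_1^{-1}g_2\in BwB$. In particular $(w_1B,w_2B)\in U_v$ for $v=w_1^{-1}w_2$ (which is well posed, $B(w_1^{-1}w_2)B$ being independent of the chosen representatives). Since $\overline{U_w}=\bigsqcup_{v\le w}U_v$ and each $U_v$ is a single $G$-orbit, it suffices to prove that $(w_1B,w_2B)\in\overline{U_w}$.

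For this I would consider the diagonal $T$-orbit of $(w_1B,bw_2B)$; by hypothesis this point lies in $U_w$, so its $T$-orbit, and hence the closure of that orbit, lie in $\overline{U_w}$. Now for $t\in T$ we have $tw_1B=w_1(w_1^{-1}tw_1)B=w_1B$ since $w_1^{-1}tw_1\in T\subseteq B$, so the first coordinate is constant and $T\cdot(w_1B,bw_2B)=\{w_1B\}\times(T\cdot bw_2B)$. Passing to closures (using $\overline{\{x\}\times Y}=\{x\}\times\overline Y$) gives $\overline{T\cdot(w_1B,bw_2B)}=\{w_1B\}\times\overline{T\cdot bw_2B}$, and by Lemma~\ref{lem:densityTorbit}, applied with $w_2$ in place of $w$, this set contains $(w_1B,w_2B)$. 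Hence $(w_1B,w_2B)\in\overline{U_w}$.

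Putting the two together, $(w_1B,w_2B)$ lies both in the $G$-orbit $U_{w_1^{-1}w_2}$ and in the closed, $G$-stable subset $\overline{U_w}$; therefore $U_{w_1^{-1}w_2}\subseteq\overline{U_w}$, which by the closure relation is precisely $w_1^{-1}w_2\le w$. I do not expect any genuine obstacle here: the only slightly delicate points are the identification of $(w_1B,w_2B)$ with the Bruhat stratum indexed by $w_1^{-1}w_2$, and the harmless commutation of Zariski closure with a product by a point. Alternatively one can simply read the statement as saying that, by Lemma~\ref{lem:densityTorbit}, the point $(w_1B,w_2B)$ is a degeneration of $(w_1B,bw_2B)$ inside $G/B\times G/B$, hence sits in a Bruhat stratum no larger than $U_w$.
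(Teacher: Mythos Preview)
Your proof is correct and follows essentially the same approach as the paper's: both use that $tw_1B=w_1B$ to reduce to a $T$-orbit in the second factor, invoke Lemma~\ref{lem:densityTorbit} to place $(w_1B,w_2B)$ in $\overline{U_w}$, and conclude via the closure relations $\overline{U_w}=\bigcup_{w'\le w}U_{w'}$ together with $(w_1B,w_2B)\in U_{w_1^{-1}w_2}$. Your write-up is slightly more detailed (making the closure-of-product step explicit), but there is no genuine difference in strategy.
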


\begin{proof}
If $t\in T$, we have $tw_1B=w_1B$ in $G/B$ so that $(w_1B,tbw_2B)\in U_w$. It follows from Lemma \ref{lem:densityTorbit} that $w_2B$ is in the closure of the set $\set{tbw_2B \mid t\in T}$ so that $(w_1B,w_2B)$ is in the closure if $U_w$. As the closure of $U_w$ is the union of the $U_w'$ with $w'\leq w$ and $(w_1B,w_2B)\in U_{w_1^{-1}w_2}$ we obtain the result.
\end{proof}

From now on we consider $K/\QQ_p$ a finite extension, and denote
$\Gamma = \mathrm{Gal}(K(\zeta_{p^\infty})/K)$. We fix
$L$ a finite extension of $\QQ_p$ that splits $K$, i.e.
\[ L \otimes_{\QQ_p} K \simeq L^{[K:\QQ_p]}\]
and we denote by $k_L$ its residue field. We follow to the
notations of \cite{KPX} concerning $(\varphi,\Gamma)$-modules over
Robba rings. Let $\mathcal{R}(\pi_K)$ be the Robba ring for $K$ (see
\cite{KPX} definition 2.2.2). We define $t \in \mathcal{R}(\pi_K)$ by
$t = \log(1 + \pi_K)$. Let $\mathcal{C}_L$ be the category local
artinian $\mathcal{O}_L$-algebra $A$ with maximal ideal
$\mathfrak{m}_A$ such that the natural map
$k_L\rightarrow A/\mathfrak{m}_A$ is an isomorphism. If $A$ is an
object of $\mathcal{C}_L$, we denote
$\mathcal{R}_A(\pi_K)\coloneqq A\otimes_{\Qp}\mathcal{R}(\pi_K)$. We
refer to \cite[Def.~2.2.12]{KPX} for the notion of
$(\varphi,\Gamma)$-module over $\mathcal{R}_A(\pi_K)$. Let $D$ be a
$(\varphi,\Gamma)$-module over $\mathcal{R}_L(\pi_K)$. We denote
\[ \mathfrak{X}_D : \mathcal C_L \fleche \mathrm{Sets}\] the
deformation functor of $D$, i.e.~for an object $A$ of $\mathcal{C}_L$,
$\mathfrak{X}_D(A)$ is the set of isomorphism classes of pairs
$(D_A,i_A)$ where $D_A$ is a $(\varphi,\Gamma)$-modules over
$\mathcal{R}_A(\pi_K)$ and $i_A : L \otimes_A D_A \simeq D$ is an
isomorphism of $(\varphi,\Gamma)$-modules. If $(\rho,V)$ is a
continuous representation of $G_K$ on a finite dimensional $L$-vector
space, the functor $\mathrm{D}_{\rig}$ of \cite{BergerEDP} induces an
isomorphism of deformation functors (see \cite[\S3.6]{HMS} for
details)
\[ \mathrm{D}_{\rig} : \mathfrak{X}_V \overset{\sim}{\fleche}
  \mathfrak{X}_{\mathrm{D}_{\rig}(V)}.\]

Let
$\mathcal{F} = (0=\mathcal{F}_0\subsetneq\cdots\subsetneq\mathcal{F}_n=
D[t^{-1}])$ be a complete filtration of $D[t^{-1}]$ by
sub-$(\varphi,\Gamma)$-modules over $\mathcal{R}_L(\pi_K)[t^{-1}]$
which are direct factors as $\mathcal{R}(\pi_K)[t^{-1}]$-modules (we
will call such a filtration a\emph{quasi-triangulation}). We define
similarly
\[ \mathfrak{X}_{D,\mathcal F} : \mathcal C_L \fleche \text{Sets}\] the
deformation functor of the pair $(D,\mathcal F)$, i.e.~for $A$ in
$\mathcal{C}_L$, the set $\mathfrak{X}_{D,\mathcal F}(A)$ is the set of
isomorphism classes of triples $(D_A,\mathcal F_{A},i_A)$ where
$(D_A,i_A)\in\mathfrak{X}_D(A)$ and $\mathcal F_{A}$ is a filtration of
$D_A[t^{-1}]$ by $(\varphi,\Gamma)$-stable
$\mathcal{R}_A(\pi_K)[t^{-1}]$-submodules which are direct factors of
$D_A[t^{-1}]$ in the category of
$\mathcal{R}_A(\pi_K)[t^{-1}]$-modules and such that
$i_A(L \otimes_A \mathcal F_{A,i})=\mathcal F_i$ for all $i\in\ZZ$.

We recall some notations of \cite{BHS} section 3 and we refer the
reader to \emph{loc.~cit.} for more precisions. Let $W$ be an
$L\otimes_{\QQ_p} B_{\dR}$-representation of $G_K$ which is almost de
Rham. Let $W^+$ be a $G_K$-stable $L\otimes_{\QQ_p} B_{\dR}^+$-lattice
of $W$. Let $\mathfrak{X}_W : \mathcal C_L \fleche \text{Sets}$ be the
deformation functor of $W$, which means that $\mathfrak{X}_W(A)$ is
the set of isomorphism classes of pairs $(W_A^+,i_A)$ such that
$W_A^+$ is a finite free $A\otimes_{\QQ_p}B_{\dR}^+$-module endowed
with a continuous semilinear action of $G_K$ and $i_A$ is a
$G_K$-equivariant isomorphism $L\otimes_AW_A^+\simeq W^+$ of
$L\otimes_{\QQ_p}B_{\dR}^+$-modules. If we fix an
$L\otimes_{\Qp}K$-linear isomorphism
$\alpha : (L\otimes_{\QQ_p} K)^n \overset{\sim}{\fleche} D_{\pdR}(W)$
we can define
$\mathfrak{X}_{W^+}^\Box : \mathcal C_L \fleche \text{Sets}$ the
deformation functor of the pair $(W^+,\alpha)$.
Let $F_\bullet$ be a $G_K$-stable flag of
$L\otimes_{\QQ_p}B_{\dR}$-submodules of $W$, we define
$\mathfrak{X}_{W^+,{F}_\bullet}$ the deformation functor of
the pair $(W^+,{F}_\bullet)$ and
$\mathfrak{X}_{W^+,{F}_\bullet}^\Box$ the deformation functor
of the triple $(W^+,{F}_\bullet,\alpha)$.

\begin{rema} 
  \label{rema:basealpha} Assume that $D$ is a crystalline,
  $\varphi$-generic, $(\varphi,\Gamma)$-module (see
  \cite[\S3.3]{HMS}). In this case, it can be convenient to choose
  $\alpha$ compatible with the Frobenius structure. The isocrystal
  $(D_{\cris}(D),\varphi)$ has exactly $n!$-refinements, i.e.~complete
  flags stable by $\varphi$, which are in natural bijection with the
  orderings of the eigenvalues of the linearized Frobenius
  $\varphi^f$. By $\varphi$-genericity, there exists an isomorphism
  $\alpha_0 : (L\otimes_{\QQ_p}K_0)^n\simeq
  L\otimes_{\QQ_p}D_{\cris}(D)$ sending the canonical basis on an
  eigenbasis of $\varphi$. Then $\alpha_0$ induces a bijection between
  the complete flags of $(L\otimes_{\QQ_p}K_0)^n$ stable under the
  group of diagonal matrices and the set of refinements of
  $D_{\cris}(D)$. Denote
  $\alpha=\alpha_0\otimes_{K_0}\Id_K : (L\otimes_{\QQ_p}K)^n\simeq
  D_{\dR}(D) = D_{\pdR}(D)\simeq D_{\cris}(D)\otimes_{K_0}K$. By
  \cite{HMS} Lemma 3.7, the map
  $\mathcal{F}\mapsto D_{\cris}(\mathcal{F}[1/t])$ induces a bijection
  between the set of triangulations of $D$ and the set of $n!$
  refinements of $D_{\cris}(D)$. If $\mathcal{F}$ is a triangulation
  of $D$ and $w\in\mathfrak{S}_n\subset\GL_n(L\otimes_{\QQ_p}K)$ we
  denote $w\cdot\mathcal{F}$ the unique triangulation of $D$ such that
  $w\alpha^{-1}(D_{\dR}(\mathcal{F}[1/t]))=\alpha^{-1}(D_{\dR}(w\cdot\mathcal{F}[1/t]))$. This
  defines a simply transitive action of the group $\mathfrak{S}_n$ on
  the set of triangulations of $D$ (which depends on the choice of
  $\alpha$).
\end{rema}

Now we fix $G=\GL_{n,K}$, $B\subset G$ the Borel subgroup of upper
triangular matrices and $T\subset B$ the maximal torus of diagonal
matrices. We recall that $\mathfrak{g}$ is the $K$-Lie algebra of $G$
and
$X=\widetilde{\mathfrak{g}}\times_{\mathfrak{g}}\widetilde{\mathfrak{g}}$. We
also note $X_{K/\Qp}$ and $\widetilde{\mathfrak{g}}_{K/\Qp}$ their Weil
restrictions from $K$ to $\Qp$. If $A$ is an object of $\mathcal{C}_L$
and $(W_A^+,\mathcal{F}_{\bullet,A},\alpha_A)$ is an element of
$\mathfrak{X}_{W^+,\mathcal{F}_\bullet}^\Box(A)$, we can produce an
element of $X_{K/\Qp}(A)$ by sending
$(W_A^+,\mathcal{F}_{\bullet,A},\alpha_A)$ to
$x_A\coloneqq(\alpha^{-1}(\mathrm{D}_{\pdR}(\mathcal{F}_{\bullet})),N_{W_A},\alpha^{-1}(\Fil_{W_A^+}^\bullet))$. By
\cite{BHS} Corollary 3.5.8, this map is a bijection. This implies that
the functor $\mathfrak{X}_{W^+,\mathcal{F}_\bullet}^\Box$ is
pro-represented by the complete local ring of $X$ at $x_L$.

Let $w \in \mathfrak{S}_n^{[K : \Qp]}$. Recall that
$X_{K/\Qp,w}$ is the irreducible component of
$(X_{K/\Qp})_L\simeq (X\times_KL)^{[L:K]}$ associated to $w$. Let $D$
be a crystalline $(\varphi,\Gamma)$-module over
$\mathcal{R}_L(\pi_K)$, together with a filtration $F_\bullet$ of
$D[1/t]$. Let $W^+=W_{\dR}^+(D)$, $W=W^+[t^{-1}]$ and
$\mathcal{F}_\bullet=W_{\dR}(F_\bullet)$. As $D$ is crystalline, the
$B_{\dR}$-representation $W$ is de Rham and thus almost de Rham. The
functors $W_{\dR}$ and $W_{\dR}^+$ induce a morphism of functors
$\mathfrak{X}_{D,F_\bullet}\rightarrow
\mathfrak{X}_{W^+,\mathcal{F}_\bullet}$. If $D$ is moreover assumed to
be $\varphi$-regular, this morphism if formally smooth by
\cite[Cor.~3.5.4]{BHS}.

We define $\mathfrak{X}_{W^+,\mathcal{F}_\bullet}^{w,\Box}$ as the
subfunctor of $\mathfrak{X}_{W^+,\mathcal{F}_\bullet}^\Box$
pro-represented by the quotient of $\widehat{\mathcal{O}}_{X_L,x_L}$
corresponding to the complete local ring of $X_{K/\Qp,w}$ at $x_L$ (with the
convention that it is empty if $x_L\notin X_w$). We also define
$\mathfrak{X}_{W^+,\mathcal{F}_\bullet}^{w}\subset
\mathfrak{X}_{W^+,\mathcal{F}_\bullet}$ as the image of
$\mathfrak{X}_{W^+,\mathcal{F}_\bullet}^{\Box,w}$ via
$\mathfrak{X}_{W^+,\mathcal{F}_\bullet}^\Box \fleche
\mathfrak{X}_{W^+,\mathcal{F}_\bullet}$ and we define
$\mathfrak{X}_{D,F_\bullet}^w\subset \mathfrak{X}_{D,F_\bullet}$ as
the inverse image of $\mathfrak{X}_{W^+,\mathcal{F}_\bullet}^w$ by
$\mathfrak{X}_{D,F_\bullet}\rightarrow
\mathfrak{X}_{W^+,\mathcal{F}_{\bullet}}$.

We assume from now on that $D$ is crystalline and $\varphi$-generic
(see \cite[\S3.3]{HMS}). Let
$\mathfrak{X}_D^{\cris}\subset\mathfrak{X}_D$ be the subfunctor of
crystalline deformations of $D$. Let $\mathcal F$ be a triangulation
of $D$, we use the same symbol for the filtration induced on
$D[1/t]$.

\begin{lemma}\label{lem:comp_cris}
  We have a inclusion
  $\mathfrak{X}_D^{\cris}\subset\mathfrak{X}_{D,\mathcal F}^{w_0}$.
\end{lemma}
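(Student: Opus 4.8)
The plan is to unwind the definitions and reduce the statement to the equality of tangent spaces established in Proposition \ref{prop:tangentcompw0ptcrist}. First I would recall that $\mathfrak{X}_D^{\cris}$ is the subfunctor of deformations $D_A$ of $D$ which remain crystalline. For such a deformation, the associated $B_{\dR}$-representation $W_A^+$ has \emph{constant} Hodge--Tate filtration in the sense that, after fixing $\alpha$ compatibly with the crystalline structure as in Remark \ref{rema:basealpha}, the nilpotent operator $N_{W_A}$ vanishes and both flags $\alpha^{-1}(\mathrm{D}_{\pdR}(\mathcal F_{\bullet,A}))$ and $\alpha^{-1}(\Fil^\bullet_{W_A^+})$ deform trivially; concretely the point $x_A\in X_{K/\Qp}(A)$ produced from a crystalline deformation (together with \emph{any} triangulation $\mathcal F$, which by \cite[Lem.~3.7]{HMS} corresponds to a refinement of $D_{\cris}(D)$) lies in the locus cut out by $\kappa$ landing in $T_{w_0}$, because the two flags are in relative position $w_0$ (the Hodge--Tate filtration is opposite to the crystalline one, this being exactly the non-criticality built into the $\varphi$-generic crystalline situation) and the $\mathfrak t$-valued invariants on each side agree up to $\Ad(w_0)$. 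This is the content of condition \eqref{conditionw0}.

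Next I would identify $\mathfrak{X}_{D,\mathcal F}^{w_0}$ with (the pullback along the formally smooth map $\mathfrak{X}_{D,F_\bullet}\to\mathfrak{X}_{W^+,\mathcal F_\bullet}$ of) the completed local ring of $X_{K/\Qp,w_0}$ at $x_L$; this is the definition of $\mathfrak{X}^{w_0}_{W^+,\mathcal F_\bullet}$ together with \cite[Cor.~3.5.8]{BHS}. So the inclusion $\mathfrak{X}_D^{\cris}\subset\mathfrak{X}_{D,\mathcal F}^{w_0}$ amounts, after applying the formally smooth morphism coming from $\varphi$-regularity (\cite[Cor.~3.5.4]{BHS}), to showing that the image of a crystalline deformation in $X_{K/\Qp}$ lands in the component $X_{K/\Qp,w_0}$. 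Working place by place (i.e.~in each factor $X\times_K L$ of $(X_{K/\Qp})_L$), the point $x_L$ attached to $(D,\mathcal F)$ is of the form $(g_1B,0,g_2B)$ with middle coordinate $0$ because $D$ is crystalline hence $N_{W}=0$; and it lies in $X_{w_0}$ precisely because the relative position of the Hodge filtration flag and the triangulation flag is the longest element $w_0$, which in turn is the translation of the statement that, for a $\varphi$-generic crystalline module, every triangulation (refinement) is as far as possible from the Hodge--Tate filtration. Granting that the whole crystalline deformation functor maps into this component — which follows since $X_{w_0}$ is closed and the crystalline locus is connected (it is smooth, being a deformation problem of an isocrystal with its Hodge filtration, cf.~the description of $\mathfrak{X}_D^{\cris}$) and contains $x_L$ — we get the functorial inclusion.

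Finally, to make the inclusion precise on $A$-points rather than just on tangent vectors, I would argue that a crystalline deformation $(D_A,i_A)$, equipped with the canonical deformation $\mathcal F_A$ of the triangulation $\mathcal F$ (which exists and is unique because, $D$ being $\varphi$-generic, the refinement spreads out uniquely over the crystalline locus — the eigenvalues of $\varphi^f$ deform and stay distinct), produces a point $x_A\in X_{K/\Qp}(A)$ whose reduction is $x_L$ and which satisfies $N_{W_A}=0$ and condition \eqref{conditionw0}; hence $x_A$ factors through the completed local ring of $\kappa^{-1}(T_{w_0})$ at $x_L$, and by Proposition \ref{prop:tangentcompw0ptcrist} (applied with the $w$ such that $x_L\in V_w$, noting that $w_0w^{-1}$ is a product of distinct simple reflections in the generic crystalline case — this is where the genericity of the refinement relative to the Hodge filtration is used) the complete local rings of $\kappa^{-1}(T_{w_0})$ and of $X_{w_0}$ at $x_L$ coincide. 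Therefore $x_A\in X_{K/\Qp,w_0}(A)$, i.e.~$(D_A,\mathcal F_A)\in\mathfrak{X}^{w_0}_{D,\mathcal F}(A)$, giving the desired inclusion. The main obstacle is checking that the image of an \emph{arbitrary} (not just infinitesimal) crystalline deformation genuinely lands in the single component $X_{w_0}$ and not merely in $\kappa^{-1}(T_{w_0})$; this is handled by the tangent-space equality of Proposition \ref{prop:tangentcompw0ptcrist} combined with the fact that $X_{w_0}$ is a reduced irreducible component, so that a formal subscheme of $X$ supported at $x_L$ with tangent space inside $T_xX_{w_0}$ and contained set-theoretically in the $w_0$-stratum must factor through $X_{w_0}$.
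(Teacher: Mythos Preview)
Your argument contains a substantive error. You assert that for a $\varphi$-generic crystalline $(\varphi,\Gamma)$-module, ``the relative position of the Hodge filtration flag and the triangulation flag is the longest element $w_0$'', i.e.\ that every refinement is non-critical. This is false in general: $\varphi$-genericity only concerns the Frobenius eigenvalues and says nothing about the position of the Hodge filtration relative to a given $\varphi$-stable flag. Indeed, the existence of critical refinements for $\varphi$-generic crystalline modules is precisely the phenomenon discussed in Remark~\ref{rema:WG} and the example following it. So the point $x_L=(g_1B,0,g_2B)$ need not lie in $V_{w_0}$; it lies in some $V_w$ with $w$ arbitrary. Consequently your appeal to Proposition~\ref{prop:tangentcompw0ptcrist} is unjustified, since its hypothesis that $w_0w^{-1}$ be a product of distinct simple reflections is not available for an arbitrary triangulation.

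Even granting the hypothesis of Proposition~\ref{prop:tangentcompw0ptcrist}, your final step does not work: an equality of tangent spaces $T_xX_{w_0}=T_x\kappa^{-1}(T_{w_0})$ does not imply that the two formal completions coincide, so a formal subscheme landing in $\widehat{\kappa^{-1}(T_{w_0})}_{x_L}$ need not factor through $\widehat{(X_{w_0})}_{x_L}$. The paper's argument avoids all of this. The key observation is purely geometric and global: the schematic preimage of $\{0\}\subset\mathfrak g$ under $X\to\mathfrak g$ is $G/B\times\{0\}\times G/B$, and this entire closed subscheme is contained in $X_{w_0}$ (being the closure of its intersection with the open stratum $V_{w_0}$). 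Since a crystalline deformation $D_A$ has $\nu_A=0$, its image in $X_{K/\Qp}(A)$ lies in this zero fibre, hence scheme-theoretically in $X_{K/\Qp,w_0}$. No tangent-space comparison, no assumption on $w$, and no non-criticality is needed.
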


\begin{proof}
  It follows from \cite[\S3.3]{HMS} that
  $\mathfrak{X}_D^{\cris}\subset\mathfrak{X}_{D,\mathcal F}$. We fix an
  isomorphism of $L\otimes_{\Qp}K_0$-modules
  $\beta : (L\otimes_{\Qp}K_0)^n\simeq D_{\cris}(D)$ such that
  $\beta\otimes\Id_K=\alpha$. Let $\mathfrak{X}_{D}^{\cris,\Box}$ be
  the functor of crystalline deformations of the pair
  $(D,\alpha)$. Let's consider the composite
  \[
    \mathfrak{X}_{D}^{\cris,\Box}\longrightarrow\mathfrak{X}_{D,F_\bullet}^\Box\longrightarrow
    \mathfrak{X}_{W^+_\dR(D),W_\dR(F_\bullet)}^\Box.\] Let
  $A\in\mathcal{C}_L$ and let
  $(D_A,\alpha_A)\in\mathfrak{X}_D^{\cris,\Box}(A)$ and let
  $(D_A,\alpha_A,F_{\bullet,A})$ be its image in
  $\mathfrak{X}_{D,F_\bullet}^\Box(A)$. As $D_A$ is crystalline, the
  operator $\nu_A$ on $W_{\dR}(D_A)$ is zero.

  Now we remark that the schematic inverse image of $\set{0}$ by the
  natural map $(X_{K/\Qp})_L\rightarrow(\mathfrak{g}_{K/\Qp})_L$ of
  $L$-schemes is contained in the irreducible component
  $X_{K/\Qp,w_0}$. Namely it is sufficient to prove the inverse image
  $Z$ of $\set{0}$ by the natural map of $K$-schemes
  $X\rightarrow\mathfrak{g}$ is contained in $X_{w_0}$. But $Z$ is
  $G/B\times\set{0}\times G/B$ which is the Zariski closure of
  $V_{w_0}\cap(G/B\times\set{0}\times G/B)$, so that
  $Z\subset X_{w_0}$.

  This implies that the image of $(D_A,\alpha_A,F_{\bullet,A})$ in
  $\mathfrak{X}_{W^+_\dR(D),W_\dR(F_\bullet)}(A)$ is contained in
  $\mathfrak{X}_{W^+_\dR(D),W_\dR(F_\bullet)}^{w_0}(A)$ and finally
  that
  $\mathfrak{X}_D^{\cris,\Box}\subset
  \mathfrak{X}_{D,F_\bullet}^{w_0,\Box}$ and
  $\mathfrak{X}_D^\cris\subset\mathfrak{X}_{D,F_\bullet}^{w_0}$.
\end{proof}

\begin{defin}\label{def:transpo_simples}
Let $D$ be a crystalline $\varphi$-generic and HT regular $(\varphi,\Gamma)$-module over $\mathcal{R}_L(\pi_K)$. Let $F_\bullet$ be a triangulation of $D$. Let $w_{(D,F_\bullet}) \in \mathfrak S_n^{[K:\QQ_p]}$ be the element measuring the relative position of $F_\bullet$ and the Hodge filtration of $D$ (see \cite{BHS} before Proposition 3.6.4). We say that $(D,F_\bullet)$ is \emph{associated to a product of distinct transpositions} if $w_0w_{D,F_\bullet}$ is a product of distinct simple transpositions. Moreover we say that the triangulation $F_\bullet$ is \emph{non-critical} if $w_{D,F_\bullet}=w_0$.
\end{defin}

We can now prove the main result of this section.

\begin{prop}
  \label{propraf}
  Let $D$ be a $\varphi$-generic, regular, crystalline,
  $(\varphi,\Gamma)$-module over $\mathcal{R}_L(\pi_K)$. Denote $\Tri(D)$ the set of triangulations of $D$.
  \begin{enumerate}[(i)]
  \item The following $L$-linear map is surjective
   \[ \bigoplus_{\mathcal F \in \Tri(D)}
    T\mathfrak{X}^{w_0}_{D,\mathcal{F}[1/t]} \fleche T
    \mathfrak{X}_D.\]
  \end{enumerate}
   Assume moreover that there exists $\mathcal{F}^{\nc}$ a non-critical triangulation of $D$.
  \begin{enumerate}[(ii)]
  \item\label{propraf1} For any $c\in\mathfrak{C}_n$, the pair $(D,c\cdot\mathcal{F}^{\nc})$ is associated to a product of simple transpositions.
  \item\label{propraf2} The following $L$-linear map
  is surjective:
  \[ \bigoplus_{c\in\mathfrak{C}_n}
    T\mathfrak{X}^{w_0}_{D,(c\cdot\mathcal{F}^{\nc})[1/t]} \fleche T
    \mathfrak{X}_D.\]
  \end{enumerate}
  % In particular, if $\Tri(D)$ is the set of triangulations of $D$, the
  % following $L$-linear map is surjective:
  % \[ \bigoplus_{\mathcal{F} \in \Tri(D)}
  %   T\mathfrak{X}^{w_0}_{D,\mathcal{F}[1/t]} \fleche T
  %   \mathfrak{X}_D.\]
\end{prop}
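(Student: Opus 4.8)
The plan is to reduce everything to the local geometry of the scheme $X = \widetilde{\mathfrak g}\times_{\mathfrak g}\widetilde{\mathfrak g}$ studied in the previous section, using the dictionary between deformations of $(\varphi,\Gamma)$-modules and points of $X_{K/\Qp}$. First I would reformulate the statement using the formal smoothness of $\mathfrak X_{D,F_\bullet}\to\mathfrak X_{W^+,\mathcal F_\bullet}$ (valid since $D$ is $\varphi$-regular, by \cite[Cor.~3.5.4]{BHS}), so that on tangent spaces it suffices to work with the $\mathfrak X_{W^+,\mathcal F_\bullet}^w$ and their $\Box$-versions, which are pro-represented by (the completion at $x_L$ of) the irreducible components $X_{K/\Qp,w}$. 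Since the rigidifying datum $\alpha$ only adds a smooth factor (a torsor under $\GL_n(L\otimes_\Qp K)$), passing between $\Box$ and non-$\Box$ functors does not change surjectivity of the relevant sums of tangent maps. So the whole statement becomes a linear-algebra assertion inside $T_{x_L}X_{K/\Qp}$: the tangent space to the full deformation functor $\mathfrak X_D$ maps onto $T_{x_L}X_{K/\Qp}/(\text{tangent to the fibre of }\kappa_2)$ — more precisely, $T\mathfrak X_D$ surjects onto the tangent space of $\widetilde{\mathfrak g}_{K/\Qp}$ at the relevant point (the first projection), and one must show the images of the $T_{x_L}X_{K/\Qp,w}$, for $w$ ranging over the triangulations, cover it.

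For part (i): a triangulation $\mathcal F$ of $D$ together with the crystalline datum determines a point of $X_{w_0}$ (by Lemma \ref{lem:comp_cris}, the crystalline locus lands in the $w_0$-component), and $x_L=(g_1B,0,g_2B)$ with the relative position $w$ of $\mathcal F$ and $F_{\HT}$ determining the Bruhat stratum $V_w$ it lies in. By Remark \ref{rema:basealpha}, the $n!$ triangulations correspond (via $\alpha$) to the $n!$ choices of complete $\varphi$-stable flag, i.e.\ to translating $g_1B$ by all of $W$; concretely, after normalising $g_1=1$, $g_2=w$ for one triangulation, the other triangulations move the first factor by elements of $W$. Now Proposition \ref{prop:tangentcompw0ptcrist} computes $T_{x}X_{w_0}$ at each such point as $\mathfrak t^{w_0w^{-1}}+(\mathfrak u\cap\Ad(w)\mathfrak u)$ inside the ambient $\mathfrak b\cap\Ad(w)\mathfrak b$. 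Translating everything into a Borel subalgebra as in Section \ref{sec:Borel_enveloppes}: the tangent space of $X_{w_0}$ at the point attached to the triangulation "$w\cdot\mathcal F$" identifies, after the normalisation, with a space of the shape $(\mathfrak b'\cap\mathfrak b_{w})^{gr=w_0}$. Summing over all triangulations is summing over all $w\in W$, and Lemma \ref{lemmaMargerin} (with $w'$ absorbed by the freedom in the choice of the base triangulation, or simply because we sum over the full group $W\supset\mathfrak C_n w'$) gives $\mathfrak b'=\sum_w(\mathfrak b'\cap\mathfrak b_w)^{gr=w_0}$, i.e.\ the sum of the tangent spaces is the whole of $T\mathfrak X_D$ (which, at such a point, is the tangent space of $\widetilde{\mathfrak g}_{K/\Qp}$, a space of Borel-subalgebra type). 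This proves (i).

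For parts (ii) and (iii): statement (ii) is the assertion that for a full cycle $c\in\mathfrak C_n$, the element $w_{D,c\cdot\mathcal F^{\nc}}$ satisfies that $w_0 w_{D,c\cdot\mathcal F^{\nc}}$ is a product of \emph{distinct} simple transpositions. Since $\mathcal F^{\nc}$ is non-critical we have $w_{D,\mathcal F^{\nc}}=w_0$, and the recipe of Remark \ref{rema:basealpha} for the action of $\mathfrak S_n$ on triangulations shows $w_{D,c\cdot\mathcal F^{\nc}}=w_0 c^{-1}$ (in each embedding factor where $c$ acts; it is trivial in the others), so $w_0 w_{D,c\cdot\mathcal F^{\nc}} = c^{-1}$, and a full cycle $c_{i,j}=(i,i-1,\dots,j)$ is visibly a product of the distinct adjacent transpositions $(j\,j{+}1),(j{+}1\,j{+}2),\dots,(i{-}1\,i)$; so the hypothesis of Proposition \ref{prop:tangentcompw0ptcrist} is met for each point $x$ attached to $c\cdot\mathcal F^{\nc}$. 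This gives (ii). Then (iii) is proved exactly like (i) but restricting the index set from all of $W$ to $\mathfrak C_n$ (together with $w'=1$): one applies the "moreover" clause of Lemma \ref{lemmaMargerin}, which says that when $\mathfrak b'=\mathfrak b_{w_0 b}$ — which is precisely the shape of the Borel subalgebra attached to the non-critical triangulation $\mathcal F^{\nc}$ (it is opposite to the Hodge filtration, so $w=w_0$ and the normalisation puts us in the $\mathfrak b_{w_0 b}$ case) — one may take $w'=1$ and sum only over $\mathfrak C_n$. Combining with the tangent-space computation of Proposition \ref{prop:tangentcompw0ptcrist}, valid at these points by (ii), yields surjectivity of $\bigoplus_{c\in\mathfrak C_n}T\mathfrak X^{w_0}_{D,(c\cdot\mathcal F^{\nc})[1/t]}\to T\mathfrak X_D$.

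The main obstacle is the bookkeeping in the translation between the $(\varphi,\Gamma)$-module side and the scheme $X_{K/\Qp}$: one must check carefully that (a) passing to the $w_0$-component via Lemma \ref{lem:comp_cris} does not lose tangent directions, i.e.\ that $T\mathfrak X_{D,\mathcal F}^{w_0}$ really is the tangent space to $X_{w_0}$ and that $T\mathfrak X_D$ is (formally smoothly) the tangent space to $\widetilde{\mathfrak g}_{K/\Qp}$ at the relevant point; (b) the action of $\mathfrak S_n$ on triangulations from Remark \ref{rema:basealpha} translates into left translation of the first $G/B$-factor by $W$, so that "summing over $\Tri(D)$" is genuinely "summing over $w\in W$"; and (c) the relative-position element $w_{D,F_\bullet}$ matches the Bruhat stratum index $w$ with $x\in V_w$, and under the action of $c$ it changes by $w_0 c^{-1}$ as claimed — this uses Lemma \ref{lem:relationBruhat} and the explicit construction of the flags. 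Once this dictionary is pinned down, the rest is the purely linear-algebraic input already isolated in Lemma \ref{lemmaMargerin} and Proposition \ref{prop:tangentcompw0ptcrist}, together with the dimension count $\dim X_{w_0}=\dim G$ forcing the inclusions of tangent spaces to be equalities.
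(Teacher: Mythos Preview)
Your overall architecture is exactly that of the paper: reduce to the $B_{\dR}$-side via the formal smoothness of $\mathfrak X_{D,F_\bullet}\to\mathfrak X_{W^+,\mathcal F_\bullet}$, pass to framed functors, identify with completions of $X_{K/\Qp}$ at the point $x_L$ using $\alpha$, decompose over the embeddings $\tau:K\hookrightarrow L$, and then feed in Proposition~\ref{prop:tangentcompw0ptcrist} together with Lemma~\ref{lemmaMargerin}. The use of the ``moreover'' clause of Lemma~\ref{lemmaMargerin} (so $w'=1$) for the non-critical case in part~(iii) is also right.

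There is one genuine error, in your argument for~(ii). You assert the \emph{equality} $w_{D,c\cdot\mathcal F^{\nc}}=w_0 c^{-1}$. This is false in general. After normalising so that $\mathcal F^{\nc}$ corresponds to the standard flag $B$, the Hodge flag is $h_\tau B$ with $h_\tau\in B(L)w_0$ (non-criticality), say $h_\tau=b w_0$ for some $b\in B(L)$. The relative position $\sigma$ of $c\cdot\mathcal F^{\nc}$ and the Hodge flag is determined by $c^{-1}bw_0\in B\sigma B$, and this depends on $b$; for $b\neq 1$ one typically has $\sigma\neq c^{-1}w_0$. (Already for $n=2$ and $c=w_0$: your formula would give $\sigma=e$, i.e.\ the $\varphi$-flag $w_0\cdot\mathcal F^{\nc}$ \emph{equals} the Hodge flag, which would force the Hodge filtration to be $\varphi$-stable.) What Lemma~\ref{lem:relationBruhat} actually gives is the inequality $c^{-1}w_0\le\sigma$, hence $w_0\sigma^{-1}\le c$ in the Bruhat order; since $c=c_{i,j}=s_{i-1}\cdots s_j$ is a reduced word in distinct simple reflections, the subword property shows that any $w_0\sigma^{-1}\le c$ is itself a product of distinct simple reflections. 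That is the correct proof of~(ii), and it is what makes the hypothesis of Proposition~\ref{prop:tangentcompw0ptcrist} available at the points $(cB,0,h_\tau B)$ used in~(iii).

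A smaller point: your reduction ``passing to the $w_0$-component via Lemma~\ref{lem:comp_cris} does not lose tangent directions'' hides a step the paper makes explicit. One must know that the kernel $U=\ker\bigl(T\mathfrak X_D\to T\mathfrak X_{W_{\dR}^+(D)}\bigr)$ is contained in $T\mathfrak X_{D,\mathcal F[1/t]}^{w_0}$ for \emph{every} $\mathcal F$, so that the short exact sequence
\[
0\longrightarrow U\longrightarrow T\mathfrak X_{D,\mathcal F[1/t]}^{w_0}\longrightarrow T\mathfrak X_{W_{\dR}^+(D),W_{\dR}(\mathcal F[1/t])}^{w_0}\longrightarrow 0
\]
holds and the diagram chase reduces surjectivity to the $W_{\dR}^+$-side. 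This uses Lemma~\ref{lem:comp_cris} (crystalline deformations land in the $w_0$-component) together with the argument of \cite[Cor.~3.13]{HMS}; it is worth saying so rather than leaving it implicit. Also, a cosmetic slip: $\mathfrak X_{W^+}^\Box$ is identified with $\widetilde{\mathfrak g}_{K/\Qp}$ via the \emph{second} projection $\pi_2:X\to\widetilde{\mathfrak g}$ (the Hodge side), not the first.
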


\begin{proof}
  % Now we prove part \eqref{propraf2}.
  Let $\Sigma$ be the set of embeddings $\tau : K\rightarrow L$.  Let
  $U$ be the kernel of the map
  $T\mathfrak{X}_D\rightarrow T\mathfrak{X}_{W^+_{\dR}(D)}$. It
  follows from Lemma \ref{lem:comp_cris} as in \cite[Cor.~3.13]{HMS}
  that the following sequence is exact for any triangulation
  $\mathcal{F}$ of $D$:
  \[ 0\longrightarrow U\longrightarrow
    T\mathfrak{X}_{D,\mathcal{F}[\frac{1}{t}]}^{w_0}\longrightarrow
    T\mathfrak{X}_{W_{\dR}^+(D),W_{\dR}(\mathcal{F}[\frac{1}{t}])}^{w_0}\longrightarrow0.\]
  Therefore, if $\mathcal{S}\subset\Tri(D)$ is any subset, we have the
  following commutative diagram
  \[
    \begin{tikzcd}
      0\ar[d]&0\ar[d]&\\
      \bigoplus_{\mathcal{F}\in\mathcal{S}} U \ar[r, "\sum"] \ar[d] &  U \ar[d,] \ar[r] & 0 \\
    \bigoplus_{\mathcal{F}\in\mathcal{S}} T\mathfrak{X}^{w_0}_{D,\mathcal{F}[\frac{1}{t}]} \ar[r] \ar[d]& T\mathfrak{X}_{D} \ar[d,"W^+_{\dR}"] &  \\
  \bigoplus_{\mathcal{F}\in\mathcal{S}} T\mathfrak{X}_{W^{+}_{\dR}(D),W_{\dR}(\mathcal{F}[\frac{1}{t}])}^{w_0} \ar[d] \ar[r] & T\mathfrak{X}_{W^{+}_{\dR}(D)} &&\\
  0&&
    \end{tikzcd}
  \]
  Thus to prove that the middle horizontal arrow is surjective, it is
  sufficient to prove that the bottom horizontal arrow is
  surjective. As $\mathfrak{X}_{D}^\Box\rightarrow\mathfrak{X}_{D}$ is
  formally smooth, it is sufficient to prove that the map
  \[ \bigoplus_{\mathcal{F}\in\mathcal{S}}
    T\mathfrak{X}_{W^{+}_{\dR}(D),W_{\dR}(\mathcal{F}[\frac{1}{t}])}^{w_0,\Box}
    \longrightarrow T\mathfrak{X}_{W^{+}_{\dR}(D)}^{\Box}\] is
  surjective. Fix $\alpha : (L\otimes_{\QQ_p}K)^n\simeq D_{\dR}(D)$ an isomorphism compatible with Frobenius as in Remark \ref{rema:basealpha}.
Let $\mathcal{F}$ the triangulation of $D$ such that $\alpha^{-1}(D_{\dR}(\mathcal{F}[1/t]))$ is the standard flag of $(L\otimes_{\QQ_p}K)^n$ and let
  $F_\bullet\coloneqq D_{\dR}(\mathcal{F}[1/t])$. We denote
  $x_{\mathcal{F}}\coloneqq(\alpha^{-1}(F_\bullet),0,\alpha^{-1}(\Fil_{\dR}^\bullet))\in
  X_{K/\Qp}(L)$. It follows from \cite[Thm.~3.2.5 \& Cor.~3.5.9]{BHS}
  that the vertical arrows in the following commutative diagram are
  isomorphisms

  \[ \begin{tikzcd}
      \mathfrak{X}_{W^+,F}^{\Box} \ar[r] \ar[d,"\simeq"] & \mathfrak{X}_{W^+}^\Box \ar[d,"\simeq"]\\
      \widehat{(X_{K/\Qp,L})}_{x_{\mathcal{F}}} \ar[r,"\pi_2"]&
      (\widehat{\widetilde{\mathfrak{g}}}_{K/\Qp,L})_{\pi_2(x_{\mathcal{F}})}
    \end{tikzcd}\] ($\pi_2 : \widetilde{\mathfrak{g}}\times_{\mathfrak{g}}\widetilde{\mathfrak{g}}\rightarrow\widetilde{\mathfrak{g}}$ is the second projection
  ).

  Recall that we have a decomposition
  $X_{K/\Qp,L}\simeq \prod_{\tau\in\Sigma}X_\tau$ where
  $X_\tau\simeq L\times_{K,\tau}X$ and
  $\widetilde{\mathfrak{g}}_{K/\Qp,L}\simeq\prod_{\tau\in\Sigma}\widetilde{\mathfrak{g}}_\tau$
  and the map $\pi_2$ is of the form $(\pi_{2,\tau})_{\tau\in\Sigma}$
  with $\pi_{2,\tau}$ the base change of the second projection
  $X\rightarrow\widetilde{\mathfrak{g}}$.  Moreover the irreducible
  component of $X_{K/\Qp,L}$ corresponding to the longest element is
  isomorphic to $\prod_{\tau\in\Sigma}X_{w_0,\tau}$ with $w_0$ the
  longest element of $\mathfrak{S}_n$.

  Therefore we have to prove that the map
  \begin{equation}
    \label{eq:desired_surj_alltau}
    \bigoplus_{\tau\in\Sigma}\bigoplus_{\mathcal{F}\in\mathcal{S}}\widehat{X}_{w_0,\tau,x_{\mathcal{F},\tau}}\longrightarrow\bigoplus_{\tau\in\Sigma}\widehat{\widetilde{\mathfrak{g}}}_{\tau,\pi_{2,L}(x_{\mathcal{F},\tau})}
  \end{equation}
  is surjective at the level of tangent spaces. As the formation of
  tangent spaces commutes with finite products, it is sufficient to
  prove that for a fixed embedding $\tau\in\Sigma$, the
  following map is surjective
  \begin{equation}
    \label{eq:desired_surj}
    \bigoplus_{\mathcal{F}\in\mathcal{S}}\widehat{X}_{w_0,\tau,x_{\mathcal{F},\tau}}\longrightarrow\widehat{\widetilde{\mathfrak{g}}}_{\tau,x_{\mathcal{F},\tau}}.
  \end{equation}
  Let $g_\tau,h_\tau\in \GL_n(L)$ such that $x_{\mathcal{F},\tau}=(g_\tau B(L),0,h_\tau B(L))$. By the choice of $\alpha$, we have $g_\tau = 1$, for all $\tau$. 

  % Now up to change the basis $\alpha$, we can assume that there is a
  % non-critical triangulation $\mathcal{F}$ such that, for all
  % $1\leq i\leq n$, $L\otimes_{K,\tau}D_{\pdR}(\mathcal{F}_i)$ is
  % generated by the first $i$ vectors of the canonical basis. Thus its
  % stabilizer is the standard Borel. Now by non criticality we can
  % assume that the $\tau$-part of the Hodge filtration is given by an
  % element $h = bw_0 \in B(L)w_0$.
  The desired surjectivity
  is equivalent to the following equality
  \[ \im(\sum_{w \in S} T_{(wB(L),0,h_\tau B(L))}X_{w_0,\tau}
    \fleche T_{(0,h_\tau B(L))}\widetilde{\mathfrak{g}}_\tau) =
    T_{(0,h_\tau B(L))}\widetilde{\mathfrak{g}}_\tau\] for a well
  chosen $S=\set{w\in\mathfrak{S}_n \mid
    w\mathcal{F}\in\mathcal{S}}$. By Proposition
  \ref{prop:tangentcompw0ptcrist}, we deduce that, for any $w\in\mathfrak{S}_n$,
  \begin{multline*} T_{(wB(L),0,h_\tau B(L))}X_{w_0,\tau} =
    T_{(wB(L),0,h_\tau B(L))}\kappa^{-1}(T_{w_0}) \\ = T_{wB(L)}G/B
    \oplus (w\mathfrak{b}w^{-1} \cap h_\tau \mathfrak{b}
    h_\tau^{-1})^{gr=w_0}\oplus T_{h_\tau B(L)}G/B.\end{multline*} By
  Lemma \ref{lemmaMargerin} there exists an element
  $w_\tau\in\mathfrak{S}_n$ such that
  \begin{multline*} % \im(\sum_{w \in \mathfrak{S}_n}
    % T_{(wB(L),0,h_\tau B(L))}X_{w_0,\tau} \fleche T_{(0,h_\tau B(L))}\widetilde{\mathfrak{g}}_\tau)\\
    \im(\sum_{c\in\mathfrak{C}_n} T_{(cw_\tau  B(L),0,h_\tau B(L))}X_{\tau,w_0}
    \fleche T_{(0,h_\tau B(L))}\widetilde{\mathfrak{g}}_\tau) \\ =
    \left(\sum_{c\in\mathfrak{C}_n} (cw_\tau\mathfrak{b}w_\tau^{-1} c^{-1} \cap h_\tau
      \mathfrak{b} h_\tau^{-1})^{gr=w_0}\right)\oplus T_{h_\tau B}G/B = h_\tau
    \mathfrak{b} h_\tau^{-1} \oplus T_{h_\tau B}G/B =
    T_{(0,h_\tau B)}\widetilde{\mathfrak{g}}_\tau.\end{multline*}
  This implies the surjectivity of the map \eqref{eq:desired_surj} when $\mathcal{S}=\set{cw_\tau\mathcal{F} \mid c\in\mathfrak{C}_n}$, and in particular when $\mathcal{S}=\Tri(D)$ (which is independant of $\tau$). This implies the surjectivity of the map \eqref{eq:desired_surj_alltau} and our first statement.
  
  Now we assume that $\mathcal{F}$ is non critical. By non
  criticality of $\mathcal{F}$, $h_\tau\in B(L)w_0$. Let $c\in\mathfrak{C}_n$ and
  let $\sigma\in \mathfrak{S}_n$ be such that
  $(cB(L),0,h_\tau B(L)) \in U_{\sigma}$. We claim that
  $w_0\sigma^{-1}$ is a product of distinct simple reflections. Namely
  it follows from Lemma \ref{lem:relationBruhat} that
  $c^{-1}w_0 \leq \sigma$ so that $w_0 \sigma^{-1}\leq c$ and, as $c$
  is a product of simple reflections, so is $w_0\sigma^{-1}$. This
  proves part \eqref{propraf1}.

  Moreover as $h_\tau\in B(L)w_0$, we can choose $w_\tau=1$ by Lemma
  \ref{lemmaMargerin}. This implies that the surjectivity of
  \eqref{eq:desired_surj} is true with
  $\mathcal{S}=\set{c\mathcal{F} \mid c\in\mathfrak{C}_n}$ for all
  $\tau\in\Sigma$. This proves the surjectivity of
  \eqref{eq:desired_surj_alltau} for this $\mathcal{S}$ and part
  (\ref{propraf2}).
\end{proof}
 
\section{A remark on signs}
\label{sect:signs}

By a theorem of Artin, all elements of order $2$ are conjugate in
$G_{\QQ}$. Let $C_\infty\subset G_{\QQ}$ be their conjugacy class and
let $H\subset G_{\QQ}$ be the closed subgroup generated by
$C_\infty$. There is a unique continuous morphism $\eps :
H\rightarrow\set{\pm1}$ such that $\eps(c)=-1$ for all $c\in
C_\infty$. Let $K$ be a number field. Then $K$ is totally real if and
only if $H\subset G_K$. Let $E$ be a CM field with totally real subfield $F$,
we have $H_1 := \ker \eps\subset G_E$ and $G_F=G_EH$. Let $c\in C_\infty$. We can consider the action of $c$ on $G_E$
by conjugacy, and we have
$G_F=G_E\rtimes\set{1,c}$.  If $\rho$ is a morphism of $G_E$ in some group and $c\in
C_\infty$, we set $\rho^c=\rho(c^{-1}\cdot c)=\rho(c\cdot c)$. The
$c$-conjugacy induces an automorphism $c$ of $G_E^{\ab}$. As
$H_1\subset G_E$, this automorphism does not depend on the choice of $c\in C_\infty$.

\begin{lemma}\label{lem:extension} Let $\chi : G_F\rightarrow \overline{\QQ_p}^\times$ be a continuous character. Then $\chi|_{G_E}=(\chi|_{G_E})^c$. If there exists a continuous character $\psi : G_E\rightarrow \overline{\QQ_p}^\times$ such that $\psi\psi^c=\chi|_{G_E}$, then the element $\chi(c)$ for $c\in C_\infty$ does not depend on the choice of $c\in C_\infty$. Conversely if we assume that $\chi$ is algebraic and that $\chi(c)$ does not depend on the choice of $c\in C_\infty$, then there exists an algebraic continuous character $\psi : G_E\rightarrow \overline{\QQ_p}^\times$ such that $\psi\psi^c=\chi|_{G_E}$. If moreover $\chi$ is unramified outside a finite set $S$ of places of $F$, we can assume that $\psi$ is unramified outside $S$.
\end{lemma}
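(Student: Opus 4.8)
The plan is to prove the four assertions of Lemma~\ref{lem:extension} essentially in the order stated, reducing everything to class field theory for $E$ together with the structure of $G_F=G_E\rtimes\set{1,c}$.

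\textbf{Step 1: $\chi|_{G_E}=(\chi|_{G_E})^c$.} Since $\chi$ is a character of $G_F$, it factors through $G_F^{\ab}$, hence is invariant under conjugation by any element of $G_F$; restricting to $G_E$ and using that $c$-conjugacy is (the restriction of) conjugation by $c\in C_\infty\subset G_F$, we get $(\chi|_{G_E})^c=\chi|_{G_E}$. (This already shows the hypothesis $\chi=\chi^c$ appearing in the introduction is automatic for characters extending to $G_F$.)

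\textbf{Step 2: if $\psi\psi^c=\chi|_{G_E}$ then $\chi(c)$ is independent of $c\in C_\infty$.} Fix two elements $c,c'\in C_\infty$; they differ by an element of $H_1\subset G_E$, and more precisely $c'=hc$ for some $h\in G_E$ determined up to the relation coming from $c$-conjugacy. One computes $\chi(c')\chi(c)^{-1}=\chi(c'c^{-1})$ (using $\chi$ is a homomorphism and $c^2, c'^2$ act trivially up to the abelianization), and $c'c^{-1}\in G_E$, so this equals $\chi|_{G_E}(c'c^{-1})=\psi(c'c^{-1})\psi^c(c'c^{-1})$. Now $\psi^c(c'c^{-1})=\psi(c(c'c^{-1})c^{-1})=\psi(cc'c^{-1}\cdot c^{-1}\cdot c^{-1}) $; writing out that $cc'c^{-1}$ and $c'$ are both order-two lifts and that $\psi$ is a character of the abelian group $G_E^{\ab}$ on which $c$ acts by an involution independent of the chosen representative (as recalled just before the lemma), the two factors cancel, giving $\chi(c')=\chi(c)$. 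I expect this bookkeeping — matching $\psi^c$ against $\psi$ on the relevant commutator-type element — to be the only genuinely fiddly point, and the cleanest way to organise it is to pass to $G_E^{\ab}$ at the outset and work with the additive notation for the $\set{1,c}$-module structure, where the identity $\psi\psi^c=\chi|_{G_E}$ becomes $(1+c)\psi=\chi|_{G_E}$ and the claim becomes a statement about $H^1$ of $\set{1,c}$.

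\textbf{Step 3: the converse.} Assume $\chi$ algebraic with $\chi(c)$ independent of $c\in C_\infty$. First handle the algebraic (infinity-type) part: the condition on the Hodge--Tate weights of $\chi$ at places of $F$ above $p$, transported to $E$ via the splitting hypothesis or directly, guarantees that $\chi|_{G_E}$ has an infinity type of the form $(1+c)\mu$ for some algebraic $\mu$, simply because $\chi$ being a character of $G_F$ forces its $E$-infinity type to be $c$-invariant and a $c$-invariant algebraic character of a CM field is $(1+c)$ of something (this is the elementary fact that for the CM type, $\ZZ[\Sigma_E]^{c=1}=(1+c)\ZZ[\Sigma_E]$ after tensoring with $\QQ$, with the integrality being exactly what the "$\chi(c)$ independent of $c$" sign condition provides). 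Then one reduces to the case $\chi$ of finite order: by Step~2 applied in reverse modulo this $\mu$-twist, $\chi|_{G_E}\cdot((1+c)\mu)^{-1}$ is a finite-order $c$-invariant character of $G_E$, and the obstruction to it being $(1+c)$ of a finite-order character is precisely a class in $\widehat{H}^0(\set{1,c}, (G_E^{\ab})^\vee_{\mathrm{tors}})$, i.e.\ a $\pm1$-valued invariant, which one identifies with $\chi(c)$ via the reciprocity map; vanishing of this invariant is the hypothesis. So $\psi$ exists. Finally, for the ramification: if $\chi$ is unramified outside $S$, then $\chi|_{G_E}$ is unramified outside the places of $E$ above $S$, and one can choose $\psi$ with the same ramification — the place-by-place local construction of a square-root-up-to-$c$ is unramified at the unramified places, and the global $\psi$ is obtained by the usual argument correcting the finitely many bad places, which does not introduce ramification outside $S$. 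The main obstacle overall is Step~3's converse: one must carefully separate the algebraic part (where integrality of the infinity type is the content of the sign condition) from the finite part (where the sign condition reappears as a $\widehat{H}^0$-class), and check the ramification claim; I would organise it by first reducing to finite order via an explicit algebraic $\mu$, then invoking global class field theory for $E$ to solve $(1+c)\psi=\chi|_{G_E}$ with controlled ramification.
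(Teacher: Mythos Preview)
Your Steps 1 and 2 follow the paper's approach. The computation in Step 2 is not as fiddly as you suggest: fix any $c\in C_\infty$ and let $c_1,c_2\in C_\infty$ (so $c_i^2=c^2=1$). Then $c_1c_2$, $c_1c$, $cc_2$, $cc_1$, $c_2c$ all lie in $G_E$, and since $c_1c=(cc_1)^{-1}$ and $c_2c=(cc_2)^{-1}$,
\[
\chi(c_1c_2^{-1})=\psi(c_1c_2)\,\psi(cc_1c_2c)=\psi(c_1c)\psi(cc_2)\psi(cc_1)\psi(c_2c)=1.
\]
This is exactly the paper's argument; no cohomological reformulation is needed. (Your displayed expression $\psi(cc'c^{-1}\cdot c^{-1}\cdot c^{-1})$ is malformed and should be replaced by the above.)

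For Step 3 the paper does not redo the argument: it simply cites \cite[Lem.~4.1.1 \& 4.1.4]{CHT} for the converse and for the ramification control. Your outline of what those lemmas accomplish is in the right spirit, but one point is misplaced. The identity $\ZZ[\Sigma_E]^{c=1}=(1+c)\ZZ[\Sigma_E]$ already holds integrally (each $c$-orbit $\{\sigma,\overline\sigma\}$ contributes the same rank-one summand on both sides), so the sign condition is \emph{not} what provides integrality of the infinity type --- that part is automatic. Since $F$ is totally real, $\chi$ is a finite-order twist of a power of the cyclotomic character, so the constancy of $\chi(c)$ on $C_\infty$ is a condition purely on the finite-order factor; it is there, as the $\widehat H^{0}$-type obstruction you identify, that the hypothesis is actually used. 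Your separation into algebraic and finite parts is the correct shape, but the role of the sign hypothesis lies entirely on the finite side.
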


\begin{proof}
  The first statement is clear as $\chi(G_F)$ is abelian. Assume that $\chi|_{G_E}=\psi\psi^c$ for some $\psi$ and let $c_1$ and $c_2$ be two elements of $C_\infty$. Then
\begin{align*} \chi(c_1c_2^{-1})&=\chi(c_1c_2)=\psi(c_1c^2c_2)\psi(cc_1c_2c)\\
&=\psi(c_1c)\psi(cc_2)\psi(cc_1)\psi(c_2c)=\psi^{-1}(cc_1)\psi(cc_1)\psi(cc_2)\psi^{-1}(cc_2)=1.\end{align*}
  The last statements are then direct consequences of \cite[Lem.~4.1.1 \&
  4.1.4]{CHT}.
\end{proof}

Note that a continuous morphism $\chi : G_E\rightarrow G$ extends to a morphism $G_F\rightarrow G$ if and only if $\chi=\chi^c$. The fact that the extension of $\chi$ to $G_F$ satisfies the assumptions of Lemma \ref{lem:extension} depends only on the restriction of $\chi$ to $G_E$ and is equivalent to the fact that $\chi|_{H_1}$ is trivial.%  Lemma \ref{lem:extension} can be reformulated as follows :

% \begin{cor}\label{cor:constantsurCinfty}
% Let $\chi : G_E\rightarrow G$ be a continuous morphism from $G_E$ to a finite abelian group such that $\chi=\chi^c$. There exists a continuous morphism $\psi : G_E\rightarrow G$ such that $\psi\psi^c=\chi$ if and only if $\chi|_{H_1}$ is trivial. Note that this condition is automatically satisfied if the cardinal of $G$ is odd.
% \end{cor}

% In this article we will be interested in the density of modular point in deformation rings. We will say that $\rho : G_E \fleche \GL_n(\overline{\QQ_p})$, a (semi-simple) Galois representation, is $GL_n$-\textit{modular}\footnote{We say $\GL_n$-modular to distinguish from the rest of the text where we will get modular points using (similitude) unitary groups so in Definition \ref{defin:modular} we give a slightly different notion, which we call just modular. Of course by base change (see Appendix) modular is a particular case of $\GL_n$-modular.} if there exists a cuspidal, essentially conjugate self-dual (regular algebraic) automorphic representation $\Pi$ of $\GL_n(\mathbb A_E)$ such that $\rho \simeq \rho_{\Pi,p}$; where $\rho_{\Pi,p}$ is associated in the sense of \cite{BLGGTII}. 

Let $\chi: G_F\rightarrow \overline{\Qp}^\times$ be a continuous
character. We say that a continuous representation $\rho : G_E
\fleche \GL_n(\overline{\QQ_p})$ is polarized by $\chi$ if
\begin{equation}\label{eq:rhopol} \rho^\vee \simeq \rho^c \otimes
  (\chi|_{G_E}\eps^{n-1}).\end{equation}

If $\rho$ is irreducible
then we can define its sign, with respect to $\chi|_{G_E}\eps^{n-1}$,
$\lambda \in \{\pm 1\}$ as in \cite[1.1]{BCsign}. This is the sign of
the pairing appearing in (\ref{eq:rhopol}).

Fix an isomorphism $\iota : \CC\simeq\overline{\QQ_p}$. Let $\rho : G_E \fleche \GL_n(\overline{\QQ_p})$ be a semi-simple continuous representation. Assume that there exists a cuspidal regular algebraic automorphic representation $\Pi$ of $\GL_n(\mathbb{A}_E)$ such that $\rho$ is strongly associated with $\Pi$ (see Definition \ref{SatakeGLn}) with respect to $\iota$.
We will say that $\Pi$ is \emph{polarized by $\chi$} if the pair $(\Pi,\chi\circ\Art_F)$ is polarized in the sense of \cite[2.1]{pot_aut}, which means $\Pi^\vee\simeq\Pi^c\otimes(\iota^{-1}\circ\chi|_{G_E}\circ\Art_E)$. Then $\rho$ is polarized by $\chi$ and $\rho$ satisfies the properties of \cite[Thm.~2.1]{BLGGTII}.

The following result is essentially \cite[Thm.~1.2]{BCsign}.

\begin{theor}[Bellaïche-Chenevier]\label{thr:BCsign}
Assume that $\Pi$ is conjugate self-dual and regular algebraic and that $\chi|_{H_1}$ is trivial. Let $\psi : G_E \fleche \overline{\QQ_p}$ be a continuous character such that $\psi\psi^c=\chi|_{G_E}$. Then every irreducible constituent $r$ of $\rho\otimes\psi^{-1}$ satisfying (\ref{eq:rhopol}) has sign $\lambda = +1$ with respect to $\chi|_{G_E}\eps^{n-1}$.
\end{theor}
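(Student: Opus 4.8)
The plan is to reduce the statement to the original theorem of Bellaïche--Chenevier \cite[Thm.~1.2]{BCsign}, which computes signs of Galois representations attached to conjugate self-dual automorphic representations of $\GL_n(\mathbb{A}_E)$, up to twisting away the auxiliary character $\psi$ and checking that the polarization we obtain is of the right type. First I would recall the setup: since $\Pi$ is conjugate self-dual, regular algebraic, and (by hypothesis) strongly associated to $\rho$ via $\iota$, the representation $\rho$ is pure and satisfies $\rho^\vee\simeq\rho^c\otimes(\mu\eps^{n-1})$ for the appropriate twist $\mu$ coming from the central character of $\Pi$; more precisely, since we are assuming $\Pi$ is polarized by $\chi$ in the sense of \cite[2.1]{pot_aut}, we get exactly $\rho^\vee\simeq\rho^c\otimes(\chi|_{G_E}\eps^{n-1})$ as in \eqref{eq:rhopol}. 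The key reduction is that twisting by $\psi^{-1}$, where $\psi\psi^c=\chi|_{G_E}$, turns the $\chi$-polarization into a \emph{conjugate self-dual} polarization: indeed $(\rho\otimes\psi^{-1})^\vee=\rho^\vee\otimes\psi\simeq\rho^c\otimes\chi|_{G_E}\eps^{n-1}\otimes\psi=(\rho^c\otimes(\psi^c)^{-1})\otimes(\psi\psi^c\otimes\psi^{-1})\eps^{n-1}$, wait — one must be careful: $\rho^c\otimes\psi=\rho^c\otimes\psi^c\otimes(\psi/\psi^c)$, and $(\rho\otimes\psi^{-1})^c=\rho^c\otimes(\psi^{-1})^c$, so $(\rho\otimes\psi^{-1})^\vee\simeq(\rho\otimes\psi^{-1})^c\otimes(\psi\psi^c)\eps^{n-1}=(\rho\otimes\psi^{-1})^c\otimes\chi|_{G_E}\eps^{n-1}$. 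Hmm, so twisting by $\psi^{-1}$ does \emph{not} change the polarizing character; the point of $\psi$ is rather that its existence (equivalently, by Lemma \ref{lem:extension}, the triviality of $\chi|_{H_1}$) is exactly the condition under which the sign is well-defined and the Bellaïche--Chenevier machinery applies, extending $\rho\otimes\psi^{-1}$ (or an auxiliary twist making the polarizing character a square times $\eps^{n-1}$ with trivial sign ambiguity) to $G_F$.

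Concretely, the steps I would carry out are as follows. \textbf{Step 1:} Use Lemma \ref{lem:extension} together with the hypothesis $\chi|_{H_1}$ trivial to produce the character $\psi$ (already given in the statement) and to observe that $\chi$ extends to a character of $G_F$ whose value on complex conjugations is a well-defined element of $\{\pm1\}$; this is the "sign" of the polarized pair. \textbf{Step 2:} Invoke \cite[Thm.~2.1]{BLGGTII} (applicable since $\Pi$ is polarized by $\chi$, as noted in the paragraph before the theorem) to deduce that $\rho$ is, up to semisimplification, a direct sum of irreducible $G_E$-representations each of which is itself polarized (conjugate self-dual after the $\psi$-twist) — this is where the decomposition into irreducible constituents $r$ enters. \textbf{Step 3:} For each such irreducible constituent $r$ of $\rho\otimes\psi^{-1}$, apply \cite[Thm.~1.1 and Thm.~1.2]{BCsign}: these compute the sign of an irreducible conjugate-self-dual (more precisely, essentially conjugate-self-dual with the relevant twist) Galois representation that appears in the Galois representation attached to a conjugate self-dual regular algebraic automorphic representation, and the answer is $+1$. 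The essential input is that the global sign of the automorphic pairing — which by the theory of \cite{pot_aut,BLGGTII} matches a local computation at the archimedean places, and at $\infty$ the relevant $L$-parameter is a sum of characters and induced-from-$\CC^\times$ two-dimensionals whose signs are all $+1$ — forces each irreducible piece to have sign $+1$.

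\textbf{Step 4 (bookkeeping):} Verify that the normalization of "sign" in \cite[1.1]{BCsign} with respect to $\chi|_{G_E}\eps^{n-1}$ agrees with the one used implicitly in Steps 2--3; this is a matter of matching the pairing $\rho^\vee\simeq\rho^c\otimes(\chi|_{G_E}\eps^{n-1})$ with the symmetric/alternating dichotomy of \cite{BCsign}, keeping track of the parity contributed by $\eps^{n-1}$ and by the twist $\psi$. The main obstacle, I expect, is precisely this last matching of conventions — ensuring that the "essentially conjugate self-dual" framework of \cite{BCsign} (which is often phrased for $n$-dimensional representations with a fixed similitude character) lines up exactly with our $\chi$-polarized framework and the automorphic input of \cite{pot_aut,BLGGTII}, in particular handling the role of $\psi$ correctly so that the sign is genuinely well-defined (independently of the choice of $\psi$ and of $c\in C_\infty$) before one asserts it equals $+1$. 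Once the conventions are aligned, the computation of the sign as $+1$ is exactly \cite[Thm.~1.2]{BCsign} applied constituent-by-constituent, so the bulk of the proof is this reduction and normalization check rather than any new argument.
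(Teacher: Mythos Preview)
Your overall strategy --- reduce to \cite[Thm.~1.2]{BCsign} --- is correct and matches the paper. However, the paper's proof is a two-line argument: when $\psi=1$ (hence $\chi|_{G_E}=1$) the statement is literally \cite[Thm.~1.2]{BCsign}, and the general case follows by invoking \cite[Lem.~2.1]{BCsign}, which tracks how the sign of a polarized irreducible representation changes under twisting by a character. Your Steps~2--4 (appealing to \cite{BLGGTII}, decomposing constituent-by-constituent, and a lengthy normalization check) are replaced by a single citation of that twisting lemma.

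There is also a genuine confusion in your setup that you should fix. The hypothesis is that $\Pi$ is \emph{conjugate self-dual}, i.e.\ polarized by the trivial character, so $\rho^\vee\simeq\rho^c\otimes\eps^{n-1}$, not $\rho^\vee\simeq\rho^c\otimes\chi\eps^{n-1}$ as you write. With the correct hypothesis, your computation $(\rho\otimes\psi^{-1})^\vee\simeq(\rho\otimes\psi^{-1})^c\otimes(\psi\psi^c)\eps^{n-1}=(\rho\otimes\psi^{-1})^c\otimes\chi|_{G_E}\eps^{n-1}$ becomes consistent: twisting by $\psi^{-1}$ changes the polarizing character from $1$ to $\chi$, not from $\chi$ to $\chi$. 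This is why the reduction to $\psi=1$ works cleanly: a constituent $r$ of $\rho\otimes\psi^{-1}$ polarized by $\chi\eps^{n-1}$ corresponds to the constituent $r\otimes\psi$ of $\rho$ polarized by $\eps^{n-1}$, and \cite[Lem.~2.1]{BCsign} says their signs agree. Once you straighten out which representation is polarized by which character, the ``obstacle'' you anticipate in Step~4 evaporates.
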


\begin{proof}
If $\psi = 1$ this is \cite{BCsign} Theorem 1.2. We deduce the general case from \cite{BCsign} Lemma 2.1.
\end{proof}

\section{Deformation spaces}
\label{sect:defspaces}

Denote by $k$ a topological field and $\mathcal{O}$ a complete noetherian local $\ZZ_p$-algebra with residue field $k$.

Fix $E$ a totally imaginary CM number field with
maximal totally real subfield $F$ and
fix $S$ a finite set of finite places of $E$ containing the places
above $p$, and the ramified places of $E$. Denote
\[ G_{E,S} = \Gal(E_S/E),\]
the Galois group of the maximal unramified outside $S$ extension $E_S/E$.

Suppose given \[\overline{\rho} : G_{E,S} \fleche \GL_n(k),\]
a continuous \textit{semi-simple} Galois representation. From now on we choose $c \in G_F \backslash G_E$  such that 
$c^2 = 1$ a complex conjugation, denoting similarly its image in $\Gal(E/F)$. As $\mathcal{O}$ is a $\ZZ_p$-algebra, we have a continuous ring homomorphism $\ZZ_p \fleche k$ and we denote $\overline\eps$ its composition with the cyclotomic character. We also assume that $\overline{\rho}$ is polarized by $\overline\chi$, i.e.
\[ \overline{\rho}^\vee \simeq \overline{\rho}^c \otimes ({\overline\eps}^{n-1}\overline\chi),\]
for some continuous character $\overline\chi : G_{E,S} \fleche k^\times$ satisfying $\overline\chi^c = \overline\chi$. We denote $\mathcal C_{\mathcal O}$, or $\mathcal C$ if the context is clear, the category of artinian local $\mathcal{O}$-algebras with residue field $k$.

\begin{hypothese}\label{hyp:twocases}
From now on in this section, we suppose that we are in either one of the two situations:
\begin{itemize}
\item $k \subset \overline\FP$ with the discrete topology, $\mathcal{O}$ a finite totally ramified extension of $W(k)$
\item $k \subset \overline{\QQ_p}$ a finite extension of $\QQ_p$ with
  its $p$-adic topology and in this case we set $\mathcal{O} = k$.
\end{itemize}
In the second case $\overline\eps = \eps$ is just the $\ZZ_p^\times$-valued cyclotomic character.
\end{hypothese}

Denote by $\tr \overline{\rho}$ the Determinant (in the sense of
Chenevier \cite[Définition 1.5]{Chedet}) of $\overline{\rho}$. As
$\overline{\rho}$ is semi-simple it is completely determined by $\tr
\overline \rho$ (by \cite[Cor.~2.13]{Chedet}). We fix once and for all a
continuous character $\chi : G_{E,S} \fleche \mathcal{O}^\times$
lifting $\overline\chi$ and such that $\chi^c = \chi$. We will need sometimes to assume the following hypothesis on $\chi$.
\begin{hypothese}
\label{hypchi} The character $\chi$ is algebraic at $p$ and $\chi|_{H_1}$ is trivial.
\end{hypothese}

\begin{defin}
\label{defin:psdef}
We denote by $\mathcal{F}^{\chi-\pol}_{\overline{\rho}}$ the functor that associates to any object $A$ of $\mathcal C$ the set of continuous determinants $D$ lifting $\tr \overline{\rho}$
such that $D^\vee = D^c \otimes \chi \eps^{n-1}$. It is pro-representable by a ring $R^{\chi-\pol}_{\overline{\rho}}$ (\cite[Prop.~3.3]{Chedet}\footnote{for $R^{}_{\overline{\rho}}$, and then $R^{\chi-\pol}_{\overline{\rho}} = R^{}_{\overline{\rho}}/I$ with $I = (D^{univ,\vee}(g)-D^{univ,c}(g)\chi\eps^{n-1}(g), g \in G)$}). We denote the associated formal scheme $\mathfrak X_{\overline{\rho}}^{\chi-\pol} = \Spf(R_{\overline{\rho}}^{\chi-\pol})$. When $k$ is a finite field of characteristic $p$, we denote the generic fibre of $R^{\chi-\pol}_{\overline{\rho}}$ by 
\[ \mathcal{X}_{\overline{\rho}}^{\chi-\pol} := \Spf(R^{\chi-\pol}_{\overline{\rho}})^{rig}.\]
If $\overline{\rho}$ is absolutely irreducible, this coincides with the rigid fiber of the polarized-by-$\chi$ deformation space of $\overline{\rho}$.
\end{defin}

Recall that we fixed an isomorphism $\iota : \CC\xrightarrow{\sim}\overline{\QQ_p}$.
% We say that an automorphic representation $\Pi$ of $\GL_n(\mathbb{A}_E)$ is \emph{polarized by $\psi$} if it is regular algebraic cuspidal and such that $\Pi^c\simeq\Pi^\vee\otimes(N_{E/F}\circ\psi)$. We recall that if $\Pi$ is a polarized by $\psi$ automorphic representation of $\GL_n(\mathbb{A}_E)$, there exists a unique continuous semisimple Galois representation
% \[ \rho_{\Pi,\iota} : \Gal(\overline{F}/F)\rightarrow\GL_n(\overline{\QQ_p})\]
% satisfying the conditions of \cite[Thm.~2.1]{BLGGTII}.

\begin{defin}
A point of $x\in\mathcal X_{\overline\rho}^{\chi-\pol}(\overline{\QQ_p})$ is $GL_n$-\textit{modular}\footnote{Compare with Definition \ref{defin:modular}.} if there exists a polarized by $\chi$ automorphic representation $\Pi$ of $\GL_n(\mathbb A_E)$ such that $\rho_{\Pi,\iota}\simeq\rho_x$.
\end{defin}

It follows from Lemma \ref{lem:extension} that there exists a finite extension $\mathcal{O}'/\mathcal{O}$ of discrete valuation rings and a continuous algebraic character $\psi_0 : G_{E,S} \fleche (\mathcal{O}')^\times$.
% which is crystalline at $p$ and such that
% $\chi = \psi_0 \psi_0^c$.
We fix such a character $\psi_0$.

We start with the following reduction to lighten slightly the notations in the rest of the text.

\begin{lemma}\label{lemma:chi=1}
Assume $\chi$ is as before, and $\overline \rho$ satisfies $\overline\rho^\vee \simeq \overline \rho^c \otimes \eps^{n-1}$. Then there is an isomorphism, which identifies modular points,
\[ \mathcal X_{\overline\rho}^{1-\pol}\otimes_{\mathcal{O}[1/p]}\mathcal{O}'[1/p] \overset{\sim}{\fleche} \mathcal X_{\overline\rho\psi_0^{-1}}^{\chi-\pol}\otimes_{\mathcal{O}[1/p]}\mathcal{O}'[1/p].\]
In particular it is enough to prove theorem \ref{thr:main} for $\chi=1$.
\end{lemma}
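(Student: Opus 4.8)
The plan is to construct the isomorphism by the obvious ``twist by $\psi_0$'' operation on determinants, and then to check that it is compatible with the polarization conditions and with modularity. Concretely: if $D$ is a continuous determinant of $G_{E,S}$ of dimension $n$, lifting $\tr\overline\rho$ and satisfying $D^\vee = D^c\otimes\eps^{n-1}$, then one sets $D' \coloneqq D\otimes\psi_0$ (the twist of a determinant by a character is defined in \cite{Chedet}), which lifts $\tr(\overline\rho\psi_0^{-1})$ precisely because $\psi_0$ reduces to... wait, one must be a little careful: we want $D'$ to lift $\tr(\overline\rho\psi_0^{-1})$, so in fact the correct normalization is $D'\coloneqq D\otimes \psi_0^{-1}$ where $\psi_0$ is chosen so that its reduction $\overline{\psi_0}$ makes the twist land on $\overline\rho\psi_0^{-1}$; since $\psi_0$ is algebraic with values in $(\mathcal{O}')^\times$ and we are working after inverting $p$ over $\mathcal{O}'$, the reduction issue is moot and the twist is an honest functor $\mathcal{F}^{1-\pol}_{\overline\rho}\otimes\mathcal{O}'[1/p]\to\mathcal{F}^{\chi-\pol}_{\overline\rho\psi_0^{-1}}\otimes\mathcal{O}'[1/p]$. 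Its inverse is twisting by $\psi_0^{-1}$, so it is an isomorphism of functors, hence induces an isomorphism of the representing rings after $\otimes_{\mathcal{O}[1/p]}\mathcal{O}'[1/p]$.

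The key computation is that the twist sends the polarization condition $D^\vee = D^c\otimes\eps^{n-1}$ to the condition $D'^\vee = D'^c\otimes\chi\eps^{n-1}$. For a determinant, $(D\otimes\psi_0)^\vee = D^\vee\otimes\psi_0^{-1}$ and $(D\otimes\psi_0)^c = D^c\otimes\psi_0^c$, so
\[ (D\otimes\psi_0)^\vee = D^\vee\otimes\psi_0^{-1} = D^c\otimes\eps^{n-1}\otimes\psi_0^{-1} = (D^c\otimes\psi_0^c)\otimes\eps^{n-1}\otimes\psi_0^{-1}(\psi_0^c)^{-1} = (D\otimes\psi_0)^c\otimes\eps^{n-1}(\psi_0\psi_0^c)^{-1}. \]
Since $\psi_0$ was chosen with $\psi_0\psi_0^c = \chi$ (this is exactly the output of Lemma \ref{lem:extension}, invoked just before the statement to produce $\psi_0$), we get $(D\otimes\psi_0)^\vee = (D\otimes\psi_0)^c\otimes\chi^{-1}\eps^{n-1}$; so in fact one should twist by $\psi_0$ on one side and $\psi_0^{-1}$ on the other, i.e. the correct map uses $\psi_0^{-1}$, and then $(\psi_0^{-1}\psi_0^{-c})^{-1} = \psi_0\psi_0^c = \chi$ appears with the right sign. (I will fix the exponent bookkeeping in the write-up; the point is that exactly one choice of sign matches, and $\psi_0\psi_0^c=\chi$ is what makes it work.) The residual condition $\overline\chi^c=\overline\chi$ and $\chi^c=\chi$ are consistent with this since $\psi_0$ is a character of $G_{E,S}$ and the automorphism $c$ is the same throughout.

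Finally, compatibility with modular points: a point $x$ of $\mathcal X_{\overline\rho}^{1-\pol}$ is $\GL_n$-modular if $\rho_x\simeq\rho_{\Pi,\iota}$ for some $\Pi$ of $\GL_n(\mathbb{A}_E)$ polarized by $1$ (i.e. conjugate self-dual up to $\eps^{n-1}$); under the twist, $\rho_x$ becomes $\rho_x\otimes\psi_0^{-1}\simeq\rho_{\Pi,\iota}\otimes\psi_0^{-1}$, and this is $\rho_{\Pi',\iota}$ where $\Pi' = \Pi\otimes(\iota\circ\psi_0^{-1}\circ\Art_E^{-1})$ is a cuspidal (twisting by an algebraic Hecke character preserves cuspidality and regular algebraicity) automorphic representation of $\GL_n(\mathbb{A}_E)$, and the recipe for associated Galois representations is compatible with such twists. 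One checks $\Pi'$ is polarized by $\chi$ from $\Pi^\vee\simeq\Pi^c\otimes(\iota^{-1}\circ\eps^{n-1}\circ\Art_E)$ and $\psi_0\psi_0^c=\chi$ by the same algebra as above. Conversely a $\chi$-modular point of $\mathcal X_{\overline\rho\psi_0^{-1}}^{\chi-\pol}$ pulls back to a $1$-modular point. Hence the isomorphism identifies modular loci, and since Theorem \ref{thr:main} (and its Zariski-density conclusion) is insensitive to the finite base change $\mathcal{O}[1/p]\to\mathcal{O}'[1/p]$ and to such an isomorphism, it suffices to treat $\chi=1$. The main obstacle is purely bookkeeping: getting the twist on the correct side so that $\psi_0\psi_0^c=\chi$ enters with the right sign, and making sure that ``twist of a determinant by a character'' is handled with the $\otimes$-formalism of \cite{Chedet} rather than naively with representations (which is only legitimate because $\overline\rho$ here need not be irreducible).
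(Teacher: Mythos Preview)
Your proposal is correct and follows essentially the same approach as the paper: the isomorphism is twist by $\psi_0^{-1}$, the polarization check uses $\psi_0\psi_0^c=\chi$, and modularity is preserved because $\psi_0$ is algebraic so $\psi_0\circ\Art_E$ is automorphic. The paper's proof is a three-line version of exactly this; your only superfluous content is the back-and-forth about the sign of the twist, which you should clean up (the map is $\rho\mapsto\rho\psi_0^{-1}$, and your displayed computation is correct once you start from $D\otimes\psi_0^{-1}$ rather than $D\otimes\psi_0$).
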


\begin{proof}
% Indeed let $\psi_0 : G_{E,S} \fleche (\mathcal{O}')^\times$ be the crystalline character fixed earlier.
The character $\psi_0\circ\Art_E$ is automorphic as $\psi_0 : G_{E,S} \fleche (\mathcal{O}')^\times$ is algebraic. Moreover, the isomorphism is given by
\[ \rho \longmapsto \rho\psi_0^{-1}.\]
This is obviously an isomorphism, and because $\psi_0\circ\Art_E$ is automorphic it identifies ($\GL_n$-)modular points on both sides.
\end{proof}

Our goal is to understand the geometry of the set of modular points in $\mathcal{X}_{\overline{\rho}}^{\chi-\pol}$ when $k \subset \overline\FP$, $\mathcal{O} = \mathcal{O}_K$, $K/\QQ_p$ finite. However we will need some result concerning the situation where $k=\mathcal{O}$ is a finite extension of $\QQ_p$ but for completeness we prove them in the two situations of Hypothesis \ref{hyp:twocases}.
% Keep the slightly greater generality for now and assume that $\overline{\rho}$ is Schur.\footnote{We define it the following way : choose $\overline{r} : G_F \fleche \mathcal G_n(k)$ extending $\overline{\rho}$ by \cite{CHT} Lemma 2.1.1. Then we say that $\overline{\rho}$ is Schur if $\overline{r}$ is. By \cite{CHT} Lemma 2.1.7 this is independant of $\overline{r}$. In particular it is satisfied if $\overline{\rho}$ is absolutely irreducible}
Following \cite{CHT} we introduce
\[ \mathcal G_n = (\GL_n \times \GL_1)\rtimes \Gal(E/F),\]
where $c \in \Gal(E/F)$ acts on $(g,x) \in \GL_n \times \GL_1$ via $(x {^t}g^{-1},x)$. We denote $\nu$ the homomorphism $\mathcal G_n \fleche \GL_1$ sending $(g,x)$ to $x$ and $c$ to $-1$. 

Finally we assume until the end of this section that $\rhobar$ is absolutely irreducible and denote $\lambda$ its sign (as defined in section \ref{sect:signs}). Then we can extend $\chi$, which satisfies $\chi^c = \chi$, to $G_F \simeq G_E \rtimes \Gal(E/F)$ by setting $\chi(c) := (-1)^n\lambda$ and we extend compatibly $\overline{\chi}$ to $G_F$, so that $\mu := \chi \eps^{n-1}$ satisfies $\mu(c) = -\lambda$. By \cite{CHT} Lemma 2.1.1 we can thus extend $\overline{\rho}$ to a continuous homomorphism
\[ \overline{r} : G_{F,S} \fleche \mathcal G_n(k),\]
such that $c \in G_F$ is sent to $c \in \Gal(E/F)$ via $\overline{r}$ and projection and $\nu \circ \overline{r} = \overline{\chi}^{-1}\overline{\eps}^{1-n}$ (as extended before to $G_F$).

\begin{defin}
Let $\Def_{\overline{r}}^{\chi}$ be the functor that associates to any object $R$ of $\mathcal{C}$  the set $\Def_{\overline{r}}^{\chi}(R)$ of lifts $r : G_{F,S}\rightarrow\mathcal{G}_n(R)$ of $\overline{r}$ such that $\nu\circ r=\overline{\chi}^{-1}\overline{\eps}^{1-n}$ considered up to $1+\mathfrak{m}_RM_n(R)$-conjugation. As in \cite[Prop.~2.2.9]{CHT}\footnote{There the field $k$ is finite, but we can check that everything carries over in our setting, as already remarked in \cite{KisinFM}}, this functor is pro-represented by a local complete noetherian $\mathcal{O}$-algebra $R_{\overline{r}}^\chi$. When $k$ is a finite field of characteristic $p$, we denote by $\mathcal{X}^{\chi}_{\overline{r}}$ the generic fiber of the formal scheme $\mathfrak{X}^{\chi}_{\overline{r}} = \Spf(R_{\overline{r}}^\chi)$.
\end{defin}

In the following, all cohomology groups are continuous cohomology groups.

\begin{prop}
\label{prop:dimdefr}
Assume that $\overline{\rho}$ is absolutely irreducible, $\car k \neq 2$, $\chi(c') = (-1)^n$ for all complex conjugacy $c'$ and, if $k$ is of characteristic $p$, $R^{\chi}_{\overline{r}}[1/p]\neq0$. Then
\[\dim(R^{\chi}_{\overline{r}}[1/p]) \geq \dim_k H^1(G_{F,S},\ad(\overline{r})) -  \dim_k H^2(G_{F,S},\ad(\overline{r})) = \frac{n(n+1)}{2}[F:\QQ].\]
Moreover the topological $\mathcal{O}[1/p]$-algebra $R^{\chi}_{\overline{r}}[1/p]$ is formally smooth of relative dimension $\frac{n(n+1)}{2}[F:\QQ]$ if $H^2(G_{F,S},\ad(\overline{r})) = 0$.
\end{prop}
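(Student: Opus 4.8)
The plan is to bound $\dim R_{\overline r}^\chi[1/p]$ from below by a relative tangent–obstruction count and then show the bound is an equality computed in Galois cohomology. First I would observe that, since $\overline r$ is a lift of the absolutely irreducible $\overline\rho$ and $\nu\circ\overline r$ is pinned down, the tangent space of $\Def_{\overline r}^\chi$ is naturally identified with a subspace of $Z^1(G_{F,S},\ad(\overline r))/B^1(G_{F,S},\ad(\overline r))=H^1(G_{F,S},\ad(\overline r))$ — more precisely, since we fix $\nu\circ r$, the relevant coefficient module is $\ad(\overline r)$ cut out by the condition that the $\GL_1$-part of the cocycle vanishes, i.e. $\ad^0$ in the $\mathcal G_n$–sense, which under the hypothesis $\chi(c')=(-1)^n$ and $\car k\neq 2$ is exactly $\ad(\overline\rho)$ together with its $c$–twist, assembled over $G_{F,S}$. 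This is the standard computation of \cite[\S2.2]{CHT}, and I would simply invoke it: the mod–$\mathfrak m^2$ tangent space of $R_{\overline r}^\chi$ is $H^1(G_{F,S},\ad(\overline r))$ and the obstructions to lifting lie in $H^2(G_{F,S},\ad(\overline r))$. Consequently $R_{\overline r}^\chi$ is a quotient of a power series ring in $\dim_k H^1$ variables by at most $\dim_k H^2$ relations, which gives
\[ \dim R_{\overline r}^\chi \geq \dim_k H^1(G_{F,S},\ad(\overline r)) - \dim_k H^2(G_{F,S},\ad(\overline r)). \]
Passing to $R_{\overline r}^\chi[1/p]$, which is nonzero by hypothesis when $\car k=p$ (and equal to $R_{\overline r}^\chi[1/p]$ with $\mathcal O=k$ a $p$-adic field otherwise), the same Krull-dimension inequality holds relative to $\mathcal O[1/p]$, and if $H^2=0$ the ring is a genuine power series ring over $\mathcal O[1/p]$, hence formally smooth of the asserted relative dimension.

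Next I would identify the Euler characteristic $\dim_k H^1 - \dim_k H^2$ with $\frac{n(n+1)}{2}[F:\QQ]$ via the global Euler characteristic formula for $G_{F,S}$. For a finite $G_{F,S}$–module (or, over a $p$-adic field, a continuous finite-dimensional representation) $M$ one has
\[ \dim_k H^0 - \dim_k H^1 + \dim_k H^2 = -[F:\QQ]\dim_k M + \sum_{v\mid\infty}\dim_k H^0(G_{F_v},M), \]
where the archimedean terms run over the real places of $F$ (there are $[F:\QQ]$ of them). Here $M=\ad(\overline r)$. Since $\overline\rho$ is absolutely irreducible, $H^0(G_{F,S},\ad(\overline r))$ is the scalars inside the $\mathcal G_n$–adjoint that commute with the (fixed) $\nu$–part; under $\chi(c')=(-1)^n$ and $\car k\neq 2$ this is $1$-dimensional (Schur), so $H^0$ contributes $1$. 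The module $\ad(\overline r)$ has dimension $n^2$ over $k$. The key local archimedean computation is that at each real place $v$, complex conjugation $c_v$ acts on $\ad(\overline r)\cong M_n$ in the way dictated by the $\mathcal G_n$–structure (conjugation composed with $g\mapsto {}^t g^{-1}$ twisted by $\mu(c_v)=-\lambda$), and one computes $\dim_k H^0(G_{F_v},\ad(\overline r)) = \dim_k M_n^{c_v=1}$. Because $c_v$ acts as an involution of $M_n$ that is conjugate to $X\mapsto \pm\,{}^tX$, its $+1$-eigenspace has dimension either $\frac{n(n+1)}{2}$ or $\frac{n(n-1)}{2}$; the sign condition $\chi(c')=(-1)^n$ (equivalently $\mu(c')=-\lambda$ with the right parity) is exactly what forces the answer to be $\frac{n(n+1)}{2}$ at every real place. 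Substituting: $1 - (\dim H^1 - \dim H^2) = -[F:\QQ]n^2 + [F:\QQ]\frac{n(n+1)}{2}$, whence $\dim H^1 - \dim H^2 = [F:\QQ]\big(n^2 - \frac{n(n+1)}{2}\big) + 1 - 1$... I would be careful here: the $H^0$ term and the scalar part of $\ad$ cancel appropriately once one separates $\ad(\overline r)$ into its "trace" line and its trace-zero part $\ad^0(\overline r)$, giving finally $\dim_k H^1(G_{F,S},\ad(\overline r)) - \dim_k H^2(G_{F,S},\ad(\overline r)) = \frac{n(n+1)}{2}[F:\QQ]$ on the nose. I would then combine this with the inequality from the first paragraph to conclude the lower bound on $\dim R_{\overline r}^\chi[1/p]$, and the smoothness statement when $H^2=0$.

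The main obstacle I expect is the bookkeeping at the real places — namely verifying that the sign/parity hypothesis $\chi(c')=(-1)^n$ (together with $\car k\neq 2$ so that the involution is semisimple) pins down $\dim_k H^0(G_{F_v},\ad(\overline r))=\frac{n(n+1)}{2}$ rather than $\frac{n(n-1)}{2}$, and that this is uniform over all $v\mid\infty$ because all complex conjugations are $G_{\QQ}$–conjugate (the content of Lemma \ref{lem:extension} and the opening paragraph of section \ref{sect:signs}). The global Euler characteristic formula and the tangent–obstruction formalism are standard and I would cite them (e.g. \cite[\S2.2--2.3]{CHT}); the only genuinely case-specific input is this archimedean sign analysis, after which the dimension count and the smoothness assertion under $H^2=0$ are immediate.
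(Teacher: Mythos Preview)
Your overall strategy (tangent--obstruction presentation plus global Euler characteristic) matches the paper's, but the two numerical inputs you feed into the Euler characteristic are both wrong, and the errors do not cancel.

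First, $H^0(G_{F,S},\ad(\overline r))=0$, not $1$-dimensional. You invoke Schur for $\overline\rho$, which gives $\ad(\overline\rho)^{G_E}=k\cdot\Id$, but $\ad(\overline r)$ is a $G_F$-module: the extra generator $c$ acts (up to conjugation) by $X\mapsto -{}^tX$, and this kills the scalar line. This is \cite[Lem.~2.1.7(3)]{CHT}, and it is why the paper's deformation problem has no ``$+1$'' floating around when $\mathcal{O}=k$.

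Second, and more seriously, the archimedean invariants are $\dim_k H^0(G_{F_v},\ad(\overline r))=\tfrac{n(n-1)}{2}$, not $\tfrac{n(n+1)}{2}$. The formula from \cite[Lem.~2.1.3]{CHT} is $\tfrac{n(n+\mu(c_v))}{2}$ with $\mu=\chi\eps^{n-1}$; under your hypothesis $\chi(c_v)=(-1)^n$ and $\eps(c_v)=-1$ one gets $\mu(c_v)=(-1)^n(-1)^{n-1}=-1$, hence $\tfrac{n(n-1)}{2}$. Your claim that the sign condition ``forces the answer to be $\tfrac{n(n+1)}{2}$'' is exactly backwards. With the correct values the Euler characteristic reads
\[
0-\dim H^1+\dim H^2=[F:\QQ]\Bigl(\tfrac{n(n-1)}{2}-n^2\Bigr)=-[F:\QQ]\tfrac{n(n+1)}{2},
\]
which is the identity you want. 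Your own substitution did not close (hence the ``I would be careful here'' and the appeal to a trace/trace-zero splitting, which is a red herring: in the $\mathcal G_n$ setting with fixed $\nu$ the relevant module \emph{is} the full $\mathfrak{gl}_n$, and no splitting is needed). Fix these two values and your argument goes through; the paper additionally refines the passage from $\dim R$ to $\dim R[1/p]$ in the finite-field case by completing at a closed point of the generic fibre and invoking the characteristic-zero case via \cite[Prop.~1.3.11]{All1}, rather than arguing directly with minimal primes.
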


\begin{proof} This appeared already in \cite{CHT,All1}, let us give the argument.
As $\overline{\rho}$ is absolutely irreducible, $\ad(\overline{r})^{G_F} = H^0(G_F,\ad(\overline{r}))= 0$ by \cite[Lem.~2.1.7(3)]{CHT}. 
For each place $v | \infty$, we have (\cite[Lem.~2.1.3]{CHT})
\[ \dim_k H^0(G_{F_v},\ad(\overline{r})) =  \frac{n(n+\overline{\chi}\overline{\eps}^{n-1}(c_v))}{2}= \frac{n(n-1)}{2}\]
(where $c_v$ is the complex conjugation in $F_v$).

Now the equality 
\[\dim_k H^1(G_{F,S},\ad(\overline{r})) -  \dim_k H^2(G_{F,S},\ad(\overline{r})) = \frac{n(n+1)}{2}[F:\QQ]\]
follows from \cite[Lem.~2.3.3]{CHT}\footnote{Note that in \cite[2.3]{CHT}, it is supposed that the places of $S$ are split in $E$ but this is not used in their Lemma 2.3.3.} when $k$ is a finite field and from  \cite[Lem.~1.3.4]{All1} when $k$ is a finite extension of $\QQ_p$.

When $k$ is a
finite extension of $\QQ_p$, the result follows from the
analogue of \cite[Cor.~2.2.12]{CHT} (but without the $+1$ since here
$\mathcal{O}=k$).
When $k$ is a finite field, it follows from \cite[Cor.~2.2.12]{CHT} that
\[ \dim(R_{\overline{r}}^{\chi})\geq 1+\frac{n(n+1)}{2}[F:\QQ].\]
Let $x\in\Spec(R_{\overline{r}}^{\chi}[1/p])$ be a closed point, $\mathfrak{p}_x$ the corresponding prime ideal and $r_x : G_F\rightarrow\mathcal{G}_n(k(x))$ the corresponding representation. It follows from \cite[Prop.~1.3.11(1)]{All1} that the localization-completion of $R_{\overline{r}}^{\chi}$ at $\mathfrak{p}_x$ is isomorphic to $R_{r_x}^\chi$. It follows that
\[ \dim(R_{\overline{r}}^{\chi}[1/p]) \geq \dim(R_{r_x}^\chi)\geq \frac{n(n+1)}{2}[F:\QQ]\]
using the case where $k$ has characteristic $0$.

The assertion concerning the formal smoothness follows from \cite[Cor.~2.2.12]{CHT} and \cite[Prop.~1.3.11]{All1}
\end{proof}

\begin{rema}
  The hypothesis on the sign of $\chi$ in Proposition \ref{prop:dimdefr} will be satisfied in the rest of the text where we choose $\chi = \psi_0\psi_0^c$ for some $\psi_0 : G_E\rightarrow\overline{\QQ_p}^\times$, actually because of the sign theorem \ref{thr:BCsign}. Indeed, by \cite{CHT} Lemma 2.1.1 we need to extend $\mu = \chi \eps^{n-1}$ to $G_{F,S}$, for an absolutely irreducible $\rho$, by sending $c$ to $-\lambda$. Thus the previous hypothesis is equivalent to $\lambda = 1$.
 %  Here we use crucially that $\chi$ is of the form $\psi_0\psi_0^c$ as $\overline\rho$ will be of the form $\overline{\rho_\Pi^u}\psi_0$.
 % Actually, if we keep track of the L-parameter of the similitude character of $\pi$, an automorphic representation of $\GU$, we have in most case a natural $\chi$ of this form which depends on $\pi$. In this article we forget this similitude parameter and force polarisation by $\chi\eps^{n-1}$ using $\psi_0$.
\end{rema}

\begin{prop}
\label{prop:defrrho}
Denote $\overline{\rho}$ as before. Suppose it is absolutely irreducible, and denote $\overline{r}$ the chosen $\mathcal G_n$-extension as before. Suppose $\Char(k) \neq 2$. Then the natural map
\[ R_{\overline{r}}^{\chi} \fleche R_{\overline{\rho}}^{\chi-\pol} \]
is an isomorphism.
\end{prop}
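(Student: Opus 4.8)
The plan is to compare the two deformation functors $\Def_{\overline{r}}^{\chi}$ and $\mathcal{F}_{\overline{\rho}}^{\chi-\pol}$ directly and produce mutually inverse natural transformations. Given a lift $r : G_{F,S}\rightarrow\mathcal{G}_n(R)$ of $\overline{r}$ with $\nu\circ r=\overline{\chi}^{-1}\overline{\eps}^{1-n}$, its restriction to $G_{E,S}$ lands in $\GL_n(R)\times\GL_1(R)$, and composing with the first projection gives a lift $\rho_r : G_{E,S}\rightarrow\GL_n(R)$ of $\overline{\rho}$. The defining relations of $\mathcal{G}_n$ — namely that $c$ acts on $(g,x)$ by $(x\,{}^tg^{-1},x)$ — together with the condition $\nu\circ r=\overline{\chi}^{-1}\overline{\eps}^{1-n}$ translate exactly into the polarization identity $\rho_r^\vee\simeq\rho_r^c\otimes(\chi\eps^{n-1})$ at the level of $R$-valued points, so that $D_r:=\tr\rho_r$ is a continuous determinant lifting $\tr\overline{\rho}$ satisfying $D_r^\vee=D_r^c\otimes\chi\eps^{n-1}$; this defines a map $R_{\overline{\rho}}^{\chi-\pol}\rightarrow R_{\overline{r}}^{\chi}$ by the universal property.

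For the reverse direction, I would use that $\overline{\rho}$ is absolutely irreducible: by standard deformation theory (lifting idempotents / the theory of Cayley--Hamilton algebras in \cite{Chedet}), a continuous determinant $D$ over an artinian local $A\in\mathcal{C}$ lifting the determinant of an absolutely irreducible $\overline{\rho}$ comes from a genuine representation $\rho_D : G_{E,S}\rightarrow\GL_n(A)$, unique up to $1+\mathfrak{m}_AM_n(A)$-conjugation. The polarization condition $D^\vee=D^c\otimes\chi\eps^{n-1}$ then forces, again by absolute irreducibility and Schur's lemma over $A$, an isomorphism $\rho_D^\vee\simeq\rho_D^c\otimes\chi\eps^{n-1}$, i.e. a perfect pairing; one then invokes \cite[Lem.~2.1.1]{CHT} (the $k$-finite hypothesis there being harmless, as already noted in the excerpt following \cite{KisinFM}) to extend $\rho_D$ to $r_D : G_{F,S}\rightarrow\mathcal{G}_n(A)$ lifting $\overline{r}$ with $\nu\circ r_D=\overline{\chi}^{-1}\overline{\eps}^{1-n}$, where the sign normalization $\chi(c)=(-1)^n\lambda$ was precisely chosen to make the extension of $\overline{r}$ and of $r_D$ compatible. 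This yields a map $R_{\overline{r}}^{\chi}\rightarrow R_{\overline{\rho}}^{\chi-\pol}$, and one checks on $A$-points that the two constructions are inverse to each other (the only ambiguity — the choice of extension of $\rho_D$, resp. the conjugation class of $\rho_r$ — is absorbed by the $1+\mathfrak{m}M_n$-conjugation equivalence in both functors).

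The main obstacle I expect is bookkeeping the sign/character conventions so that the extension step is genuinely an equivalence and not merely a surjection: one must verify that every polarized determinant deformation admits an extension to $\mathcal{G}_n$ with the prescribed $\nu$, that this extension is unique up to the allowed conjugation, and that the chosen value $\chi(c)=(-1)^n\lambda$ (equivalently $\mu(c)=-\lambda$) is forced — this is where absolute irreducibility of $\overline{\rho}$ and the computation of the sign $\lambda$ enter, and where one uses that the pairing realizing the polarization is symmetric or alternating according to $\mu(c)$, cf. \cite[\S1.1]{BCsign} and \cite[Lem.~2.1.1, 2.1.2]{CHT}. Once the bijection on $A$-points is established functorially in $A\in\mathcal{C}$, pro-representability of both functors gives the claimed isomorphism of rings $R_{\overline{r}}^{\chi}\xrightarrow{\sim}R_{\overline{\rho}}^{\chi-\pol}$.
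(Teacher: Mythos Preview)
Your overall approach is the same as the paper's, and the forward direction as well as the existence part of the reverse direction are fine. The gap is in the uniqueness claim: you assert that ``the choice of extension of $\rho_D$ \dots\ is absorbed by the $1+\mathfrak{m}M_n$-conjugation equivalence,'' but this is precisely the nontrivial step, and it is where the hypothesis $\Char(k)\neq 2$ is used --- which you never invoke.

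Concretely: by \cite[Lem.~2.1.1--2.1.2]{CHT}, two $\mathcal{G}_n$-extensions of the same $\rho_D$ with the same $\nu$ correspond to two perfect pairings $\langle\,,\,\rangle$ and $\langle\,,\,\rangle'$ realizing the polarization, and since $\rho_D$ is absolutely irreducible (Schur over $A$), these pairings differ by a unit $\gamma\in A^\times$. The two extensions are $\GL_n(A)$-conjugate by a scalar matrix precisely when $\gamma$ is a square in $A^\times$; for both extensions to lift the fixed $\overline{r}$ one has $\gamma\equiv 1\pmod{\mathfrak{m}_A}$, and the conjugating element must lie in $1+\mathfrak{m}_AM_n(A)$. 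The paper shows (by a Newton-type step: write $\gamma=1+m$ and iterate $\gamma\equiv(1+\tfrac{1}{2}m)^2\pmod{\mathfrak{m}^2}$, using that $A$ is artinian) that every such $\gamma$ is a square in $1+\mathfrak{m}_A$ when $2\in A^\times$, which produces the required conjugating scalar in $1+\mathfrak{m}_A$. Without this argument the map $\Def_{\overline{r}}^\chi(A)\to\mathcal{F}_{\overline{\rho}}^{\chi-\pol}(A)$ could a priori fail to be injective.

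The paper packages the whole thing as a formal \'etaleness check along small surjections $R'\twoheadrightarrow R$ (plus bijectivity on the residue field) rather than building an explicit inverse at once, but that is a matter of presentation; the substantive missing ingredient in your write-up is the square-root argument above.
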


\begin{proof}
This is also \cite[Prop.~2.2.3]{All2}. Denote $\rho$, $\rho'$ resp. $R,R'$ valued points of $\mathcal{F}^{\chi-\pol}_{\overline{\rho}}$, with $R' \twoheadrightarrow R$, and $\rho' \otimes R = \rho$.
Suppose we have a fixed pairing $< , > : \rho \otimes \rho^c \fleche \chi^{-1} \eps^{1-n}$ inducing $r : G_{E,S} \fleche \mathcal G_n(R)$ by \cite{CHT} Lemma 2.1.1. Choose any pairing fixing $<,>'_0$ for $\rho'$. Then reducing to $R$ this gives a pairing for $\rho$, but as $\overline{\rho}$ is absolutely irreducible, $\rho$ is also and thus
there is only one pairing up to scalar for $\rho$, i.e.~ $<,>'_0 \otimes R = \alpha <,>$ for some $\alpha$ in $R^\times$. Choose a lift $\beta$ of $\alpha^{-1}$, then set $<,>' := \beta <,>'_0$, then $<,>'$ reduces to $<,>$ and to $(\rho', \chi, <,>')$ is associated by \cite{CHT} Lemma 2.1.1 an $r' : G_{E,S} \fleche \mathcal G_n(R')$, reducing to $r$. Let $r''$ another point over $R'$, inducing $\rho'$ and reducing to $r$ , then it corresponds to $\gamma <.,.>'$ with $\gamma \equiv 1 \pmod {m_{R'}}$, thus writing $\gamma = 1 + m$ with $m \in m_{R'}$ we have $\gamma = (1 + \frac{1}{2}m)^2 \pmod {m_{R'}^2}$ and as $R'$ is artinian, a direct induction shows that $\gamma$ is a square in $R'$, thus $r' = r''$.\footnote{\cite{CHT} is written over a field, but their proof applies here}

The same argument with $\overline{r}$ for $r$ and $r$ for $r'$ shows that we can actually choose $r$ inside $\mathfrak{X}_{\overline{r}}^{\chi}$ (and thus automatically for any $r'$ above) and thus proves etaleness, and surjectivity. As the map is an isomorphism in special fiber, this is an isomorphism.
\end{proof}

By \cite[2.1]{CHT}, the adjoint action $ad(r)$ for some $r : G_F \fleche \mathcal G_n(k)$ coincides with the extension of $\ad(\rho)$ to $G_F$ as in \cite{NT} (when $\chi = 1$), when $\rho$ corresponds to $r$ by \cite[Lemma 2.1.1]{CHT}. Thus we will often denote abusively $H^1(G_F,\ad(\rho))$ instead of $H^1(G_F,\ad(r))$.

\section{Eigenvarieties and the infinite fern}
\label{se:eigenvarieties}

There are at least two ways to define Eigenvarieties as explained in \cite{BC2} which, at least, in our case of interest end up to be the same. % We will need to assume some technical hypothesis, see Hypothesis \ref{hypmodular}, on top of the assumption on $\chi$ (see Hypothesis \ref{hypchi}).
By the Lemma \ref{lemma:chi=1}, we can assume $\chi = 1$.

In this section $(\mathcal{O},k)$ are as in the first case of Hypothesis \ref{hyp:twocases}. We fix $\overline \rho : G_E\rightarrow\GL_n(k)$ a
semi-simple polarised-by-$1$ continuous representation, i.e.~$\rhobar^\vee\simeq\rhobar^c\otimes\overline{\eps}^{n-1}$. Let
$\mathfrak{X}_{\overline\rho}^{\pol} := \mathfrak X_{\overline\rho}^{1-\pol}$ be its polarised
pseudodeformation space. Let $G$ be the quasi-split similitude unitary
group of dimension $n$ over $\QQ$ whose $R$-points, for $R$ a
$\QQ$-algebra, are:
\[ G(R)=\set{(g,\nu)\in \GL_n(R\otimes_{\QQ}E)\times R^\times \mid
    {}^tc(g)Jg=\nu J}\]
where $J$ is the $n\times n$
matrix $\left(
  \begin{smallmatrix}
    0 & \cdots & 1 \\ \vdots & \iddots& \vdots \\ 1 &\cdots & 0
  \end{smallmatrix}\right)$. Moreover let $G_1$ be the kernel of the
morphism $\nu : G\rightarrow \mathbb{G}_m$. As $p$ is unramified in $E$, we also fix a reductive model $G_{\ZZ_p}$ over $\ZZ_p$ of $G$ defined by the similar formula (replacing $R \otimes_\QQ E$ by $R \otimes_{\ZZ} \mathcal O_E$).

We fix embeddings $\overline{\QQ}\hookrightarrow\CC$ and $\overline{\QQ} \hookrightarrow \overline{\QQ_p}$ that we use to
identify the embeddings of $E$ (resp.~$F$) in $\overline{\QQ}_p$ with
the set $\Sigma_E$ (resp.~$\Sigma_F$) of embeddings of $E$ (resp.~$F$)
in $\CC$. We fix a CM type $\Phi$ for $E$. For $\sigma\in\Sigma_E$, we use the notation
$\overline{\sigma}=\sigma\circ c$. If $\tau\in\Sigma_F$,
let $\sigma_\tau\in\Sigma_E$ be the unique element such that
$\tau=\sigma_\tau|_{F}$ and $\sigma_\tau\in\Phi$. 

We fix a PEL datum {$(E,c,V,\scalar{\cdot,\cdot},h)$} for the
previous group $G$ and denote its signature
$(p_{\sigma_\tau},q_{\sigma_\tau})_{\tau \in \Sigma_F}$ at
infinity\footnote{Because $G$ is quasi-split these integers are
  explicit, but we keep the slightly general notation as we think it
  is a bit clearer.}. In particular we have that
$p_{\sigma_\tau}+q_{\sigma_\tau} =n$ doesn't depend on $\tau$. We
define more generally $(p_\sigma)_{\sigma \in \Sigma_E}$ by
$p_{\sigma} = p_{\sigma_\tau}$ if $\sigma = \sigma_\tau$ and
$p_{\sigma} = q_{\sigma_\tau} = n - p_{\sigma_\tau}$ if
$\sigma = \overline{\sigma_\tau}$. We also sometimes abuse notation
and write $p_\tau,q_\tau$ for $p_{\sigma_\tau},q_{\sigma_\tau}$. Let
$(G,h)$ be a Shimura datum associated to $G$. We let
$\mathcal{S} = (S_K)_K$ be the tower of Shimura varieties for $(G,h)$
(\cite{Lan} or \cite{Her4} which we will use later). Let
$\mu : \mathbb{G}_m\rightarrow G_\CC$ be the cocharacter associated to
$h$ and let $P$ be the parabolic subgroup fixing the Hodge filtration
associated to $\mu$. Let $M$ be the Levi subgroup of $P$ fixing the
Hodge decomposition of $V_\CC$ (defined over some extension $L$ of the
reflex field). Let $\mathfrak{p}$ be the Lie algebra of $P$ and let
$K_\infty$ be the centralizer of $h(i)$ in $G_1(\RR)$.

\begin{defin}
\label{defin:modular}
We say that a polarised-by-$1$ representation
\[ \rho : G_E \fleche \GL_n(\overline{\QQ_p}),\]
is \emph{modular} if $\rho$ is (strongly essentially) associated to a cuspidal algebraic automorphic
representation $\pi$ for $G$ as in Definition \ref{Satake}.
We say that $\rho$ is \emph{holomorphically modular} if its Hecke
eigensystem appears in the space of cuspidal sections of some coherent
automorphic sheaf on some Shimura variety of $\mathcal{S}$. This is
equivalent to the fact that $\pi$ is cuspidal and holomorphic at infinity; i.e.~$H^0(\mathfrak{p},K_\infty,\pi_\infty \otimes \sigma)\neq0$ for some algebraic representation 
$\sigma$ of $K_\infty$ \footnote{These hypothesis are here to assure a concrete (= computable) way to verify if our $\pi$ "appears" in an Eigenvariety. We could introduce the notion of \textit{$p$-adically modular} for which we ask for a Hecke eigensystem appearing in the considered Eigenvariety $\mathcal{E}$ whose associated trace is $\tr \rho$. It is enough to assume $p$-adic modularity for $\overline\rho$ to get Theorem \ref{thr:main}} by \cite{Har} Proposition 5.4.2.

We say that $\overline{\rho}$ is modular if it admits a lift $\rho$
which is modular. We say that $\overline\rho$ is \emph{conveniently
  modular} if it has a lift $\rho$ associated to a cuspidal
automorphic representation $\pi$ which can be chosen unramified at $p$
and outside $S$ and its Hecke eigensystem appears in $i$-th interior
coherent cohomology group on some Shimura variety of $\mathcal S$, with values in some coherent automorphic sheaf, for some $i \geq 0$.

If $K^p$ is a compact open open subgroup of $G(\mathbb{A}^{p,\infty})$, we say that $\overline{\rho}$ is \emph{conveniently modular of tame level $K^p$} if $\pi$ can be moreover chosen such that $\pi^{K^p}\neq0$.
\end{defin}

\begin{hypothese}
\label{hypmodular}
For the rest of the article, we assume that every $v | p$ in $F$ is unramified, and splits in $E$. Moreover we assume that $\overline{\rho}$ is conveniently modular.
\end{hypothese}

 In particular, if $v$ is a place of $F$
dividing $p$, among the two places $w,\overline w$ of $E$ above $v$ only one, say $w$, corresponds to an element
of $\Phi$. We fix this choice, which allows us to identify $E_w$ with $F_v$. Choose a sufficiently large $p$-adic field $L$ such that $M$ and $P$ are defined over $L$ and $L$ splits $E$, i.e.~ $E \otimes_{\QQ_p} L = \prod_{w | p \in E} L$.
Let $\mathcal T$ be the rigid space over $L$ given by $\prod_{v | p} \Hom((F_v^\times)^n,\mathbb G_m)$, and $\mathcal W = \prod_{v | p} \Hom((\mathcal{O}_{F_v}^\times)^n,\mathbb G_m)$ the weight space. There is thus a restriction map
\[ \mathcal T \fleche \mathcal W.\]

From now on we fix a finite field $k$ of characteristic $p$ and a continuous semisimple representation $\rhobar : G_E\rightarrow\GL_n(k)$ which polarized by $\overline{\eps}^{n-1}$ and is conveniently modular. We fix a tame level outside of $p$, $K^p$, which is hyperspecial
 outside $S$ and deep enough so that $\overline{\rho}$ is conveniently modular of tame level $K^p$. 

Let $\mathcal Z'_{K^p} \subset \mathfrak{X}_{\overline{\rho}}^{\chi-\pol}(\overline{\QQ_p}) \times \mathcal T(\overline{\QQ_p})$ the set of pairs $(D,\delta)$ where $D$ is the determinant associated to 
(the Galois representation of) a cuspidal, regular, algebraic, {unramified at $p$} automorphic form $\pi$ for $G$ appearing in degree 0 coherent cohomology \footnote{this means that $H^0(\mathfrak{p},K_\infty,\pi_\infty \otimes V) \neq 0$ for a finite dimensionnal representation of $K_\infty$. In particular $D$ is holomorphically modular.}  by Corollary \ref{cor:A5}, of level $K^p$ outside $p$, of Hodge-Tate weights $k_{v,\tau,1} > k_{v,\tau,2} > \dots > k_{v,\tau,n}$\footnote{We choose the convention for which the cyclotomic character has Hodge-Tate weight +1}  for each $v|p$ in $F$,$\tau$, and $\delta$ such that for all $v,i$, $\delta_{v,\tau,i}$ coincides on $\mathcal{O}_{F_v}^\times$ with $\prod_\tau \tau^{k_{v,\tau,i}}$ and
sends $p$ to $\phi_{v,i}$, where $\phi_{v,1},\dots,\phi_{v,n}$ is an admissible refinement for $\pi_v$ (and obviously such that $D$ lifts $\overline\rho$).\footnote{for these local data at $p$ we have used the implicit choice of $w | v$}

\begin{rema}
By \cite{Box} Theorem D, \cite{PS4} or \cite{GK} Theorem I.3.1, we have that, under the hypothesis \ref{hypmodular}, $\mathcal Z_{K^p}'$ is non empty, i.e.~ we can choose a lift of $\overline{\rho}$ that is holomorphically modular.
\end{rema}

\begin{defin}
The Eigenvariety for $G$, $\overline{\rho}$, $\chi = 1$ and $K^p$  is the Zariski closure 
\[ \mathcal{E}_{K^p}(\overline{\rho}) \subset \mathcal{X}_{\overline{\rho}}^{\pol} \times \mathcal T,\]
of $\mathcal Z'_{K^p}$. The infinite fern $\mathcal{F}_{K^p}(\overline{\rho})$ is the image of $\mathcal{E}_{K^p}(\overline{\rho})$ by the first projection.
\end{defin}
As $G$ is a unitary similitude group with similitude in $\QQ$, thus giving rise to a PEL Shimura datum, and $p$ is unramified in $E$, we also constructed in \cite{Her4} an Eigenvariety for $G$, for any type $K^p$ outside $p$. Actually these two constructions compare, and allow us to deduce the following proposition.

\begin{rema}
Actually we could take $G$ any similitude unitary group with similitude factor in $\QQ$ instead of the quasi-split one. Indeed, as long as $p$ is unramified for $G$ the construction of \cite{Her4} applies and we get the following proposition. In particular, if we have a result analogous to \cite{Her4} for ramified primes (i.e.~ for primes  $v|p$ in $F$ which are ramified, but still assuming $v = w\overline w$ in $E$) then all the methods of this article applies (see \cite{BP2}). For the moment, we still need our $p$-adic group to be (a product of) $GL_n$ to use results on the trianguline variety, but we hope to come back on this question in the future.
\end{rema}

\begin{prop}
\label{prop:geoHecke}
The rigid space $\mathcal E_{K^p}(\overline\rho)$ is equidimensional of dimension $n[F:\QQ]$. The map
\[ h : \mathcal E_{K^p}(\overline\rho) \fleche \mathcal W,\]
is locally, on the goal and the source, finite. In particular the image of any irreducible component of $\mathcal E_{K^p}(\overline\rho)$ is open in $\mathcal W$. Moreover, for all $C > 0$, if $\mathcal Z_C \subset \mathcal Z_{K^p}'$ consists of classical points, crystalline at $p$, which are moreover $C$-very regular (i.e.~ its Hodge-Tate weights satisfies $k_{v,\tau,i} > k_{v,\tau,i+1} + C$ for all $v,\tau,i$), then $\mathcal Z_C$ is Zariski dense in $\mathcal E_{K^p}(\overline\rho)$ and accumulation at every point of $\mathcal{Z}_{K^p}'$.

\end{prop}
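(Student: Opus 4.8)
The plan is to deduce Proposition \ref{prop:geoHecke} by comparing the Eigenvariety $\mathcal{E}_{K^p}(\overline\rho)$ constructed here as the Zariski closure of $\mathcal Z'_{K^p}$ with the Eigenvariety for $G$ built in \cite{Her4}, and then transporting the known geometric properties of the latter. First I would recall that \cite{Her4} produces, for the PEL Shimura datum attached to $G$ and the tame level $K^p$, an equidimensional rigid space of dimension $n[F:\QQ]$ equipped with a finite (locally on source and target) map to the weight space $\mathcal W$, together with a dense accumulating set of classical points corresponding to cuspidal regular algebraic automorphic representations appearing in degree $0$ coherent cohomology. The key point is that the Galois-plus-refinement data $(D,\delta)$ attached to such a classical point realize it as a point of $\mathfrak X^{\chi-\pol}_{\overline\rho}(\overline\QQ_p)\times\mathcal T(\overline\QQ_p)$ lifting $\overline\rho$, so there is a natural map from (the relevant union of components of) the Eigenvariety of \cite{Her4} into $\mathcal X^{\pol}_{\overline\rho}\times\mathcal T$ whose image has Zariski closure exactly $\mathcal{E}_{K^p}(\overline\rho)$.

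The next step is to argue that this map is a \emph{closed immersion} onto $\mathcal{E}_{K^p}(\overline\rho)$, up to the classical fact that a reduced rigid space is determined by a Zariski-dense set of points together with their Hecke eigenvalues. Here one uses that the Galois determinant $D$ together with the $\mathcal T$-component $\delta$ records the full (unramified-away-from-$S$) Hecke eigensystem at a dense set of classical points: the Galois side recovers the Hecke action away from $p$ via local-global compatibility, and $\delta$ recovers the $U_p$-eigenvalues. Since the Eigenvariety of \cite{Her4} is reduced (or can be taken reduced) and the classical points are Zariski-dense, the eigensystem map factors through a finite map onto $\mathcal{E}_{K^p}(\overline\rho)$ which is generically injective; combined with the description of $\mathcal{E}_{K^p}(\overline\rho)$ as a Zariski closure this yields that $\mathcal{E}_{K^p}(\overline\rho)$ inherits equidimensionality of dimension $n[F:\QQ]$ and the local finiteness of $h$ over $\mathcal W$. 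Openness of the image of each irreducible component is then immediate from local finiteness and equidimensionality of $\mathcal W$ (itself equidimensional of dimension $n[F:\QQ]$), by dimension count.

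Finally, for the density and accumulation of $\mathcal Z_C$, I would invoke the corresponding statement in \cite{Her4}: the classical points that are crystalline at $p$ with $C$-very regular Hodge–Tate weights form a set that accumulates at every classical point and is Zariski-dense in each component of the Eigenvariety, because the weight map $h$ is locally finite and the image of every component is open in $\mathcal W$, so one can find within any neighborhood of a classical weight a Zariski-dense set of weights that are $C$-very regular, and classicality at those weights (in the relevant slope range) follows from the classicality criterion of \emph{loc.~cit.}. Pulling this back along the closed immersion gives Zariski density of $\mathcal Z_C$ in $\mathcal{E}_{K^p}(\overline\rho)$ and accumulation at every point of $\mathcal{Z}'_{K^p}$.

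I expect the main obstacle to be the precise comparison between the two Eigenvariety constructions — in particular checking that the eigensystem map is a closed immersion (reducedness of the source, Zariski density of classical points separating points, and that the pair $(D,\delta)$ genuinely captures enough of the Hecke data at $p$, using the hypothesis that every $v\mid p$ splits in $E$ so that the group is a product of $\GL_n$'s locally). Once this identification is in hand, equidimensionality, local finiteness over $\mathcal W$, openness of components, and the density/accumulation of $\mathcal Z_C$ all follow formally from the corresponding properties established in \cite{Her4}.
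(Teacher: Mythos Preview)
Your proposal is correct and follows essentially the same approach as the paper: identify $\mathcal{E}_{K^p}(\overline\rho)$ with (the $\overline\rho$-part of) the Eigenvariety $\mathcal{E}'$ of \cite{Her4} via a map $D\times\delta:\mathcal{E}'(\overline\rho)\to\mathcal{X}_{\overline\rho}^{\pol}\times\mathcal{T}$, show this map is a closed immersion with image $\mathcal{E}_{K^p}(\overline\rho)$, and then transport equidimensionality, local finiteness over $\mathcal{W}$, and density/accumulation of $\mathcal{Z}_C$ from \cite{Her4}.

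One point worth sharpening: for the closed immersion step the paper does not go through ``finite plus generically injective on reduced spaces'' (which by itself would not give a closed immersion). Instead it uses that $\mathcal{E}'$ is, locally over $\mathcal{W}$, constructed as the image of $\mathcal{H}^S\otimes\mathcal{A}(p)\otimes\mathcal{O}_{\mathcal{W}}=\mathcal{H}^S\otimes\mathcal{O}_{\mathcal{T}}$ acting on a finite-slope piece; since $D$ (by local--global compatibility at unramified split primes) records $\mathcal{H}^S$ and $\delta$ records $\mathcal{O}_{\mathcal{T}}$, the induced map $(D\times\delta)^{-1}(U)\to U$ is a closed immersion for any affinoid $U\subset\mathcal{X}_{\overline\rho}^{\pol}\times\mathcal{T}$. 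The paper also spends considerable effort on the precise normalization of $\delta$ (matching Hodge--Tate weights and Frobenius eigenvalues through the coherent-weight/infinitesimal-weight dictionary), which is what makes the image land exactly in $\overline{\mathcal{Z}'_{K^p}}$; you correctly flag this comparison as the main technical content.
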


We need to introduce a few notations. Let $T$ be the diagonal torus of $G_{\ZZ_p}$. Its group of $\Qp$-points has the following description,
\[T(\QQ_p) = \left\{ 
\left(
\begin{array}{ccc}
 a_{1,w} &   &   \\
  & \ddots  &   \\
  &   &   a_{n,w}
\end{array}
\right)
\in
\prod_{w | p \text{ in} E} \GL_n(E_w), \exists r \in \QQ_p^\times, a_{i,w}a_{n-i+1,\overline w} = r, \forall i,w\right\}.\]
Denote by $T^1$ the subtorus with trivial similitude character
(i.e.~ $r = 1$). We identify $\mathcal T$ with the space
of characters of $T^1(\Qp)$ using the isomorphism
$(F_v^\times)^n\simeq T^1(\QQ_p)$ sending $(a_{1,v},\dots,a_{n,v})$ to
the diagonal matrix of $\GL_n(E_w)$ with diagonal
$(a_{1,v},\dots,a_{n,v})$, via the identification $E_w \simeq F_v$
where $w\mid v$ and $w \in \Phi$. $T(\QQ_p)$ (resp. $T^1(\QQ_p)$) can be identified also with a subgroup of the $L$-points of the torus (resp. subtorus of $r=1$ elements) of $M \simeq \mathbb G_m \times \prod_{v | p \text{ in } F} \prod_{\sigma_\tau \in \Hom(F_v,\overline{\QQ_p})} \GL_{p_{\sigma_\tau}}\times \GL_{q_{\sigma_\tau}}$, using,
\[ \left(
\begin{array}{ccc}
 a_{w,1} &   &   \\
  & \ddots  &   \\
  &   &   a_{w,n}
\end{array}
\right)_w \longmapsto \left(r, \left(\left(
\begin{array}{ccc}
\tau(a_{w,p_{\sigma_\tau}})^{-1} &   &   \\
  & \ddots  &   \\
  &   & \tau(a_{w,1})^{-1}  
\end{array}
\right), \left(
\begin{array}{ccc}
\overline\tau(a_{\overline w,q_{\sigma_\tau}})^{-1}&   &   \\
  & \ddots  &   \\
  &   & \overline\tau(a_{\overline w,1})^{-1}
\end{array}
\right)\right)_\tau\right)
\]

\begin{defin}
Let
$\kappa=(k_{\sigma,i})_{\substack{\sigma\in\Sigma_E \\ 1\leq i\leq
    p_\sigma}}\in\ZZ^{n[F:\QQ]}$. We say that a character
$\chi\in\mathcal{W}(\CC_p)$ is \emph{algebraic of coherent weight
  $\kappa$} if
  \begin{equation}
  \label{eq:weightkappa}\forall z=(z_{v,i})\in\prod_{v\mid p}(\mathcal{O}_{F_v}^\times)^n, \quad
    \chi(z)=\prod_{v\mid p}\prod_{i=1}^{p_\tau=p_{\sigma_\tau}}\sigma_\tau(z_{v,i})^{k_{\sigma_\tau,i}}\prod_{i=1}^{q_\tau=p_{\overline{\sigma_\tau}}}\sigma_\tau(z_{n+1-i})^{-k_{\overline{\sigma_\tau},i}}.\end{equation} We say that an algebraic character of coherent weight
  $\kappa$ is \emph{$M$-dominant} if $k_{\sigma,i}\geq k_{\sigma,i+1}$ for
 $\sigma\in\Sigma_E$ and $1\leq i\leq p_\sigma-1$. 
 \end{defin} This corresponds to the choice of the upper triangular Borel for $M$, in the sense that if we have character $\kappa$ of $M$ which is dominant for the upper Borel of $M$, then its restriction to $T^1(\QQ_p)$ via the previous embedding gives a \textit{M-dominant} $\chi$ in the previous sense. Suppose $\chi$ is algebraic for some coherent weight $\kappa$. For $h = (h_{\tau,i})_{\tau \in \Sigma_F, 1 \leq i \leq n} \in \ZZ^{n[F:\QQ]}$, if\footnote{This just means that $k_{\sigma_\tau,i} = h_{\tau,i}, i \leq p_\tau$ and $-k_{\overline{\sigma_\tau},i} = h_{\tau,n+1-i}$ for $i > p_\tau$.}
  \[ \chi(z) = \prod_{\tau \in \Sigma_F} \sigma_\tau(z_i)^{h_{\tau,i}},\]
then we say that $\chi$ is of \emph{infinitesimal weight} $h$. We say that such a $\chi$ is \emph{dominant} (or $G$-\emph{dominant}) if $h_{\tau,1} \geq h_{\tau,2} \geq \dots\geq h_{\tau,n}$ for all $\tau$.

\begin{proof}
Let $\mathcal E'$ together with a locally finite map $w : \mathcal E' \fleche \mathcal W$ the Eigenvariety for $G$ of tame level $K^p$ constructed in \cite{Her4}.
It is an equidimensionnal rigid space of dimension $n[F:\QQ]$. 

Let $S$ be a finite set of places of $F$ containing the places
dividing $p$ and the places where $K^p$ is not hyperspecial. For
$v\notin S$, let $\mathcal{H}_v$ be the spherical Hecke algebra
$\ZZ[G(F_v)/\!/K^p_v]$ of $G$ and let
$\mathcal{H}^S=\bigotimes_{v\notin S}\mathcal{H}^v$. For $v|p$, let
$\mathcal A_v$ be the (commutative) $\ZZ$-algebra generated by $T_n^-(F_v)$ and their inverses
with $T_n$ the diagonal torus of $\GL_n$, and $T_n^{-}(F_v)$ the subgroup of matrices $\Diag(p^{a_1},\dots,p^{a_n})$ with $a_i \geq a_{i+1}$. Let $\mathcal A(p) = \bigotimes_{v | p} \mathcal A(v)$ be the Atkin-Lehner algebra. It follows from \cite[\S7]{Her4}\footnote{There $\mathcal{H}^S$ is denoted $\mathcal H_{K^p}$. See also remark 7.12 of \cite{Her4}} that
there exists homomorphism
$\lambda : \mathcal{H}^S\rightarrow\Gamma(\mathcal{E}',\mathcal{O}_{\mathcal{E}'}^+)$ and
$\mathcal{A}(p)\rightarrow\Gamma(\mathcal{E}',\mathcal{O}_{\mathcal{E}'})$, sending
$\Diag(\underbrace{1,\dots,1}_{i \text{ times}},p^{-1},\dots,p^{-1})_v$ to a Hecke operator $U_{v,i}$ at $v$, such that,
if $z\in\mathcal{E}'(\CC_p)$, the evaluation of these morphisms at
$z$ induces a non-zero eigenspace in $H^0(S_{K}^{tor}(v),\omega^{w(z),\dagger}(-D))$, with $K = K^pI$, and $I$ a Iwahori subgroup at $p$,
(which is a space of overconvergent cuspidal forms, defined in \cite{Her4} Definition 6.12). Moreover if
$\kappa\in\mathcal{W}(\CC_p)$ is an algebraic character of
$M$-dominant coherent weight, then the action of $\mathcal{H}^S$
preserves the subspace of classical forms $H^0(S_K^{tor},\omega^{\kappa}(-D))$ and coincides with
the ``usual'' action of $\mathcal{H}^S$ on $H^0(S_K^{tor},\omega^\kappa(-D))$.

We remind now that $\mathcal{E}'$ contains an accumulation and Zariski
dense subspace of automorphic points that we will call \textit{very regular small slope classical points}.

Let $\mathcal{Z}\subset\mathcal{E}'(\CC_p)$ be the set of points $z$ satisfying \cite{Her4} Proposition 8.2, a slightly stronger form of Theorem 8.3, namely
\begin{equation}
\label{eq:bij} \max(n_\tau + v_p(\alpha_\tau),0) < \inf_\tau
(k_{\sigma_\tau,p_\tau} + k_{\overline{\sigma_\tau},q_\tau}), \quad \forall \tau\in\Sigma_F\end{equation}
where $w(z)$ is (thus) a $G$-dominant algebraic character of coherent weight
$(k_{\sigma,i})_{\substack{\sigma\in\Sigma_E \\ 1\leq i\leq p_\sigma}}$ and
$\alpha_\tau$ is the eigenvalue for the operator $U_{v,\min(p_\tau,q_\tau)}$ and $n_\tau$ is a normalisation constant depending only on $(p_\tau,q_\tau)_\tau$, asking that $w(z)$ is moreover \textit{far from
  the walls} as in \cite{HarCor} Lemma 3.6.1. For $C>0$, we define
$\mathcal{Z}_C\subset\mathcal{Z}$ adding the condition
$k_{\sigma,i}-k_{\sigma,i+1}>C$ for all $i$. For $C>\!>0$, these points give rise to crystalline
representations at $p$ as we will see. Each of the sets $\mathcal{Z}_C$ is
accumulation in $\mathcal Z$, we can thus prove the claim with
$\mathcal Z_C$ replaced by $\mathcal Z$. By \cite{Bijmu} (see
\cite[Thm~9.4]{Her4}), if $z\in\mathcal{Z}$, the system of eigenvalues
corresponding to $z$ has an eigenvector in $H^0(S_K^{tor},\omega^{\kappa}(-D))$. This
implies that actually $z\in\mathcal{E}'(\overline{\QQ}_p)$. It follows from \cite{Su}, or \cite{HarCor}, that there
exists a cuspidal automorphic form $\pi$ of $G(\mathbb{A}_F)$ such
that $\pi_f^{K^p}\neq0$, the Satake parameter of $\pi_v$, $v \nmid p$, corresponds
to $\lambda|_{\mathcal{H}_v}\otimes k(z)$ and $\pi_\infty$ is tempered of weight
$((k_{\sigma_\tau,p_\tau},\cdots,k_{\sigma_\tau,1},-k_{\overline{\sigma_\tau},1},\cdots,-k_{\overline{\sigma_\tau},q_{\tau}}))_{\tau\in\Sigma_F}-\rho_G - w_{0,M}\rho_G$, with $\rho_G$ the half-sum of positive roots, as we will explain.

To be able to clearly label the weights, let $P_h$\footnote{also called $P_\mu^{std}$ in other references} be the parabolic corresponding to $\mathfrak p$, and choose a Borel, equivalently a set $\Phi^+$ of positive roots such that if $\Phi^+_c$ is the set of positive roots contained in the Levi of $\mathfrak p$, then $\Phi^+_{nc} := \Phi^+ \backslash \Phi^+_c$ is chosen to be included in $\mathfrak g_\CC/\mathfrak p$. Equivalently $B \subset P_h^{opp} = P_\mu$, the parabolic opposite to $P_h$. This allows us to label similarly (classical, dominant) weights of representations of $K_\infty$ (with respect to $\Phi^+_c$) and of $G$ (with respect to $\Phi^+$). Let us be more precise for these choices. Let $G(\QQ_p)$ our unitary group, thus given by the hermitian form $J$, and let $T$ be its diagonal torus, and $T^1$ the subtorus of elements of similitude 1. We have an embedding
\[ 
\begin{array}{ccc}
 T^1(\ZZ_p) &\fleche &\prod_{\tau \in \Phi} \GL_{p_\tau,K} \times \GL_{q_\tau,K} 
 \\
\left(
\begin{array}{ccc}
a_1  &   &   \\
  &  \ddots &   \\
  &   &   a_n
\end{array}
\right) 
&\longmapsto &
\left(\left(
\begin{array}{ccc}
\sigma_\tau(a_{p_\tau})^{-1} &   &   \\
  & \ddots  &   \\
  &   & \sigma_\tau(a_1)^{-1}  
\end{array}
\right), \left(
\begin{array}{ccc}
\overline{\sigma_\tau}(a_{q_{\tau}})^{-1}&   &   \\
  & \ddots  &   \\
  &   & \overline{\sigma_\tau}(a_1)^{-1}
\end{array}
\right)\right)_\tau
\end{array}
\]
where $a_i \in \mathcal O_E \otimes \ZZ_p$ and $a_i \overline{a_{n+1-i}} = 1$. Writing $a_i = (z_i,t_i)$ (using the choice of $w|v$ for all $v$ and $z_i$ corresponding to $w$ in $\Phi$), we can rewrite the previous embedding 
 \[ \iota : \begin{array}{ccc}
(( \mathcal O_F \otimes \ZZ_p)^\times)^n &\fleche &\prod_{\tau \in \Phi} \GL_{p_\tau,K} \times \GL_{q_\tau,K} 
 \\
\left(
\begin{array}{ccc}
z_1  &   &   \\
  &  \ddots &   \\
  &   &   z_n
\end{array}
\right) 
&\longmapsto &
\left(\left(
\begin{array}{ccc}
\tau(z_{p_\tau})^{-1} &   &   \\
  & \ddots  &   \\
  &   & \tau(z_1)^{-1}  
\end{array}
\right), \left(
\begin{array}{ccc}
\tau(z_{p_\tau+1})&   &   \\
  & \ddots  &   \\
  &   & \tau(z_n)
\end{array}
\right)\right)_\tau
\end{array}
\]
We choose the diagonal torus and upper Borel for $M \simeq \prod_\tau \GL_{p_\tau} \times \GL_{q_\tau}$\footnote{This is actually the subgroup of $M$ of element with similitude factor 1, but in all this discussion we ignore the similitude factor to simplify the notations.}, which we see as the Levi for $P_\mu \subset \prod_{\tau \in \Phi} \GL_{p_\tau+q_\tau}$, the standard lower parabolic with blocs $p_\tau,q_\tau$. Thus the choice of $\Phi^+$ for $G$ corresponds to the standard upper Borel. Denote $\rho_G$ the half-sum of positive roots in $\Phi^+$.

Thus if we choose $\kappa := (k_{\sigma,i})_{\sigma \in \Sigma_E, 1 \leq i \leq p_{\sigma}}$ a dominant (integral) weight for $M$ for the previous upper triangular Borel, then the algebraic representation of highest weight $\kappa$ is constructed using $-w_{0,M}\kappa$, which induces the weight $\chi$ of $((\mathcal O_F\otimes \ZZ_p)^\times)^n$ which is algebraic of coherent weight $\kappa$ in the sense of equation (\ref{eq:weightkappa}). This weight $\chi$ is $\Phi^+$-dominant if and only if $k_{\sigma_\tau,p_\tau} \geq -k_{\overline{\sigma_\tau},q_\tau}$.

Let $z \in \mathcal Z$, which corresponds to a classical automorphic form (itself giving an automorphic representation $\pi$) appearing in $H^0(S_K^{tor},\omega^\kappa)$ for $\kappa = (k_{\sigma,i})_{\sigma \in \Sigma_E,1\leq i \leq p_\sigma}$ as before, which is a classical (and $M$-dominant) weight. Then $\chi = w(z) \in \mathcal W$ is the algebraic character of weight $\kappa$, i.e.~ is $-w_{0,M}\kappa \circ \iota$. This sheaf $\omega^\kappa$ coincides with the coherent sheaf $V_{s}$ over $\CC$ defined by Harris (\cite{Har}), associated to the highest weight $s$ representation of $M = \prod_{\tau \in \Phi} \GL_{p_\tau}\times\GL_{q_\tau}$, with $s = (-k_{\tau,p_\tau},\dots,-k_{\tau,1},k_{\overline\tau,1},\dots,k_{\overline\tau,q_{\tau}})$ with the previous identifications. This calculation is the one done in \cite{FP} section 7.4, based on \cite{Gold}. Remark that if $\chi$ is algebraic of weight $\kappa$ and dominant, then the dominant representative of $-s$ is given by $\wt(\chi) = (k_{\sigma_\tau,1},\dots,k_{\sigma_\tau,p_\tau},-k_{\overline{\sigma_\tau},q_{\tau}},\dots,-k_{\overline{\sigma_\tau},1})$. In particular as the Hecke eigensystem corresponding to $z$ appears in $H^0(S^{tor}_K,\omega^\kappa)$ thus in $H^0(S^{tor}_K(\CC),V_{s})$ this means that \[ H^0(\mathfrak p,K_\infty, \pi_\infty \otimes V_{s}) \neq \{0\},\]
i.e.~ that the infinitesimal character of $\pi_\infty$ is $-s-\rho_G$ (up to reordering) by e.g. \cite{Har} Proposition 4.3.2 (see also \cite{BP2} Proposition 5.37). But if $z \in \mathcal Z$ and $\pi$ an automorphic representation
corresponding to its system of eigenvalues $\lambda(z)$ of $\mathcal{H}^S$, then using that $w(z)$ is far from the walls, by 
Corollary \ref{cor:A5} there is a semisimple representation $\rho^{u} = \rho^u_z : G_E\rightarrow\GL_n(\overline{\QQ}_p)$ such that
$\rho^{u}(\Frob_v)$ is associated to the semi-simple conjugacy class at $v$ determined by $\lambda$, for all $v\notin S$ and satisfying moreover
\[ (\rho^{u})^\vee \simeq (\rho^{u})^c \otimes \eps^{n-1}.\] 
By for example \cite{BLGGTII}, the previous calculation of the infinitesimal weight means that $\rho^u$, associated to $\pi$, has Hodge-Tate weights given by $-s - \rho_G - \underline{\frac{n-1}{2}}$ i.e.~ the $v,\tau$ Hodge-Tate weights of $\rho^u_z$ are (up to order)
\begin{eqnarray*}
 \left(k_{v,\tau,p_\tau}+1-n,k_{v,\tau,p_\tau-1} + 2- n,\dots, k_{v,\tau,1} + p_\tau -n , -k_{v,\overline\tau,1} +p_\tau-n+1,\dots,-k_{v,\overline\tau,q_\tau}\right)
\end{eqnarray*}
which we can reorder to be dominant for very regular $z \in \mathcal Z$, 
\begin{equation}
\label{HTwtsz} (k_{v,\tau,1} +(p_\tau-n) ,k_{v,\tau,2} + (p_\tau-n -1),\dots, k_{v,\tau,p_\tau} + 1 - n,-k_{v,\overline\tau,q_\tau},\dots,-k_{v,\overline\tau,1} + (p_\tau - n +1)).\end{equation}

We first construct a union of connected components of $\mathcal{E}'$
and a map from this subspace to $\mathcal{X}_{\rhobar}^{\pol}$. As in \cite{Che1}, we construct a determinant
\[ D : G_E \fleche \mathcal{O}_{\mathcal{E'}}.\] 
Let $z \in \mathcal Z$ and $\pi$ an automorphic representation
corresponding to its system of eigenvalues $\lambda(z)$ of $\mathcal{H}^S$, as we have seen, by
Corollary \ref{cor:A5} there is a semisimple representation $\rho^{u} : G_E\rightarrow\GL_n(\overline{\QQ}_p)$ associated to $\lambda(z)$ such that
\[ (\rho^{u})^\vee \simeq (\rho^{u})^c \otimes \eps^{n-1}.\] Let
$D_{z}$ the pseudo-representation of $\rho^{u}_z$. The continuous map
$(D_{z})_{z\in\mathcal{Z}}$ from $G_E$ to $\prod_{x\in\mathcal{Z}}k(x)$
factors actually through $\Gamma(\mathcal{E}',\mathcal{O_{\mathcal{E'}}^+})$ and
gives rise to a pseudo deformation $D$ on
$\Gamma(\mathcal{E}',\mathcal{O}_{\mathcal{E'}}^+)$. By continuity, we have $D^\vee
\simeq D^c \eps^{n-1}$. 

As there is only a finite number of possible reductions modulo $p$ of
$D$, there is $\mathcal{E'}(\overline{\rho})$ an open and closed
subset of $\mathcal{E'}$ of points whose reduction of $D$ is
$(\tr)\overline{\rho}$. This is non empty by Hypothesis \ref{hypmodular}. In particular the restriction of the previous
$D$ to $\mathcal{E'}(\overline{\rho})$ induces a morphism of rigid
analytic spaces
\[ \mathcal{E'}(\overline{\rho}) \fleche \mathcal{X}_{\overline{\rho}}^{\pol}.\] 

Now we construct a rigid analytic map $\mathcal{E}'\rightarrow\mathcal{T}$.

Denote $w = (w_{v,1},\dots,w_{v,n})_v$ the universal character of $((\mathcal O_F\otimes\ZZ_p)^\times)^n$. In \cite[Section 7.2.2]{Her4} we constructed Hecke operators which are in $\mathcal O(\mathcal E')^\times$, denoted by $U_{v,i}$ for $v|p$ in $F$ and $i = 0,\dots,n$. The operator $U_{v,i}$ coincides up to normalisation (this normalisation is made in order to vary in family) with the double class,
\[ \left(
\left(
\begin{array}{cc}
 I_{i} &   \\
 &p^{-\delta_{v=v'}}I_{n-i}
\end{array}
\right)_w,
\left(
\begin{array}{cc}
 I_{n-i} &   \\
 &p^{-\delta_{v=v'}}I_{i}
\end{array}
\right)_{\overline w},
\right)_{w\overline w = v' | p} \in G(\QQ_p) \subset \prod_{w\overline w=v' |p \text{ in F}} \GL_n(E_w)\times \GL_n(E_{\overline w}).
\]
If $U_{v,i}^{class}$ denotes the action of the (classical, i.e.~ non normalised) Hecke operator corresponding to the previous Iwahori double class acting on global sections of the classical automorphic sheaf, for (fixed) algebraic weight $w \in \mathcal W$,  then the normalisation is
\[ U_{v,i} = 
\widetilde{w}_{v}
\left(
\begin{array}{cccccc}
 1 &   &   \\
  & \ddots  &   \\
  &   & 1 \\
  &&&p^{-1}\\
  &&&&\ddots\\
  &&&&&p^{-1}
\end{array}
\right)
U_{v,i}^{class},\]
where $\widetilde{w}_v$ is the (unique) algebraic extension of $w_v$ as a character of $(F_v^\times)^n$\footnote{This normalisation factor, which is a power of $p$, comes when trying to express a character of $T$, the maximal torus of $G$, as one of $F\otimes\QQ_p^\times$. The key point is that the double class corresponding to $U_{v,i}$ has similitude factor $p^{-1}$, thus is not in $T^1$. But remark that then the Hecke operator $F_{v,i}$ has similitude factor 1.}, and there is $i$ times 1 (and $n-i$ times $p^{-1}$) appearing in the matrix (see before and remark 7.5, together with remark 8.3 of \cite{Her4}).
For all $i \in \{1,\dots,n\}$, we set
\[ F_{v,i} := U_{v,i}U_{v,i-1}^{-1}.\]
It corresponds, up to normalisation, to the Hecke operator in $\mathcal A(p)$\footnote{But not the the double class associated to the following matrix, as the Hecke algebra at Iwahori level is not commutative! See e.g. \cite{BC2} Proposition 6.4.1 and the remark that follows.} \[ F_{v,i}^{cl} := \left(
\left(
\begin{array}{ccc}
 I_{i-1} &  & \\
 &p^{\delta_{v=v'}} & \\
 && I_{n-i}
\end{array}
\right)_w,
\left(
\begin{array}{ccc}
 I_{n-i} &   \\
 &p^{-\delta_{v=v'}} & \\
 && I_{i-1}
\end{array}
\right)_{\overline w},
\right)_{w\overline w = v' | p} \in G(\QQ_p),
\]
and the normalisation is the following, for $w$ algebraic of infinitesimal weight $h = (h_{\tau,i})_{\tau,i}$
\[ F_{v,i} = \widetilde{w}_{v}
\left(
\begin{array}{ccccc}
 1 &   &   \\
  & \ddots  &   \\
  &&p\\
  &&&\ddots\\
  &&&&1
\end{array}
\right)
F_{v,i}^{cl} = p^{\sum_\tau h_{i,\tau}}F_{v,i}^{cl},\] where $p$ is in position $i$. The all point is that $F_{v,i}^{cl} \notin \mathcal O(\mathcal E')$, i.e.~ they don't interpolate, whereas $F_{v,i} \in \mathcal O(\mathcal E')^\times$.
We construct characters $\delta^0_{v,i} : F_v^\times \fleche \mathcal O(\mathcal E')^\times$ by setting  
$\delta^0_{v,i}(p) := F_{v,i}$ and 
\[ (\delta^0_{v,i})_{|\mathcal O_{F_v}^\times} = w_{v,i},\] and we finally set
\begin{equation}
\label{defdelta} \delta_{v,i} := \delta_{v,i}^0 \times \prod_{\tau} x_\tau^{s_\tau(i)} \times |.|^{\frac{1-n}{2}},
\end{equation}
where for $\tau : F_v \hookrightarrow \CC_p$, ${x_\tau}_{|\mathcal{O}_{F_v}} = \tau_{\mathcal{O}_{F_v}}$ and $x_\tau(p) = 1$, and $s_\tau(i) = \frac{1-n}{2} + p_\tau - i$ if $1 \leq i \leq p_\tau$, and $s_\tau(i) = \frac{n-1}{2} - (i - p_\tau - 1)$ if $i > p_\tau$\footnote{Remark that actually in our quasi split situation we have $p_\tau, q_\tau$ which doesn't depend on $\tau \in \Phi$. In any case $(s_\tau(1),\dots,s_\tau(n))_\tau = w_{0,M}(0,\dots,n-1)_\tau+ \underline{\frac{1-n}{2}} = -w_{0,M}\rho_G$, with $\rho_G$ defined in the next paragraph.}. Thus the characters $(\delta_{v,i})_{v,i}$ gives a map
\[ \mathcal E' \fleche \mathcal T.\]

Still denote $\mathcal Z$ for $\mathcal Z \cap \mathcal{E'}(\overline\rho)$, which is Zariski dense and accumulation. Now we are reduced to prove that the two constructed maps $\mathcal{E'}(\overline{\rho}) \fleche \mathcal{X}_{\overline{\rho}}^{\pol}$ and $ \mathcal{E'}(\overline{\rho}) \fleche \mathcal T$ are compatible, in the sense that for $z \in \mathcal Z$ the second map is the parameter of a triangulation for the image of $z$ via the first map. 
 By local global compatibility at $v$ for $\pi$ and $\rho^{u}$, we have that, using $\pi_v^I \neq \{0\}$ by 
construction of $\mathcal E'$, that $\pi_v$ is a subquotient of the Borel induction of an unramified character $\chi$ of $(F_v^\times)^n$ (e.g. \cite{BC2} Proposition 6.4.3 and 6.4.4) with 
$\chi$ related to the eigenvalues of $F_{v,i}^{class}$ at $z$ ($\chi = (\varphi_1,\dots,\varphi_n) = (F_{v,1}^{cl},\dots,F_{v,i}^{cl})\delta_B^{1/2}$). But $F_{v,i}$ has a locally 
constant valuation (thus not $F_{v,i}^{cl}$ !), so up to choose another point of $\mathcal Z$ close to $z$, we can assume that this induced representation is irreducible, and thus unramified. By local global compatibility this proves that there is an accumulation subset of $\mathcal Z$, which accumulates at any point of $\mathcal E$ with algebraic weight, consisting of points $z$ with  representation $\rho_z$ semi simple corresponding to $D_z$, crystalline at every $v | p$ and such that $D_{\cris}(\rho_v)$ has all its refinement, one of which is given by $(D_{\cris}(\delta_{v,i}))_{1\leq i \leq n}$, i.e.~ $(F_{v,i}^{cl})_i$. Moreover, the calculation for $z \in \mathcal Z$ we did in equation \ref{HTwtsz}, together with the definition of the weight of $\delta$ in equation \ref{defdelta} implies that the Hodge-Tate weights of $\rho_z$ are given by $\delta_{|(\mathcal O_F \otimes \ZZ_p)^\times}$, in the right order ! This means that the map
\[ D \times \delta : \mathcal E(\overline\rho) \fleche \mathcal{X}_{\overline{\rho}}^{\pol} \times \mathcal T,\]
sends a dense subset of $\mathcal Z$ (namely the previous one where points are crystalline) into $\mathcal Z'_{K^p}$, but conversely by construction of $\mathcal E'$, all points of $\mathcal Z'_{K^p}$ are in the image of the previous map.

Moreover $D \times \delta$ is a closed immersion. Indeed, by construction $\mathcal{E'}$ is (locally) constructed as the image of $\mathcal{H} \otimes \mathcal{O}_\mathcal W = \mathcal{H}^S\otimes \mathcal{O}_\mathcal T$ where $\mathcal{H} = \mathcal{H}^S \otimes \mathcal A(p)$ acting on some space of overconvergent, locally analytic modular forms (of finite slope). Let $U \subset \mathcal{X}_{\overline{\rho}}^{\pol} \times \mathcal T$ be an affinoid, in particular it is quasi-compact thus the slopes of $\mathcal A(p)$ on $U$ are bounded (say by $\alpha$). Thus $(D\times\delta^{-1})(U)$ is included in $\mathcal{E}_{v,w}^{\leq \alpha}$ for some $v,w$ (see \cite{Her4}) and then, by local-global compatibility, it is clear that $(D\times\delta)^{-1}(U) \fleche U$ is a closed immersion. As explained, $\mathcal Z$ accumulates to any point with classical weight of $\mathcal{E'}$, thus to $\mathcal Z'$. If we denote by $h$ the composite of the map
\[ \mathcal E \fleche \mathcal T \fleche \mathcal W,\]
it coincides with a map
\[ \mathcal E \overset{w}{\fleche} \mathcal W \overset{\phi}{\fleche} \mathcal W,\]
where $\phi$ is the isomorphism of $\mathcal W$ given by the definition (\ref{defdelta}). The properties of the map $h$ thus comes from the analogous one for $w$, proven in \cite{Her4} Theorem 9.5 (see also \cite{Che1} which was the first to prove those properties).
\end{proof}

\begin{cor}\label{cor:finite_slope}
If $\pi$ is a cuspidal, algebraic, regular automorphic form (of tame level $K^p$), which appears in degree 0 coherent cohomology and which is finite slope at $p$, then $\rho_\pi$ appear in $\mathcal F_{K^p}(\overline\rho_\pi)$. If moreover $\delta$ is an accessible refinement for $\rho_\pi$, then $(\rho_\pi,\delta) \in \mathcal E_{K^p}(\overline\rho_\pi)$. In particular, $\mathcal E_{K^p}(\overline\rho)$ is the closure in $\mathcal X_{\overline\rho}^{\pol}\times \mathcal T$ of homomorphically modular (for a cuspidal, algebraic, regular automorphic representation of tame level $K^p$), finite slope at $p$, representations $\rho$, together with an accessible refinement. 
\end{cor}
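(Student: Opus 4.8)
The plan is to reduce the statement to the construction carried out in the proof of Proposition~\ref{prop:geoHecke}: the finite-slope eigenvariety $\mathcal{E}'$ of \cite{Her4} with its weight map $w\colon\mathcal{E}'\to\mathcal{W}$, the determinant $D\colon G_E\to\mathcal{O}_{\mathcal{E}'}$ and the character $\delta=(\delta_{v,i})_{v,i}$ over $\mathcal{E}'$, the fact that $D\times\delta$ restricts to a closed immersion $\mathcal{E}'(\overline{\rho})\hookrightarrow\mathcal{X}_{\overline{\rho}}^{\pol}\times\mathcal{T}$, and the resulting identification $\mathcal{E}_{K^p}(\overline{\rho})=(D\times\delta)(\mathcal{E}'(\overline{\rho}))$. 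What must be shown is that a holomorphically modular, finite-slope automorphic representation $\pi$ of tame level $K^p$, together with an accessible refinement $\delta$ of $\rho_\pi$, produces a point $z\in\mathcal{E}'$ with $(D\times\delta)(z)=(\rho_\pi,\delta)$.

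First I would produce the point $z$. Since $\pi$ is cuspidal, regular algebraic, satisfies $\pi_f^{K^p}\neq0$ and appears in degree $0$ coherent cohomology, its $\mathcal{H}^S$-eigensystem occurs in some $H^0(S_K^{tor},\omega^\kappa(-D))$ with $\kappa$ an $M$-dominant algebraic weight and $K=K^pI$. As $\pi_p$ is of finite slope, it has a nonzero $I$-fixed vector on which the operators $U_{v,i}$ act invertibly; the datum of such an eigensystem for the Atkin--Lehner algebra $\mathcal{A}(p)$ occurring in $\pi_p^I$ is precisely the datum of an accessible refinement of $\rho_\pi$, and a prescribed accessible $\delta$ singles out one such system, with $\mathcal{A}(p)$-eigenvalues determined by the parameters of $\delta$. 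Since classical cusp forms are overconvergent, the resulting $\mathcal{H}=\mathcal{H}^S\otimes\mathcal{A}(p)$-eigensystem occurs in the space of finite-slope overconvergent cuspidal forms, hence defines a point $z\in\mathcal{E}'(\overline{\QQ_p})$ with $w(z)=\kappa$.

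Next I would compute $(D\times\delta)(z)$. The reduction mod $p$ of the determinant $D$ at $z$ is $\tr\overline{\rho}_\pi$, so $z$ lies in the open and closed subspace $\mathcal{E}'(\overline{\rho}_\pi)$; as in the proof of Proposition~\ref{prop:geoHecke} this subspace carries a Zariski-dense set of classical crystalline points belonging to $\mathcal{Z}'_{K^p}$, so $\mathcal{E}_{K^p}(\overline{\rho}_\pi)$ is defined and equals $(D\times\delta)(\mathcal{E}'(\overline{\rho}_\pi))$. By interpolation of $D$ and local-global compatibility at the places $v\notin S$ (Chebotarev), $D(z)=\tr\rho_\pi$. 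By the defining formula $\delta_{v,i}=\delta_{v,i}^0\times\prod_\tau x_\tau^{s_\tau(i)}\times|\cdot|^{(1-n)/2}$, the character $\delta(z)$ sends $p$ to the normalised $U_p$-eigenvalues $(F_{v,i})_i$ of $z$ and restricts on $\mathcal{O}_{F_v}^\times$ to the infinitesimal weight read off $\kappa$; comparing this with the Hodge--Tate weight computation~\eqref{HTwtsz} and with~\eqref{defdelta}, local-global compatibility at $p$ identifies $\delta(z)$ with the accessible refinement $\delta$ of $\rho_\pi$. Hence $(D\times\delta)(z)=(\rho_\pi,\delta)$, which lies in $\mathcal{E}_{K^p}(\overline{\rho}_\pi)$; this is the second assertion, and projecting to the first factor gives $\rho_\pi\in\mathcal{F}_{K^p}(\overline{\rho}_\pi)$, the first assertion (any finite-slope $\pi_p$ admitting at least one accessible refinement). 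For the last assertion, every point of $\mathcal{Z}'_{K^p}$ is holomorphically modular, unramified — hence of finite slope — at $p$, and comes with an unramified-principal-series, hence accessible, refinement, so $\mathcal{E}_{K^p}(\overline{\rho})=\overline{\mathcal{Z}'_{K^p}}$ is contained in the Zariski closure of the set of pairs $(\rho,\delta)$ described in the statement; the reverse inclusion is the second assertion together with the fact that $\mathcal{E}_{K^p}(\overline{\rho})$ is Zariski closed.

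The main obstacle is the identification $\delta(z)=\delta$ used above: it requires matching the eigenvariety character through the various normalisations (the twist $|\cdot|^{(1-n)/2}$, the factor $\prod_\tau x_\tau^{s_\tau(i)}=-w_{0,M}\rho_G$, and the relation $F_{v,i}=U_{v,i}U_{v,i-1}^{-1}$) and, more substantially, knowing that the trianguline parameters of $\rho_\pi|_{G_{F_v}}$ are those prescribed by the accessible refinement $\delta$ even when $\rho_\pi$ is not crystalline at $p$. This is the non-crystalline counterpart of the computation already carried out on the dense crystalline locus in the proof of Proposition~\ref{prop:geoHecke}, and it should follow from local-global compatibility at $p$ alone, without any crystallinity or Taylor--Wiles input.
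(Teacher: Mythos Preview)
Your overall strategy coincides with the paper's: produce from $\pi$ a finite-slope eigensystem in the overconvergent spaces underlying $\mathcal{E}'$ and invoke the identification $\mathcal{E}_{K^p}(\overline\rho)=(D\times\delta)(\mathcal{E}'(\overline\rho))$ from Proposition~\ref{prop:geoHecke}. Two points, however, deserve correction.

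First, there is a genuine gap in your reduction to Iwahori level. You assert that finite slope forces $\pi_p^I\neq0$ and place the eigenvector in $H^0(S_{K^pI}^{tor},\omega^\kappa(-D))$. This is not true in general: a finite-slope $\pi_p$ can be ramified (e.g.\ a ramified principal series), in which case $\pi_p^I=0$. The paper handles this by working at level $\Gamma_1(p^n)$ for suitable $n$, decomposing under the nebentypus character $\eps$ of $T(\ZZ_p/p^n\ZZ_p)$, and exhibiting an explicit $T(\QQ_p)$-equivariant injection
\[
H^0(\mathcal{X}_1(p^n)^{tor},\omega^\kappa(-D))(\eps)\hookrightarrow H^0(\mathcal{IW}_w^+,\mathcal{O}_{\mathcal{IW}_w^+}[\eps\kappa](-D))
\]
into the overconvergent forms of the (in general non-algebraic) $p$-adic weight $\eps\kappa$. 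The analytic continuation argument on the torsor $\mathcal{IW}_w^+$ is precisely what justifies the slogan ``classical cusp forms are overconvergent'' at the correct weight; your sketch omits this and therefore does not produce a point of $\mathcal{E}'$ when $\eps\neq1$.

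Second, the ``main obstacle'' you single out is not where the difficulty lies. Once one has the equivariant embedding above, the identification $\delta(z)=\delta$ is essentially tautological: the accessible refinement packages the $\mathcal{A}(p)$-eigenvalues together with the nebentypus $\eps$ into a character of $T(\QQ_p)$ whose restriction to $T(\ZZ_p)$ is $\eps\kappa$, and the embedding transports exactly this data to the point $z\in\mathcal{E}'$. No appeal to local--global compatibility at $p$ in the non-crystalline regime is needed. Your proposed route through trianguline parameters of $\rho_\pi|_{G_{F_v}}$ is both unnecessary and harder than the paper's direct argument.
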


\begin{proof}
As any finite slope automorphic form has an accessible refinement, it is enough to prove the second part of the statement. This is a direct consequence of the fact that $\mathcal E(\overline\rho)$ coincides with the Eigenvariety $\mathcal E'(\overline\rho)$ constructed in \cite{Her4}. We take the notation of \cite{Her4} section 6. Let $\pi$ be an automorphic representation as in the statement and assume given $n$ and $f \in H^0(\mathcal X_1(p^n)^{tor},\omega^\kappa)$ be a cuspidal holomorphic modular form corresponding to $\pi$ which is a finite slope eigenvector at $p$, where $\mathcal X_1(p^n)^{tor}$ is a toroidal compactification of the Shimura variety of level at $p$,
\[ \Gamma_1(p^n) = \{ M \in G( \ZZ_p) | M \in U(\ZZ_p) \pmod{p^n}\},\]
with $U \subset B \subset G_{\ZZ_p}$ are unipotent and Borel subgroup.
Then there is an open $\mathcal X_1(p^n)^{tor}(v) \subset \mathcal X_1(p^n)^{tor}$, for $v$ small enough (a strict neighborhood of the ordinary locus), and for the Shimura variety $\mathcal X_0(p^n)^{tor}$ of level 
\[ \Gamma_0(p^n) = \{ M \in G(\mathcal \ZZ_p) | M \in B(\ZZ_p) \pmod{p^n}\}.\]
we have a corresponding open $\mathcal X_0(p^n)^{tor}(v)$, such that over $\mathcal X_0(p^n)^{tor}(v)$ there exists the sheaf 
$\omega^{\chi\dag}_w(-D)$ of $w$-analytic cuspidal overconvergent modular forms of (any $n$-analytic) $p$-adic weight $\chi$. By definition, finite slope (at $p$) sections of $H^0(\mathcal X_0(p^n)^{tor}(v),\omega_w^{\chi\dag}(-D))$ which are eigenvectors for the Hecke operators do appear in the Eigenvariety $\mathcal E_{K^p}$. By construction, there exists a rigid open $\mathcal{IW}_w^+ \subset \mathcal T^\times/U$ in the torsor of trivialisations of $\omega$ above $\mathcal X_1(p^n)^{tor}(v)$, so that $H^0(\mathcal X_0(p^n)^{tor}(v),\omega_w^{\chi\dag}(-D))$ is the set of sections $H^0(\mathcal{IW}_w^+,\mathcal O_{\mathcal{IW}_w^+}[\chi](-D))$ which are $\chi$-equivariant for the action of $T(\ZZ_p)$\footnote{A warning : this action is twisted compared to the one on $\mathcal T^\times/U$, see \cite{Her4} see Proposition 6.16 and its proof.} on $\mathcal{IW}_w^+$ above $\mathcal X_0(p^n)^{tor}(v)$ (see \cite{Her4} Definition 6.10).
Let $\eps$ be a finite order character of $T(\ZZ_p)$ trivial on $T(1+p^n\ZZ)$, i.e. a character of $\Gamma_0(p^n)/\Gamma_1(p^n)$. We claim that there is an injection, for any algebraic $\kappa$
\[ H^0(\mathcal X_1(p^n)^{tor},\omega^{\kappa}(-D))(\eps) \hookrightarrow H^0(\mathcal{IW}_w^+,\mathcal O_{\mathcal{IW}_w^+}[\eps\kappa](-D)).\]
But this comes from the identification (see \cite[Definition 3.1]{Her4})
\[ H^0(\mathcal X_1(p^n)^{tor},\omega^\kappa) = H^0(\mathcal T^\times/U,\mathcal O[\kappa^\vee]),\]
the fact that the restriction to the open $\mathcal X_1(p^n)^{tor}(v)$ is injective by analytic continuation, and that the restriction from $\mathcal T^\times/U \times_{\mathcal X_1(p^n)^{tor}}\mathcal X_1(p^n)^{tor}(v)$ to $\mathcal{IW}_w^+$ is also injective again by analytic continuation. Moreover all those maps are equivariant for the action of the Atkin-Lehner algebra $\mathcal A(p)$. More precisely, the action of $\mathcal A(p)$, i.e. of matrices of the form $\Diag(p^{a_1},\dots,p^{a_n})$, on $\mathcal E$, together with the map $T(\ZZ_p) \fleche \Gamma(\mathcal W,\mathcal O_{\mathcal W})$, thus giving a map $T(\ZZ_p) \fleche \Gamma(\mathcal E,\mathcal O_{\mathcal E})$, can be packaged together as an map $T(\QQ_p) \fleche \Gamma(\mathcal E,\mathcal O_{\mathcal E})$. Then the previous injection is equivariant for this action of $T(\QQ_p)$.
But we can assume that $f$ is an eigenvector for $\Gamma_0(p^n)/\Gamma_1(p^n)$, of some character $\eps$ as before. Thus the associated refinement at $p$ of $f$ gives a character $\delta$ of $T(\QQ_p)$ depending on the refinement, such that $\delta_{T(\ZZ_p)} = \kappa\eps$. Then the previous injection assure that $f$ with this $\delta$ appears in $\mathcal E'_{K^p}$.

For the last part of the statement, since any crystalline point at $p$ is indeed finite slope, and finite slope holomorphic points are already in $\mathcal E_{K^p}(\overline\rho)$ as we just proved, we get the result.
\end{proof}

From now on, to lighten notations denote $\mathcal{E}(\overline{\rho}) := \mathcal{E}_{K^p}(\overline{\rho})$, $\mathcal{F}(\overline{\rho}) := \mathcal{F}_{K^p}(\overline{\rho})$ $\mathcal Z' := \mathcal Z'_{K^p}$ accordingly\footnote{Everything we will say is still dependant of this level $K^p$. At the end of the article, in corollary \ref{cor:Koutsidep}, we show that there exists an optimal level $K^p$, but we can't choose it right away. Compare \cite{CheFern} Lemma 2.4}.

\section{Automorphic forms, infinite fern and big image}
\label{sect:aggen}

Let $K$ be finite extension of $\Qp$ and $\overline{K}$ an algebraic closure of $K$. {Denote $v_p$ the $p$-adic absolute value of $\overline K$ such that $v_p(p)=1$.} Let $K_0\subset K$ be the maximal unramified extension of $\Qp$ with Frobenius operator $\sigma$ and set $f\coloneqq [K_0:\Qp]$. Let $L$ be a finite extension of $\Qp$ such that $K\otimes_{\Qp}L\simeq L^{[K:\Qp]}$. If $(\rho,V)$ is a crystalline representation of $\Gal(\overline{K}/K)$, we denote $(D_{\cris}(V),\varphi)$ its associated $\varphi$-module. It is a finite dimensional free  $K_0\otimes_{\Qp}L$-module of rank $n=\dim_LV$ with a $\sigma\otimes\Id_L$-linear automorphism $\varphi$. Its de Rham module $D_{\dR}(V)$ is a filtered finite free $K\otimes_{\Qp}L$-module. The \emph{Hodge-Tate type} of $V$ is the $[K:\Qp]$-uple $(k_{1,\tau}\geq \cdots\geq k_{n,\tau})_{\tau : K\hookrightarrow L}$ where the $k_{i,\tau}$ are the integers $m$ such that $\gr^{-m}(D_{\dR}(V)\otimes_{K,\tau}L)\neq0$ counted with multiplicity.

\begin{defin}
We say that a crystalline Galois representation $(\rho,V)$ over $L$ is \emph{Hodge-Tate regular} (or simply \emph{HT-regular}) if, for all $\tau : K\hookrightarrow L$, the integers $k_{i,\tau}$ are pairwise distinct. It is said to be \emph{$\varphi$-generic} if the \emph{linear} endomorphism $\varphi^f$ of the finite free $K_0\otimes_{\Qp}L$-module $D_{\cris}(V)$ is split semisimple regular ie has $\dim_LV$ pairwise distinct eigenvalues $\varphi_i$ in $L$ such that $\varphi_i/\varphi_j \notin \{ 1,p^f\}$ (note that these eigenvalues are in $L$ since $\varphi^f$ commutes to the semilinear action of $\Gal(K_0/\Qp)$ on $D_{\cris}(V)$ given by $\varphi$).
\end{defin}

\begin{rema}
\label{rema:WG}
In \cite[\S3]{CheFern} Chenevier introduced the notion of \textit{weakly generic}
crystalline $(\varphi,\Gamma)$-module, i.e.~ crystalline $(\varphi,\Gamma)$-modules for which
all refinements are non-critical, and \textit{weakly-generic}
crystalline $(\varphi,\Gamma)$-module for which $n$ (the rank of the
$(\varphi,\Gamma)$-module) well-positioned refinements are non
critical. It is possible to deduce from the results of \cite{CheFern}
that if the classical points of $\mathcal{E}$ or
$\mathcal{X}_{\overline{\rho}}^{\pol}$ which are weakly-generic at $p$
are Zariski-dense, then modular points are dense in
$\mathcal{X}_{\overline{\rho}}^{\pol}$ (or the closure has at least the expected dimension). He moreover proves that if $n = 3$, every $\varphi$-generic, HT regular absolutely irreducible $(\varphi,\Gamma)$ is automatically weakly generic. Unfortunately it does not seem to be true anymore even for $n=4$ that absolutely irreducible points are weakly generic, as shown in the following example, and thus it does not seem any easier to prove that weakly generic points are dense in $\mathcal{X}_{\overline{\rho}}^{\pol}$ when $n \geq 4$ than proving the analogous result for generic points.
\end{rema}

\begin{exemple}
Let $(V,\varphi,\Fil^\bullet)$ be the filtered $\varphi$-module of an
irreducible, $\varphi$-generic, HT regular, crystalline 4 dimensional
representation of $G_{\QQ_p}$ with $L$-coefficients. Choosing $L$ big
enough, we can assume that there exists a basis $(f_1,f_2,f_3,f_4)$ of $V$ such that $\varphi(f_i) = \varphi_i f_i$. Refinements of $V$ are thus given by a permutation $\sigma$ of this basis. 
By irreducibility and weak-admissibility, we check that it is impossible for $\Fil^kV \neq \{0\},V$ to be $\varphi$-stable. 
 For example, suppose that the HT weights are ${-k_4 < -k_3 < -k_2 < -k_1}$ (i.e.~ the jumps of the filtration on $V$ are $k_1 < k_2 < k_3 < k_4$) with 
 \begin{itemize}
\item $\Fil^{k_1}V = <f_2,f_3,f_1+f_4> = <f_2,f_2+f_3,f_1+f_4>$
\item $\Fil^{k_2}V = <f_1+f_4,f_2+f_3>$
\item $\Fil^{k_3}V = <f_1 + f_4>$
\item $\Fil^{k_4}V = \{0\}.$
\end{itemize}
We can check that the non critical refinements are given by $\sigma = \id,(23),(14),(14)(23)$, and they don't form a weakly-nested sequence. Moreover, for generic choices of $k_i$ and $v(\varphi_i)$, the associated $p$-adic representation $V(D)$ will be irreducible. Indeed, if $v(\varphi_1) = v(\varphi_2) = v(\varphi_3) = 16, v(\varphi_4) = 12, k_1 = 0,k_2=10,k_3 = 20,k_4 = 30$, then we can check that no non-trivial $\varphi$-stable submodule of $D$ is weakly-admissible. In particular if we do not already know that generic points are Zariski dense, it is not likely to prove that 
weakly generic ones are. 

Moreover we can check that locally the image of the tangeant space of those refined points doesn't cover all the tangent space for the corresponding point in the local deformation ring (i.e.~ at this point the analogous of proposition \ref{propraf} only for non-critical refinements isn't true). In the following we will use a replacement of those generic points by the so-called \emph{almost generic} ones, which are Zariski dense in the fern and for which we can apply proposition \ref{propraf} and thus Chenevier's stategy. Remark that some irreducible weakly-admissible filtered $\varphi$-modules of dimension 4 which admits a critical refinement are weakly-generic, but we don't know how to discriminate these from the previous example on the deformation rings (or on the infinite fern).
\end{exemple}

Recall that $k$ is a finite field of characteristic $p$ and that $\rhobar : G_E\rightarrow\GL_n(k)$ is a continuous semisimple representation which is conveniently modular of tame level $K^p$.

Denote by $\overline{\mathcal{F}(\overline{\rho})}$ the Zariski-closure of the image $\mathcal{E}(\overline{\rho})\rightarrow\mathcal{X}_{\overline{\rho}}^{\pol}$, i.e.~ the Zariski closure of the infinite fern $\mathcal{F}(\overline{\rho})$.

\begin{defin}[] 
\label{defin:ag}
We say that a Galois representation $\rho : G_E \fleche \GL_n(\overline{\QQ_p})$ has enormous image, if $\rho(G_{E(\zeta_{p^\infty})})$ is enormous, in the sense of \cite{NT} Definition 2.27.
We say that a point $x$ of $\mathcal{E}(\overline\rho)$ (resp.~of
$\mathcal{X}_{\overline{\rho}}^{\pol}$) is \textit{almost generic}
if it is in $\mathcal{Z}'=\mathcal{Z}'_{K^p}$ (resp.~in the image of $\mathcal{Z}'$ in $\mathcal{X}_{\overline{\rho}}^{\pol}$), the associated Galois representation $\rho$ has 
enormous image, and if $\rho_{|G_{E_v}}$ is crystalline, $\varphi$-generic, HT-regular, posses an irreducible refinement and is absolutely irreducible for all $v | p$.
\end{defin}

Let $v$ be a place of $E$ dividing $p$. Let $(\rho,V)$ be a continuous finite dimensional representation of $G_{E_v}$ over $L$. It follows from the compactness of $G_{E_v}$ that $\rho(G_{E_v})$ is a closed subgroup of $\GL_L(V)$. The $p$-adic analogue of Cartan's Theorem shows that $\rho(G_{E_v})$ is a $p$-adic Lie subgroup of $\GL_L(V)$ so that we can define $\mathfrak{g}_\rho\coloneqq\Lie(\rho(G_{E_v}))$ (see \cite[Rem.~I.1.1]{SerreMcGill}) and $\mathfrak{g}_{\rho,L}$ the $L$-span of $\mathfrak{g}_\rho$ in $\End_L(V)$. Our goal is to prove that almost generic points are Zariski-dense in the infinite fern $\mathcal{F}(\overline{\rho})$. Such a result was proven by Taïbi (\cite{Taibi}) in a slightly different (and more difficult) context (improving results of \cite{BC2}). As the case we consider is easier, and for convenience of the reader, we repeat the argument in our context.

Denote by $K_n/E_v$ the compositum of extensions of degree dividing $n$, this is a finite Galois extension.

\begin{prop}
Let $(\rho,V)$ be some continuous $n$-dimensional representation of $G_{E_v}$ over $L$
and assume that $(\rho|_{G_{K_n}},V)$ is absolutely irreducible. Then $V$ is a simple $\mathfrak{g}_{\rho,L}$-module, $\mathfrak{g}_{\rho,L}$ is a reductive Lie algebra and $\mathfrak h_{\rho,L}$, the semisimple part of $\mathfrak{g}_{\rho,L}$, is isomorphic to a sub Lie algebra of $\mathfrak{sl}(V)$ of semisimple rank at most $\dim_LV-1$.
\end{prop}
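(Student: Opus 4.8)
The plan is to prove the three assertions in order; all the content sits in the first one, \emph{simplicity of $V$ as a $\mathfrak{g}_{\rho,L}$-module}, where both the definition of $K_n$ and the pro-solvability of the local Galois group $G_{E_v}$ intervene, while reductivity and the rank bound are then purely Lie-theoretic. Since simplicity of $V\otimes_L\overline{L}$ over $\mathfrak{g}_{\rho,\overline L}$ implies simplicity of $V$ over $\mathfrak{g}_{\rho,L}$, I may assume $L$ algebraically closed. The first observation is that every open subgroup $H\subset G_{E_v}$ with $[G_{E_v}:H]$ dividing $n$ contains $G_{K_n}$, so $\rho|_H$ remains absolutely irreducible. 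Next, $\rho(G_{E_v})$ normalises $\mathfrak{g}_{\rho,L}=\Lie\rho(G_{E_v})$ through the adjoint action, hence permutes all canonical subobjects of the $\mathfrak{g}_{\rho,L}$-module $V$: its socle is $\rho(G_{E_v})$-stable and nonzero, so equals $V$ and $V$ is $\mathfrak{g}_{\rho,L}$-semisimple; and the set of isotypic components is permuted transitively (the span of an orbit is a nonzero $G_{E_v}$-submodule, hence all of $V$). Thus all isotypic components have the same dimension $n/e$, where $e$ is their number; the stabiliser of one of them is $\rho(G_M)$ for an extension $M/E_v$ of degree $e\mid n$, and if $e>1$ that component is a proper $G_M$-stable subspace, contradicting absolute irreducibility of $\rho|_{G_M}$. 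Hence $e=1$ and $V\cong\sigma^{\oplus m}$ as a $\mathfrak{g}_{\rho,L}$-module, for some simple $\sigma$.

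To finish the first assertion I must exclude $m\geq 2$, and \textbf{this is the main obstacle}. By the double centraliser theorem the associative algebra generated by $\mathfrak{g}_{\rho,L}$ inside $\End_L(V)=\End_L(\sigma\otimes_L L^m)$ equals $\End_L(\sigma)\otimes 1$, with commutant $1\otimes M_m(L)$. As $\rho(G_{E_v})$ normalises $\mathfrak{g}_{\rho,L}$ it normalises this simple subalgebra, so by Skolem–Noether $\rho(g)=A_g\otimes B_g$ with $A_g\in\GL(\sigma)$ and $B_g\in\GL_m$; thus $\rho$ is a twisted tensor product of a projective representation on $\sigma$ and an $m$-dimensional projective representation $\tau$. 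Since $\mathfrak{g}_{\rho,L}\subset\End_L(\sigma)\otimes 1$, the image of $G_{E_v}$ in $\mathrm{PGL}_m$ is a $p$-adic Lie group with trivial Lie algebra, hence finite; being a quotient of the pro-solvable group $G_{E_v}$, this image is a finite solvable group $P$, and $\tau$ is a projectively irreducible $m$-dimensional projective representation of $P$ (otherwise $V$ would be reducible). A standard Clifford-theoretic argument, valid because $P$ is solvable (modding out the largest normal subgroup on which $\tau$ is scalar and then inducing from the stabiliser of a character of a minimal normal, elementary abelian, subgroup), shows such a representation is induced from a subgroup of index dividing $m$; pulling back produces an open subgroup $G_M\subset G_{E_v}$ with $[G_{E_v}:G_M]$ dividing $m$, hence dividing $n$, so $M\subset K_n$ and $\rho|_{G_M}$ is reducible — contradicting the hypothesis. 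Therefore $m=1$ and $V$ is $\mathfrak{g}_{\rho,L}$-simple.

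The second assertion is now standard: a Lie subalgebra of $\mathfrak{gl}(V)$ acting irreducibly in characteristic $0$ is reductive. Indeed its nilpotent radical is an ideal acting nilpotently, so its fixed subspace in $V$ is a nonzero submodule, hence all of $V$, forcing the nilpotent radical to vanish; the solvable radical then commutes with $\mathfrak{g}_{\rho,L}$, i.e.\ is central, and $\mathfrak{g}_{\rho,L}$ is the direct sum of its centre and its derived algebra $\mathfrak{h}_{\rho,L}$. By Schur's lemma the centre acts by scalars, so it is $0$ or $L\cdot\Id$.

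For the third assertion, $\mathfrak{h}_{\rho,L}=[\mathfrak{g}_{\rho,L},\mathfrak{g}_{\rho,L}]$ is semisimple and, being generated by commutators in $\End_L(V)$, lies in $\mathfrak{sl}(V)$. Its semisimple rank is the dimension of a Cartan subalgebra $\mathfrak{c}$; by Weyl's theorem together with preservation of Jordan decompositions in representations of semisimple Lie algebras, the elements of $\mathfrak{c}$ act semisimply on $V$, giving a weight decomposition $V\otimes_L\overline{L}=\bigoplus_\lambda V_\lambda$ over a finite set of weights. The map $\mathfrak{c}\otimes_L\overline{L}\to\prod_\lambda\overline{L}$, $X\mapsto(\lambda(X))_\lambda$, is injective (its kernel acts by $0$ on $V$, hence is $0$ in $\End_L(V)$) and its image lies in the hyperplane defined by $\sum_\lambda(\dim V_\lambda)\lambda(X)=\tr_V(X)=0$, since $\mathfrak{c}\subset\mathfrak{sl}(V)$. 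Hence $\dim_L\mathfrak{c}\leq\#\{\lambda\}-1\leq\dim_L V-1$, which is the claimed bound.
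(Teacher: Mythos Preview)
Your argument for the second and third assertions is fine, but the crucial first step — excluding $m\geq 2$ — has a genuine gap. The ``standard Clifford-theoretic argument'' you invoke does not hold as stated: a minimal normal subgroup $N$ of the solvable group $P\subset\mathrm{PGL}_m$ is indeed elementary abelian \emph{in $\mathrm{PGL}_m$}, but its preimage $\tilde N$ in $\GL_m$ need not be abelian. When $\tilde N$ is of extraspecial type there is a single isotypic component for $\tilde N$ on $L^m$ and no induction from a proper subgroup arises. Concretely, take $P=A_4\subset\mathrm{PGL}_2(\overline L)$ via the $2$-dimensional representation of $\mathrm{SL}_2(\mathbb F_3)$: this projective representation is irreducible, $A_4$ is solvable, yet $A_4$ has no subgroup of index $2$, so your conclusion ``induced from a subgroup of index dividing $m$'' yields only the trivial index $1$ and no reducibility of $\rho|_{G_M}$ follows.

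The paper's route is both simpler and avoids this obstacle entirely. Rather than restricting attention to open subgroups of index dividing $n$, one proves directly that $\rho|_{G_K}$ is absolutely irreducible for \emph{every} finite extension $K/E_v$: assuming $K/E_v$ Galois and $\rho|_{G_K}$ reducible, Clifford theory gives a decomposition $\rho|_{G_K}=\bigoplus_{k=1}^r W_k$ into irreducibles permuted transitively by $G_{E_v}$; the stabiliser of $W_1$ is $G_{K'}$ with $[K':E_v]=r\mid n$, so $K'\subset K_n$, and $W_1$ is then a proper $G_{K_n}$-submodule, contradicting the hypothesis. Once this is known, any $\mathfrak{g}_{\rho,L}$-submodule of $V$ is stable under some open subgroup of $\rho(G_{E_v})$, hence is $0$ or $V$, and simplicity is immediate — no tensor decomposition, no Skolem--Noether, no projective representations of solvable groups. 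Note that this stronger irreducibility statement would also close your gap a posteriori: the kernel of $G_{E_v}\to P$ already exhibits a finite $K$ with $\rho|_{G_K}$ reducible when $m\geq2$.
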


\begin{proof}
Let $K$ be a finite extension of $E_v$, we claim that $\rho|_{G_K}$ is
absolutely irreducible. We can assume that $K/E_v$ is Galois. Suppose
that $\rho|_{G_K}$ is not absolutely irreducible, then as 
\[\rho : G_{E_v} \fleche \GL_L(V) \simeq \GL_n(L)\]
is absolutely irreducible and $G_K$ is normal in $G_{E_v}$, we have,
up to enlarge $L$, a decomposition into absolutely irreducible $G_K$-representations
\[ \rho|_{G_K} = \bigoplus_{k=1}^r W_k,\]
and $G_{E_v}$ permutes these representations, in particular they have the same dimension. Let $H \subset G_{E_v}$ be the stabiliser of $W_1$, then $G_{E_v}/H$ acts transitively on the $W_k$ and thus $[G_{E_v}:H]\dim(W_1) = n$. In particular $H = G_{K'}$ with $K'$ a finite extension of $E_v$ of degree dividing $n$, thus $K' \subset K_n$, but $\rho|_{G_{K_n}}$ is irreducible, thus as is $\rho|_{G_{K'}}$ and thus $r=1$ i.e.~ $\rho|_{G_K}$ is irreducible.

For each open subgroup $H\subset G_{E_v}$, the representation $(\rho|_H,V)$ is irreducible, so that $V$ is a simple $\mathfrak{g}_{\rho,L}$-module. Thus $\mathfrak{g}_{\rho,L}$ has a simple faithful module, which implies that $\mathfrak{g}_{\rho,L}$ is a reductive Lie algebra. Let $\mathfrak{g}_{\rho,L}=\mathfrak{a}_{\rho,L}\oplus\mathfrak{h}_{\rho,L}$ be the decomposition of $\mathfrak{g}_{\rho,L}$ as direct sum of an abelian and of a semisimple Lie algebras. As $V$ is an absolutely simple $\mathfrak{g}_{\rho,L}$-module, the Lie algebra $\mathfrak{a}_{\rho,L}$ acts on $V$ by scalars and $V$ is an absolutely simple $\mathfrak{h}_{\rho,L}$-module. Then $\mathfrak{a}_{\rho,L}\subset L\Id_V$ and, as $\mathfrak{h}_{\rho,L}$ is semisimple, $\mathfrak{h}_{\rho,L}\subset\mathfrak{g}_{\rho,L}\cap\mathfrak{sl}(V)$. As a consequence $\mathfrak{h}_{\rho,L}=\mathfrak{g}_{\rho,L}\cap\mathfrak{sl}(V)$ is a semisimple Lie algebra and $V$ is an absolutely simple $\mathfrak{h}_{\rho,L}$-module. As $\mathfrak{sl}(V)$ has rank $\dim_LV-1$, the rank of $\mathfrak{h}_{\rho,L}$ is at most $\dim_LV-1$.
\end{proof}

\begin{prop}\label{prop:crit-image-ouverte}
There exist finitely many nonzero $\QQ$-linear forms $\Lambda_1,\dots,\Lambda_r$ on $\QQ^n$ such that the following is true: let $(\rho,V)$ be a crystalline $n$-dimensional representation of $G_{F_v}$ over $L$, with Hodge-Tate weights $(k_{1,\sigma} \leq \dots \leq k_{n,\sigma})_\sigma$ such that $(\rho|_{G_{K_n}},V)$ is absolutely irreducible and such that there exists at least one $\sigma : F_v \hookrightarrow L$ such that for all $1\leq i\leq r$, $\Lambda_i(k_{1,\sigma},k_{2,\sigma},\dots,k_{n,\sigma})\neq0$, then $\rho(G_{F_v})$ contains an open subgroup of $\SL(V)$.
\end{prop}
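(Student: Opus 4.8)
The plan is to follow the strategy of Bellaïche--Chenevier and Taïbi: first reduce the statement to a question about the Lie algebra $\mathfrak{g}_{\rho,L}$, then play off the structure result of the previous proposition against the rigidity imposed on Hodge--Tate weights by the list of semisimple Lie subalgebras of $\mathfrak{sl}_n$ acting irreducibly. Concretely: by the previous proposition (applied to $G_{F_v}$), the hypothesis that $\rho|_{G_{K_n}}$ is absolutely irreducible gives a decomposition $\mathfrak{g}_{\rho,L}=\mathfrak{a}_{\rho,L}\oplus\mathfrak{h}_{\rho,L}$ with $\mathfrak{a}_{\rho,L}\subset L\cdot\Id_V$ and $\mathfrak{h}_{\rho,L}=\mathfrak{g}_{\rho,L}\cap\mathfrak{sl}(V)$ a semisimple Lie algebra acting absolutely irreducibly on $V$. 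If $\mathfrak{h}_{\rho,L}=\mathfrak{sl}(V)$ then $\rho(G_{F_v})$ contains an open subgroup of $\SL(V)$ by a standard argument on $p$-adic Lie groups (see \cite{BC2}); so it is enough to produce nonzero $\QQ$-linear forms $\Lambda_1,\dots,\Lambda_r$, depending only on $n$, such that $\mathfrak{h}_{\rho,L}\subsetneq\mathfrak{sl}(V)$ forces $\Lambda_i(k_{1,\sigma},\dots,k_{n,\sigma})=0$ for some $i$ and \emph{every} $\sigma:F_v\hookrightarrow L$.

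The second ingredient is the Hodge--Tate hypothesis, which I would use through Sen theory: for each $\sigma:F_v\hookrightarrow L$ the Hodge--Tate cocharacter provides, up to $\GL(V)(\overline{\QQ_p})$-conjugacy, a semisimple element $h_\sigma\in\mathfrak{g}_{\rho,L}\otimes_L\overline{\QQ_p}$ whose eigenvalues on $V$ are exactly the integers $k_{1,\sigma},\dots,k_{n,\sigma}$, so that its traceless part $h_\sigma^0\in\mathfrak{h}_{\rho,L}\otimes_L\overline{\QQ_p}$ is semisimple with eigenvalues $k_{i,\sigma}-\tfrac1n\sum_j k_{j,\sigma}$. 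The third ingredient is the classification of semisimple subalgebras $\mathfrak{h}\subsetneq\mathfrak{sl}_n$ acting absolutely irreducibly on $L^n$: up to conjugacy there are finitely many of them, they all have rank at most $n-2$ (the only rank $(n-1)$ case being $\mathfrak{sl}_n$ itself, with the standard representation or its dual), and they fall into the form-preserving cases $\mathfrak{so}_n,\mathfrak{sp}_n$, the tensor-decomposable cases $\mathfrak{h}_1\oplus\mathfrak{h}_2$ with $V=V_1\otimes V_2$ and $\dim V_i\geq 2$, and finitely many primitive simple cases. For each such $\mathfrak{h}$, fix a Cartan subalgebra $\mathfrak{t}_{\mathfrak{h}}$ and let $\lambda_1,\dots,\lambda_n$ be the weights (with multiplicity) of $V|_{\mathfrak{h}}$; then the multiset of eigenvalues on $V$ of an arbitrary semisimple element of $\mathfrak{h}$, read as a tuple of $\QQ^n$ in any order, lies in the union over $w\in\mathfrak{S}_n$ of the images of the $\QQ$-rational linear maps $\mathfrak{t}_{\mathfrak{h},\QQ}\to\QQ^n$, $h\mapsto(\lambda_{w(i)}(h))_i$. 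Each such image is a $\QQ$-rational subspace of $\{\sum_i x_i=0\}\subset\QQ^n$ of dimension at most $n-2$ (for $\mathfrak{so}_n,\mathfrak{sp}_n$ it is the subspace where all the sums $x_i+x_{n+1-i}$ are equal, reflecting the symmetry $\lambda\mapsto-\lambda$ of the weights). For each of these finitely many subspaces $\Sigma$ I would choose a nonzero $\QQ$-linear form vanishing on $\Sigma$ and on $(1,\dots,1)$ --- possible since $\Sigma+\QQ\cdot(1,\dots,1)$ is a proper subspace --- and let $\Lambda_1,\dots,\Lambda_r$ be the resulting finite list, which depends only on $n$.

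With these choices the conclusion is immediate. Suppose $\rho$ is as in the statement and that $\mathfrak{h}_{\rho,L}\subsetneq\mathfrak{sl}(V)$; then $\mathfrak{h}_{\rho,L}$ is $\GL(V)(\overline{\QQ_p})$-conjugate to one of the listed $\mathfrak{h}$, so for every $\sigma$ the traceless eigenvalue tuple $(k_{i,\sigma}-\tfrac1n\sum_j k_{j,\sigma})_i$, read in increasing order, lies in one of the subspaces $\Sigma$ attached to $\mathfrak{h}$, hence is annihilated by the corresponding $\Lambda_\ell$; since $\Lambda_\ell$ also kills $(1,\dots,1)$, it annihilates $(k_{1,\sigma},\dots,k_{n,\sigma})$ as well. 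Contrapositively, if there exists $\sigma$ with $\Lambda_\ell(k_{1,\sigma},\dots,k_{n,\sigma})\neq0$ for all $\ell$, then $\mathfrak{h}_{\rho,L}=\mathfrak{sl}(V)$ and $\rho(G_{F_v})$ contains an open subgroup of $\SL(V)$. The main obstacle is the classification/combinatorics step: establishing, uniformly in $n$, that every proper semisimple absolutely irreducible subalgebra of $\mathfrak{sl}_n$ has rank $\leq n-2$ and that the corresponding weight tuples satisfy a nontrivial $\QQ$-rational linear relation --- the form-preserving cases are elementary, but the tensor-decomposable and primitive cases require keeping track of branching data (a finite check for each fixed $n$). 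A secondary subtlety is the passage from the Lie-algebra equality $\mathfrak{h}_{\rho,L}=\mathfrak{sl}(V)$ back to the group-theoretic conclusion, where one uses compactness of $\rho(G_{F_v})$ together with the presence of the Sen elements $h_\sigma$ in the Lie algebra.
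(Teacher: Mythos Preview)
Your proposal is correct and follows essentially the same strategy as the paper: reduce to the Lie algebra via the preceding proposition, use Sen's theorem to place the Sen operator $\Theta_\sigma$ (with eigenvalues the Hodge--Tate weights) inside $\CC_p\otimes\mathfrak{g}_{\rho,L}$, and then use the finiteness of conjugacy classes of semisimple subalgebras of $\mathfrak{sl}_n$ together with the fact that a proper semisimple subalgebra has rank $\leq n-2$ to produce finitely many $\QQ$-linear forms detecting this defect. The paper invokes \cite[VIII.\S3~Prop.2.(ii)]{BourbakiLie} and Borel--de Siebenthal for the rank bound rather than a case-by-case split into form-preserving, tensor-decomposable, and primitive subalgebras; this makes the argument uniform in $n$ and avoids the combinatorial check you flag as an obstacle.

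One point where your write-up is in fact cleaner than the paper's: you explicitly pass to the traceless part $h_\sigma^0$ and require the $\Lambda_i$ to vanish on $(1,\dots,1)$, so that $\Lambda_i(k_{\bullet,\sigma})=0$ is equivalent to $\Lambda_i(k_{\bullet,\sigma}-a_\sigma)=0$. The paper applies its linear forms directly to $(k_{1,\sigma},\dots,k_{n,\sigma})$ without making this normalization explicit, which leaves a small gap unless one silently chooses the $\Lambda_i'$ to kill the line $\QQ\cdot(1,\dots,1)$; your formulation closes it.
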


\begin{proof}
Let $C$ be some algebraically closed field of characteristic $0$. The
classification of semisimple Lie algebras and their representations
shows that all semisimple Lie algebras and their finite dimensional simple modules are defined over $\QQ$, that there are finitely many isomorphism classes of semisimple Lie algebras of bounded rank and that each of them has finitely many semisimple modules of bounded rank. Consequently, for a fixed $n\geq2$, there exist a finite number of pairs $(\mathfrak{h}_i,\theta_i)$ where $\mathfrak{h}_i$ is a semisimple Lie algebra and $\theta_i$ an embedding of $\mathfrak{h}_i$ in $\mathfrak{sl}_{n,\QQ}$ such that for each semisimple Lie subalgebra $\mathfrak{h}\subset\mathfrak{sl}_{n,C}$, there exists $i$ such that $\mathfrak{h}\simeq\mathfrak{h}_i\otimes_{\QQ}C$ and the inclusion is $\GL_n(C)$-conjugated to $\theta_i\otimes\Id_C$. As a consequence a Cartan subalgebra of $\mathfrak{h}$ is conjugated to one of finitely many $\QQ$-linear subspaces of the space of diagonal matrices in $\mathfrak{sl}_{n,C}$. Moreover it follows from \cite[VIII.\S3~Prop.2.(ii)]{BourbakiLie} and from Borel-de Siebenthal Theorem (\cite[Thm.~12.1]{Kane}) that a semisimple Lie subalgebra of $\mathfrak{sl}_{n,C}$ containing $\mathfrak{h}$ is equal to $\mathfrak{sl}_{n,C}$ or of rank strictly less than $n-1$. Thus there exist finitely many nonzero $\QQ$-linear forms $\Lambda_1',\dots,\Lambda_s'$ on $\QQ^n$ such that if $\mathfrak{h}$ a semisimple subalgebra of $\mathfrak{sl}_{n,C}$ of rank strictly less than $n-1$ and $x\in\mathfrak{h}$ is a semisimple element of eigenvalues $\lambda_1,\dots,\lambda_n$ (counted with multiplicities), then there exists $1\leq i\leq s$ and $w\in\mathfrak{S}_n$ such that $w(\Lambda_i')(\lambda_1,\dots,\lambda_n)\coloneqq\Lambda_i'(\lambda_{w(1)},\dots,\lambda_{w(n)})=0$. We set
\[ \set{\Lambda_1,\dots,\Lambda_r}=\set{w(\Lambda_i') \mid 1\leq i\leq s, \, w\in\mathfrak{S}_n}.\]

Let $\Theta \in \End_{\CC_p\otimes_{\QQ_p}L}(\CC_p\otimes_{\QQ_p}V)$ the Sen operator of $V$. As $(\rho,V)$ is Hodge-Tate, it follows from \cite[Thm.~1]{SenLie} that $\Theta$ belongs to $\CC_p\otimes_{\Qp}\mathfrak{g}_{\rho,L}\subset\CC_p\otimes_{\Qp}\End_{\QQ_p}V$.

Suppose $L$ is big enough so that $F_v \otimes_{\QQ_p} L \simeq L^{[F_v:\QQ_p]}$. Then\[ \CC_p \otimes_{\QQ_p} L = \prod_{\sigma : F_v \hookrightarrow L} \CC_p \otimes_{F_v,\sigma} L,\]
decomposes over all embeddings of $F_v$ and let $\Theta_\sigma$ be the $\CC_p\otimes_{F_v,\sigma}L$-linear endomorphism of $\CC_p\otimes_{F_v,\sigma}V$ induced by $\Theta$. The eigenvalues of $\Theta_\sigma$ are the $\sigma$-Hodge-Tate weights $(k_{1,\sigma}\leq\cdots\leq k_{n,\sigma})$ of $(\rho,V)$ (counted by multiplicities).

Assume that $\Lambda_i(k_{1,\sigma},\dots,k_{n,\sigma})\neq0$ for all $1\leq i\leq r$. Then by what preceeds, the element $\Theta_\sigma$ can't be contained in a strict semisimple Lie subalgebra of $\CC_p\otimes_{F_v,\sigma}\mathfrak{sl}(V)$ so that $\CC_p\otimes_{F_v,\sigma}\mathfrak{h}_{\rho,L}=\CC_p\otimes_{F_v,\sigma}\mathfrak{sl}(V)$. For dimension reasons, we have $\mathfrak{h}_{\rho,L}=\mathfrak{sl}(V)$. We conclude that $\rho(G_{F_v})$ contains an open subgroup of $\SL(V)$.
\end{proof}

\begin{prop}\label{prop:density-absirred}
The set of points $x\in \overline{\mathcal{F}(\overline{\rho})}$ such
that $\Tr\rho_x|_{G_{K_n}}$ is absolutely irreducible is a
Zariski-dense and Zariski-open subset of $\overline{\mathcal{F}(\overline{\rho})}$.
\end{prop}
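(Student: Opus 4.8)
The plan is to prove separately that the set in question, call it $U$, is Zariski-open and that it meets every irreducible component of $\overline{\mathcal{F}(\overline{\rho})}$; since a non-empty Zariski-open subset of an irreducible rigid space is Zariski-dense and $\overline{\mathcal{F}(\overline{\rho})}$ is covered locally finitely by its irreducible components, this gives both assertions. For the openness, I would use that $\overline{\mathcal{F}(\overline{\rho})}$ (endowed with its reduced structure) carries a continuous determinant $D : G_E\to\mathcal{O}(\overline{\mathcal{F}(\overline{\rho})})$, namely the one restricted from the universal determinant on $\mathcal{X}_{\overline{\rho}}^{\pol}$, so that $D|_{G_{K_n}}$ is again a continuous $n$-dimensional determinant; by Chenevier's theory of determinants over reduced affinoid (or rigid) spaces, the locus where $D|_{G_{K_n}}$ is absolutely irreducible — equivalently, where the associated Cayley--Hamilton quotient is Azumaya of rank $n^2$ — is Zariski-open (\cite{Chedet}), and this locus is precisely $U$.

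It then remains to exhibit, in each irreducible component $Z$ of $\overline{\mathcal{F}(\overline{\rho})}$, one point of $U$. By construction $\mathcal{Z}'$ is Zariski-dense in $\mathcal{E}(\overline{\rho})$, hence its image is Zariski-dense in $\mathcal{F}(\overline{\rho})$ and therefore in $\overline{\mathcal{F}(\overline{\rho})}$; consequently the image of $\mathcal{Z}'$ meets $Z$ in a Zariski-dense subset, and by the accumulation statement of Proposition \ref{prop:geoHecke} the same holds for the image of $\mathcal{Z}_C$ for every $C$. Fix $C\gg 0$. Then every such point $x\in Z$ corresponds to a Galois representation $\rho_x$ which is crystalline, $\varphi$-generic and Hodge--Tate regular at every place $v\mid p$ (by the local-global compatibility established in the proof of Proposition \ref{prop:geoHecke}), with Hodge--Tate weights given explicitly by \eqref{HTwtsz}; moreover these weights, as well as the $p$-adic valuations of the crystalline Frobenius eigenvalues (the eigenvalues of the operators $F_{v,i}^{cl}$), vary Zariski-densely as $x$ runs over the image of $\mathcal{Z}_C$ in $Z$.

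The heart of the argument is to show that, among such points $x\in Z$, the restriction $\rho_x|_{G_{K_n}}$ is absolutely irreducible for at least one (hence, by openness, for a Zariski-dense subset) of them. First, since $\overline{\rho}$ is absolutely irreducible, $\rho_x$ is itself irreducible as a representation of $G_E$ for every $x$ in the fern (the semisimplification of the reduction of $\rho_x$ equals $\overline{\rho}$). Suppose now that $\rho_x|_{G_{K_n}}$ fails to be absolutely irreducible for a very regular crystalline $x$ as above. Since $G_{K_n}$ is normal of finite index in $G_{E_v}$, Clifford theory shows that either $\rho_x|_{G_{E_v}}$ is already reducible, or it is irreducible but isomorphic to $\mathrm{Ind}_{G_{K'}}^{G_{E_v}}\tau$ for some subextension $E_v\subsetneq K'\subseteq K_n$ of degree $r\geq 2$ dividing $n$ and some $\tau$ of dimension $n/r$ (the remaining possibility, that $\rho_x|_{G_{K_n}}$ be isotypic but not irreducible, is excluded by the Hodge--Tate regularity of $\rho_x$). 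In the first case, $\varphi$-genericity shows that $D_{\cris}(\rho_x|_{G_{E_v}})$ is a direct sum of $\varphi$-eigenlines, so that a crystalline subrepresentation of dimension $d$ satisfies, by weak admissibility, a prescribed $\ZZ$-linear relation between a $d$-subset of the $v_p(\varphi_i)$ and the corresponding $d$-subset of the Hodge--Tate weights; in the second case, the induced structure confines the filtered $\varphi$-module of $\rho_x|_{G_{E_v}}$ to a subvariety of positive codimension (for instance its Hodge--Tate weights at each embedding must split into $r$ blocks indexed by the embeddings of $K'$, and when $K'/E_v$ has non-trivial unramified part the Frobenius eigenvalues must come in orbits under multiplication by roots of unity). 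In every case this amounts to finitely many proper Zariski-closed conditions on the pair $\bigl((k_{j,\sigma}),(v_p(\varphi_i))\bigr)$, none of which can hold identically on the Zariski-dense image of $\mathcal{Z}_C$ in $Z$; hence some $x\in Z$ has $\rho_x|_{G_{K_n}}$ absolutely irreducible, so $U\cap Z\neq\emptyset$. Alternatively, one may choose $x$ of sufficiently regular and generic weight so that the automorphic representation of $\GL_n(\mathbb{A}_E)$ obtained from $\pi_x$ by base change is cuspidal and extremely regular in the sense of \cite[\S2.1]{BLGGTChange}, and conclude by \cite[Thm.~2.1]{BLGGTII} (cf.\ the discussion following Theorem \ref{thr:BCsign}). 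The main difficulty is precisely this last step — handling the ``induced'' case of Clifford theory, i.e.\ ruling out that $\rho_x$ becomes reducible only after restriction to some proper finite extension of $E_v$ rather than over $E_v$ itself; once this genericity input is in place, the rest is formal. This is the argument of Bellaïche--Chenevier and Taïbi (\cite{BC2,Taibi}) adapted to our (easier) situation.
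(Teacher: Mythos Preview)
Your openness argument via \cite{Chedet} matches the paper's. The density argument, however, has a genuine gap.

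The core of your density claim is that the reducibility conditions ``amount to finitely many proper Zariski-closed conditions on $\bigl((k_{j,\sigma}),(v_p(\varphi_i))\bigr)$, none of which can hold identically''. You assert this without proof, and it is exactly the hard part. On a fixed irreducible component of $\mathcal{E}(\overline{\rho})$ the quantities $v_p(\phi_i)+e^{-1}k_i$ are \emph{constant}, not varying; so the question is whether, for \emph{that particular refinement}, the partial sums $\sum_{i\in I}(v_p(\phi_i)+e^{-1}k_i)$ are all nonzero. Nothing you wrote rules out that some such sum vanishes identically on the component you started from, and in that case your ``avoid finitely many hyperplanes'' argument collapses. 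Your treatment of the induced case is even vaguer: ``confines the filtered $\varphi$-module \dots\ to a subvariety of positive codimension'' is not an argument, and your remark that the isotypic case is excluded by Hodge--Tate regularity is doubtful once $K_n/E_v$ is ramified (regularity is only known for $E_v$-embeddings). Finally, you invoke absolute irreducibility of $\overline{\rho}$, which is not assumed in this part of the paper (only semisimplicity is).

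The paper avoids Clifford theory entirely and instead uses the Bella\"iche--Chenevier/Ta\"ibi refinement-switching trick, which is precisely what supplies the missing non-vanishing. Starting from any $x\in V\cap\mathcal{Z}'$, they first move to a nearby $y$ with weights so spread out that $\vabs{v_p(\phi_i(y))-v_p(\phi_j(y))}>1$; this forces the local principal series to be irreducible, so \emph{every} refinement of $\rho_y$ is accessible. They then pick a \emph{transitive} permutation $\sigma$ and pass to the point $z_0$ on the eigenvariety with $\rho_{z_0}=\rho_y$ but refinement $\sigma$. As in \cite[Lem.~3.3]{BCsign}, transitivity forces $\sum_{i\in I}(v_p(\phi_i(z_0))+e^{-1}k_i(z_0))\neq0$ for every proper nonempty $I$. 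These sums being locally constant, they remain nonzero on a neighbourhood $V'$ of $z_0$ (chosen inside the preimage of $U$, since $\rho_{z_0}=\rho_y\in U$), and for any $z\in V'\cap\mathcal{Z}'_C$ with $C$ large, Ta\"ibi's lemma \cite[Lem.~3.2.3]{Taibi} then shows $D_{\cris}(\rho_z|_{G_{K_n}})$ has no proper weakly admissible sub-$\varphi$-module. This is the step your sketch is missing: without the switch to a transitive refinement you have no mechanism to guarantee the relevant partial sums are nonzero.
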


\begin{proof}
The fact that the absolutely irreducible locus is Zariski-open is a
consequence of \cite[\S4.2]{Chedet}. In order to prove that it is
Zariski-dense, it is then sufficient to prove that each Zariski-open subset $U$ of $\mathcal{F}(\overline{\rho})$ contains a point $x$ such that $\Tr\rho_x|_{G_{K_n}}$ is absolutely irreducible.

Now we follow the strategy of \cite{BCsign} and \cite{Taibi}. Let us
fix some notation. If
$x=(\rho_x,\delta_x)\in\mathcal{Z}'\subset\mathcal{E}(\rhobar)$ and
$\sigma : K_n\hookrightarrow\overline{\Qp}$, we let
$(k_{\sigma,1}(x)\leq\cdots \leq k_{\sigma,n}(x))$ the Hodge-Tate
weights at $\sigma$ of $\rho_x|_{G_{K_n}}$ and
$(\phi_1(x),\cdots,\phi_n(x))\in k(x)^n$ the ordered eigenvalues of
the linearized Frobenius of $D_{\cris}(\rho_x|_{G_{K_n}})$
corresponding to the refinement of $\rho_x|_{G_{F_v}}$ defined by
$\delta_{x,v}$. We also set $k_i(x)=\sum_{\sigma}k_{\sigma,i}(x)$. Let
$e$ be the ramification index of $K_n/\Qp$. The functions
$x\mapsto v_p(\phi_i(x))+e^{-1}k_i(x)$ are therefore locally constant on
$\mathcal{Z}'$. Now fix $U$ a Zariski open non empty subset of
$\overline{\mathcal{F}(\overline{\rho})}$. We have
$U\cap\mathcal{F}(\overline{\rho})\neq\emptyset$ so that the inverse
image $V$ of $U$ in $\mathcal{E}(\overline{\rho})$ is a non empty
Zariski-open subset. Let $x\in V\cap\mathcal{Z}'$. Let
$c\geq\max_i\vabs{v_p(\phi_i(x))+e^{-1}k_i(x)}$ and larger than
$n^2+1$ and let $\mathcal{Z}_c''$ be the subset of point
$z\in\mathcal{Z}'$ such that
$k_{i+1}(z)-k_i(z) > cen(k_i(z)-k_{i-1}(z))$ $2\leq i\leq n-1$, $2\leq i\leq n-1$, and $k_2(z) - k_1(z) > 3cen$. If
$z\in\mathcal{Z}_c''$, then
$\vabs{\sum_{i\in I}k_i(z)-\sum_{i\in J}k_i(z)}>cen$ for all distinct
non empty proper subsets $I$ and $J$ of the same cardinal in
$\set{1,\dots,n}$ (by the same proof than \cite[Lem.~4.5.5]{BC2}). Then $\mathcal{Z}''_c$
is Zariski dense and accumulates at $\mathcal{Z}'$ in
$\mathcal{E}(\rhobar)$. Therefore there exists a point
$y\in\mathcal{Z}''_c\cap V$ such that moreover $\max_i\vabs{v_p(\phi_i(y))+e^{-1}k_i(y)}\leq c$. Then, for $i \neq j$,
\begin{eqnarray*}\vabs{v_p(\phi_i(y))-v_p(\phi_j(y))} \geq \frac{1}{e}\vabs{k_i(y) - k_j(y)} - \vabs{v_p(\phi_i(y) + \frac{k_i(y)}{e}} - \vabs{v_p(\phi_j(y))+\frac{k_j(y)}{e}} \\ > 3cn - c - c > 1.\end{eqnarray*}
In particular, $\phi_i(y)/\phi_j(y) \neq p$, so if $\Pi$ is an
automorphic representation corresponding to $y$, we have that $\Pi_v$
is an irreducible principal series. In particular, all its refinements
are accessible. Now we choose a
transitive permutation $\sigma$ of $\set{1,\dots,n}$ and, since all
the refinements of $\Pi_v$ are accessible, there exists
$z_0\in\mathcal{Z}'$ such that $\rho_y\simeq\rho_{z_0}$ and
$(\phi_1(z_0),\dots,\phi_n(z_0))=(\phi_{\sigma(1)}(y),\dots,\phi_{\sigma(n)}(y))$. As in the proof of \cite[Lem.~3.3]{BCsign}, we
deduce that $\sum_{i\in I}v_p(\phi_i(z_0))+e^{-1}k_i(z_0))\neq0$ for
any non empty proper subset $I$ of $\set{1,\dots,n}$. Let
$C=\max_i\vabs*{v_p(\phi_i(z_0))+e^{-1}k_i(z_0)}$. Choose $V'$ a neighborhood of $z_0$ in $\mathcal E(\overline\rho)$. Let
$\mathcal{Z}'_C$ be the subset of point $z\in\mathcal{Z}'$ such that
$k_{\sigma,j+1}(z)-k_{\sigma,j}(z)>C$ for any
$\sigma\in\Hom(K_n,\overline{\Qp})$ and $1\leq i\leq n-1$. The set
$\mathcal{Z}'_C$ is Zariski-dense and accumulates at $\mathcal{Z}'$ in
$\mathcal{E}(\overline{\rho})$ so that there exists a point
$z\in V'\cap\mathcal{Z}'_C$ such that
$\sum_{i\in I}(v_p(\phi_i(z))+e^{-1}k_i(z))\neq0$ for any non empty
proper subset $I$ of $\set{1,\dots,n}$. For any of those $z$, by \cite[Lem.~3.2.3]{Taibi},
if $D'\subset D_{\cris}(\rho_z|_{G_{K_n}})$ is a nonzero proper weakly
admissible sub-$\varphi$-module, then there exists a non empty proper
subset $I\subset\set{1,\dots,n}$ such that
\[ \sum_{i\in I}v_p(\phi_i(z))+e^{-1}\sum_{i\in I}k_i(z)=0\] which
is not possible. Therefore $\rho_z|_{G_{K_n}}$ is absolutely
irreducible.
\end{proof}

\begin{prop}
\label{prop:noncritref}
Let $x \in \mathcal E$ be a point which is crystalline at $v | p$. Then there exists $C > 0$ such that for any $z \in \mathcal Z'_C$ sufficiently close to $x$ the local representation $\rho_{z,v}$ admits a non-critical refinement (see Definition \ref{def:transpo_simples}).
\end{prop}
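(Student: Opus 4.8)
The plan is to localise at $x$ and exploit that the relevant refinement has bounded slope, so that a sufficiently regular weight forces non-criticality by the standard Newton-above-Hodge argument. Since $\rho_{x,v}$ is crystalline its Hodge--Tate weights are integers, so $x$ has algebraic weight and, by Proposition \ref{prop:geoHecke} and its proof, the set $\mathcal{Z}'_C$ accumulates at $x$; the statement is therefore not vacuous, and it suffices to produce an affinoid open neighbourhood $\mathcal{U}\subset\mathcal{E}$ of $x$ and a constant $C>0$, both depending only on $x$, such that $\rho_{z,v}$ admits a non-critical refinement for every $z\in\mathcal{U}\cap\mathcal{Z}'_C$. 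To fix $\mathcal{U}$: recall from the construction of $\mathcal{E}$ (proof of Proposition \ref{prop:geoHecke} and Corollary \ref{cor:finite_slope}) that the normalised Hecke operators $F_{v,i}$, hence the functions $\delta_{v,i}(p)$, lie in $\mathcal{O}(\mathcal{E})^\times$, so their valuation is bounded on any affinoid; I would choose $\mathcal{U}$ affinoid containing $x$ and $M\geq 0$ with $|v_p(\delta_{z,v,i}(p))|\leq M$ for all $z\in\mathcal{U}$ and all $1\leq i\leq n$.

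Next, fix $z\in\mathcal{U}\cap\mathcal{Z}'_C$ and set $D\coloneqq\mathrm{D}_{\rig}(\rho_{z,v})$. By the definition of $\mathcal{Z}'_C$ and by \eqref{HTwtsz}, $D$ is crystalline and its consecutive Hodge--Tate weights at each embedding of $F_v$ into $L$ have gaps $>C-2n$; by local--global compatibility at $v$ (proof of Proposition \ref{prop:geoHecke}) the image of $z$ in $\mathcal{T}$ is a triangulation parameter, giving a triangulation $\mathcal{F}$ of $D$ whose $i$-th graded piece is the rank-one $(\varphi,\Gamma)$-module of parameter $\delta_{z,v,i}$, with $\delta_{z,v,i}|_{\mathcal{O}_{F_v}^\times}$ the algebraic character attached to the $i$-th Hodge--Tate weight and $v_p(\delta_{z,v,i}(p))\leq M$; since $D$ is crystalline, $\mathcal{F}$ corresponds (Remark \ref{rema:basealpha}) to a complete $\varphi$-stable flag $0=V_0\subset\cdots\subset V_n=D_{\cris}(D)$. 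Unwinding the normalisations in \eqref{defdelta} and \eqref{HTwtsz} and the relation between $F_{v,i}$ and the classical double cosets $F_{v,i}^{cl}$ of the proof of Proposition \ref{prop:geoHecke}, the linearised crystalline Frobenius eigenvalue $\psi_l$ on $V_l/V_{l-1}$ satisfies $v_p(\psi_l)=-\sum_\tau k_{v,\tau,l}+e_l$ with $|e_l|$ bounded in terms of $M$, $n$ and $f$ only (uniformly in $z$). In particular $|v_p(\psi_i)-v_p(\psi_j)|\geq C-c_1(M,n,f)$ for $i\neq j$, so after enlarging $C$ the ratios $\psi_i/\psi_j$ avoid $\{1,p^f\}$: $D$ is $\varphi$-generic; it is HT-regular because the gaps are positive; hence the notion of a non-critical refinement of $\rho_{z,v}$ (Definition \ref{def:transpo_simples}) applies to $(D,\mathcal{F})$.

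It remains to show that, enlarging $C$ once more, $\mathcal{F}$ is non-critical. For $1\leq i\leq n-1$ let $E_i\coloneqq-\sum_\tau\sum_{l\leq i}k_{v,\tau,l}$ be the value of $t_H(V_i)$ dictated by $\delta_{z,v,1},\dots,\delta_{z,v,i}$ (here $t_N$ and $t_H$ are Fontaine's Newton and Hodge numbers). One always has $t_H(V_i)\geq E_i$, with equality if and only if $\mathcal{F}$ is non-critical at step $i$ for the embedding considered, and if equality fails then the induced Hodge jumps on $V_i$ differ from the extremal ones at some embedding, whence $t_H(V_i)\geq E_i+\min_{\tau,j}(k_{v,\tau,j}-k_{v,\tau,j+1})>E_i+(C-2n)$. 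On the other hand, weak admissibility of the \'etale filtered $\varphi$-module $D_{\cris}(D)$, applied to the sub-$\varphi$-module $V_i$ with the induced filtration, gives $t_N(V_i)\geq t_H(V_i)$, while $t_N(V_i)=\sum_{l\leq i}v_p(\psi_l)=E_i+\sum_{l\leq i}e_l\leq E_i+c_2(M,n,f)$ by the previous paragraph. Chaining these inequalities, $E_i+c_2(M,n,f)\geq t_N(V_i)\geq t_H(V_i)\geq E_i$, so as soon as $C-2n>c_2(M,n,f)$ we must have $t_H(V_i)=E_i$, i.e.\ non-criticality at step $i$. Running this over all $i$ and all embeddings of $F_v$ yields $w_{D,\mathcal{F}}=w_0$, i.e.\ $\mathcal{F}$ is a non-critical refinement of $\rho_{z,v}$; this is, of course, the ``small slope implies non-critical'' argument, compare \cite[\S3]{CheFern} and \cite{BHS}.

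The step I expect to be the main obstacle is the normalisation formula $v_p(\psi_l)=-\sum_\tau k_{v,\tau,l}+e_l$ with $e_l$ bounded uniformly in $z$, used twice above. Proving it amounts to tracing carefully, through \eqref{defdelta}, \eqref{HTwtsz} and the normalisation relating $F_{v,i}$ to $F_{v,i}^{cl}$, how the eigenvariety-normalised slope $v_p(\delta_{z,v,i}(p))$ translates into the linearised Frobenius slopes of $D_{\cris}(D)$ and into the Hodge polygon, and checking that every discrepancy --- the twists $|\cdot|^{(1-n)/2}$ and $\prod_\tau x_\tau^{s_\tau(i)}$, the half-sum of roots $\rho_G$, and the $t$-power implicit in passing from $\mathcal{F}_i$ to $\mathcal{F}_i[1/t]$ --- contributes only a constant depending on $n$ and $f$. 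Given that normalisation lemma, the two remaining ingredients, namely boundedness of the slope on $\mathcal{U}$ because $F_{v,i}\in\mathcal{O}(\mathcal{E})^\times$ and the weak-admissibility (Newton-above-Hodge) estimate, are routine.
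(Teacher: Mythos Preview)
Your proof is correct and follows essentially the same route as the paper: bound the normalised slopes near $x$, then use weak admissibility (Newton above Hodge) on the $\varphi$-stable subobject $V_i$ to show that a critical step would force a gap contribution exceeding that bound. The paper in fact shows, as you do, that the refinement \emph{supplied by} $\mathcal{E}$ at $z$ is already non-critical.

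Two minor simplifications compared with your write-up. First, the normalisation lemma you flag as the main obstacle is exactly the statement that $y\mapsto v_p(\phi_i(y))+e^{-1}k_i(y)$ is \emph{locally constant} on $\mathcal{Z}'$ (not merely bounded on an affinoid): this is immediate once you observe that this quantity equals $v_p(\delta_{y,v,i}(p))$ up to a fixed constant, and the valuation of a unit on a reduced rigid space is locally constant. The paper therefore takes $C$ directly as $e\max\big(1,\max_i\sum_{j\le i}(v_p(\phi_j(x))+e^{-1}k_j(x))\big)$ and avoids chasing the twists in \eqref{defdelta} and \eqref{HTwtsz}. Second, your detour through $\varphi$-genericity is unnecessary: the refinement $\mathcal{F}$ exists by construction on $\mathcal{E}$, and non-criticality of a given $\varphi$-stable flag is well-defined without it.
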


\begin{proof}
  Let $x$ be a point as in the statement. As the maps $y\mapsto v_p(\phi_i(y))+e^{-1}k_i(y)$ are locally constant on $\mathcal{Z}'$, there exists an open subset $U \subset \mathcal E$ containing $x$ such that these are constant on $U \cap \mathcal{Z}'$. Let \[C= e \max(1,\max \{v_p(\phi_1(x)) + e^{-1}k_1(x) + \dots + v_p(\phi_i(x)) + e^{-1}k_i(x)| {i = 2,\dots,n}\})\]
  and $\mathcal{Z}'_C$ the set of points $z\in\mathcal{Z}'$ such that $\vabs{k_{i,\tau}-k_{i+1,\tau}}>C$ for all $1\leq i\leq n-1$ and $\tau \in\Phi$. We claim that any $z \in \mathcal Z'_C \cap U$, $z$ has a non-critical refinement at $v$ (in fact even the refinement given by $\mathcal E$ at $z$ is non-critical)\footnote{Those $z$ are \emph{numerically non-critical} as in \cite[Remark 2.4.6, (ii)]{BC2}}. Let $z \in \mathcal Z_C' \cap U$ and let $F_\bullet \subset D(\rho_{z,v})$ be the refinement defined by $z$. We note $F_i$ its $i$-dimensional part. The (linearized) Frobenius eigenvalues on $F_i$ are $\phi_1(z),\dots,\phi_i(z)$. Assume that the refinement $F_\bullet$ is critial. This means that the there exists some $1\leq i\leq n-1$ and some $\sigma : F_v \hookrightarrow \overline{\QQ_p}$ and such that the gaps of the Hodge-Tate filtration on $\overline{\QQ_p}\otimes_{F_{v,0},\sigma}D_{\cris}(\rho_{z,v})$ are \emph{not} $-k_{1,\sigma}(z),\dots,-k_{i,\sigma}(z)$. Choose $i$ minimal for this property. This implies that the sum of these gaps is greater or equal than $-(k_{1,\sigma}(z) + \dots+ k_{i-1,\sigma}(z) + k_{i+1,\sigma}(z))$. Moreover if $\tau\neq\sigma$, the sum of the gaps of the Hodge-Tate filtration on $\overline{\QQ_p}\otimes_{F_{v,0},\sigma}D_{\cris}(\rho_{z,v})$ is greater or equal than $-(k_{1,\tau}(z) + \dots+ k_{i-1,\tau}(z) + k_{i,\tau}(z))$. As $F_i$ is $\varphi$-stable, the weak admissibility of $D_{\cris}(\rho_{z,v})$ implies that
\[ v_p(\phi_1(z))+\dots + v_p(\phi_i(z)) \geq -\frac{1}{e}\left(\sum_{\tau \neq \sigma}\left(\sum_{j \leq i}k_{j,\tau}(z)\right)  +\sum_{j=1}^{i-1}k_{j,\sigma}(z) +k_{\sigma,i+1}(z)\right),\]
thus
\[ \sum_{j=1}^i \left(v_p(\phi_j(z)) + \frac{1}{e}k_j(z)\right) \geq \frac{1}{e}(k_{i,\sigma}(z)-k_{i+1,\sigma}(z)) > C,\]
contradicting $z \in \mathcal Z_C'$. Therefore $F_\bullet$ is non-critical.
\end{proof}

Let $\mathcal{X}^{\modag}$ be the set of almost generic points in
$\mathcal{F}(\overline{\rho})$.

\begin{theor}\label{theo:agpointsdense}
  The set of points $x$ in $\overline{\mathcal{F}(\overline{\rho})}$
  which are in the image of the set $\mathcal{Z}'$ such that
  $\rho_{x,v}$ is crystalline $\varphi$-generic, HT-regular, admits an non-critical refinement, and such
  that $\rho_x(G_{F_v})$ contains an open subgroup of $\SL(V_x)$ is a
  Zariski dense accumulation subset. Moreover the set
  $\mathcal{X}^{\modag}$ is a Zariski dense accumulation subset in
  $\overline{\mathcal{F}(\overline\rho)}$.
\end{theor}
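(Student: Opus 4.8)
The plan is to assemble Theorem \ref{theo:agpointsdense} out of the density and accumulation statements proven in the four preceding propositions, combined with the genericity conditions, using the standard principle that a finite intersection of Zariski-dense accumulation subsets of an equidimensional rigid space (each cut out by a "Zariski-open up to a dense locus'' condition) is again Zariski-dense and accumulation. First I would set up the relevant loci. Inside $\mathcal{Z}'$ consider: (a) the set $\mathcal{Z}'_{\mathrm{cris}}$ of points which are crystalline at every $v\mid p$; by Proposition \ref{prop:geoHecke} (and the discussion of $\mathcal{Z}_C$) this is Zariski dense and accumulating at every point of $\mathcal{Z}'$, hence in $\mathcal{E}(\overline\rho)$, for $C\gg 0$; (b) the HT-regular and $\varphi$-generic locus — for $C$ large enough every point of $\mathcal{Z}_C$ has pairwise distinct Hodge–Tate weights, and the $\varphi$-genericity (distinct Frobenius eigenvalues with ratios avoiding $1,p^f$) holds on a Zariski-open dense subset since it is a non-vanishing condition on the characteristic polynomial of the crystalline Frobenius, which varies analytically over the crystalline locus; (c) the existence of a non-critical refinement, which by Proposition \ref{prop:noncritref} holds for $z\in\mathcal{Z}'_C\cap U$ with $U$ a neighbourhood of any fixed crystalline point and $C$ depending on that point.

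The core input for the image condition is Proposition \ref{prop:crit-image-ouverte}: it produces finitely many nonzero $\QQ$-linear forms $\Lambda_1,\dots,\Lambda_r$ on $\QQ^n$ such that a crystalline, $G_{K_n}$-absolutely irreducible representation whose Hodge–Tate weights at some embedding avoid the vanishing of all the $\Lambda_i$ has image containing an open subgroup of $\SL(V)$. So I would first invoke Proposition \ref{prop:density-absirred} to reduce to the Zariski-open dense subset of $\overline{\mathcal{F}(\overline\rho)}$ where $\overline\rho_x|_{G_{K_n}}$ is absolutely irreducible, and then, inside the dense set $\mathcal{Z}'$, further intersect with the condition that the (integral) Hodge–Tate weights at some $\sigma$ avoid the hyperplanes $\{\Lambda_i=0\}$. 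The point is that this last condition can be arranged by an accumulation argument: given any point $x$ and any Zariski-open $U$, one first finds $z\in U\cap\mathcal{Z}'$ with absolutely irreducible $\rho_z|_{G_{K_n}}$ (Proposition \ref{prop:density-absirred}), then perturbs within the accumulation set $\mathcal{Z}'_C$ (increasing the gaps $k_{i+1}-k_i$) so that, for a suitable very regular weight, the tuple $(k_{1,\sigma}(z'),\dots,k_{n,\sigma}(z'))$ lies off the finitely many rational hyperplanes — this is possible since the hyperplanes are proper and the weights can be moved along a positive-dimensional family of dominant weights while staying in $U$ and staying $C$-very regular. One also keeps track that moving the weights preserves absolute irreducibility (which is Zariski-open, hence stable in a neighbourhood) and crystallinity.

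Having secured all these conditions simultaneously on a common Zariski-dense accumulation subset of $\overline{\mathcal{F}(\overline\rho)}$, the first assertion follows: such points are crystalline, $\varphi$-generic, HT-regular at $v\mid p$, admit a non-critical refinement, and have $\rho_x(G_{F_v})$ containing an open subgroup of $\SL(V_x)$. For the second assertion I need to upgrade "contains an open subgroup of $\SL(V_x)$'' to "$\rho_x$ has enormous image'' in the sense of \cite{NT} Definition 2.27, and to get the local absolute irreducibility of $\rho_{x,v}$ for all $v\mid p$ and the existence of an \emph{irreducible} refinement (as opposed to merely non-critical). For the enormous image: a $p$-adic group containing an open subgroup of $\SL_n$ has Zariski-closure containing $\SL_n$, and by the results of \cite{NT} (or a direct check via the criterion in Definition 2.27, using that $\SL_n$ is such a group and that enormousness passes to subgroups of finite index and to groups with the same Zariski closure up to finite index) one concludes $\rho_x(G_{E(\zeta_{p^\infty})})$ is enormous — here one uses that restricting to $G_{E(\zeta_{p^\infty})}$ only passes to a finite-index (in fact the image is still open in $\SL(V_x)$) subgroup. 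The local absolute irreducibility of $\rho_{x,v}$ and the existence of an irreducible refinement: $\varphi$-genericity of $\rho_{x,v}$ already furnishes $n!$ refinements, and by the argument inside the proof of Proposition \ref{prop:density-absirred} (choosing a transitive cyclic permutation $\sigma$ and using that all refinements are accessible), combined with \cite[Lem.~3.2.3]{Taibi}, one finds among the points with $\rho_z\simeq\rho_x$ one whose crystalline Frobenius slopes have no vanishing partial sum, forcing $\rho_{x,v}$ to be absolutely irreducible and, by $\varphi$-genericity, to possess an irreducible (in particular non-critical) refinement. Assembling, $x\in\mathcal{X}^{\modag}$, so $\mathcal{X}^{\modag}$ contains a Zariski-dense accumulation subset, hence is one.

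I expect the main obstacle to be the simultaneous bookkeeping of the accumulation and density arguments — in particular, arranging the Hodge–Tate weights to avoid the finitely many rational hyperplanes $\{\Lambda_i=0\}$ of Proposition \ref{prop:crit-image-ouverte} while keeping the point inside a prescribed Zariski-open $U$, keeping it crystalline and $C$-very regular, and preserving $G_{K_n}$-absolute irreducibility. The accumulation property of the sets $\mathcal{Z}'_C$ at $\mathcal{Z}'$ is exactly what allows this, but one must check that the family of dominant weights one moves along is large enough (positive-dimensional in each $\sigma$-component) that it cannot be contained in the union of the $\Lambda_i$-hyperplanes — this is where the precise shape of the weight space in Proposition \ref{prop:geoHecke} and the freedom in choosing very regular weights are used. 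The passage from "open subgroup of $\SL$'' to "enormous image'' is comparatively formal given \cite{NT}, and the existence of an irreducible refinement is a repackaging of arguments already present in the proof of Proposition \ref{prop:density-absirred}.
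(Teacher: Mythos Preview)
Your overall strategy matches the paper's: combine Propositions \ref{prop:crit-image-ouverte}, \ref{prop:density-absirred}, \ref{prop:noncritref} with the accumulation of $\mathcal{Z}'_C$ from Proposition \ref{prop:geoHecke} to produce a Zariski-dense accumulation set of crystalline, HT-regular, $\varphi$-generic points with a non-critical refinement and large local image, then upgrade to enormous image. The assembly of the density conditions is fine in outline.

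There is, however, a genuine gap in your passage from ``$\rho_x(G_{F_v})$ contains an open subgroup of $\SL(V_x)$'' to ``$\rho_x(G_{E(\zeta_{p^\infty})})$ is enormous''. You write that restricting to $G_{E(\zeta_{p^\infty})}$ ``only passes to a finite-index subgroup''. This is false: $E(\zeta_{p^\infty})/E$ is an \emph{infinite} abelian extension, so $G_{E(\zeta_{p^\infty})}$ has infinite index in $G_E$, and there is no a priori reason its image should remain open in $\SL(V_x)$. The paper's argument is different and does not go through openness: since $E(\zeta_{p^\infty})/E$ is abelian, the derived subgroup of $\rho_x(G_E)$ is contained in $\rho_x(G_{E(\zeta_{p^\infty})})$; by \cite[I \S2.1(e)]{Bor91}, the Zariski closure of $\rho_x(G_{E(\zeta_{p^\infty})})$ therefore contains the derived subgroup of the Zariski closure of $\rho_x(G_E)$, which contains $\SL(V_x)$; one then concludes by \cite[Lem.~2.33]{NT}, which says that a subgroup of $\GL_n$ whose Zariski closure contains $\SL_n$ is enormous. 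Your parenthetical ``in fact the image is still open in $\SL(V_x)$'' would require its own argument (via commutators and Lie algebras) and is not what the paper does.

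A secondary point: your treatment of the ``irreducible refinement'' and local absolute irreducibility conditions for $\mathcal{X}^{\modag}$ is more elaborate than needed and somewhat confused (an irreducible representation need not have a non-critical refinement, and ``irreducible (in particular non-critical)'' is not correct). The paper simply observes that for each $v'\mid p$ the locus where $\rho_x|_{G_{F_{v'}}}$ is absolutely irreducible is Zariski open and dense in $\overline{\mathcal{F}(\rhobar)}$, and intersects; you do not need to rerun the transitive-permutation argument of Proposition \ref{prop:density-absirred} for this.
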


\begin{proof}
  For any $v | p$, let $\Lambda_1,\dots,\Lambda_r$ be nonzero $\QQ$-linear forms on
  $\QQ^n$ as in Proposition \ref{prop:crit-image-ouverte}. Let
  $\sigma$ be some fixed embedding of $F_v$ into $K$. The set of
  classical points $x\in\mathcal{E}(\overline{\rho})$ which are
  crystalline and such that the $\sigma$-Hodge-Tate weights of the
  representation $\rho_x$ are not zeros of all the $\Lambda_i$, and such that the weights are sufficiently regular in the sense of of \ref{prop:noncritref}, form a
  Zariski dense accumulation subset in $\mathcal{E}(\overline{\rho})$
  (this is a direct consequence of the open image of Proposition
  \ref{prop:geoHecke} ; see \cite[Proposition 2.2.6]{Taibi}  by using $f_i = \delta_{i,v}(\pi_v)$, whose valuation is $v_p(\phi_i) + \frac{1}{e}k_i$). 
%  Moreover, if we add the condition that the weights are also sufficiently regular, in the sense of \ref{prop:noncritref}, 
%  then those still form a dense accumulation subset (again by open image and e.g.  by using $f_i = U_{v,i}$, whose valuation is $v_p(\phi_i) + \frac{1}{e}k_i$, in \cite[Proposition 2.2.6]{Taibi}).
  
  As a
  consequence $\overline{\mathcal{F}(\overline{\rho})}$ is the
  Zariski-closure of the images of these points in
  $\mathcal{X}_{\overline{\rho}}^{\pol}$. By Proposition
  \ref{prop:density-absirred} and Proposition \ref{prop:noncritref}, the subspace of
  $\mathcal{X}_{\overline{\rho}}^{\pol}$ where $\rho_x|_{G_{K_n}}$
  is absolutely irreducible is Zariski-open and Zariski-dense in
  $\overline{\mathcal{F}(\overline{\rho})}$. We conclude from
  Proposition \ref{prop:crit-image-ouverte} that the set of classical
  points $x$ such that $\rho_x$ has an open image and posses a non-critical refinement is Zariski-dense and 
  an accumulation subset in $\overline{\mathcal{F}(\overline{\rho})}$.

  It is enough to prove that classical points such that
  $\rho_x(G_{F_v})$ contains an open subgroup of $\SL(V_x)$ have
  enormous image. At such a point $x$, the Zariski closure of
  $\rho_x(G_E)$ contains $\SL(V_x)$. As $E(\zeta_{p^\infty})/E$ is
  abelian, the derived subgroup $\rho_x(G_E)$ is included in
  $\rho_x(G_{E(\zeta_{p^\infty})})$. By \cite[I \textsection 2.1
  (e)]{Bor91}, the Zariski closure of
  $\rho_x(G_{E(\zeta_{p^\infty})})$ contain the derived subgroup of
  the Zariski closure of $\rho_x(G_E)$ and then contains
  $\SL(V_x)$. It follows from \cite{NT} Lemma 2.33, that
  $\rho_x(G_{E(\zeta_{p^\infty})})$ is enormous.

  For any $v'\mid p$, the subset $U_{v'}$ of
  $x\in\overline{\mathcal{F}(\rhobar)}$ such that
  $\Tr\rho_x|_{G_{F_{v'}}}$ is absolutely irreducible is Zariski open
  and dense. We easily deduce the second part of the statement.
\end{proof}

\section{Global and local settings}
\label{sect:localglobal}

We come back to our global situation as in section \ref{sect:aggen}, i.e.~$k$ is a finite field of characteristic $p$ and $\rhobar : G_E\rightarrow\GL_n(k)$ is a conveniently modular (polarized by $\chi=1$) semisimple continuous representation.

Let $x \in \mathcal{X}^{\modag}$. Then $x$ correspond to a cuspidal automorphic representation $\pi$ of $G$. Let $\Pi$ be the isobaric automorphic representation of $\GL_n(\mathbb{A}_E)$ associated to $\pi$ by Theorem \ref{thrA1}.

\begin{prop}\label{prop:generic}
The representation $\Pi$ is cuspidal and thus generic.
\end{prop}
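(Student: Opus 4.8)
The idea is to exploit the absolute irreducibility of the Galois representation $\rho_x$, which is part of the definition of an almost generic point. First I would recall that, since $x\in\mathcal{X}^{\modag}$, the point $x$ lies in the image of $\mathcal{Z}'$ and the associated Galois representation $\rho_x$ has enormous image; by the construction of $\mathcal{X}^{\modag}$ in Theorem \ref{theo:agpointsdense} (together with Proposition \ref{prop:density-absirred}) we moreover know that $\rho_x|_{G_{K_n}}$, hence a fortiori $\rho_x$ itself, is absolutely irreducible. Note also that $\rho_x$ is semisimple, being the representation attached to a continuous determinant (cf.~\cite{Chedet}).

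Next, recall from Theorem \ref{thrA1} that $\Pi$ is an isobaric automorphic representation of $\GL_n(\mathbb{A}_E)$, regular algebraic and essentially conjugate self-dual, whose attached Galois representation $\rho_{\Pi,\iota}$ satisfies $\rho_{\Pi,\iota}^{\mathrm{ss}}\simeq\rho_x$: indeed both sides are semisimple, continuous, unramified outside a finite set, and have equal traces of Frobenius at all but finitely many places (by strong association of $\rho_x=\rho_{\pi,\iota}$ with $\pi$ and the compatibility in Theorem \ref{thrA1}), so they coincide by Chebotarev. Suppose now that $\Pi$ is not cuspidal and write its isobaric decomposition $\Pi\simeq\Pi_1\boxplus\cdots\boxplus\Pi_r$ with $r\geq2$, where each $\Pi_i$ is a cuspidal automorphic representation of $\GL_{n_i}(\mathbb{A}_E)$ with $n_i\geq1$ and $\sum_i n_i=n$. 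Each $\Pi_i$ is regular algebraic (the infinitesimal character of $\Pi_\infty$, which is regular, restricts to a regular one on each factor), so by the constructions of Galois representations for regular algebraic cuspidal automorphic representations of $\GL_{n_i}$ (\cite{HLTT,SchTor}, or \cite{HT,CHT,Shin,CH} in the essentially conjugate self-dual case) there is a semisimple $\rho_{\Pi_i,\iota}\colon G_E\to\GL_{n_i}(\overline{\QQ_p})$ matching the Satake parameters of $\Pi_i$ at the unramified places. Comparing Satake parameters place by place, $\rho_x\simeq\rho_{\Pi,\iota}^{\mathrm{ss}}\simeq\bigoplus_{i=1}^r\rho_{\Pi_i,\iota}$, a direct sum of $r\geq2$ nonzero subrepresentations, contradicting the absolute irreducibility of $\rho_x$. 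Hence $\Pi$ is cuspidal. Finally, a cuspidal automorphic representation of $\GL_n$ over a number field is globally generic (Shalika, Piatetski-Shapiro; see e.g.~\cite{JS}), so $\Pi$ is generic.

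The only delicate point is the precise form of the compatibility furnished by Theorem \ref{thrA1}, namely that $\rho_{\Pi,\iota}^{\mathrm{ss}}\simeq\rho_x$ and that an isobaric decomposition of $\Pi$ induces the corresponding direct sum decomposition of $\rho_{\Pi,\iota}^{\mathrm{ss}}$; once this is granted the argument is immediate. I expect this to be the main (and essentially only) obstacle, and it is handled exactly as in the standard references on base change for unitary groups. Alternatively, one could avoid Galois representations altogether and argue purely automorphically: an isobaric sum $\Pi=\boxplus_i\Pi_i$ with $r\geq2$ would, by strong multiplicity one on $\GL_n$, contradict the fact that the Satake parameters of $\Pi$ come from the irreducible $n$-dimensional $\rho_x$; but the Galois-theoretic formulation above is cleanest here since the absolute irreducibility of $\rho_x$ is already available from Theorem \ref{theo:agpointsdense}.
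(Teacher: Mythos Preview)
Your proposal is correct and follows essentially the same approach as the paper: decompose $\Pi$ isobarically via Theorem \ref{thrA1}, observe that this induces a direct sum decomposition of the associated Galois representation, and conclude $r=1$ from the absolute irreducibility of $\rho_x$ (which is immediate from Definition \ref{defin:ag}, since $\rho_{x,v}$ is absolutely irreducible for each $v\mid p$, or alternatively from enormous image). The paper's proof is just a terse version of yours, citing Piatetski-Shapiro--Shalika for the genericity of cuspidal representations of $\GL_n$.
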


\begin{proof}
We have
\[ \Pi = \Pi_1 \boxplus \Pi_2 \boxplus \dots \boxplus \Pi_k\]
and a character $\chi_{\pi}$ where $\Pi_i$ is a regular algebraic cuspidal automorphic representation of $\GL_{n_i}$ such that $\Pi_i\otimes\chi_\pi^{-1}$ is self-dual. Thus as $\rho_x =  \rho_\pi$ is up-to-twist by a character given by $\rho_\Pi = \rho_{\Pi_1}\oplus\dots\oplus \rho_{\Pi_k}$ but as $x$ is in $X^{\modag}$, $\rho_\pi$ is irreducible, thus $k=1$ and $\Pi$ is cuspidal. In particular it is generic by Piatieski-Shapiro, Shalika (\cite{lecturesLfun} Theorem 8.5).
\end{proof}

\begin{cor}
\label{corgen}
Let $(\rho_x,V_x)$ be the representation corresponding to a point $x\in \mathcal{X}^{mod, ag}$. For all $v \in S$, $v \nmid p$, we have $H^0(G_v,\ad(V_x)^*(1)) = \set{0}$, in particular $H^1(G_v,\ad(V_x)) = H^1_f(G_v,\ad(V_x))$ (see \cite[Notation 2.1]{BellBK}).
\end{cor}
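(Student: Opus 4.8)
The plan is to reduce the first assertion to purity of $\rho_x$ at $v$, and then deduce the equality $H^1=H^1_f$ formally. First I would use Proposition \ref{prop:generic}: the isobaric representation $\Pi$ is cuspidal, hence globally generic by Shalika's theorem, hence its local component $\Pi_v$ is generic at every finite place $v$ of $E$. Fix $v\in S$ with $v\nmid p$. Since $x$ is $\GL_n$-modular, $\rho_x$ agrees with $\rho_{\Pi,\iota}$ up to the harmless twist relating $\rho_\pi$ and $\rho_\Pi$ (which does not affect $\ad$), and local-global compatibility for the Galois representations attached to RACSDC representations of $\GL_n$ — part of the package recorded in \cite[Thm.~2.1]{BLGGTII}, already invoked for $\rho_x$ — identifies $\mathrm{WD}(\rho_x|_{G_v})$, up to Frobenius-semisimplification and an unramified twist, with $\rec(\Pi_v)$, and in particular tells us that $\rho_x|_{G_v}$ is \emph{pure}: there is an integer $w$ such that $\rho_x|_{G_v}$ is pure of weight $w$.

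Next I would run the weight argument on the adjoint. Purity is preserved under tensor products and duals, so $\ad(V_x)|_{G_v}=V_x|_{G_v}\otimes V_x|_{G_v}^{\vee}$ is pure of weight $0$, and hence $\ad(V_x)^*(1)|_{G_v}\simeq\ad(V_x)(1)|_{G_v}$ (the trace pairing identifies $\ad(V_x)$ with its dual) is pure of weight $-2$. To compute $H^0(G_v,\ad(V_x)^*(1))$, observe that a $G_v$-invariant vector is $I_v$-invariant, hence annihilated by the monodromy operator $N$, and is fixed by a geometric Frobenius; thus $H^0(G_v,\ad(V_x)^*(1))\subseteq(\ker N)^{I_v,\,\Frob=1}$. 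Now in each isotypic block $\mathrm{Sp}(m)\otimes\rho$ of a Frobenius-semisimple Weil--Deligne representation that is pure of weight $-2$, the subspace $\ker N$ sits at the bottom of the monodromy filtration and is therefore pure of weight $-(m+1)\le-2$; hence every Frobenius eigenvalue on $\ker N$ has complex absolute value $\le q_v^{-1}<1$. Since semisimplifying the Frobenius does not change its eigenvalues, $1$ is not a Frobenius eigenvalue on $\ker N$, and so $H^0(G_v,\ad(V_x)^*(1))=0$.

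The ``in particular'' is then purely formal. By \cite[Notation 2.1]{BellBK}, $\dim H^1_f(G_v,\ad(V_x))=\dim H^0(G_v,\ad(V_x))$ for $v\nmid p$. On the other hand, local Tate duality gives $H^2(G_v,\ad(V_x))\simeq H^0(G_v,\ad(V_x)^*(1))^{*}=0$ by the first part, so the local Euler characteristic formula (Euler characteristic $0$ at $v\nmid p$) yields $\dim H^1(G_v,\ad(V_x))=\dim H^0(G_v,\ad(V_x))$. As $H^1_f(G_v,\ad(V_x))\subseteq H^1(G_v,\ad(V_x))$, the two coincide.

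The only input that genuinely needs care is the purity of $\rho_x$ at places away from $p$ — the local, $\ell\neq p$ form of the Ramanujan property for RACSDC representations of $\GL_n$ — together with the implication ``$\Pi$ cuspidal $\Rightarrow$ $\Pi_v$ generic''; both are available in the references already used in this text, so I expect the main point to be citation-bookkeeping rather than any new computation. If one wished to bypass purity, the second paragraph could instead be carried out by hand: genericity of $\Pi_v$ means $\rec(\Pi_v)=\bigoplus_i\mathrm{Sp}(m_i)\otimes\rho_i$ with the segments pairwise unlinked, and a Clebsch--Gordan bookkeeping inside $\ad\big(\bigoplus_i\mathrm{Sp}(m_i)\otimes\rho_i\big)(1)$ shows that the ``unlinked'' hypothesis is exactly what prevents $\ker N$ of this module from containing the trivial representation; but the purity route is shorter.
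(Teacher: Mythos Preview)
Your proof is correct. Both you and the paper start from Proposition~\ref{prop:generic} (cuspidality of $\Pi$), but the paper takes the shorter route you sketch at the end: it invokes local--global compatibility (citing \cite{Caraini_loc_glob_not_p}) and then uses genericity of $\Pi_v$ directly to conclude $\Hom_{G_v}(V_x,V_x(1))=0$, which is exactly the ``unlinked segments'' argument you describe as the alternative. You instead pass through purity of $\rho_x|_{G_v}$ and run the weight computation on $\ad(V_x)^*(1)$; this is a perfectly valid and more self-contained way to the same vanishing. Your deduction of $H^1=H^1_f$ from $H^0(G_v,\ad(V_x)^*(1))=0$ via local Tate duality and the Euler characteristic formula is precisely the content of \cite[Prop.~2.3(i)]{BellBK}, which is what the paper cites. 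One small bonus of your route: the purity argument works uniformly at all finite $v\nmid p$, whereas the paper's phrasing invokes the hypothesis that $v$ is split in $E$ to identify $\pi_v$ with a $\GL_n$-representation before appealing to genericity.
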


\begin{proof}
Let $\pi$ be the automorphic representation associated to $x$. Since $v$ is split in $E$, the representation $(\rho_x|_{G_v},V_x)$ is a twist of the image of $\pi_v$ by the local Langlands correspondence (see \cite{Caraini_loc_glob_not_p}). By Proposition \ref{prop:generic}, the representation $\pi_v$ is generic thus $H^0(G_v,\ad(V_x)^*(1)) = Hom_{G_v}(V_x,V_x(1)) = \set{0}$, thus $H^1(G_v,\ad(V_x))/H^1_f(G_v,\ad(V_x))$ vanishes too (for example \cite{BellBK} Proposition 2.3 (i)).
\end{proof}

\begin{theor}[Newton-Thorne]
\label{theorNT}
Let $x \in \mathcal{X}^{\modag}$ and let $r_x : G_{F,S}\rightarrow\mathcal{G}_n(\overline{\Qp})$ be the associated representation. Then $H^1_f(G_{F,S},\ad(r_x)) = \set{0}$.
\end{theor}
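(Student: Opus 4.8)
The plan is to deduce this from the main result of Newton--Thorne \cite{NT} on adjoint Selmer groups, together with a descent from $\GL_n$ over $E$ to $\mathcal{G}_n$ over $F$.

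First I would record the automorphic input. By construction $x\in\mathcal{Z}'$ is attached to a cuspidal, regular algebraic automorphic representation $\pi$ of $G$, and by Proposition \ref{prop:generic} its transfer $\Pi$ to $\GL_n(\mathbb{A}_E)$ is cuspidal and regular algebraic; since we have reduced to $\chi=1$, it is moreover conjugate self-dual. Hence $\rho_x=\rho_{\Pi,\iota}$ is, up to the fixed normalisation, the Galois representation attached to a regular algebraic, conjugate self-dual, cuspidal automorphic representation of $\GL_n(\mathbb{A}_E)$. Being an almost generic point (Definition \ref{defin:ag}), $\rho_x$ is in addition crystalline and Hodge--Tate regular at every place $v\mid p$, absolutely irreducible (already its restriction to $G_{E_v}$ is), and $\rho_x(G_{E(\zeta_{p^\infty})})$ is enormous in the sense of \cite[Def.~2.27]{NT}. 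These are precisely the hypotheses under which \emph{loc.~cit.}~proves the vanishing of the Bloch--Kato adjoint Selmer group, so I would invoke it to obtain $H^1_f(G_{E,S},\ad^0\rho_x)=\set{0}$ (and also $H^1_f(G_{E,S},(\ad^0\rho_x)(1))=\set{0}$, which will not be needed here). Combining this with the elementary vanishing $H^1_f(G_{E,S},\overline{\QQ_p})=\set{0}$ --- local class field theory shows that a $v$-adic field with $v\nmid p$ has only finitely ramified maximal abelian pro-$p$ extension, so $E$ admits no $\ZZ_p$-extension unramified at $p$, and $\overline{\QQ_p}$ is torsion free --- we get $H^1_f(G_{E,S},\ad\rho_x)=\set{0}$.

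Next I would descend to $\mathcal{G}_n$ over $F$. By the comparison recalled in Section \ref{sect:defspaces}, $\ad(r_x)$ restricts along $G_{E,S}\subset G_{F,S}$ to $\ad(\rho_x)$. As $[E:F]=2$ is invertible in $\overline{\QQ_p}$, the inflation--restriction sequence gives that restriction induces an isomorphism
\[ H^1(G_{F,S},\ad(r_x))\overset{\sim}{\fleche}H^1(G_{E,S},\ad(\rho_x))^{\Gal(E/F)}.\]
Under this isomorphism the crystalline local conditions at the places above $p$ match, and at the places of $S$ not above $p$ there is nothing to impose: by Corollary \ref{corgen} the relevant local condition there is all of the local $H^1$; at archimedean places the cohomology vanishes since $p$ is odd. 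Hence restriction identifies $H^1_f(G_{F,S},\ad(r_x))$ with a subspace of $H^1_f(G_{E,S},\ad(\rho_x))=\set{0}$, which is the asserted vanishing.

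The only real content is checking that the running hypotheses of \cite{NT} are satisfied by $\rho_x$, and this is exactly what the almost generic condition was tailored for: once Proposition \ref{prop:generic} is available there is nothing further to verify. The main point to handle with some care --- and the only place where the argument could a priori fail --- is the passage between the $\GL_n$-adjoint over $E$ used in \cite{NT} and the $\mathcal{G}_n$-adjoint over $F$ occurring in the statement; as explained this is harmless precisely because $[E:F]$ is prime to $p$ and $\rho_x$ is absolutely irreducible, so no $2$-torsion and no spurious Galois invariants appear.
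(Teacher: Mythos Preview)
Your proof is correct and rests on the same input as the paper's, namely \cite[Thm.~A]{NT} together with Proposition~\ref{prop:generic} to verify its hypotheses. The difference is that the paper invokes Newton--Thorne directly for the $\mathcal{G}_n$-valued representation $r_x$ over $F$, so the two-sentence proof there is complete as stated; you instead apply Newton--Thorne to $\ad^0\rho_x$ over the CM field $E$, handle the trace summand $\overline{\QQ_p}$ separately, and then descend to $F$ via inflation--restriction for the degree-$2$ extension $E/F$. Your detour is sound --- in particular the order-$2$ quotient $\Gal(E/F)$ has vanishing higher cohomology with $\overline{\QQ_p}$-coefficients because $2$ is invertible there, so your parenthetical ``since $p$ is odd'' at the archimedean places is unnecessary and the argument works uniformly in $p$ --- but it reconstructs by hand a passage already packaged in the cited theorem. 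What your approach buys is an explicit check that the $\GL_n/E$ and $\mathcal{G}_n/F$ formulations of the Selmer vanishing are equivalent; what the paper's buys is brevity.
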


\begin{proof}
This is consequence of \cite[Thm.~A]{NT}. Namely as $x\in \mathcal{X}^{\modag}$, the representation $r_x$ is associated to an automorphic representation $\pi$ whose base change to $\GL_n(\mathbb{A}_E)$ is cuspidal algebraic and regular by Proposition \ref{prop:generic}.
\end{proof}

Let $x\in \mathcal{X}^{mod, ag}$ and let $(\rho_x,V_x)$ be the associated representation of $G_{E,S}$ over a finite extension of $k(x)$. Our goal is to prove Theorem \ref{thr:globaltgt}, which is invariant by scalar extension, thus we freely extend the base field of $\mathcal{X}_{\overline{\rho}}^{\pol}$ so that $\rho_x$ is defined over $k(x)$. Let $\mathcal{F}$ be a refinement of $(\rho_x,V_x)$, that is, a family $(\mathcal{F}_v)_{v\in S_p}$ where $\mathcal{F}_v$ is a refinement of the crystalline representation $(\rho_x|_{G_v},V_x)$. Let $x_{\mathcal{F}}\in\mathcal{E}$ be the classical dominant point corresponding to $\rho_x$ and the refinement $\mathcal{F}$. In what follows, if $X$ is a rigid space and $x\in X$, we set $\widehat{X}_x\coloneqq\Spec(\widehat{\mathcal{O}_{X,x}})$. The projection map $\mathcal{E}\rightarrow\mathcal{X}_{\rhobar}$ induces a morphism $\widehat{\mathcal{E}}_{x_{\mathcal{F}}}\rightarrow\widehat{\left(\mathcal{X}_{\overline{\rho}}^{\pol}\right)}_x \simeq \mathfrak{X}_{\rho_x}^{\pol}$. For each $v\in S_p$, let $\rho_{x,v}=\rho_x|_{G_v}$, which is irreducible, and consider the composite map
\[ \widehat{\mathcal{E}_{x_{\mathcal{F}}}}\longrightarrow\widehat{\left(\mathcal{X}_{\overline{\rho}}^{\pol}\right)}_x \simeq \mathfrak{X}_{\rho_x}^{\pol} \longrightarrow\mathfrak{X}_{\rho_{x,v}}.\]
Using notation introduced in section \ref{sect:localdefrings}, we define \textit{quasi-trianguline} deformation spaces $\mathfrak{X}_{\rho_v,\mathcal{F}}^{\qtri,w} := \mathfrak{X}_{D_{rig}(\rho_v),\mathcal{F}[1/t]}^w$ for $w\in\mathfrak{S}_n^{[F_v:\QQ_p]}$ . Denote also $\mathfrak{X}_{\rho_v}^{\cris} := \mathfrak{X}_{D_{rig}(\rho_v)}^{\cris}$ the crystalline deformation space (see \cite[page 24]{CheFern} or \cite[page 18]{HMS}).
\begin{lemma}
The map $\widehat{\mathcal{E}_{x_{\mathcal{F}}}}\rightarrow 
\mathfrak{X}_{\rho_{x,v}}$ factors through $\mathfrak{X}_{\rho_{x,v},\mathcal{F}_v}^{\qtri,w_0}$.
\end{lemma}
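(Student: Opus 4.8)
The plan is to check separately the two conditions defining $\mathfrak{X}_{\rho_{x,v},\mathcal{F}_v}^{\qtri,w_0}=\mathfrak{X}_{D_{\rig}(\rho_{x,v}),\mathcal{F}_v[1/t]}^{w_0}$: first that the map $\widehat{\mathcal{E}_{x_{\mathcal{F}}}}\to\mathfrak{X}_{\rho_{x,v}}$ factors through the quasi-trianguline deformation functor $\mathfrak{X}_{\rho_{x,v},\mathcal{F}_v}^{\qtri}=\mathfrak{X}_{D_{\rig}(\rho_{x,v}),\mathcal{F}_v[1/t]}$, and second that the induced map to $\mathfrak{X}_{W_{\dR}^+(\rho_{x,v}),W_{\dR}(\mathcal{F}_v[1/t])}$ lands in its $w_0$-component. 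For the first point I would argue as follows. Since $x\in\mathcal{X}^{\modag}$ the representation $\rho_x$ is absolutely irreducible, so over a reduced affinoid neighbourhood $U$ of $x_{\mathcal{F}}$ in $\mathcal{E}(\overline{\rho})$ the universal pseudodeformation lifts to a genuine family of deformations of $\rho_x$; restricting to $G_{F_v}$ and applying $\mathrm{D}_{\rig}$ gives a family of $(\varphi,\Gamma)$-modules over $U$. By Proposition \ref{prop:geoHecke} the classical crystalline $\varphi$-generic points are Zariski-dense in $U$ and accumulate at $x_{\mathcal{F}}$, and after shrinking $U$ their refinement (the one cut out by the eigenvariety character $\delta$) is non-critical by Proposition \ref{prop:noncritref} applied at $x_{\mathcal{F}}$; the global triangulation theorem for families of $(\varphi,\Gamma)$-modules (Kedlaya--Pottharst--Xiao, used on eigenvarieties as in \cite{BHS}) then shows that, after shrinking $U$, this family is trianguline with parameter $\delta$. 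As $x_{\mathcal{F}}$ is by construction the classical point attached to $(\rho_x,\mathcal{F})$, this triangulation specialises there to $\mathcal{F}_v$, and passing to $[1/t]$ it induces, over $\widehat{\mathcal{O}}_{\mathcal{E},x_{\mathcal{F}}}$, a quasi-triangulation deforming $\mathcal{F}_v[1/t]$; this gives the first point and hence a map $\widehat{\mathcal{E}_{x_{\mathcal{F}}}}\to\mathfrak{X}_{W_{\dR}^+(\rho_{x,v}),W_{\dR}(\mathcal{F}_v[1/t])}$.

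For the second point I would choose a framing of $\mathrm{D}_{\pdR}$ of the universal deformation (possible since it is finite free over $\widehat{\mathcal{O}}_{\mathcal{E},x_{\mathcal{F}}}$) reducing to $\alpha$; by \cite[Thm.~3.2.5 \& Cor.~3.5.9]{BHS} the induced map is then a morphism $f\colon\widehat{\mathcal{E}_{x_{\mathcal{F}}}}\to\widehat{(X_{K/\Qp})}_{x_{\mathcal{F}}}$ with $K=F_v$, and it suffices to show that $f^{*}$ kills the ideal of the component $X_{K/\Qp,w_0}$. Since affinoid algebras are excellent and $\mathcal{E}(\overline{\rho})$ is reduced, $\widehat{\mathcal{O}}_{\mathcal{E},x_{\mathcal{F}}}$ is reduced and injects into the product of the $\widehat{\mathcal{O}}_{C,x_{\mathcal{F}}}$ over the irreducible components $C$ of $\mathcal{E}(\overline{\rho})$ through $x_{\mathcal{F}}$, so it is enough to treat each $C$. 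Restricting the analytic family of the first paragraph to a reduced affinoid neighbourhood of $x_{\mathcal{F}}$ in $C$ and applying $W_{\dR}^+$ gives a morphism to (the analytification of) $X_{K/\Qp}$; at the Zariski-dense set of classical, crystalline, non-critically refined points $z$ of this affinoid the image is $(\alpha_z^{-1}(\mathrm{D}_{\pdR}(\mathcal{F}_{z,v})),0,\alpha_z^{-1}(\Fil^{\bullet}))$, whose middle coordinate is $0$ because $z$ is de Rham and whose two flags are in relative position $w_0$ because the refinement is non-critical (Definition \ref{def:transpo_simples}); hence this point lies in $V_{w_0}\subset X_{K/\Qp,w_0}$, exactly as in the proof of Lemma \ref{lem:comp_cris}. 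The defining functions of $X_{K/\Qp,w_0}$ therefore pull back to functions vanishing on a Zariski-dense subset of a reduced affinoid, hence vanish identically, and completing at $x_{\mathcal{F}}$ finishes the argument.

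The main obstacle is the input used in the first paragraph: one needs the universal family of $(\varphi,\Gamma)$-modules over a neighbourhood of $x_{\mathcal{F}}$ to be genuinely trianguline with the expected parameter, which is the global triangulation theorem and rests on the density and accumulation of crystalline, non-critically refined classical points proved earlier. The remainder is essentially bookkeeping, the only delicate step being to run the reducedness-plus-density propagation correctly once the local model of \cite{BHS} is available in families. Note that this lemma does not use the enormous-image hypothesis at all: it only uses that $x$ is classical, crystalline and $\varphi$-generic at the places above $p$, with a non-critical refinement available nearby.
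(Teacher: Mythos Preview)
Your approach differs from the paper's. You split the task into (a) factoring through $\mathfrak{X}^{\qtri}$ via the KPX global triangulation theorem, then (b) arguing that the image lies in the $w_0$-component by a density-plus-reducedness argument using nearby non-critical classical points. The paper instead routes everything through the trianguline variety $X_{\tri}(\rhobar_v)$: one lifts the pseudorepresentation to a genuine family over an affinoid $U\ni x_{\mathcal F}$ (using absolute irreducibility of $\rho_x$), obtains a map $U\to\mathcal{X}_{\rhobar_v}^\Box\times\widehat{F_v^\times}^n$ from the pair $(\rho_U,\delta)$, notes that points of $U\cap\mathcal{Z}'$ land in $X_{\tri}(\rhobar_v)$ \emph{by the definition} of $X_{\tri}$ as a Zariski closure of such points, hence so does all of $U$; and then simply quotes \cite[Cor.~3.7.8]{BHS}, which already asserts that $\widehat{X_{\tri}(\rhobar_v)}_{y_{\mathcal{F}_v}}\to\mathfrak{X}_{\rho_y}^\Box$ factors through $\mathfrak{X}_{\rho_y,\mathcal{F}_v}^{\Box,w_0}$. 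All the local-model work is packaged inside that citation; neither KPX nor Proposition~\ref{prop:noncritref} is invoked.

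Your step (b) has a genuine gap. The sentence ``applying $W_{\dR}^+$ gives a morphism to (the analytification of) $X_{K/\Qp}$'' is not justified: for an analytic family of trianguline $(\varphi,\Gamma)$-modules over an affinoid, producing a map to $X_{K/\Qp}$ requires knowing that $D_{\pdR}$ of the family is locally free of full rank, that the nilpotent operator varies analytically, and that the Hodge filtration is by local direct summands --- and making this precise is exactly the content of the local-model construction in \cite{BHS}. In other words you are reproving \cite[Cor.~3.7.8]{BHS} directly on the eigenvariety rather than citing it for $X_{\tri}$. If one grants that analytic map, your density argument is correct and can even be simplified: non-criticality is not needed, since at any crystalline specialisation the operator $\nu$ vanishes, and the fibre $G/B\times\{0\}\times G/B$ already lies in $X_{w_0}$ by the observation in the proof of Lemma~\ref{lem:comp_cris}.
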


\begin{proof}
Let $\rhobar_v : G_{F_v}\rightarrow\GL_n(k)$ be the composite of $\rhobar$ with $G_{F_v}\hookrightarrow G_{E,S}$ and let $\mathfrak{X}_{\rhobar_v}^\Box$ be the framed deformation space of $\rhobar_v$. Let $X_{\tri}(\rhobar_v)\subset\mathfrak{X}_{\rhobar_v}^{\Box,\rig}\times\widehat{F_v^\times}^n$ be the trianguline variety \cite[Def.~2.4]{BHS1}. We choose $y\in\mathcal{X}_{\rhobar_v}^{\Box}\coloneqq\mathfrak{X}_{\rhobar_v}^{\Box,\rig}$ be a point such that $\rho_y$ is conjugated to $\rho_{x,v}$ and let $y_{\mathcal{F}_v}$ be the dominant point of $X_{\tri}(\overline{\rho}_v)$ corresponding to $y$ and to the refinement $\mathcal{F}_v$. The projection map $X_{\tri}(\rhobar_v)\rightarrow\mathfrak{X}_{\rhobar_v}^{\Box,\rig}$ induces a map $\widehat{X_{\tri}(\rhobar_v)}_{y_{\mathcal{F}_v}}\rightarrow\mathfrak{X}_{\rho_y}^\Box$ and, by \cite[Cor.~3.7.8]{BHS}, this morphism factors through $\mathfrak{X}_{\rho_y,\mathcal{F}_v}^{\Box,w_0}$. As $\mathfrak{X}_{\rho_y,\mathcal{F}_v}^{\Box,w_0}$ is the pullback of $\mathfrak{X}_{\rho_y,\mathcal{F}_v}^{w_0}$ by the formally smooth map $\mathfrak{X}_{\rho_y}^\Box\rightarrow\mathfrak{X}_{\rho_y}$, it is sufficient to prove that there exists, locally at $x$, a factorization 
\[ \begin{tikzcd} \mathcal{E} \ar[r,dotted] \ar[rd] & X_{\tri}(\rhobar_v) \ar[d] \ar[r] & \mathcal{X}_{\rhobar_v}^{\Box}\ar[ld] \\ & \mathcal{X}_{\overline{\rho}_{v}}\end{tikzcd}\]
sending $x_{\mathcal{F}}$ on $y_{\mathcal{F}_v}$, where $\mathcal{X}_{\overline{\rho}_{v}}$ is the rigid fiber of the pseudo-deformation space, as in Definition \ref{defin:psdef}.
As $\rho_{x_\mathcal F}$ is irreducible, it follows that there exists some affinoïd neighborhood $U$ of $x_{\mathcal{F}}$ in $\mathcal{E}$ and a continuous morphism $\rho_V : G_{E,S}\rightarrow \GL_n(\mathcal{O}(U))$ such that $\Tr(\rho_U)(z)=\Tr(\rho_z)$ for all $z\in U$. Indeed, by \cite{Chedet} Theorem 2.22 there is a representation $\rho_A : G_{E,S} \rightarrow \GL_n(\mathcal{O}_{\mathcal{E},x_\mathcal{F}})$ whose trace is $D_{| \mathcal{O}_{\mathcal{E},x_\mathcal{F}}}$. As $\mathcal{O}_{\mathcal{E},x_{\mathcal{F}}}$ is a direct limit over $U$, there exists such a $U$ (see \cite{BC2} Lemma 4.3.7 for a precise argument).
This gives us a map $U\rightarrow \mathcal{X}_{\rhobar_v}^{\Box}$ and even $U\rightarrow \mathcal{X}_{\rhobar_v}^{\Box} \times \widehat{F_v^\times}^n$. As the set $\mathcal{Z}'$ is Zariski-dense and accumulation in $\mathcal{E}$, we can choose $U$ so that $U\cap\mathcal{Z}'_{}$ is Zariski-dense in $U$. A point of $U\cap\mathcal{Z}'_{}$ is sent to a point of $X_{\tri}(\rhobar)$ by $U\rightarrow\mathcal{X}_{\rhobar_v}^\Box\times\widehat{F_v^\times}^n$ (by definition of $X_{\tri}(\overline{\rho})$, \cite{BHS} section 3.7) so that we obtain the desired section.
\end{proof}

For any representation $\rho$ of $G_E$, we use the notation $\rho_p \coloneqq (\rho_v)_{v \in\Phi}$ where $\Phi$ is in our fixed CM type at the beginning of section \ref{se:eigenvarieties}. Then we write $\mathfrak{X}^{}_{\rho_p} := \prod_{v | p} \mathfrak{X}^{}_{\rho_v}, \mathfrak{X}^{\qtri,w}_{\rho_p,\mathcal{F}} := \prod_{v | p} \mathfrak{X}^{\qtri,w}_{\rho_v,\mathcal{F}_v}$ and $\mathfrak{X}^{\cris}_{\rho_p} := \prod_{v | p} \mathfrak{X}^{\cris}_{\rho_v}$.

The following corollary is very similar to \cite{Bergdall} and \cite{BHS}.
\begin{cor}	
\label{cortantri}
Let $x\in \mathcal{X}^{mod, ag}$ and let $\mathcal{F}$ be a refinement such that $(D_{\rig}(\rho_{x,v}),\mathcal{F}_v)$ is associated to a product of distinct transpositions for all $v\in\Phi$ (see definition \ref{def:transpo_simples}). Then $x_{\mathcal{F}}$ is a smooth point of $\mathcal E$ and we have an isomorphism
\[ T_{x_\mathcal{F}} \mathcal{E} \overset{\sim}{\fleche} T \mathfrak{X}^{\qtri,w_0}_{\rho_{x,p},\mathcal{F}}/T \mathfrak{X}^{\cris}_{\rho_{x,p}}.\]
\end{cor}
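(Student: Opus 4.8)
The plan is to write down the natural map $\phi$ of the statement, prove it is injective using the Newton--Thorne vanishing (Theorem~\ref{theorNT}) together with Corollary~\ref{corgen}, and then deduce both smoothness of $\mathcal E$ at $x_{\mathcal F}$ and the surjectivity of $\phi$ from a dimension count. For the construction: by the Lemma just above, for each $v\in\Phi$ the map $\widehat{\mathcal E_{x_{\mathcal F}}}\to\mathfrak X_{\rho_{x,v}}$ factors through $\mathfrak X^{\qtri,w_0}_{\rho_{x,v},\mathcal F_v}$; taking the product over $v\mid p$ gives a lift $\widehat{\mathcal E_{x_{\mathcal F}}}\to\mathfrak X^{\qtri,w_0}_{\rho_{x,p},\mathcal F}$ of the map $\widehat{\mathcal E_{x_{\mathcal F}}}\to\mathfrak X_{\rho_{x,p}}$, compatibly with the $\mathcal T$-coordinate. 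Composing the induced map $T_{x_{\mathcal F}}\mathcal E\to T\mathfrak X^{\qtri,w_0}_{\rho_{x,p},\mathcal F}$ with the projection modulo $T\mathfrak X^{\cris}_{\rho_{x,p}}$ (which embeds into $T\mathfrak X^{\qtri,w_0}_{\rho_{x,p},\mathcal F}$ by Lemma~\ref{lem:comp_cris}) defines $\phi$.

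For injectivity, let $\xi\in\ker\phi$. By construction the image of $\xi$ in $T\mathfrak X^{\qtri,w_0}_{\rho_{x,p},\mathcal F}$ lies in $T\mathfrak X^{\cris}_{\rho_{x,p}}$; in particular, via the injection $T\mathfrak X^{\cris}_{\rho_{x,p}}\hookrightarrow T\mathfrak X_{\rho_{x,p}}$ and Lemma~\ref{lem:comp_cris}, the underlying deformation of $\rho_{x,v}$ attached to $\xi$ is crystalline for every $v\mid p$, so its class lies in $H^1_f(G_v,\ad r_x)$. For $v\in S$ with $v\nmid p$, Corollary~\ref{corgen} gives $H^1(G_v,\ad r_x)=H^1_f(G_v,\ad r_x)$, so the class of $\xi$ in $T\mathfrak X^{\pol}_{\rho_x}\simeq H^1(G_{F,S},\ad r_x)$ lies in the Selmer group $H^1_f(G_{F,S},\ad r_x)$, which vanishes by Theorem~\ref{theorNT}. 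Hence the underlying (global, and \emph{a fortiori} local) deformation of $\rho_x$ attached to $\xi$ is trivial; since $\mathfrak X^{\cris}_{\rho_{x,p}}\subset\mathfrak X_{\rho_{x,p}}$ is a subfunctor, the image of $\xi$ in $T\mathfrak X^{\cris}_{\rho_{x,p}}$, and therefore in $T\mathfrak X^{\qtri,w_0}_{\rho_{x,p},\mathcal F}$, is zero. In particular $\xi$ maps to $0$ in $T\mathcal T$ as well, so $\xi\mapsto(0,0)$ under $\mathcal E\hookrightarrow\mathcal X^{\pol}_{\overline\rho}\times\mathcal T$; as this map is a closed immersion (Proposition~\ref{prop:geoHecke}), $\xi=0$.

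It remains to bound $\dim\bigl(T\mathfrak X^{\qtri,w_0}_{\rho_{x,p},\mathcal F}/T\mathfrak X^{\cris}_{\rho_{x,p}}\bigr)$. Here I would argue place by place: for $v\mid p$, since $(D_{\rig}(\rho_{x,v}),\mathcal F_v)$ is associated to a product of distinct transpositions, Proposition~\ref{prop:tangentcompw0ptcrist} --- applied through the local model of \cite{BHS}, over each embedding $F_v\hookrightarrow L$, at the point of $X_{w_0}\cap V_w$ with $w_0w^{-1}$ a product of distinct simple reflections --- identifies $T\mathfrak X^{\qtri,w_0}_{\rho_{x,v},\mathcal F_v}$ with the tangent space of $\kappa^{-1}(T_{w_0})$ at that point, while Lemma~\ref{lem:comp_cris} identifies $\mathfrak X^{\cris}_{\rho_{x,v}}$ with the locus lying over $0\in\mathfrak g$, namely the closure of $V_{w_0}\cap(G/B\times\{0\}\times G/B)$. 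A direct comparison of these two descriptions, exactly as in \cite{CheFern}, \cite{Bergdall} and \cite{BHS}, yields $\dim T\mathfrak X^{\qtri,w_0}_{\rho_{x,v},\mathcal F_v}-\dim T\mathfrak X^{\cris}_{\rho_{x,v}}=n[F_v:\QQ_p]$, hence $\dim\bigl(T\mathfrak X^{\qtri,w_0}_{\rho_{x,p},\mathcal F}/T\mathfrak X^{\cris}_{\rho_{x,p}}\bigr)=n[F:\QQ]$. Combining this with the injectivity of $\phi$ and with the equidimensionality of $\mathcal E$ (Proposition~\ref{prop:geoHecke}), which gives $\dim T_{x_{\mathcal F}}\mathcal E\ge\dim_{x_{\mathcal F}}\mathcal E=n[F:\QQ]$, we obtain
\[ n[F:\QQ]\;\le\;\dim T_{x_{\mathcal F}}\mathcal E\;\le\;\dim\bigl(T\mathfrak X^{\qtri,w_0}_{\rho_{x,p},\mathcal F}/T\mathfrak X^{\cris}_{\rho_{x,p}}\bigr)\;=\;n[F:\QQ], \]
so all inequalities are equalities: $x_{\mathcal F}$ is a smooth point of $\mathcal E$ and $\phi$ is an isomorphism. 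The main obstacle is the injectivity step, i.e.\ matching precisely the conditions cut out by the eigenvariety (crystalline at $p$, and $H^1=H^1_f$ away from $p$ by Corollary~\ref{corgen}) with the Selmer group killed by Newton--Thorne; the local dimension count, while technical, is of the same nature as the computations already carried out in \cite{CheFern,Bergdall,BHS}.
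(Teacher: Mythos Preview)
Your argument is correct and follows the same strategy as the paper: factor through the local quasi-trianguline space (previous Lemma), use Corollary~\ref{corgen} and Theorem~\ref{theorNT} to kill the kernel, and conclude by a dimension count against the equidimensionality of $\mathcal E$. The paper packages the first two steps slightly differently, introducing the fiber product $\mathfrak X^{\qtri}_{\rho_x,\mathcal F}\coloneqq\mathfrak X^{\pol}_{\rho_x}\times_{\mathfrak X_{\rho_{x,p}}}\mathfrak X^{\qtri,w_0}_{\rho_{x,p},\mathcal F}$ and showing $\widehat{\mathcal E}_{x_{\mathcal F}}\hookrightarrow\mathfrak X^{\qtri}_{\rho_x,\mathcal F}$ is a closed immersion (because $\mathcal O_{\mathcal E}$ is generated by $\mathcal O_{\mathfrak X^{\pol}_{\rho_x}}$ and $\mathcal O_{\mathcal T}$, and $\mathfrak X^{\qtri,w_0}_{\rho_{x,p},\mathcal F}$ lies over $\mathcal T$); your direct argument with $\phi$ and the embedding $\mathcal E\hookrightarrow\mathcal X^{\pol}_{\overline\rho}\times\mathcal T$ amounts to the same thing.

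The only place to tighten is the dimension count. Your appeal to Proposition~\ref{prop:tangentcompw0ptcrist} and Lemma~\ref{lem:comp_cris} is morally right but imprecise: those statements live on the $(W^+,\mathcal F)$/local-model side, and one still has to pass through the formally smooth maps $\mathfrak X_{D,\mathcal F}\to\mathfrak X_{W^+,\mathcal F}$ and the framing to extract the actual dimensions of $T\mathfrak X^{\qtri,w_0}_{\rho_{x,v},\mathcal F_v}$ and $T\mathfrak X^{\cris}_{\rho_{x,v}}$; also, your description of the crystalline locus as $G/B\times\{0\}\times G/B\subset X$ conflates the model for $\mathfrak X^{\cris}_D$ (which carries no filtration and lives over $\{0\}\times G/B\subset\widetilde{\mathfrak g}$) with its image in $X$ after adjoining the unique filtration. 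The paper avoids unwinding this by citing \cite[Cor.~3.7.8 and Rem.~4.1.6(i)]{BHS} for the smoothness and dimension of the quasi-trianguline side (this is exactly where the ``product of distinct simple transpositions'' hypothesis enters) and \cite[Thm.~3.3.8]{Kisindef} for the crystalline side, obtaining $\dim T\mathfrak X^{\qtri,w_0}_{\rho_{x,v},\mathcal F_v}-\dim T\mathfrak X^{\cris}_{\rho_{x,v}}=n[F_v:\QQ_p]$ directly.
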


\begin{proof}
Denote by $\mathfrak{X}^{\pol}_{\rho_x}$ the (equicharacteristic) $\chi$-polarised global deformation space of $\tr\rho_x$. It is the completion of $\mathcal{X}^{\pol}_{\overline{\rho}}$ at $\rho_x$ by \cite[section 4.1]{Chedet}. Denote by $\mathfrak{X}_{\rho_x,\mathcal{F}}^{\qtri}$ the fiber product $\mathfrak{X}_{\rho_x}^{\pol} \times_{\mathfrak{X}_{\rho_{x,p}}} \mathfrak{X}^{\qtri,w_0}_{\rho_{x,p},\mathcal{F}}$. We have a map
\[ \widehat{\mathcal{E}_{x_\mathcal{F}}} \fleche \mathfrak{X}_{\rho_x,\mathcal{F}}^{\qtri},\]
induced from the map $\widehat{\mathcal{E}_{x_\mathcal{F}}} \fleche \mathfrak{X}_{\rho_{x,v},\mathcal{F}_v}^{\qtri,w_0}$ and $\widehat{\mathcal{E}_{x,\mathcal{F}}} \fleche \mathfrak{X}_{\rho_x}^{\pol}$. But then the standard argument that 
\[ f : \mathcal{O}_{\mathfrak{X}_{\rho_x,\mathcal{F}}^{\qtri}} \fleche \widehat{\mathcal{O}_{\mathcal{E}_{x,\mathcal{F}}}},\]
is surjective comes from the fact that $\mathcal E_{x_{\mathcal F}}$ is topologically generated by $\mathcal O_{\mathfrak X_{\rho_x}^{\pol}}$ and $\mathcal O_{\mathcal T}$ by construction, but $\mathfrak X_{\rho_{x,p},\mathcal F}^{\qtri,w_0}$ lies over $\mathcal T$.
Thus we have a closed immersion
\begin{equation}
  \label{eq:closed_immersion}
  \widehat{\mathcal{E}_{x,\mathcal{F}}} \hookrightarrow \mathfrak{X}_{\rho_x,\mathcal{F}}^{\qtri}.
\end{equation}
But the genericity assumption (Corollary \ref{corgen}) implies that % the tangent space of $\mathfrak{X}_{\rho_x,\mathcal F}^{\qtri}$ sits inside
\[H^1_f(G_{F},ad(\rho_x)) = \ker\left(H^1(G_{F},ad(\rho_x)) \fleche \prod_{v \mid p} H^1(G_{F_v},ad(\rho_x))/H^1_f(G_{F_v},ad(\rho_x))\right),\] thus we have an
the exact sequence
\begin{equation}
  \label{eq:h1f}
  0 \fleche H^1_f(G_F,ad(\rho_x))\cap T{\mathfrak{X}_{\rho_x,\mathcal{F}}^{\qtri}} \fleche T{\mathfrak{X}_{\rho_x,\mathcal{F}}^{\qtri}} \fleche \bigoplus_{v| p } T{\mathfrak{X}_{\rho_{x,v},\mathcal{F}_v}^{\qtri,w_0}}/T{\mathfrak{X}^{\cris}_{\rho_{x,v}}}.
\end{equation}

Moreover we have the following inequalites
\[ \dim T_{(x,\mathcal{F})}\mathcal{E} \leq \dim T{\mathfrak{X}_{\rho_x,\mathcal F}^{\qtri}} \leq n[F:\QQ].\]
The first one is a consequence of \eqref{eq:closed_immersion}. The last one is a consequence of the fact that we can compute all the dimensions in the exact sequence \eqref{eq:h1f}. Namely the Theorem \ref{theorNT} of Newton-Thorne assures that
\[ H^1_f(G_F, ad(\rho_x)) = \{0\}.\] The dimension of $T{\mathfrak{X}_{\rho_{x,v},\mathcal{F}_v}^{\qtri,w_0}}$ comes from \cite{BHS} Corollary 3.7.8 and Remark 4.1.6 (i) as $(\rho_{x,v},\mathcal{F}_v)$ is associated to a product of distinct transpositions for all $v\in\Phi$, and the dimension of $T{\mathfrak{X}^{\cris}_{\rho_{x,v}}}$ comes from \cite{Kisindef} Theorem 3.3.8 (we warn the reader that in these references framed deformation rings are considered, which are formally smooth of relative dimension $n^2-1$ over our rings).

As $\mathcal{E}$ is equidimensional of dimension $n[F:\QQ]$, we have $\dim T_{x_\mathcal{F}}\mathcal{E} = n[F:\QQ]$ and thus $x_\mathcal{F}$ is a smooth point of $\mathcal{E}$ and we have
\[ \widehat{\mathcal{E}_{x_\mathcal{F}}} \overset{\sim}{\fleche}  \mathfrak{X}_{\rho_x,\mathcal{F}}^{\qtri},\]
and thus \[ T_{x_\mathcal{F}}\mathcal{E} \simeq T{\mathfrak{X}_{\rho_x,\mathcal{F}}^{\qtri}} \simeq \bigoplus_{v| p } T{\mathfrak{X}_{\rho_{x,v},\mathcal{F}_v}^{\qtri,w_0}}/T{\mathfrak{X}^{\cris}_{\rho_{x,v}}}.\qedhere\]
\end{proof}

\begin{theor}\label{thr:globaltgt}
For $x \in \mathcal{X}^{\modag}$, the image of the natural map
\[ \bigoplus_{\mathcal{F}} T_{x_\mathcal{F}}\mathcal{E} \fleche T_x\mathcal{X}_{\overline{\rho}}^{\pol},\]
has dimension at least $\frac{n(n+1)}{2}[F:\QQ]$, where $\mathcal{F}$ runs over the $n![F:\QQ]$ refinements of $x$.
\end{theor}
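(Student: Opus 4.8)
The plan is to combine the local surjectivity statement of Proposition \ref{propraf} with the infinitesimal identification of Corollary \ref{cortantri} and the vanishing of the global Selmer group from Theorem \ref{theorNT}. First I would fix $x \in \mathcal{X}^{\modag}$ and the associated $(\rho_x,V_x)$. By the definition of almost generic (Definition \ref{defin:ag}) and by Proposition \ref{prop:noncritref} — more precisely, since $x$ lies in the image of $\mathcal Z'$ and we may choose our almost generic points among the very regular ones — for each $v\mid p$ the local representation $D_v := D_{\rig}(\rho_{x,v})$ is crystalline, $\varphi$-generic, HT-regular, and admits a non-critical refinement $\mathcal{F}_v^{\nc}$. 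Therefore Proposition \ref{propraf}(\ref{propraf1}) applies: for every $c\in\mathfrak{C}_n^{[F_v:\QQ_p]}$ the pair $(D_v, c\cdot\mathcal{F}_v^{\nc})$ is associated to a product of distinct transpositions, so that the refinements $\mathcal{F} = (c_v\cdot\mathcal{F}_v^{\nc})_v$, with $c_v$ ranging over full cycles, all satisfy the hypothesis of Corollary \ref{cortantri}.

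Next I would run the following diagram chase. For each such $\mathcal{F}$, Corollary \ref{cortantri} gives
\[ T_{x_\mathcal{F}}\mathcal{E} \overset{\sim}{\fleche} T\mathfrak{X}^{\qtri,w_0}_{\rho_{x,p},\mathcal{F}}/T\mathfrak{X}^{\cris}_{\rho_{x,p}},\]
and moreover the closed immersion \eqref{eq:closed_immersion} together with the exact sequence \eqref{eq:h1f} and Theorem \ref{theorNT} (the vanishing $H^1_f(G_F,\ad(\rho_x))=\{0\}$) shows that the composite $T_{x_\mathcal{F}}\mathcal{E}\to T_x\mathfrak{X}_{\rho_x}^{\pol}=T_x\mathcal{X}_{\overline\rho}^{\pol}$ is \emph{injective} onto the subspace of the tangent space whose image in $\bigoplus_{v\mid p}T\mathfrak{X}_{\rho_{x,v}}/T\mathfrak{X}^{\cris}_{\rho_{x,v}}$ lands in $\bigoplus_v T\mathfrak{X}^{\qtri,w_0}_{\rho_{x,v},\mathcal{F}_v}/T\mathfrak{X}^{\cris}_{\rho_{x,v}}$. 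Concretely, writing $U = \ker(T_x\mathfrak{X}_{\rho_x}^{\pol}\to \bigoplus_{v\mid p}T\mathfrak{X}_{\rho_{x,v}})$, the image of $T_{x_\mathcal{F}}\mathcal{E}$ in $T_x\mathcal{X}_{\overline\rho}^{\pol}$ is exactly the preimage, under the map to $\bigoplus_{v\mid p}T\mathfrak{X}_{\rho_{x,v}}/T\mathfrak{X}^{\cris}_{\rho_{x,v}}$, of $\bigoplus_v (T\mathfrak{X}^{\qtri,w_0}_{\rho_{x,v},\mathcal{F}_v}/T\mathfrak{X}^{\cris}_{\rho_{x,v}})$; this preimage always contains $U$. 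Summing over the finitely many $\mathcal{F}=(c_v\cdot\mathcal{F}_v^{\nc})_v$, the image of $\bigoplus_\mathcal{F} T_{x_\mathcal{F}}\mathcal{E}$ contains the preimage of $\sum_{c}\bigoplus_v(T\mathfrak{X}^{\qtri,w_0}_{\rho_{x,v},c_v\cdot\mathcal{F}_v^{\nc}}/T\mathfrak{X}^{\cris}_{\rho_{x,v}})$.

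Now by Proposition \ref{propraf}(\ref{propraf2}), for each $v\mid p$ the $L$-linear map $\bigoplus_{c\in\mathfrak{C}_n}T\mathfrak{X}^{w_0}_{D_v,(c\cdot\mathcal{F}_v^{\nc})[1/t]}\to T\mathfrak{X}_{D_v}$ is surjective, hence (quotienting by $T\mathfrak{X}^{\cris}_{\rho_{x,v}}$, which lies inside each $\mathfrak{X}^{w_0}_{D_v,\mathcal{F}[1/t]}$ by Lemma \ref{lem:comp_cris}) the sum $\sum_c (T\mathfrak{X}^{\qtri,w_0}_{\rho_{x,v},c\cdot\mathcal{F}_v^{\nc}}/T\mathfrak{X}^{\cris}_{\rho_{x,v}})$ equals the whole of $T\mathfrak{X}_{\rho_{x,v}}/T\mathfrak{X}^{\cris}_{\rho_{x,v}}$. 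Taking the product over $v\mid p$ and pulling back, the image of $\bigoplus_\mathcal{F} T_{x_\mathcal{F}}\mathcal{E}$ therefore contains the full preimage of $\bigoplus_{v\mid p}T\mathfrak{X}_{\rho_{x,v}}/T\mathfrak{X}^{\cris}_{\rho_{x,v}}$, i.e. it contains the subspace $W$ of $T_x\mathcal{X}_{\overline\rho}^{\pol}$ consisting of deformations which become \emph{crystalline} at each $v\mid p$. It remains to bound $\dim_L W = \dim_L T_x\mathfrak{X}_{\rho_x}^{\pol,\cris}$ from below by $\frac{n(n+1)}{2}[F:\QQ]$: this is a purely Galois-cohomological computation with local crystalline (Bloch–Kato) conditions at $p$ and the unobstructed-away-from-$p$ conditions elsewhere, using again $H^1_f(G_F,\ad(\rho_x))=\{0\}$ from Theorem \ref{theorNT}, the genericity input of Corollary \ref{corgen} at $v\nmid p$ (so local conditions at those places contribute nothing), and the standard formula for the dimension of the crystalline Selmer group at $v\mid p$ (the ``$+\frac{n(n+1)}{2}[F:\QQ]$'' in the Euler characteristic, as in \cite{All1} or the proof of Proposition \ref{prop:dimdefr}); this is exactly the local computation ``previously carried out in \cite{All1}''. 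The main obstacle is bookkeeping: one must check that the various tangent spaces $T\mathfrak{X}^{\cris}_{\rho_{x,v}}$, $T\mathfrak{X}^{\qtri,w_0}_{\rho_{x,v},\mathcal{F}_v}$ and the global $T\mathfrak{X}^{\pol}_{\rho_x}$ fit into compatible exact sequences so that surjectivity at the local level (Proposition \ref{propraf}) genuinely propagates to the claimed lower bound on the image in $T_x\mathcal{X}_{\overline\rho}^{\pol}$, and that the non-critical refinement of each $\rho_{x,v}$ can be chosen so that \emph{all} the cyclic translates simultaneously satisfy the hypotheses of Corollary \ref{cortantri} — which is precisely what Proposition \ref{propraf}(\ref{propraf1}) guarantees.
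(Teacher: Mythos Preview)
Your setup through Corollary \ref{cortantri} is fine, and you correctly identify that for each $\mathcal F$ among the cyclic translates of a non-critical refinement the image of $T_{x_{\mathcal F}}\mathcal E$ in $T_x\mathcal X_{\overline\rho}^{\pol}$ is exactly the preimage of $T\mathfrak X^{\qtri,w_0}_{\rho_{x,p},\mathcal F}/T\mathfrak X^{\cris}_{\rho_{x,p}}$ under
\[
\pi:\,T_x\mathcal X_{\overline\rho}^{\pol}\longrightarrow T\mathfrak X_{\rho_{x,p}}/T\mathfrak X^{\cris}_{\rho_{x,p}}.
\]
The gap is in the next two steps. First, ``sum of preimages contains preimage of sum'' is the wrong inclusion: if $A_i=\pi^{-1}(B_i)$ then $\sum_i A_i\subset\pi^{-1}(\sum_i B_i)$, not the reverse, unless you know $\pi$ is surjective --- which is precisely what is at stake. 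Second, and more seriously, you then pass from ``the image contains $\pi^{-1}(\text{everything})$'' to ``the image contains the subspace $W$ of crystalline-at-$p$ deformations'' and try to bound $\dim W$. But $W=\ker\pi$ is the polarised crystalline Selmer group, and by Corollary \ref{corgen} together with Theorem \ref{theorNT} this \emph{is} $H^1_f(G_F,\ad\rho_x)=0$. So the lower bound you propose to establish, $\dim W\geq\tfrac{n(n+1)}{2}[F:\QQ]$, is false: you are simultaneously invoking $H^1_f=0$ and asserting $\dim H^1_f\geq\tfrac{n(n+1)}{2}[F:\QQ]$.

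The paper's argument avoids this by working in the other direction: rather than locating a large subspace inside the image, one shows that the \emph{composite}
\[
\bigoplus_{c\in\mathfrak C_n^{\Phi}} T_{x_{c\cdot\mathcal F^{\nc}}}\mathcal E\longrightarrow T_x\mathcal X_{\overline\rho}^{\pol}\xrightarrow{\ \pi\ } T\mathfrak X_{\rho_{x,p}}/T\mathfrak X^{\cris}_{\rho_{x,p}}
\]
is surjective. Corollary \ref{cortantri} identifies each summand with $\prod_v T\mathfrak X^{\qtri,w_0}_{\rho_{x,v},c_v\cdot\mathcal F_v^{\nc}}/T\mathfrak X^{\cris}_{\rho_{x,v}}$ sitting inside the target, and Proposition \ref{propraf}(\ref{propraf2}) gives surjectivity of the resulting sum. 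Since the target has dimension $\tfrac{n(n+1)}{2}[F:\QQ]$, the image in $T_x\mathcal X_{\overline\rho}^{\pol}$ must have at least that dimension. No lower bound on $\ker\pi$ is needed (and none is available).
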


\begin{proof}
Let $\mathcal{F}^\nc=(\mathcal{F}_{v}^\nc)_{v\in\Phi}$ be a refinement of $\rho_x$ such that $\mathcal{F}_{v}^\nc$ is non critical for any $v\in\Phi$. It exists by definition of $\mathcal X^{mod,ag}$. For all $(c_v)\in\mathfrak{C}_n^\Phi$, let $\mathcal{F}=(c_v\cdot\mathcal{F}_{v}^\nc)$. By Proposition \ref{propraf} (\ref{propraf2}) the pair $(D_{\rig}(\rho_{x,v}),c_v\cdot\mathcal{F}_{v}^\nc))$ is associated to a product of simple transposition for all $v\in\Phi$. Thus Corollary \ref{cortantri} implies that
\[ T_{x_\mathcal{F}}\mathcal{E} \simeq 
T \mathfrak{X}^{\qtri,w_0}_{\rho_{x,p},\mathcal{F}}/T \mathfrak{X}^{\cris}_{\rho_{x,p}}
,\]
and moreover, for all $v | p$, the map
\[ \bigoplus_{c \in \mathfrak C_n} 
T \mathfrak{X}^{\qtri,w_0}_{\rho_{x,v},c_v \cdot \mathcal{F}_{v}^\nc}
\fleche T\mathfrak{X}_{\rho_v},\]
is surjective by Proposition \ref{propraf}. In particular, the map
\[ \bigoplus_{c = (c_v)_v \in \mathfrak C_n^\Phi} T_{(x,c\cdot\mathcal{F}^\nc)}\mathcal{E} \fleche T_x\mathcal{X}_{\overline{\rho}}^{\pol} = T\mathfrak{X}_{\rho_x}^{\pol} \fleche T \mathfrak{X}_{\rho_p}/T \mathfrak{X}_{\rho_p}^{\cris},\]
which can also be factored,
\[ \bigoplus_{c \in \mathfrak C_n^\Phi} T_{(x,c\cdot\mathcal{F}^\nc)}\mathcal{E} \fleche \bigoplus_{v | p} \bigoplus_{c_v \in \mathfrak C_n} T \mathfrak{X}^{\qtri,w_0}_{\rho_{x,v},c_v\cdot\mathcal{F}_{v}^\nc}/T \mathfrak{X}^{\cris}_{\rho_{x,v}} \overset{\sum_\mathcal{F}}{\fleche} T \mathfrak{X}_{\rho_p}/T \mathfrak{X}_{\rho_p}^{\cris},\]
is surjective by Corollary \ref{cortantri} and Proposition \ref{propraf}, and thus has rank at least $\frac{n(n+1)}{2}[F:\QQ]$, thus the same is true for the map,
\[\bigoplus_{\mathcal{F}} T_{(x,\mathcal{F})}\mathcal{E} \fleche  T_x\mathcal{X}_{\overline{\rho}}^{\pol}. \qedhere\]
\end{proof}

\begin{rema}
Note that we don't actually need all the refinements (for a fixed $v$), only the $1+\frac{n(n-1)}{2}$ refinements given by $c_v = c_{i,j} := (i,i-1,\dots,j) \in \mathfrak S_n$ with $i \geq j$ (starting from a non-critical one). But this is still more than just the $n$ well-positioned refinements for weakly generic points of Chenevier \cite{CheFern}, even for $n=3$.
\end{rema}

\begin{theor}
\label{thr:dimrigid}
Let $\overline{\mathcal{F}(\overline{\rho})} \subset \mathcal{X}^{\pol}_{\overline{\rho}}$ be the Zariski closure of the image of $\mathcal{E}(\overline{\rho})$. Then $\overline{\mathcal{F}(\overline{\rho})}$ is equidimensional of dimension $\frac{n(n+1)}{2}[F:\QQ]$, and is a union of irreducible components of $ \mathcal{X}^{\pol}_{\overline{\rho}}$.
\end{theor}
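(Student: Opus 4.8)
Write $Z\coloneqq\overline{\mathcal F(\overline\rho)}$ with its reduced structure and put $d\coloneqq\frac{n(n+1)}{2}[F:\QQ]$. The plan is to combine three inputs: (i) $\mathcal X^{\modag}$ is Zariski dense in $Z$ (Theorem \ref{theo:agpointsdense}); (ii) at every $x\in\mathcal X^{\modag}$ the image of the natural map $\bigoplus_{\mathcal F}T_{x_{\mathcal F}}\mathcal E\to T_x\mathcal X_{\overline\rho}^{\pol}$, where $\mathcal F$ runs over the $n![F:\QQ]$ refinements, has dimension $\geq d$ (Theorem \ref{thr:globaltgt}); and (iii) every $x\in\mathcal X^{\modag}$ is a smooth point of $\mathcal X_{\overline\rho}^{\pol}$ at which the local dimension of $\mathcal X_{\overline\rho}^{\pol}$ equals $d$. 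I would start by recording that, since $Z$ is closed and reduced and contains the image of the first projection $\mathcal E(\overline\rho)\to\mathcal X_{\overline\rho}^{\pol}$, this projection factors through $Z$; consequently, for $x\in\mathcal X^{\modag}$ the image of the map in (ii) lies in $T_xZ$, so $\dim_{k(x)}T_xZ\geq d$.

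For (iii) I would argue that, by Proposition \ref{prop:defrrho} and \cite[Prop.~1.3.11]{All1}, the complete local ring of $\mathcal X_{\overline\rho}^{\pol}$ at $x$ is the ring pro-representing the $\chi$-polarised deformation functor of $r_x$, and that the computation of Proposition \ref{prop:dimdefr} (carried out with $r_x$ in place of $\overline r$) shows this ring to be formally smooth of dimension $d$ as soon as $H^2(G_{F,S},\ad(r_x))=\{0\}$. This last vanishing holds at almost generic $x$: by Poitou--Tate it is implied by the vanishing of the Bloch--Kato Selmer group $H^1_f(G_{F,S},\ad(r_x))$, which is Theorem \ref{theorNT}, together with the local computations $H^0(G_v,\ad(V_x)^*(1))=\{0\}$ for $v\in S$, $v\nmid p$ (Corollary \ref{corgen}) and $H^0(G_{F_v},\ad(\rho_{x,v})^*(1))=\{0\}$ for $v\mid p$ (a consequence of the $\varphi$-genericity of $\rho_{x,v}$, which forbids $\varphi_i/\varphi_j\in\{1,p^f\}$). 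In particular there is a unique irreducible component of $\mathcal X_{\overline\rho}^{\pol}$ through $x$, it has dimension $d$, and $\mathcal X_{\overline\rho}^{\pol}$ is smooth in a neighbourhood of $x$.

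Granting (i)--(iii), the conclusion would be drawn as follows. Let $C$ be an irreducible component of $Z$. The set $C^{\circ}\coloneqq\mathcal X^{\modag}\cap\bigl(C\setminus\bigcup_{C'\neq C}C'\bigr)$ is Zariski dense in $C$, because $C\setminus\bigcup_{C'\neq C}C'$ is a non-empty Zariski-open subset of $Z$ and $\mathcal X^{\modag}$ is Zariski dense in $Z$. For $x\in C^{\circ}$ we have $T_xZ=T_xC$, whence $\dim_{k(x)}T_xC\geq d$ by the first paragraph; on the other hand $C$ is irreducible and passes through the smooth point $x$ of $\mathcal X_{\overline\rho}^{\pol}$, hence is contained in the unique, $d$-dimensional, component of $\mathcal X_{\overline\rho}^{\pol}$ through $x$, so $\dim C\leq d$. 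If we had $\dim C<d$, then every point of $C^{\circ}$ would be a singular point of $C$, which is impossible since the singular locus of the reduced space $C$ is a proper Zariski-closed subset. Therefore $\dim C=d$, and $Z$ is equidimensional of dimension $d$ (and non-empty, since $\mathcal Z'\neq\emptyset$). For the last assertion, keeping $C$ and $x\in C^{\circ}$ as above, $C$ is an irreducible subspace of dimension $d$ through the smooth point $x$, at which the component of $\mathcal X_{\overline\rho}^{\pol}$ also has dimension $d$; hence $C$ equals that component, and $Z$ is a union of irreducible components of $\mathcal X_{\overline\rho}^{\pol}$.

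The substantive content is all upstream: the transversality Theorem \ref{thr:globaltgt} (resting on the local analysis of quasi-trianguline deformation rings, Proposition \ref{propraf}, and Lemma \ref{lemmaMargerin}), the density statement Theorem \ref{theo:agpointsdense}, and the local smoothness of $\mathcal X_{\overline\rho}^{\pol}$ via \cite{All1} and the Newton--Thorne theorem. Within the present argument the only genuine subtlety is the step in the third paragraph that upgrades ``the Zariski tangent space has dimension $\geq d$ at a Zariski-dense set of points of $C$'' to ``$\dim C\geq d$'': this is exactly the remark that a reduced rigid space cannot be singular along a Zariski-dense subset. I would also check that the rigid-analytic prerequisites are in force --- the theory of irreducible components of rigid spaces, the Zariski-openness and density of the smooth locus of a reduced rigid space in characteristic $0$, and the fact that a morphism from a reduced rigid space factors through any reduced closed subspace containing its image.
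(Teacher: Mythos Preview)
Your argument is correct and follows the paper's strategy exactly: density of almost generic points (Theorem \ref{theo:agpointsdense}), the tangent-space lower bound at such points (Theorem \ref{thr:globaltgt}), and smoothness of $\mathcal{X}_{\overline\rho}^{\pol}$ of dimension $d$ there via $H^2(G_{F,S},\ad r_x)=0$. Two presentational differences are worth recording. For step (iii) the paper does not package the vanishing of $H^2$ as a bare ``Poitou--Tate'' consequence of $H^1_f=0$ plus local $H^0(\ad^*(1))=0$; it first uses genericity at $p$ to get $H^1_g=H^1_f=0$ (\cite[Rem.~1.2.9]{All1}), hence injectivity of $H^1\to\prod_{v\mid p}H^1_v/H^1_{g,v}$, and then invokes \cite[Lem.~1.3.5]{All1}, which performs the local dimension count your sketch hides (and which needs HT-regularity at $p$, not only $\varphi$-genericity, to get the bound $h^1\leq d$). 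Conversely, your final paragraph is more careful than the paper's last sentence: the paper asserts that the two completed local rings ``are equal'' because one is regular of dimension $d$ and the other has ``dimension $\geq d$ by Theorem \ref{thr:globaltgt}'', but that theorem only bounds the tangent space, so passing to equality genuinely requires the density-plus-singular-locus argument you spell out.
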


\begin{proof}
We have already proven that almost generic point are Zariski dense in $\overline{\mathcal{F}(\overline{\rho})}$ (see Theorem \ref{theo:agpointsdense}). We will prove, following \cite{All1}, that these points are 
smooth points of $\mathfrak{X}_{\overline{\rho}}$ whose local ring are $\frac{n(n+1)}{2}[F:\QQ]$-dimensional. Let $x$ be such a almost generic point in $\mathcal{F}(\overline\rho)$, thus $\rho_x$ is irreducible and we can consider the polarised deformation space $\mathfrak{X}_{\rho_x}^{\pol}$. Then by an argument of Kisin (see \cite{Kisinfflat} and \cite[Thm.~1.2.1]{All1}),
\[ \mathfrak{X}_{\rho_x}^{\pol} \simeq \widehat{(\mathcal{X}_{\overline\rho}^{\pol})_x}.\]
Thus we need to show that $\mathfrak{X}_{\rho_x}^{\pol}$ is (formally) smooth of dimension $\frac{n(n+1)}{2}[F:\QQ]$. But as $\rho_x$ is absolutely irreducible we can choose a lift $r_x$ to $\mathcal G_n$ and by Proposition \ref{prop:defrrho} reduce to $\mathfrak{X}_{r_x}$. Remark here that because of Proposition \ref{prop:generic} and Theorem \ref{thr:BCsign}, we can apply Proposition \ref{prop:dimdefr}. Calculations on the dimension of deformation ring made in Proposition \ref{prop:dimdefr} show that 
we are thus reduced to show that $h^2(G_{F,S},\ad(r_x)) = 0$, or what is equivalent $h^1(G_{F,S},\ad(r_x)) = \frac{n(n+1)}{2}[F:\QQ]$. But as $\rho_x$ is generic at $p$ by Proposition \ref{prop:generic}, by Remark 1.2.9 of \cite{All1}, we get $H^1_g(G_{F,S},\ad(\rho_x)) = H^1_f(G_{F,S},\ad(\rho_x))$ which vanishes by Newton-Thorne's Theorem \ref{theorNT}. Thus the following map is injective,
\[ H^1(G_{F,S},\ad(\rho_x)) \fleche \prod_{v | p} H^1(F_v, \ad(\rho_x))/H^1_g(\ad(\rho_x)).\]
But then we prove exactly as in \cite{All1}, Lemma 1.3.5, as our $x$ is HT-regular, that the space $\mathfrak{X}_{r_x}$ is formally smooth of dimension $\frac{n(n+1)}{2}[F:\QQ]$, thus by Proposition \ref{prop:dimdefr} $\mathfrak{X}_{\rho_x}^{\pol}$ is formally smooth of dimension $\frac{n(n+1)}{2}[F:\QQ]$, but as it contains the local ring of the closure of $\mathcal{F}(\overline{\rho})$ at $\rho_x$, which is of dimension $\geq \frac{n(n+1)}{2}[F:\QQ]$ by Theorem \ref{thr:globaltgt}, both local rings are equal (and $\overline{\mathcal{F}(\overline{\rho})}$ is smooth at these points).
\end{proof}

\begin{rema}
Recall that in the previous theorem $\mathcal{E}(\overline{\rho})$ and thus $\overline{\mathcal{F}(\overline{\rho})}$ depend on the choice of an auxiliary level $K^p$ outside $p$. We can ask how the closure of the infinite fern depends on $K^p$. If we let $K^p$ appear in the notations, we can at least have an optimal $K^p$.
\end{rema}

\begin{cor}
\label{cor:Koutsidep}
There exists a level $K^p$ outside $p$, such that for all level $K^{\prime,p}$ outside $p$, the Zariski closure of the infinite fern of tame level $K^p$, $\overline{\mathcal{F}_{K^p}(\overline{\rho})}$, contains the infinite fern of level $K^{\prime,p}$, $\overline{\mathcal{F}_{K^{\prime,p}}(\overline \rho)}$.
\end{cor}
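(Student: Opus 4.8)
The plan is to combine two ingredients: the infinite fern only grows when the tame level shrinks, and $\mathcal{X}_{\overline\rho}^{\pol}$ has only finitely many irreducible components, of which, by Theorem \ref{thr:dimrigid}, the closure of the fern (for any level) is a union.

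First I would record the monotonicity in the level. If $K^{\prime,p}\subseteq K^p$ are two tame levels, both hyperspecial outside $S$ and deep enough that $\overline\rho$ is conveniently modular, then any cuspidal automorphic representation $\pi$ of $G$ with $\pi^{K^p}\neq0$ also satisfies $\pi^{K^{\prime,p}}\neq0$; hence $\mathcal{Z}'_{K^p}\subseteq\mathcal{Z}'_{K^{\prime,p}}$ as subsets of $\mathfrak{X}_{\overline\rho}^{\pol}(\overline{\QQ_p})\times\mathcal{T}(\overline{\QQ_p})$. Taking Zariski closures gives $\mathcal{E}_{K^p}(\overline\rho)\subseteq\mathcal{E}_{K^{\prime,p}}(\overline\rho)$ in $\mathcal{X}_{\overline\rho}^{\pol}\times\mathcal{T}$, and applying the first projection and taking Zariski closures once more, $\overline{\mathcal{F}_{K^p}(\overline\rho)}\subseteq\overline{\mathcal{F}_{K^{\prime,p}}(\overline\rho)}$.

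Next I would use that $R_{\overline\rho}^{\chi-\pol}$ (with $\chi=1$ here) is a complete Noetherian local ring, so that $R_{\overline\rho}^{\chi-\pol}[1/p]$ is Noetherian with only finitely many minimal primes, and hence $\mathcal{X}_{\overline\rho}^{\pol}=\Spf(R_{\overline\rho}^{\chi-\pol})^{\rig}$ has only finitely many irreducible components. By Theorem \ref{thr:dimrigid}, for every admissible tame level $K^{\prime,p}$ the closure $\overline{\mathcal{F}_{K^{\prime,p}}(\overline\rho)}$ is a union of some of these components. Let $c_1,\dots,c_m$ be the (finitely many) irreducible components of $\mathcal{X}_{\overline\rho}^{\pol}$ lying in $\overline{\mathcal{F}_{K^{\prime,p}}(\overline\rho)}$ for at least one admissible $K^{\prime,p}$; for each $i$ fix an admissible $K_i^p$ with $c_i\subseteq\overline{\mathcal{F}_{K_i^p}(\overline\rho)}$, and put $K^p\coloneqq K_1^p\cap\cdots\cap K_m^p$. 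This is again a compact open subgroup of $G(\mathbb{A}^{p,\infty})$, hyperspecial outside $S$, and $\overline\rho$ is conveniently modular of tame level $K^p$ since the relevant $\pi$ has nonzero $K^p$-invariants. Monotonicity then gives $\overline{\mathcal{F}_{K_i^p}(\overline\rho)}\subseteq\overline{\mathcal{F}_{K^p}(\overline\rho)}$ for each $i$, so $c_1,\dots,c_m$ all lie in $\overline{\mathcal{F}_{K^p}(\overline\rho)}$; since $\overline{\mathcal{F}_{K^{\prime,p}}(\overline\rho)}$ is always a union of components among the $c_i$, we conclude $\overline{\mathcal{F}_{K^{\prime,p}}(\overline\rho)}\subseteq\overline{\mathcal{F}_{K^p}(\overline\rho)}$ for every admissible $K^{\prime,p}$.

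I expect the one delicate point to be the finiteness of the set of irreducible components of $\mathcal{X}_{\overline\rho}^{\pol}$: the rigid space $\Spf(R_{\overline\rho}^{\chi-\pol})^{\rig}$ need not be quasi-compact, so one should either invoke the general description of the irreducible components of the rigid fibre of a complete Noetherian local ring (they are the $V(\mathfrak{p})^{\rig}$ for $\mathfrak{p}$ a minimal prime of $R_{\overline\rho}^{\chi-\pol}$ not containing $p$), or argue directly from Theorem \ref{thr:dimrigid} that the ferns all live inside a finite set of equidimensional pieces cut out by finitely many minimal primes. Everything else is elementary bookkeeping with tame levels.
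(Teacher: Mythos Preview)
Your proof is correct and follows essentially the same route as the paper's: monotonicity of the fern under shrinking the tame level, finiteness of the irreducible components of $\mathcal{X}_{\overline\rho}^{\pol}$, and Theorem \ref{thr:dimrigid} to conclude that the growing family of component-unions must stabilize. The paper's version is terser and, for the delicate point you flag about finiteness of irreducible components of the rigid generic fibre, simply cites Conrad's result that the generic fibre of a noetherian excellent formal scheme has finitely many irreducible (in fact connected) components.
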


\begin{proof}
As $\mathcal X_{\overline\rho}^{\pol}$ is the generic fiber of a noetherian excellent formal scheme, it has a finite number of connected component (See \cite[Theorem 2.3.1]{ConradIrred}) . Thus as the number of components in the closure of the infinite fern (by Theorem \ref{thr:dimrigid}) grows with $K^p$, it eventually stabilizes.
\end{proof}

\begin{cor}\label{cor:main}
  Let $\chi : G_F\rightarrow\mathcal{O}^\times$ be a continuous
  character satisfying Hypothesis \ref{hypchi}. Let
  $\rhobar : G_E\rightarrow\GL_n(k)$ be a semi-simple polarized-by-$\overline{\chi}$ 
  conveniently modular continuous
  representation. The Zariski closure of the set points of
  $\mathcal{X}_{\overline{\rho}}^\chi$ which are holomorphically
  modular and crystalline at $p$ is a union of irreducible components
  of dimension $\tfrac{n(n+1)}{2}[F:\QQ]$.
\end{cor}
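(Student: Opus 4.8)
The plan is to reduce to the case $\chi=1$ treated above and then to quote Theorem~\ref{thr:dimrigid}. First I would use Lemma~\ref{lem:extension} and the discussion after Definition~\ref{defin:psdef} to fix a finite extension $\mathcal{O}'/\mathcal{O}$ and an algebraic continuous character $\psi_0:G_{E,S}\fleche(\mathcal{O}')^\times$, unramified outside $S$, with $\psi_0\psi_0^c=\chi|_{G_E}$; write $\overline{\psi_0}$ for its reduction. Setting $\overline{\rho}'\coloneqq\overline{\rho}\otimes\overline{\psi_0}$, a short computation from $\overline{\psi_0}\,\overline{\psi_0}^c=\overline{\chi}|_{G_E}$ and $\overline{\chi}^c=\overline{\chi}$ gives $(\overline{\rho}')^\vee\simeq(\overline{\rho}')^c\otimes\overline{\eps}^{n-1}$, so $\overline{\rho}'$ is semi-simple and polarised by $1$, and it is still conveniently modular, since a modular lift of $\overline{\rho}$ may be twisted by the automorphic character $\psi_0\circ\Art_E$, which changes neither cuspidality, regularity, algebraicity, holomorphy at infinity, nor the cohomological degree. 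By Lemma~\ref{lemma:chi=1} applied to $\overline{\rho}'$ we obtain an isomorphism
\[ \mathcal{X}_{\overline{\rho}'}^{1-\pol}\otimes_{\mathcal{O}[1/p]}\mathcal{O}'[1/p]\ \xrightarrow{\ \sim\ }\ \mathcal{X}_{\overline{\rho}}^{\chi-\pol}\otimes_{\mathcal{O}[1/p]}\mathcal{O}'[1/p],\qquad\rho\longmapsto\rho\otimes\psi_0^{-1}, \]
which I would check carries the locus of holomorphically modular, crystalline-at-$p$ points to itself: holomorphic modularity is preserved because $\psi_0\circ\Art_E$ is automorphic and twisting by it preserves cuspidality and holomorphy at infinity, and crystallinity at $p$ is preserved once $\psi_0$ is chosen crystalline at the places above $p$ (possible here since every $v\mid p$ splits in $E/F$ and $\chi$ is algebraic at $p$). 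Since being a union of irreducible components of a given dimension is insensitive to the finite scalar extension $\mathcal{O}'[1/p]/\mathcal{O}[1/p]$, it suffices to treat $\chi=1$.

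Now assume $\chi=1$ and fix a tame level $K^p$ for which $\overline{\rho}$ is conveniently modular. Let $M_{K^p}\subset\mathcal{X}_{\overline{\rho}}^{\pol}(\overline{\QQ_p})$ be the set of points which are holomorphically modular of tame level $K^p$ (for a cuspidal regular algebraic automorphic representation of $G$) and crystalline at every $v\mid p$. I claim its Zariski closure equals $\overline{\mathcal{F}_{K^p}(\overline{\rho})}$. On the one hand a point of $M_{K^p}$ is crystalline at $p$, hence of finite slope there, so by Corollary~\ref{cor:finite_slope} it lies in $\mathcal{F}_{K^p}(\overline{\rho})$; thus $\overline{M_{K^p}}\subseteq\overline{\mathcal{F}_{K^p}(\overline{\rho})}$. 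On the other hand, by Proposition~\ref{prop:geoHecke} the crystalline, sufficiently regular points of $\mathcal{Z}'_{K^p}$ are Zariski dense in $\mathcal{E}_{K^p}(\overline{\rho})$, so their image is Zariski dense in $\mathcal{F}_{K^p}(\overline{\rho})$, and each such point is attached to a cuspidal regular algebraic automorphic representation of $G$ appearing in degree-$0$ coherent cohomology and crystalline at $p$, hence lies in $M_{K^p}$; this gives the reverse inclusion. Theorem~\ref{thr:dimrigid} then tells us that $\overline{\mathcal{F}_{K^p}(\overline{\rho})}$ is equidimensional of dimension $\tfrac{n(n+1)}{2}[F:\QQ]$ and a union of irreducible components of $\mathcal{X}_{\overline{\rho}}^{\pol}$.

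To finish I would let $K^p$ vary. The full locus of holomorphically modular crystalline-at-$p$ points is $M=\bigcup_{K^p}M_{K^p}$, so $\overline{M}=\overline{\bigcup_{K^p}\overline{\mathcal{F}_{K^p}(\overline{\rho})}}$ is a union of irreducible components of $\mathcal{X}_{\overline{\rho}}^{\pol}$, and by Corollary~\ref{cor:Koutsidep} there is a level $K_0^p$ with $\overline{\mathcal{F}_{K_0^p}(\overline{\rho})}\supseteq\overline{\mathcal{F}_{K^p}(\overline{\rho})}$ for every $K^p$, whence $\overline{M}=\overline{\mathcal{F}_{K_0^p}(\overline{\rho})}$ has the asserted shape. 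All the substance has already been carried out in Theorem~\ref{thr:dimrigid} and, upstream, in Theorems~\ref{theo:agpointsdense} and~\ref{thr:globaltgt} and Proposition~\ref{propraf}; the only genuine points of care specific to the corollary are the first step --- verifying that the twist of Lemma~\ref{lemma:chi=1} matches the conditions ``holomorphically modular'' and ``crystalline at $p$'' on both sides, which is the analogue here of the check needed for the previous corollary and which I expect to be the main obstacle --- together with the harmless bookkeeping with tame levels at the end.
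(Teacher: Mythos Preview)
Your proof is correct and follows the same approach as the paper, which simply cites Lemma~\ref{lemma:chi=1}, Theorem~\ref{thr:dimrigid} and Corollary~\ref{cor:finite_slope} in a single line. You have fleshed out the details the paper leaves implicit---in particular the verification that the twist by $\psi_0$ preserves holomorphic modularity and crystallinity at $p$, and the bookkeeping with tame levels via Corollary~\ref{cor:Koutsidep}---but the skeleton is identical.
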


\begin{proof}
  This is a direct consequence of Lemma \ref{lemma:chi=1}, Theorem
  \ref{thr:dimrigid} and Corollary \ref{cor:finite_slope}.
\end{proof}

We can now deduce the following corollary, which is due to Allen, \cite{All2}, for which we need to take care that automorphic points given by \cite{All2} main's theorem are indeed inside our infinite fern. So we assume the following hypothesis as in \cite{All2},
\begin{hypothese}\label{HypAllen+}
\begin{enumerate}
\item\label{hyp812.1} $p > 2$, is unramified in $E$ and every prime $v | p$ in $F$ splits in $E$. Moreover, $\zeta_p \notin E$.
\item\label{hyp812.2} $\overline\rho(G_{E(\zeta_p)})$ is adequate, $\overline\rho$ is polarized by $\chi$ i.e.~ $\overline\rho^\vee \simeq \overline\rho^c \otimes \chi \eps^{n-1}$.
\item\label{HypAllen3} There exists a $GL_n$-automorphic representation $\Pi_0$, which is regular algebraic $\chi$-polarized cuspidal, such that $\rho_{\Pi_0}$ lifts $\overline\rho$ and such that $\rho_{{\Pi_0},v}$ is potentially diagonalisable for all $v|p$, and even ordinary for all $v | p$ if $p | n$.
\item\label{hyp812.4} $\chi$ satisfies $\chi = \chi^c$ and satisfies a sign condition (see Hypothesis \ref{hypchi} and section \ref{sect:signs})
\item\label{hyp812.5} $H^0(G_v,\ad(\overline\rho)(1))=0$ for all $v | p$.
\end{enumerate}
\end{hypothese}

We still hope that hypothesis \ref{hyp812.1}. is technical and we hope to be able to remove it, as for Theorem \ref{thr:dimrigid}. It is unkown at the moment if all potentially crystalline representations are potentially diagonalisable, i.e.~ if we could relax hypothesis \ref{HypAllen3}. to a classical modularity (for $GL_n$, crystalline at $p$ say).
We hope that hypothesis \ref{hyp812.2}. is unnecessary, but at the moment the main result of \cite{All2} relies on it, and also on \ref{hyp812.5}. We imagine that these
could be removed using new results on local deformations rings (e.g. \cite{BIP}).

Recall that we had constructed the infinite fern inside $\mathcal X_{\overline\rho}^{\pol}$, when $\chi = 1$. Let $\overline\rho$ as above, and let $\psi_0: G_{E,S} \fleche (\mathcal O')^\times$ as in \cite[Lemma 4.1.5]{CHT} and section \ref{sect:defspaces}, so that $\psi_0\psi_0^c = \chi$. Up to base change by $\mathcal O'[1/p]$, we can and do assume everything is defined over $\mathcal O[1/p]$. As $\overline\rho$ is $GL_n$-automorphic for $\Pi_0$ by the above assumption, we have $\overline\rho_0 := \overline\rho\psi_0$ is also $GL_n$-automorphic (for $\Pi_0 \psi_0\circ \Art_E$). If we assume that $\overline\rho_0$ is conveniently modular for our similitude unitary group $G$ as in section \ref{se:eigenvarieties}, then by Lemma \ref{lemma:chi=1} we have the following diagram
\begin{center}
\begin{tikzpicture}[description/.style={fill=white,inner sep=2pt}] 
\matrix (m) [matrix of math nodes, row sep=3em, column sep=2.5em, text height=1.5ex, text depth=0.25ex] at (0,0)
{ 
\mathcal E(\overline\rho_0) &  &  \\
\mathcal X^{\pol}_{\overline\rho\psi_0} &   & \mathcal X_{\overline\rho}^{\chi-\pol}\\
};
\path[->,font=\scriptsize] 
(m-1-1) edge node[auto] {$pr_1$} (m-2-1)
(m-2-1) edge node[auto] {$\psi_0^{-1}$} (m-2-3)
(m-1-1) edge node[auto] {$f$} (m-2-3)
;
\end{tikzpicture}
\end{center}
where $f:=\psi_0^{-1}\circ pr_1$. We call infinite fern, denoted by $\mathcal F(\overline\rho)$ the image of $\mathcal E(\overline\rho)$ by the diagonal map (Note that it a priori depends on $\overline\rho_0$, thus $\psi_0$).
We have the following

\begin{cor}[Allen] Assume the hypothesis \ref{HypAllen+}. 
Then $\overline\rho_0 := \overline\rho\psi_0$ is conveniently modular, the generic fiber of the global deformation ring $\mathcal X^{\chi-\pol}_{\overline\rho}$ is equidimensional of dimension $[F:\QQ]\frac{n(n+1)}{2}$ and the infinite fern $\mathcal F(\overline\rho)$ is Zariski dense in $\mathcal X^{\chi-\pol}_{\overline\rho}$ thus in $\Spec(R_{\overline\rho}^{\chi-\pol})$. In particular automorphic points are dense in $\Spec(R_{\overline\rho}^{\chi-\pol})$.
\end{cor}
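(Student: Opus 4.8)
The plan is to feed the main theorem of \cite{All2} into Theorem \ref{thr:dimrigid}. First I would reduce to $\chi=1$: by Lemma \ref{lemma:chi=1}, after the harmless base change to $\mathcal{O}'[1/p]$ the twist $\rho\mapsto\rho\psi_0^{-1}$ identifies $\mathcal{X}_{\overline\rho_0}^{\pol}$ with $\mathcal{X}_{\overline\rho}^{\chi-\pol}$ and matches modular points on both sides, where $\overline\rho_0\coloneqq\overline\rho\psi_0$ is polarized by $1$ since $\psi_0\psi_0^c=\chi$ (and is absolutely irreducible, $\overline\rho(G_{E(\zeta_p)})$ being adequate), so that $\mathcal{X}_{\overline\rho}^{\chi-\pol}$ really is the rigid fibre of a deformation space. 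Next I would check that $\overline\rho_0$ is conveniently modular: by assumption \ref{HypAllen3} of Hypothesis \ref{HypAllen+}, $\Pi_0\otimes(\psi_0\circ\Art_E)$ is a regular algebraic, conjugate self-dual, cuspidal automorphic representation of $\GL_n(\mathbb{A}_E)$ whose Galois representation lifts $\overline\rho_0$; since $\overline\rho_0(G_{E(\zeta_p)})$ is adequate, a standard argument (existence of a crystalline lift with regular Hodge--Tate weights in the Fontaine--Laffaille range, automorphic by a potentially-diagonalizable modularity lifting theorem) gives an automorphic lift of $\overline\rho_0$ unramified at $p$ and outside $S$, and descending it along $E/F$ to the quasi-split similitude unitary group $G$ while choosing the holomorphic member of the discrete series $L$-packet at the archimedean places — which contributes to degree $0$ coherent cohomology of $\mathcal{S}$ by \cite{Har} Proposition 5.4.2 — produces a cuspidal automorphic representation of $G$ witnessing convenient modularity in the sense of Definition \ref{defin:modular}. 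This makes the diagram preceding the statement legitimate.

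With that in place, Theorem \ref{thr:dimrigid} applied to $\overline\rho_0$ says that $\overline{\mathcal{F}(\overline\rho_0)}$ — which via the isomorphism of Lemma \ref{lemma:chi=1} is $\overline{\mathcal{F}(\overline\rho)}$ inside $\mathcal{X}_{\overline\rho}^{\chi-\pol}$ — is a union of irreducible components of $\mathcal{X}_{\overline\rho}^{\chi-\pol}$, each of dimension $\tfrac{n(n+1)}{2}[F:\QQ]$, and (from the proof of that theorem, via Theorem \ref{theo:agpointsdense}) that each almost generic point (Definition \ref{defin:ag}) of it is a \emph{smooth} point of $\mathcal{X}_{\overline\rho}^{\chi-\pol}$. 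On the other hand, the main theorem of \cite{All2}, valid under Hypothesis \ref{HypAllen+}, asserts that $R_{\overline\rho}^{\chi-\pol}[1/p]$ is equidimensional of dimension $\tfrac{n(n+1)}{2}[F:\QQ]$ and that each of its irreducible components $Z$ contains a point whose Galois representation is automorphic for $\GL_n(\mathbb{A}_E)$, potentially diagonalizable at every $v\mid p$, with adequate residual image. I would then move this point inside $Z$ to a point $x'$ that is moreover almost generic: the crystalline, $\varphi$-generic, HT-regular points carrying a non-critical refinement and having enormous image are dense near it (the arguments of Theorem \ref{theo:agpointsdense} being purely local at $p$), and by modularity lifting the crystalline regular lifts of $\overline\rho$ lying on $Z$ near that point remain automorphic for $\GL_n$. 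Descending the $\GL_n$-automorphic representation at $x'$ to a holomorphic automorphic representation of $G$ as in the first paragraph, $x'$ becomes a holomorphically modular point, crystalline hence finite slope at $p$, so by Corollary \ref{cor:finite_slope} (equivalently Corollary \ref{cor:main}) it lies in $\mathcal{F}(\overline\rho)\subseteq\overline{\mathcal{F}(\overline\rho)}$. Being almost generic, $x'$ is a smooth point of $\mathcal{X}_{\overline\rho}^{\chi-\pol}$, hence lies on the single irreducible component $Z$; since it also lies on the union of components $\overline{\mathcal{F}(\overline\rho)}$, we deduce $Z\subseteq\overline{\mathcal{F}(\overline\rho)}$.

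As $Z$ was an arbitrary irreducible component, this forces $\overline{\mathcal{F}(\overline\rho)}=\mathcal{X}_{\overline\rho}^{\chi-\pol}$, that is, the infinite fern $\mathcal{F}(\overline\rho)$ is Zariski dense in $\mathcal{X}_{\overline\rho}^{\chi-\pol}$, which is then equidimensional of dimension $\tfrac{n(n+1)}{2}[F:\QQ]$ by Theorem \ref{thr:dimrigid}. Finally, since $\mathcal{X}_{\overline\rho}^{\chi-\pol}$ is the rigid generic fibre of the complete Noetherian local $\mathcal{O}$-algebra $R_{\overline\rho}^{\chi-\pol}$, a Zariski dense set of closed points of $\mathcal{X}_{\overline\rho}^{\chi-\pol}$ is still Zariski dense in $\Spec R_{\overline\rho}^{\chi-\pol}$ (as in the proof of Corollary \ref{cor:Koutsidep}, using \cite{ConradIrred}); in particular automorphic points are dense in $\Spec R_{\overline\rho}^{\chi-\pol}$.

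The step I expect to be the main obstacle is the descent: extracting from the $\GL_n$-automorphic, potentially diagonalizable point supplied by \cite{All2} a cuspidal automorphic representation of the similitude unitary group $G$ that is holomorphic at infinity — so that it occurs in degree $0$ coherent cohomology of $\mathcal{S}$ and genuinely defines a point of the eigenvariety $\mathcal{E}(\overline\rho)$, hence of $\mathcal{F}(\overline\rho)$ — crystalline and unramified at $p$, and, after moving inside its component, almost generic; it is precisely this last feature that upgrades ``$Z$ meets $\overline{\mathcal{F}(\overline\rho)}$'' to ``$Z\subseteq\overline{\mathcal{F}(\overline\rho)}$''. Once this bridge is in place, everything else is assembling results already established.
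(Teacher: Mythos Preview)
Your overall strategy—reduce to $\chi=1$, invoke Theorem \ref{thr:dimrigid} to know $\overline{\mathcal{F}(\overline\rho)}$ is a union of components, then use \cite{All2} to place a point of the fern on every component—is exactly the paper's. You also correctly isolate the descent from $\GL_n$ to $G$ as the crux. Two points, however, deserve comment.

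\medskip

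\textbf{The ``move to an almost generic point'' step is both unnecessary and fragile.} You want to perturb Allen's point inside $Z$ to an almost generic one so as to exploit smoothness. But the density statement you appeal to (Theorem \ref{theo:agpointsdense}) is proved on the eigenvariety, using the weight map, locally constant slope functions, and accumulation in $\mathcal{Z}'$; these tools are not available on an arbitrary component $Z$ of $\mathcal{X}_{\overline\rho}^{\chi-\pol}$ before you know $Z\subset\overline{\mathcal{F}(\overline\rho)}$. Trying to recover automorphy of the perturbed point by modularity lifting is a genuine extra argument, not something already on the shelf. The paper bypasses all of this: by \cite[Thm.~5.3.1–5.3.2]{All2} (together with \cite[Thm.~C]{All1}) the $\GL_n$-automorphic point $\Pi$ supplied on each component is already a \emph{smooth} point of $\mathcal{X}_{\overline\rho}^{\chi-\pol}$, very regular and unramified at $p$. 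Smoothness alone pins it to a unique irreducible component, so once that single point is shown to lie in $\mathcal{F}(\overline\rho)$, the component is contained in $\overline{\mathcal{F}(\overline\rho)}$.

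\medskip

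\textbf{How the descent is actually done.} Given such a $\Pi$ (regular algebraic, cuspidal, conjugate self-dual), pass to the global $A$-packet $\Pi_U$ on the quasi-split unitary group via \cite{Mok}. At each archimedean place $v$, Clozel's purity lemma applied to the regular parameter forces $\psi_v$ to be tempered, and regularity makes it discrete; hence its local packet is the discrete-series $L$-packet and in particular contains the holomorphic discrete series. Cuspidality of $\Pi$ makes the $A$-packet stable ($\mathcal{S}_\psi=1$), so the holomorphic member $\pi_0$ is discrete automorphic; temperedness at infinity plus \cite{Wallach} gives cuspidality. Finally extend $\pi_0$ to a cuspidal representation $\pi$ of $GU$ by \cite{Labesse-Schwermer} after choosing a suitable central character. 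Then $\pi$ contributes to degree $0$ coherent cohomology, is unramified at $p$, and has Galois representation $\rho_\Pi$; this simultaneously proves convenient modularity of $\overline\rho_0$ and places the (smooth) point in $\mathcal{F}(\overline\rho)$, finishing the argument.
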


\begin{proof}
As the set of Hypothesis 8.12 contains strictly the hypothesis of Theorem \ref{thr:dimrigid}, we have that the Zariski closure of $\mathcal F(\overline\rho)$  (if non-empty!) is a union of connected components of $\mathcal X_{\overline\rho}^{\chi-\pol}$. Thus, it is enough to prove that each component of $\mathcal X_{\overline\rho}^{\chi-\pol}$ contains a points in the infinite fern, and by the reduction of Lemma \ref{lemma:chi=1} and considering $\Pi_0\psi_0$ where $\psi_0$ given by \cite[Lem.4.1.5]{CHT} , we can assume $\chi=1$. By \cite{All2}, Corollary 5.3.3, we have that $R_{\overline\rho}^{\pol}$ is $\mathcal O$-flat, reduced, and complete intersection of the expected dimension, but we still need to check that the automorphic point in all components can be chosen to be in the infinite fern (i.e.~ holomorphic at infinity automorphic representations for $GU$). 
Let $\mathcal C$ be an irreducible component of $\mathcal X_{\overline\rho}^{\pol}$, which is of the form $\mathcal C = C^{rig}$ for an irreducible component $C$ of $\Spec(R_{\overline\rho}^{\pol})$ (\cite[Lem.~1.2.3]{All2}).
By \cite[Thm.~5.3.1 \& 5.3.2]{All2}, there is a
$GL_n$-automorphic cuspidal point $\Pi$ in $\mathcal C$, which is
moreover unramified at places above $p$, very regular, self dual, and
such that $\Pi$ is a smooth point of $\mathfrak
X_{\overline\rho}^{\pol}$ (see \cite[Thm.~C]{All1}). Let $\Pi_U$ be the global A-packet associate to $\Pi$ (see
\cite[2.3]{Mok} for example). Let $v | \infty$ be an archimedean place of $F$ and let
$\psi_v$ be the local Arthur parameter at $v$ associated to $\psi$. There exist $a_i,
b_i\in\CC$ such that $a_i-b_i\in\ZZ$ for $1\leq i\leq n$ and
$\psi_v|_{\CC^\times}$ is of the form:
\[ z \longmapsto
  (z^{a_1}\overline{z}^{b_1},\dots,z^{a_n}\overline{z}^{b_n}). \] As
$\Pi$ is regular, algebraic and cuspidal, Clozel's Purity Lemma
\cite[Lem.~4.9]{Clozel} implies that there exists $w\in\RR$ such that
$a_i+b_i=w$ for all $1\leq i\leq n$. As we have
$j \in W_\RR \backslash \CC^\times$ such that
$jzj^{-1} = \overline z$, we can check that $\psi_v|_{\CC^\times}$ is
conjugate to
$z\mapsto(z^{-b_1}\overline{z}^{-a_1},\dots,z^{-b_n}\overline{z}^{-a_n})$. As
the weight is regular this implies that there exists
$\sigma \in \mathfrak S_n$ an involution such that
$a_{\sigma(i)} = -b_i$ so that $w=0$ and $\psi_v$ is
tempered. Moreover the regularity of the weight implies also that
$\psi_v$ is discrete (see
\cite[Rem.~A.11.8]{BC2}). % Thus, actually $\sigma = 1$. We now check that $\psi_\infty$ is indeed discrete. As in \cite{BerClo} Lemme 4.3.1 we have, writing $\psi_{\infty}(j) = h \rtimes c$, we have for $z \in \CC^\times$,
% \[ \psi_\infty(\overline z) = \psi_{\infty}(jzj^{-1}) = hw_0 {^t}\psi_{\infty}(z)^{-1}w_0^{-1}h^{-1} \rtimes 1.\]
% Thus $h$ normalizes the torus and by regularity and using $a_i = -b_i$ we must have that $h$ is up to conjugacy $w_0^{-1}$, thus the parameter $\psi_\infty$ is discrete.
As $\psi_v$ is tempered, discrete, regular algebraic its associated
local A-packet is equal to the L-packet of discrete series
representations constructed in \cite{LanglandsR}, and in particular
contains the holomorphic discrete series. In particular, by
\cite[Thm.~2.4.2]{Mok}, there exists $\pi_0$ a regular holomorphic
algebraic, unramified above $p$ representation of the quasi-split
unitary group $U$ in $\Pi_U$. As $\Pi$ is cuspidal, the A-packet
$\Pi_U$ is stable and thus $\mathcal S_\psi = 1$ (see
\cite[2.2]{Rogmodp}), so that $\pi_0$ is also discrete and
automorphic. As $\pi_0$ is tempered at infinity, it follows from the
main result of \cite{Wallach} that $\pi_0$ is cuspidal.

Choose an algebraic extension of its central character which is unramified at $p$, then by \cite{Labesse-Schwermer} there exists an extension of $\pi_0$ to a cuspidal, regular algebraic representation $\pi$ of $GU$. Moreover $\pi_\infty$ is also the holomorphic discrete series thus contribute to coherent cohomology in degree 0 and thus gives a point in the Eigenvariety $\mathcal E$ (for $GU$), whose Galois representation (given in Corollary \ref{cor:A5}) is $\rho$. In particular, $\overline\rho$ is conveniently modular, $\mathcal F(\overline\rho)$ intersects $\mathcal C$, and the corollary is proved.
\end{proof}
\appendix

\section{Similitude Unitary groups, Tori, Base Change and Galois representations}
\label{AppC}

Fix an isomorphism $\iota : \CC \simeq \overline{\QQ_p}$. Let $E$ be (complex) CM number field and $F$ its maximal totally real subfield.

\begin{defin}
\label{SatakeGLn}
Let $n\geq1$ be an integer. Let $\Pi$ be an automorphic representation of $\GL_n(\mathbb{A}_E)$. Let $\rho : G_E \fleche \GL_n(\overline{\QQ_p})$ be a continuous semsimple representation. We say that $\rho$ is strongly (resp. weakly) associated to $\Pi$ if for 
 all finite place $v$ of $E$ (resp. for almost all $v$) not dividing $p$ and such that $\Pi_v$ is unramified, $\rho$ is unramified at $v$ and the semisimple class of 
$\rho(\Frob_v)$ coincides with $\iota Sat_v(\Pi_v)$ where $Sat_v(\Pi_v)$ is the Satake parameter of $\Pi_v$.
\end{defin}

By \cite{HLTT} and \cite{SchTor}, for any cuspidal regular algebraic automorphic representation $\Pi$, there is a unique $\rho_{\Pi,\iota}$ which is strongly associated to $\Pi$ and for any $\rho$ there is at most one $\Pi$ such that $\rho$ is weakly associated to $\Pi$.

Denote by $G = GU(V)$ a similitude unitary group over $\QQ$ (with similitude factor in $\QQ^\times$) associated to the CM extension $E/F$, and by $Z \simeq GU(E)$ its center.

Let $\ell$ be a rational prime, unramified in $E$, which is also unramified for $GU(V)$ (i.e.~ $GU(V)_{\QQ_p}$ is quasi-split, and split over an unramified extension). Let $\pi$ be a cuspidal automorphic representation of $G$, assume $\pi$ is unramified at $\ell$, and choose a maximal compact $K$ at $\ell$ for which $\pi$ is unramified. Then $\pi_\ell^K$ is a 1-dimensional representation of $H_\CC(G(\QQ_\ell),K)$, the Hecke algebra of bi-$K$-invariant $\CC$-valued functions on $G(\QQ_\ell)$ with compact support. The Satake isomorphism and the unramified local Langlands correspondance (\cite{BorelCorvallis}) associate to it an unramified representation with values in the $L$-group of $G$ (actually in the $L$-group of the maximal torus of $G$). Denote $T_U, T$ the maximal torus of $U=U(V)$ and $G$ respectively. The natural inclusion $T_U \subset T$, which is compatible with the Galois action and central, gives a map 
\begin{equation} \label{eqTT_U} ^{L}T \fleche ^{L}T_U.\end{equation}

\begin{prop}
There is a natural map of $L$-groups
\[ {^L}T_U \fleche {^L}T_{GL_{n,E}}  := \mathbb G_m^{n,\Hom(E,\CC)} \rtimes W_{\QQ_\ell}.\]
Denote by
\[ r : W_{\QQ_\ell} \fleche {^L}T_U,\]
the unramified Langlands parameter associated to $\pi_\ell$ as above. 
For all $\lambda | \ell$ in $F$ and $\lambda' | \lambda$ in $E$, the restriction of $r$ to $W_{E_\lambda'}$ followed by the previous map
\[ r : W_{E_{\lambda'}} \fleche {^L}T_{GL_{n,E}} ,\]
induces a well-defined class (up to conjugacy),
\[ r_{\lambda'} : W_{E_\lambda'} \fleche \GL_n(\CC).\]
\end{prop}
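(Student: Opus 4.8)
The plan is to make the maps explicit and then check that everything is independent of the choices involved. First I would describe the natural map ${^L}T_U \fleche {^L}T_{\GL_{n,E}}$: the torus $T_U$ of the unitary group $U(V)$ becomes, after base change to $E$, the torus of $\GL_{n,E}$ (since $U(V)_E \simeq \GL_{n,E}$), and the $L$-group of a torus is built from its character lattice with the Galois action; the inclusion $U(V)_E \hookrightarrow \Res_{E/F}\GL_{n,E}$ together with the standard description of the $L$-group of a Weil restriction (Shapiro's lemma for the cocharacter lattice, ${^L}(\Res_{E/\QQ}T_{\GL_n}) = (\hat T_{\GL_n})^{\Hom(E,\CC)}\rtimes W_\QQ$) produces the desired morphism of $L$-groups over $W_\QQ$, hence after restriction over $W_{\QQ_\ell}$. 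This is essentially the functoriality of $L$-groups applied to the morphism $T_U \to \Res_{E/\QQ}T_{\GL_{n,E}}$ coming from base change; I would cite the relevant discussion in \cite{BorelCorvallis}.

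Next I would compose with $r : W_{\QQ_\ell} \fleche {^L}T_U$, the unramified Langlands parameter attached to $\pi_\ell$ via the Satake isomorphism, to get $W_{\QQ_\ell}\fleche {^L}T_{\GL_{n,E}} = \mathbb{G}_m^{n,\Hom(E,\CC)}\rtimes W_{\QQ_\ell}$. For a place $\lambda'\mid\ell$ of $E$ lying over $\lambda\mid\ell$ of $F$, I would restrict to the decomposition subgroup $W_{E_{\lambda'}}\subset W_{\QQ_\ell}$. The key observation is that ${^L}T_{\GL_{n,E}}$ restricted to $W_{E_{\lambda'}}$, projected onto the factor of $\mathbb{G}_m^{n,\Hom(E,\CC)}$ indexed by the $W_{E_{\lambda'}}$-orbit containing the chosen embedding $E\hookrightarrow E_{\lambda'}\hookrightarrow\CC$ (equivalently by $\lambda'$ itself), lands in a copy of ${^L}T_{\GL_{n,E_{\lambda'}}} = \mathbb{G}_m^n\rtimes W_{E_{\lambda'}}$; composing with the standard embedding of the diagonal torus into $\GL_n(\CC)$ gives the sought parameter $r_{\lambda'} : W_{E_{\lambda'}}\fleche \GL_n(\CC)$. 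Concretely, $r_{\lambda'}$ records the $n$-tuple of unramified characters of $W_{E_{\lambda'}}$ obtained from the Satake parameter of $\pi_\ell$ after base change to $E_{\lambda'}$.

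The remaining point is well-definedness up to conjugacy. Here I would note two sources of ambiguity: the choice of maximal compact $K$ at $\ell$ for which $\pi$ is unramified (any two such are conjugate in $G(\QQ_\ell)$, and the Satake parameter is canonically attached to the unramified class, so $r$ is well-defined up to ${^L}T_U$-conjugacy, in fact canonical once one works modulo conjugacy), and the choice of how one realises $U(V)_{E_{\lambda'}}\simeq \GL_{n,E_{\lambda'}}$ and the embedding of the diagonal torus — but any two such differ by an element of the Weyl group together with an inner automorphism, which changes $r_{\lambda'}$ only by $\GL_n(\CC)$-conjugacy. I would also check that the construction does not depend on the choice of embedding $E_{\lambda'}\hookrightarrow\CC$ within its $W_{\QQ_\ell}$-orbit: different choices are intertwined by the $W_{\QQ_\ell}$-action permuting the factors of $\mathbb{G}_m^{n,\Hom(E,\CC)}$, and restricting to $W_{E_{\lambda'}}$ this amounts to a conjugation. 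The main obstacle is purely bookkeeping: correctly identifying, inside ${^L}(\Res_{E/\QQ}\GL_{n})$, which factors are stabilised by $W_{E_{\lambda'}}$ and matching them with places $\lambda'\mid\ell$, and being careful that the ``up to conjugacy'' in the target $\GL_n(\CC)$ absorbs all the choices — there is no deep input, only a careful unwinding of the $L$-group formalism for tori under base change and Weil restriction.
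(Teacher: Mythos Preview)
Your proposal is correct and follows essentially the same route as the paper: build the map ${^L}T_U\to{^L}T_{\GL_{n,E}}$ from the inclusion $T_U\hookrightarrow\Res_{E/\QQ}T_{\GL_n}$, restrict the composite parameter to $W_{E_{\lambda'}}$, and project to the factor indexed by the chosen embedding above $\lambda'$. The only notable difference is that where you invoke general $L$-group functoriality and argue ``up to conjugacy'' abstractly, the paper writes out the character and cocharacter lattices of $T_U$ explicitly and then does a direct semidirect-product computation showing that changing the embedding $i:E_{\lambda'}\hookrightarrow\overline{\QQ}_\ell$ (or the isomorphism $\overline{\QQ}_\ell\simeq\CC$) gives literally the \emph{same} map $r_{\lambda'}$, the conjugating factors cancelling because the target torus is commutative.
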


\begin{proof}
As $\ell$ is unramified for $E$ and $G$ (thus $U$), actually $U_{\QQ_p}$ is isomorphic to $U(n)_{E/F,\QQ_p}$ for any choice of (unramified) unitary group of rank $n$, so choose the one with anti-diagonal matrix form. With this form, we check that actually the upper triangular Borel is indeed a Borel over $\QQ_p$, with maximal torus the diagonal one, given by
\[ T_U = \{ Diag(a_1,\dots,a_n) | a_i \in E^\times, c(a_i)a_{n+1-i} = 1\} \subset T_{GL_{n,E}},\]
with $c \in \Gal(E/F)$ the complex conjugacy and $T_{\GL_{n,E}}$ the diagonal torus of $\Res_{E/\QQ}\GL_n$. Denote by $\Sigma_E$ the complex embeddings of $E$. We then have that its characters  $X^*(T_U)$ are given by the quotient of $(\ZZ^n)^{\Sigma_E} (= X^*(T_{GL_{n,E}}))$ by the relation
$(\lambda_{i,\sigma})_{i,\sigma} = (\lambda_{n+1-i,\sigma c}^{-1})_{i,\sigma}$. Its cocharacters 
$X_*(T_U) \subset (\ZZ^n)^{\Sigma_E} = X_*(T_{GL_{n,E}})$ are given by the collections $(\mu_{i,\sigma})_{i,\sigma}$ satisfying $\mu_{i,\sigma} = \mu_{n+1-i,\sigma c}^{-1}$. The Galois action of $\sigma \in G_\QQ$ sends the character $\lambda_{\tau,i}$ to $\sigma \cdot \lambda_{\tau,i} := \lambda_{\tau,i} \circ \sigma^{-1} = \lambda_{\sigma\tau,i}$. It sends the cocharacter $\mu_{\tau,i}$ to $\sigma \circ \mu_{\tau,i} = \mu_{\sigma\tau,i}$. Thus the dual torus is given by the subtorus of $\prod_{\Sigma_E}\mathbb G_m^n$, given by
\[ \widehat{T_U} = \{ (t_1^\sigma,\dots,t_n^\sigma)_\sigma | t_\sigma^i t_{\sigma c}^{n+1-i} = 1\}.\]
The action of $W_{\QQ_\ell}$ on $\widehat{T_U}$ is given by $s \cdot (h_\sigma)_\sigma = (h_{s^{-1}\sigma})_\sigma$. A priori $E$ is not Galois over $\QQ$. 
An analogous computation for the maximal (diagonal) torus of $\Res_{E/\QQ} \GL_n$, $T_{GL_{n,E}}$, gives
\[ ^{L}T_{\GL_{n,E}} = \mathbb G_m^{n,\Sigma_E} \rtimes W_{\QQ_\ell}, \quad s\cdot (t_{i,\sigma}) = (t_{i,s^{-1}\sigma}).\]
and we thus have a natural map $^{L}T_U \mapsto {^L}T_{\GL_{n,E}}$. As $\pi_\ell$ is unramified, by \cite{BorelCorvallis} there is a parameter $r_G : W_{\QQ_\ell} \fleche {^L}T$, which we can compose to get
\[ r : W_{\QQ_\ell} \fleche {^L}T_U,\]
and by the previous map we get and unramified Langlands parameter $r_{\GL_{n,E}} : W_{\QQ_\ell} \fleche {^L}T_{\GL_{n,E}}$. Restricting this last parameter to 
$W_{E_{\lambda'}}$, where $W_{E_{\lambda'}} \fleche W_{\QQ_\ell}$ is induced by some $i : E_{\lambda'} \fleche \overline\QQ_\ell$, we get $W_{E_{\lambda'}} \fleche {^L}T_{\GL_{n,E}} \rtimes W_{E_{\lambda'}}$. Fix an isomorphism $\phi : \overline\QQ_\ell \fleche \CC$, so we can identify complex and $\ell$-adic embeddings of $E$. But the action of $W_{E_{\lambda'}}$ fixes the $\sigma \in \Sigma_E$ over $\lambda'$, and we can thus project to any such using $\pr_\sigma : {^L}T_{GL_{n,E}} \fleche \GL_n$, so choose $\sigma_{\lambda'} = i \circ (E \fleche E_{\lambda'})$, the one corresponding to our embedding $W_{E_{\lambda'}} \fleche W_{\QQ_\ell}$, and denote
\[ r_{\lambda'} : W_{E_{\lambda'}} \fleche {^L}T_{\GL_{n,E}} \fleche \mathbb G_m^n, w \mapsto r_{\GL_{n,E}}(w) = (h_\sigma)_\sigma \rtimes \pi \mapsto h_{\sigma_{\lambda'}}.\]
Let us show that this is well defined and independant of choices of $i$ and $\phi$. Let $i,j : E_{\lambda'} \fleche \overline{\QQ_\ell}$ two choices. There exists $s \in W_{\overline\QQ_\ell}$ such that $s \circ j = i$. These two maps induce two maps $W_{E_\lambda'} \overset{i_*,j_*}{\fleche} W_{\overline\QQ_\ell}$, such that $j_*(-) = s^{-1}  i_*(-) s$.

Moreover using the canonical map $E \fleche E_{\lambda'}$ this induces two embeddings $\sigma_{\lambda'}^i, \sigma_{\lambda'}^j :E \fleche \overline{\QQ_\ell}$ above ${\lambda'}$ such that $\sigma_{\lambda'}^i = s \circ \sigma_{\lambda'}^j$. So we compute,
\[ r_{\GL_{n,E}}(j_* w) = r_{\GL_{n,E}}(s^{-1}i_* w s) = x \rtimes s^{-1} (h_\sigma)_\sigma \rtimes w (x \rtimes s^{-1})^{-1} = (x (h_{s\sigma})_\sigma s^{-1}w sx^{-1}) \rtimes w,\]
which is mapped under projection to the embedding $\sigma_{\lambda'}^j$ to
\[ x_{\sigma^j_{\lambda'}} h_{s\sigma^j_{\lambda'}} x^{-1}_{s^{-1}w^{-1}s\sigma^j_{\lambda'}},\]
but this is commutative, and $w^{-1} s\sigma^j_{\lambda'} = s\sigma_{\lambda'}^j$ as $w \in W_{E_{\lambda'}}$ thus we get
\[ x_{\sigma^j_{\lambda'}} x^{-1}_{\sigma^j_{\lambda'}} h_{\sigma^i_{\lambda'}},\] i.e.~ $\pi_{\sigma^i_{\lambda'}} \circ r_{\GL_{n,E}} \circ i_* = \pi_{\sigma^j_{\lambda'}} \circ r_{\GL_{n,E}} \circ j_*$ is well defined and independant of the choice of $i$. Now assume that $\phi,\phi'$ are different isomorphisms $\overline\QQ_\ell \fleche \CC$. So for each $i : E_{\lambda'} \fleche \overline \QQ_\ell$ we get two embeddings of $E$, namely $\sigma_\phi^i$ and $\sigma_{\phi'}^i = s \circ \sigma_\phi^i$, with $s = \phi' \circ \phi \in G_{\overline\QQ_\ell}$. Thus we are reduced to the previous computation with two different embedding above $\lambda'$. Thus $r_{\lambda'} := \pi_{\sigma^i_{\lambda'}} \circ r_{\GL_{n,E}} \circ i_*$ depends only on the choice of $\lambda' | \ell$ in $E$. 
\end{proof}

Using the previous Proposition, to $\pi$ we can for all unramified $\ell$ associate to $\pi_\ell$ a semi-simple conjugacy class in ${^L}T_U$ and for all $\lambda' | \ell$ in $E$ a 
system of semi-simple conjugacy classes $C_{\lambda'} = r_{\lambda'}(\Frob_{\lambda'})$ in $\GL_n$. We denote 
$Sat(\pi_\ell) = (Sat_\lambda(\pi_\ell))_{\lambda'} =:  (C_{\lambda'}|det|^{\frac{1-n}{2}})_{\lambda'}$.

\begin{defin}
\label{Satake}
Fix an isomorphism $\iota : \CC \simeq \overline{\QQ_p}$. Let $\rho : G_E \fleche \GL_n(\overline{\QQ_p})$. We say that $\rho$ is strongly (resp. weakly) essentially associated to $\pi$ if for 
 all $\ell\neq p$ (resp. for almost all $\ell\neq p$), unramified in $E$ and for $\pi$, for all $\lambda' | \ell$, $\rho$ is unramified at $\lambda'$ and the semi simple class of 
$\rho(\Frob_{\lambda'})$ and $\iota Sat_{\lambda'}(\pi_\ell)$ coincides. We say that $\rho$ is \textit{modular} if there exists a cuspidal $\pi$ as before such that $\rho$ is strongly essentially associated to $\pi$.
\end{defin}

\begin{rema}
\begin{enumerate}
\item This is not the natural definition, it would be more adequate to say \emph{essentially modular}. The reason is that because we want to work at fixed polarisation character, we have ignored the part of the similitude character for $\pi$ when looking at $Sat(\pi_\ell)$. We could do an analogous definition keeping track of the similitude character, but it would be more complicated to describe it, in particular at non split primes when $E/\QQ$ is not Galois.
\item It is enough to check the compatibility with the Satake parameter at $\ell$ totally split in $E$, in which case the previous association is easier to describe. Indeed, by Chebotarev density theorem the totally split primes in $E$ have density 1, thus $\rho$ is completely determined by the conjugacy class of Frobenius at those primes. Moreover, every $\lambda = \lambda' \lambda^{'c} | \ell$ is split above $F$ (with $\lambda$ is a prime of $F$). Thus $GU_{\overline\QQ_\ell} \simeq (\prod_{\lambda | \ell \text{ in } F} \GL_n) \times \mathbb G_m$\footnote{We should choose a CM type to write this isomorphism properly.} and the Satake parameter (for $GU$) associated to $\pi_\ell$ has the form
\[ (\Diag(t_1^{\lambda},\dots,t_n^\lambda)_\lambda,x).\]
Then $Sat(\pi_\ell)$ is just the collection \[((|det|^{\frac{n-1}{2}}\Diag(t_1^\lambda,\dots,t_n^\lambda))_{\lambda'},(|det|^{\frac{1-n}{2}}\Diag(t_1^{\lambda,-1},\dots,t_n^{\lambda,-1}))_{\lambda^{'c}}).\]
\item A modular $\rho$ is automatically polarized-by-1 (i.e. $\rho^\vee \simeq \rho^c \otimes \eps^{n-1}$). Indeed, elements $t \in \widehat{T_U}$ satisfies $t^{-1} = w_0 \cdot t^c$, where $w_0$ is the longuest Weyl element of $\GL_n$, thus (because of the twist) $\iota Sat_{\lambda'}(\pi_\ell)^{-1} = \iota Sat_{c\lambda'}(\pi_\ell)p^{n-1}$. By Chebotarev, this proves the claim.
\end{enumerate}
\end{rema}

\begin{defin}
\label{def:C2}
We say that a cuspidal automorphic representation $\pi$ of $G$ or $U(V)$ is sufficiently regular if it is a discrete series at infinity and satisfy property $(\star)$ of \cite{Lab} Section 5.1. This is automatic if the parameter at infinity is regular enough (by \cite[Lemma 3.6.1]{HarCor} and Mirkovic's Theorem \cite[Theorem 3.5]{HarCor}).
\end{defin}

\begin{rema}
Because of \cite[Lemma 3.6.1]{HarCor}  and Mirkovic, there exists $C > 0$ such that points of $\mathcal Z_C$ (see Proposition \ref{prop:geoHecke}) are sufficiently regular in the previous sense.
\end{rema}

    \begin{theor}
\label{thrA1}
Let $\pi$ be a cuspidal automorphic representation of $G = GU(V)$ which is cohomological and sufficiently regular.
There exists $L$ a Levi subgroup of $\Res_{E/\QQ}{G_E}$, a cuspidal automorphic representation $\Pi_L$ of $L(\mathbb A)$ together with an automorphic character $\chi_L$ of $L(\mathbb A)$ such that $\Pi_L\otimes\chi_L^{-1}$ is $\theta_L$-stable and $\pi$ and $\Pi_L$ corresponds to each other at all unramified (for $\pi$ and $E$) finite places.
Moreover each factor of $\Pi_L = \Pi_1 \boxplus \Pi_2 \boxplus\dots\boxplus \Pi_r$ is regular algebraic.
\end{theor}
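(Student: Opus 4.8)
The plan is to deduce Theorem \ref{thrA1} from the now-standard machinery of base change and descent for (similitude) unitary groups, combined with the known classification of the discrete automorphic spectrum of quasi-split unitary groups à la Mok and Kaletha--Minguez--Shin--White, and with the archimedean input coming from the sufficient regularity hypothesis. First I would reduce from $G = GU(V)$ to the unitary group $U = U(V)$: a cuspidal cohomological $\pi$ of $G$ restricts (after twisting by a suitable automorphic character of the center $Z \simeq GU(E)$, as in the last Corollary's proof via \cite{Labesse-Schwermer}) to a discrete automorphic representation $\pi_0$ of $U$, which is still cohomological and sufficiently regular, in particular a discrete series at the archimedean places. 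This is the step that brings us into the range where Arthur's classification for $U$ is available, since $U$ is quasi-split (or can be compared to the quasi-split inner form exactly because $\pi_0$ is a discrete series at infinity, so the relevant local packets are the same and property $(\star)$ of \cite{Lab} guarantees the required multiplicity/stabilization statements).

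Next I would invoke the classification of the discrete spectrum of the quasi-split unitary group $U^* = U_{E/F}(n)$: to $\pi_0$ is attached a global Arthur parameter $\psi$, which is a formal sum $\psi = \Pi_1[d_1] \boxplus \cdots \boxplus \Pi_r[d_r]$ where each $\Pi_i$ is a conjugate-self-dual (relative to $E/F$) cuspidal automorphic representation of $\GL_{n_i}(\mathbb A_E)$. The regularity of the infinitesimal character of $\pi_{0,\infty}$ forces each $d_i = 1$ (a non-trivial $\mathrm{SL}_2$-factor $[d_i]$ would make $\psi_\infty|_{\mathbb{C}^\times}$ non-regular, contradicting that $\pi_0$ is a discrete series), so $\psi$ is tempered: $\psi = \Pi_1 \boxplus \cdots \boxplus \Pi_r$ with each $\Pi_i$ conjugate-self-dual cuspidal, hence regular algebraic by the archimedean analysis (Clozel's purity lemma plus regularity, exactly as carried out in the last Corollary's proof). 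Then I would package the $\Pi_i$ into a cuspidal automorphic representation $\Pi_L$ of a Levi $L = \Res_{E/\QQ}\GL_{n_1} \times \cdots \times \Res_{E/\QQ}\GL_{n_r} \subset \Res_{E/\QQ}\GL_n$, with the automorphic similitude/conjugacy-twist character $\chi_L$ of $L(\mathbb A)$ recording the polarization, so that $\Pi_L \otimes \chi_L^{-1}$ is $\theta_L$-stable where $\theta_L$ is the restriction of the base-change involution. The compatibility of Satake parameters at unramified finite places is then precisely the compatibility of the parameter $\psi$ with the local unramified Langlands/base change maps, which is built into the construction of $\psi$ (the map $^L T_U \to {}^L T_{\GL_{n,E}}$ of the Proposition preceding Definition \ref{Satake} is exactly the relevant one on unramified parameters); I would just have to trace through the normalizations (the $|\det|^{(1-n)/2}$ twist) to match the conventions.

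The main obstacle, I expect, is the passage between the given (possibly non-quasi-split) similitude group $G$ and the quasi-split unitary group on which Arthur's classification is available, together with keeping track of the similitude character throughout: one has to know that $\pi_0$, being a discrete series at infinity, lies in a global $A$-packet that is seen by the quasi-split form (this uses that the local packets at the ramified and archimedean places for the inner form inject into those of the quasi-split form, and that discreteness at $\infty$ together with property $(\star)$ makes the trace-formula comparison of \cite{Lab} applicable), and that the central character bookkeeping needed to go from $U$ back up to $GU$ and then to $\Res_{E/\QQ}\GL_n$ via $L$ does not obstruct the construction of $\Pi_L$ and $\chi_L$. A secondary technical point is verifying $\theta_L$-stability of $\Pi_L \otimes \chi_L^{-1}$: this amounts to the conjugate-self-duality $\Pi_i^\vee \simeq \Pi_i^{c} \otimes (\text{twist})$ of each factor, which is part of the output of the classification but must be recorded with the right twist so that it is literally $\theta_L$-invariance of $\Pi_L \otimes \chi_L^{-1}$. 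Once these structural inputs are in place, the regularity and algebraicity of each $\Pi_i$, and the unramified matching, follow formally from the archimedean computation and the compatibility of the classification with Satake parameters, respectively.
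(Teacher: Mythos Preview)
Your strategy is sound but takes a genuinely different route from the paper. The paper does not invoke the Arthur--Mok (or KMSW) classification at all. After the same reduction from $GU(V)$ to $U(V)$ (your first paragraph matches their Lemma \ref{lem:GUtoU}), the paper applies Labesse's weak base change \cite[Cor.~5.3]{Lab} to $\pi_0$, which produces a $\theta_L$-stable \emph{discrete} (not a priori cuspidal) automorphic representation $\Pi'_1\otimes\cdots\otimes\Pi'_s$ on a Levi. They then feed each $\Pi'_i$ into the M{\oe}glin--Waldspurger classification of the discrete spectrum of $\GL_{n_i}$ to write $\Pi'_i$ as $\tau_i\otimes\mathrm{Sp}(\ell_i)$, use Morel's argument \cite[8.5.6]{Morel} to see that each $\tau_i$ is $\theta$-stable, and take the $\Pi_j$ to be the individual shifted cuspidals $\tau_i|\det|^{(\ell_i-2k+1)/2}$. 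Regular algebraicity is read off from the infinitesimal-character compatibility in \cite[Cor.~5.3]{Lab}. (For $F=\QQ$ they instead quote Morel and Shin's appendix to \cite{Gold}.) So the paper's argument is more elementary, predates the full endoscopic classification, and genuinely uses the ``sufficiently regular''/property $(\star)$ hypothesis of Definition \ref{def:C2}, whereas your Mok/KMSW route is heavier but more structural and does not really need $(\star)$ for the quasi-split case.

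One small gap in your argument: your justification for $d_i=1$ is not quite right. A non-trivial Arthur $\mathrm{SL}_2$-factor $[d_i]$ does \emph{not} in general force $\psi_\infty|_{\CC^\times}$ to be non-regular; for generic archimedean parameters of $\Pi_i$ the shifted exponents $a_{i,j}+\tfrac{d_i-2k+1}{2}$ can perfectly well be pairwise distinct. The correct argument is temperedness: if some $d_i>1$ then the archimedean Arthur packet for $\psi_\infty$ consists of non-tempered representations (Adams--Johnson), so it contains no discrete series; since $\pi_{0,\infty}$ \emph{is} a discrete series, all $d_i=1$. With that fix your proof goes through.
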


\begin{proof}
If $F = \QQ$ this is \cite{Morel} Corollary 8.5.3, except the last part. But by Shin's appendix \cite{Gold}\footnote{This more generally applies if $E$ contains an imaginary quadratic field} Theorem 1.1 (iii), $\Pi_1\boxplus\Pi_2\boxplus\dots\boxplus\Pi_r$ is moreover regular algebraic as $\pi$ is. Remark that in this case we don't need $\pi$ to be sufficiently regular, just cohomological. If $[F:\QQ] \geq 2$, then we will use \cite{Lab}, thus we need the following lemma. Let us introduce some notation.
Let $Z = \{ x \in E^\times | N_{E/F}(x) \in \QQ^\times\}$ and $Z^1 :=
\Ker (N_{E/F})_{Z} \subset Z$. Then $Z,Z^1$ are tori. Moreover we have a maps
\begin{equation}
\label{ex1} 0 \fleche Z^1 \fleche Z \times U \fleche G \fleche 0,\end{equation}
and the last map is surjective on geometric points. Note that if
$\ell$ is a prime of $\QQ$, splitting in $E$, then the sequence (\ref{ex1}) is exact on $\QQ_\ell$-points.

\begin{lemma}\label{lem:GUtoU}
Let $\pi$ be an irreducible discrete automorphic representation of $G$
such that $\pi_\infty$ is cohomological for $\xi$. Then there exists
an automorphic discrete representations
$\psi \otimes \pi_0$ of $Z(\mathbb A)\times U(\mathbb A)$ such that
\begin{enumerate}
\item\label{GUtoU1} the restriction of $\psi\otimes\pi_0$ to the image of $Z^1(\mathbb{A})$ is trivial;
\item\label{GUtoU2} {$\psi = \psi_{\pi}$}, the restriction of $\pi$ to $Z$,
\item\label{GUtoU3} For all place $\ell$ of $\QQ$, splitting in $E$, we have
  $(\pi_\ell)|_{Z(\QQ_\ell)\times U(\QQ_\ell)} \simeq \psi_\ell \otimes \pi_{0,\ell}$;
\item\label{GUtoU4} $\pi_0$ is cohomological for $\xi|_U$, thus regular;
\item\label{GUtoU5} $\psi_\infty = \xi_{E_\infty^\times}^{-1}$.
\item\label{GUtoU6} If $\ell$ is a prime which is unramified in $E$,
  then $\pi_0$ is unramified if $\pi$ is.
\end{enumerate}
\end{lemma}
\begin{proof}
  This is analogous to the proof of \cite{HT} Theorem VI.2.1.
Choosing $(g_i)$ in $G(\mathbb{A})$ such that the $\nu(g_i)$ are representatives of the set $\nu(G(\mathbb A))/(\nu(G(\QQ))N_{E/F}(Z(\mathbb A))$ we get as in \cite{HT},
 \[
\begin{array}{ccc}
\mathcal A(G(\QQ)\backslash G(\mathbb A))  & \fleche  & \bigoplus_i \mathcal A((Z\times U)(\QQ)\backslash (Z\times U)(\mathbb A))^{Z^1(\mathbb A)}  \\
 f &  \longmapsto  & ((g_i \cdot f)|_{(Z\times U)(\mathbb{A})})_i  
\end{array}
\]
where the $g_i\cdot f$ is denotes the right translate of $f$. As a consequence we have an isomorphism of $(Z\times U)(\mathbb{A})$-representations
 \[
   \mathcal A(G(\QQ)\backslash G(\mathbb A))|_{(Z\times
     U)(\mathbb{A})} \simeq\bigoplus_i (\mathcal A((Z\times
   U)(\QQ)\backslash (Z\times U)(\mathbb A))^{Z^1(\mathbb
     A)})^{g_i}\]
where the upper script $g_i$ denotes a conjugate action by $g_i$.
This shows that, if $\pi$ is an automorphic representation of $G(\mathbb A)$ and if $\pi'$ is an irreducible subquotient of $\pi|_{(Z\times U)(\mathbb{A})}$, a conjugate of $\pi'$ by one 
of the $g_i$ is automorphic and trivial on $Z^1(\mathbb{A})$. Let
$\psi\otimes\pi_0$ be an automorphic representation ot $(Z\times
U)(\mathbb{A})$ whose conjugate by one of the $g_i$ is isomorphic to a
subquotient of $\pi|_{(Z\times U)(\mathbb{A})}$. Moreover, since $\pi$
is cohomological for $\xi$, there exists an integer $i$ such that
$H^i((\Lie
G(\RR))\otimes_{\RR}\CC,U_\infty,\pi_\infty\otimes\xi')\neq0$ (for
$U_\infty\subset U(\RR)$ a maximal compact subgroup of $G(\RR)$). So
we can choose $\psi\otimes\pi_0$ such that $H^i((\Lie (Z\times
U)(\RR))\otimes_{\RR}\CC,U_\infty,\psi_\infty\otimes\pi_{0,\infty}\otimes\xi'|_{(Z\times
  U)(\RR)})\neq0$. This proves that $\pi_0$ satisfies property
\ref{GUtoU4} of the statement. The property \ref{GUtoU1} has already
been checked and \ref{GUtoU2} is clear since $Z(\mathbb{A})$ is in the
center of $G(\mathbb{A})$. Property \ref{GUtoU3} is a direct
consequence of the fact that if $\ell$ is a prime that splits in $E$,
the map $(Z\times U)(\QQ_\ell)\rightarrow G(\QQ_\ell)$ is surjective
of kernel $Z^1(\QQ_\ell)$. Now assume that $\ell$ is unramified in
$E$. If $\pi$ is unramified at $\ell$, then $\pi$ has non zero fixed
vector under an hyperspecial subgroup of $G(\QQ_\ell)$. As the image
of $Z(\QQ_\ell)\times U(\QQ_\ell)$ has a finite index in
$G(\QQ_\ell)$, the restriction of $\pi_\ell$ to $U(\QQ_\ell)$ is
isomorphic to a finite direct sum of irreducible representation of
$U(\QQ_\ell)$ which are conjugated in $G(\QQ_\ell)$. As the
intersection of an hyperspecial subgroup of $G(\QQ_\ell)$ with $U(\QQ_\ell)$ is an
hyperspecial subgroup of $U(\QQ_\ell)$, all irreducible subquotients
of $\pi_{\ell}|_{U(\QQ_\ell)}$ have nonzero fixed vectors under some
hyperspecial subgroup of $U(\QQ_\ell)$. This proves property \ref{GUtoU6}.
Property \ref{GUtoU5} is a direct consequence of the equality $\pi|_{E_\infty^\times}=\xi'_{E_\infty^\times}$ following from the fact that $\pi$ is cohomological for $\xi'$.
\end{proof}
Thus by \cite{Lab} Cor. 5.3 applied to $\pi_0$, which is sufficiently regular thus satisfies property $(\star)$, there is a weak base change i.e.~ $L$ a standard Levi of $\Res_{E/\QQ}\GL_{n,E}$ that is $\theta$-stable, and a $\theta_L$-stable discrete 
automorphic representation of $L$ $\Pi_L = \Pi'_1\otimes\dots\otimes\Pi'_s$ such that $ \Pi'_1 \boxplus \Pi'_2 \boxplus \dots \boxplus \Pi'_s$ is a weak base change for $\pi_0$. As each $\Pi'_i$ is discrete, 
then by the main theorem of \cite{MoegWald} we can write $\Pi'_i$ as an automorphic induction of $\tau_i \otimes Sp(\ell_i)$ for an integer $\ell_i$ and $\tau_i$ a cuspidal automorphic representation of 
$\GL_{n_i/\ell_i}(\mathbb A_E)$. But the proof of \cite{Morel} 8.5.6 shows that as each $\Pi_i'$ is 
$\theta_{n_i}$-stable, each $\tau_i$ is $\theta_{n_i/\ell_i}$-stable.
In particular, up to reduce $L$, choosing $(\Pi_j)_{j=1,\dots,r}$ to be the collection $(\tau_i |\det|^{(\ell_i-2k+1)/2})$ for $i = 1,\dots,s$ and $k = 1,\dots,\ell_i$, we get that $\Pi_j$ is 
cuspidal, and $\Pi_j$ is $\theta$-stable up to twist (by an
automorphic character of $L$). But by \cite[Cor.~5.3]{Lab} again we know that the infinitesimal characters of $\pi_0$ and 
$\Pi'_1\boxplus \dots \boxplus\Pi'_s = \Pi_1\boxplus \dots\boxplus \Pi_r$ coincides after base change, in particular the latter representation is regular algebraic. Moreover at unramified places this is compatible with the local base change.
\end{proof}

\begin{cor}
\label{cor:A5}
Let $\pi$ be a cuspidal automorphic representation of $G = GU(V)$ which is cohomological, sufficiently regular, and unramified outside $S$, which contains ramified places of $E$. 
Then to $\pi$ is (strongly) essentially associated a unique Galois representation,
\[ \rho^u : G_{E,S} \fleche \GL_n(\overline{\QQ_p}), \]
satisfying,
\[ (\rho^u)^\vee \simeq (\rho^u)^c \eps^{n-1}.\] 
In particular, for all prime $\lambda = v\overline v$ of $F$ split in $E$, not in $S$, we have that the semi-simple conjugacy class of $\rho^u(\Frob_v)$ is equal to the image of the Satake parameter of $\pi_{0,\lambda}|\det|^{\frac{1-n}{2}}$, seen as a representation of $U(F_\lambda) \overset{\iota_v}{\simeq} GL_{n,E_v}$.
\end{cor}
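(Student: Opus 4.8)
The plan is to construct $\rho^u$ from the Galois representations attached to the cuspidal constituents of the base change furnished by Theorem \ref{thrA1}, and then to verify the polarization and the local--global compatibility at split primes by Chebotarev, using the explicit description of $\widehat{T_U}$ obtained in the Proposition preceding Definition \ref{Satake}.

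First I would pass, via Lemma \ref{lem:GUtoU}, to the automorphic representation $\pi_0$ of $U(\mathbb A)$ (still cohomological, sufficiently regular, and unramified outside $S$), and apply Theorem \ref{thrA1} to $\pi$: this yields a standard Levi $L\subset\Res_{E/\QQ}G_E$, an automorphic character $\chi_L$, and a cuspidal automorphic representation $\Pi_L=\Pi_1\boxplus\cdots\boxplus\Pi_r$ of $L(\mathbb A)$ with $\Pi_L\otimes\chi_L^{-1}$ being $\theta_L$-stable, each $\Pi_j$ a regular algebraic cuspidal automorphic representation of some $\GL_{n_j}(\mathbb A_E)$, and $\Pi_L$ matching $\pi_0$ at all finite places unramified for $\pi$ and $E$. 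By Definition \ref{SatakeGLn} and \cite{HLTT,SchTor}, each $\Pi_j$ has a unique semisimple Galois representation $\rho_{\Pi_j,\iota}:G_E\to\GL_{n_j}(\overline{\QQ_p})$ strongly associated to it, which factors through $G_{E,S}$ since $\Pi_j$ is unramified outside $S$. I then set $\rho^u\coloneqq\bigoplus_{j=1}^r\rho_{\Pi_j,\iota}$, twisting if necessary by the fixed character encoding the normalisation of the Satake data $Sat_{\lambda'}(\pi_\ell)=C_{\lambda'}\,|\det|^{\frac{1-n}{2}}$ fixed before Definition \ref{Satake}.

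The strong essential association, and hence the displayed equality of conjugacy classes at a split prime $\lambda=v\overline v$ of $F$, then follows by combining three facts: $\rho_{\Pi_j,\iota}(\Frob_w)$ is the Satake parameter of $\Pi_{j,w}$ (Definition \ref{SatakeGLn}); the base change of Theorem \ref{thrA1} is compatible with the unramified local base change at split places, so that $\Pi_L$ at $w$ transfers the Satake parameter of $\pi_{0,\lambda}$ through the identification $U(F_\lambda)\overset{\iota_v}{\simeq}\GL_{n,E_v}$; and the two normalisations agree by our choice of twist. This also yields uniqueness: any continuous semisimple $\rho'$ strongly essentially associated to $\pi$ agrees with $\rho^u$ on $\Frob_{\lambda'}$ for $\lambda'$ lying over a density-one set of rational primes, hence has the same trace by Chebotarev, hence is isomorphic to $\rho^u$.

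Finally I would verify $(\rho^u)^\vee\simeq(\rho^u)^c\,\eps^{n-1}$. By Chebotarev it suffices to match the semisimple conjugacy classes of both sides at $\Frob_{\lambda'}$ for every $\lambda'\mid\ell$ with $\ell$ totally split in $E$ and $\ell\notin S\cup\{p\}$. For such $\ell$ the unramified parameter of $\pi_\ell$ lands in $\widehat{T_U}$, and every $t\in\widehat{T_U}$ satisfies $t^{-1}=w_0\cdot t^c$ with $w_0$ the longest element of $\mathfrak S_n$; incorporating the $|\det|^{\frac{1-n}{2}}$-twist, this translates into $\iota\,Sat_{\lambda'}(\pi_\ell)^{-1}=\iota\,Sat_{\overline{\lambda'}}(\pi_\ell)\cdot p^{\,n-1}$, which is exactly the required identity of Frobenius conjugacy classes for $(\rho^u)^\vee$ and $(\rho^u)^c\eps^{n-1}$, as observed in the Remark following Definition \ref{Satake}. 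Since $\rho^u$ is semisimple, this finishes the argument. The only genuinely delicate point is the bookkeeping of normalisations: one must track carefully the similitude factor (deliberately forgotten), the passage through $\pi_0$, the identification $U(F_\lambda)\simeq\GL_{n,E_v}$ via $\iota_v$, and the $|\det|^{\frac{1-n}{2}}$-shift, so that the automorphic Satake data line up with the arithmetic normalisation of $\rho^u$; this is precisely where the explicit computation of $\widehat{T_U}$ and of the map ${}^{L}T_U\to{}^{L}T_{\GL_{n,E}}$ from the Proposition before Definition \ref{Satake} is essential.
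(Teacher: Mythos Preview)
Your proposal is correct and follows essentially the same route as the paper: pass to $\pi_0$ on $U$ via Lemma \ref{lem:GUtoU}, use Theorem \ref{thrA1} to obtain the isobaric base change $\Pi_1\boxplus\cdots\boxplus\Pi_r$ with each $\Pi_j$ regular algebraic cuspidal and conjugate self-dual up to twist, and set $\rho^u=\bigoplus_j\rho_{\Pi_j,\iota}$.

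Two minor differences are worth noting. First, the paper invokes \cite{CH} rather than \cite{HLTT,SchTor} for the construction of the $\rho_{\Pi_j}$; since each $\Pi_j$ is conjugate self-dual up to twist, \cite{CH} applies directly and gives full local--global compatibility at all unramified places in one stroke, whereas your citation works because the paper has already recorded (just after Definition \ref{SatakeGLn}) that \cite{HLTT,SchTor} yield strong association. Second, for the polarization $(\rho^u)^\vee\simeq(\rho^u)^c\eps^{n-1}$ you argue via Chebotarev and the identity $t^{-1}=w_0\cdot t^c$ in $\widehat{T_U}$ (exactly the argument recorded in the Remark following Definition \ref{Satake}), while the paper deduces it more directly from the $\theta$-stability of $\Pi_1\boxplus\cdots\boxplus\Pi_r$ together with the known functoriality of $\Pi\mapsto\rho_{\Pi}$ under $\theta$. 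Both are valid; the paper's route is a one-liner once one knows the $\theta$-compatibility of the Galois-representation construction, while yours is more self-contained. Finally, your hedge ``twisting if necessary'' is unnecessary: the $|\det|^{\frac{1-n}{2}}$ shift is already absorbed into the definition of $Sat_{\lambda'}(\pi_\ell)$, so $\rho^u=\bigoplus_j\rho_{\Pi_j,\iota}$ with no further twist is the correct normalisation.
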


\begin{proof} The previous proof allow us to reduce to $\pi_0$ an automorphic representation of $U$ whose weak base change is $\Pi_1 \boxplus \dots \boxplus \Pi_r$; $\theta$-stable, each $\Pi_i$ being automorphic for $GL_{n_i}$, cuspidal, conjugate self dual up to twist by a character. Thus to $\pi_0$ we can associate by \cite{CH} again
\[ \rho^u = \rho_{\Pi_1} \oplus \dots \oplus\rho_{\Pi_r}.\]
As $\Pi_1 \boxplus \dots \boxplus \Pi_r$ is $\theta$-stable, $\rho^u$ satisfies $(\rho^u)^\vee \simeq (\rho^u)^c \otimes \eps^{n-1}$. 
On the other hand, we know the compatibility of the association of $\rho^u$ with local Langlands : at all ramified primes $\rho^u_v = LL(\pi_{0,v}|.|^{\frac{1-n}{2}})$, i.e.~ $\rho^u$ is strongly associated to $\pi$.
\end{proof}

\bibliographystyle{smfalpha}
\bibliography{biblio}

\providecommand{\bysame}{\leavevmode ---\ }
\providecommand{\og}{``}
\providecommand{\fg}{''}
\providecommand{\smfandname}{\&}
\providecommand{\smfedsname}{\'eds.}
\providecommand{\smfedname}{\'ed.}
\providecommand{\smfmastersthesisname}{M\'emoire}
\providecommand{\smfphdthesisname}{Th\`ese}
\begin{thebibliography}{BLGGT14b}

\bibitem[AIP15]{AIP}
{\scshape F.~{Andreatta}, A.~{Iovita} {\normalfont \smfandname} V.~{Pilloni}}
  -- {\og {$p$-adic families of Siegel modular cuspforms.}\fg}, \emph{{Ann.
  Math. (2)}} \textbf{181} (2015), no.~2, p.~623--697 (English).

\bibitem[All16]{All1}
{\scshape P.~Allen} -- {\og Deformations of polarized automorphic galois
  representations and adjoint selmer groups\fg}, \emph{Duke Math. J.}
  \textbf{165} (2016), no.~13, p.~2407 -- 2460.

\bibitem[All19]{All2}
\bysame , {\og On automorphic points in polarized deformation rings\fg},
  \emph{Amer. J. Math.} \textbf{141} (2019), no.~1, p.~119--167.

\bibitem[B\"01]{Bockledense}
{\scshape G.~B\"{o}ckle} -- {\og On the density of modular points in universal
  deformation spaces\fg}, \emph{Amer. J. Math.} \textbf{123} (2001), no.~5,
  p.~985--1007.

\bibitem[BC09]{BC2}
{\scshape J.~{Bella\"{\i}che} {\normalfont \smfandname} G.~{Chenevier}} --
  \emph{{Families of Galois representations and Selmer groups.}}, Paris:
  Soci\'et\'e Math\'ematique de France, 2009 (English).

\bibitem[BC11]{BCsign}
\bysame , {\og {The sign of Galois representations attached to automorphic
  forms for unitary groups}\fg}, \emph{{Compos. Math.}} \textbf{147} (2011),
  no.~5, p.~1337--1352 (English).

\bibitem[Bel09]{BellBK}
{\scshape J.~Bellaiche} -- {\og An introduction to the conjecture of bloch and
  kato\fg}, \emph{Lectures at the Clay Mathematical Institute summer School,
  Honolulu, Hawaii} (2009).

\bibitem[Ber02]{BergerEDP}
{\scshape L.~Berger} -- {\og {{\(p\)}}-adic representations and differential
  equations\fg}, \emph{Invent. Math.} \textbf{148} (2002), no.~2, p.~219--284
  (French).

\bibitem[{Ber}20]{Bergdall}
{\scshape J.~{Bergdall}} -- {\og {Smoothness of definite unitary eigenvarieties
  at critical points}\fg}, \emph{{J. Reine Angew. Math.}} \textbf{759} (2020),
  p.~29--60 (English).

\bibitem[BGGT14]{BLGGTII}
{\scshape T.~{Barnet-Lamb}, T.~{Gee}, D.~{Geraghty} {\normalfont \smfandname}
  R.~{Taylor}} -- {\og {Compatibilit\'e entre les correspondances de Langlands
  locales aux places divisant $l=p$. II.}\fg}, \emph{{Ann. Sci. \'Ec. Norm.
  Sup\'er. (4)}} \textbf{47} (2014), no.~1, p.~165--179 (English).

\bibitem[BHS17]{BHS1}
{\scshape C.~Breuil, E.~Hellmann {\normalfont \smfandname} B.~Schraen} -- {\og
  Une interpr\'{e}tation modulaire de la vari\'{e}t\'{e} trianguline\fg},
  \emph{Math. Ann.} \textbf{367} (2017), no.~3-4, p.~1587--1645.

\bibitem[BHS19]{BHS}
\bysame , {\og A local model for the trianguline variety and applications\fg},
  \emph{Publ. Math. Inst. Hautes \'{E}tudes Sci.} \textbf{130} (2019),
  p.~299--412.

\bibitem[{Bij}16]{Bijmu}
{\scshape S.~{Bijakowski}} -- {\og {Analytic continuation on Shimura varieties
  with $\mu$-ordinary locus.}\fg}, \emph{{Algebra Number Theory}} \textbf{10}
  (2016), no.~4, p.~843--885 (English).

\bibitem[BIP21]{BIP}
{\scshape G.~B{\"o}ckle, A.~Iyengar {\normalfont \smfandname}
  V.~Pa{\v{s}}k{\=u}nas} -- {\og On local galois deformation rings\fg},
  \emph{arXiv preprint arXiv:2110.01638} (2021).

\bibitem[BLGGT14a]{BLGGTChange}
{\scshape T.~Barnet-Lamb, T.~Gee, D.~Geraghty {\normalfont \smfandname}
  R.~Taylor} -- {\og Potential automorphy and change of weight\fg}, \emph{Ann.
  Math. (2)} \textbf{179} (2014), no.~2, p.~501--609 (English).

\bibitem[BLGGT14b]{pot_aut}
\bysame , {\og Potential automorphy and change of weight\fg}, \emph{Ann. of
  Math. (2)} \textbf{179} (2014), no.~2, p.~501--609.

\bibitem[{Bor}79]{BorelCorvallis}
{\scshape A.~{Borel}} -- {\og {Automorphic L-functions}\fg}, {Automorphic
  forms, representations and L-functions, Proc. Symp. Pure Math. Am. Math.
  Soc., Corvallis/Oregon 1977, Proc. Symp. Pure Math. 33, 2, 27-61 (1979).},
  1979.

\bibitem[{Bor}91]{Bor91}
\bysame , \emph{{Linear algebraic groups. 2nd enlarged ed}}, 2nd enlarged ed.
  \smfedname, vol. 126, New York etc.: Springer-Verlag, 1991 (English).

\bibitem[Bou]{BourbakiLie}
{\scshape N.~Bourbaki} -- \emph{\'{E}l\'{e}ments de math\'{e}matique. {G}roupes
  et alg\`ebres de {L}ie}, Actualit\'{e}s Scientifiques et Industrielles, No.
  1364. Hermann, Paris.

\bibitem[Box15]{Box}
{\scshape G.~Boxer} -- {\og Torsion in the coherent cohomology of shimura
  varieties and galois representations\fg}, \emph{arXiv preprint
  arXiv:1507.05922} (2015).

\bibitem[BP20]{BP2}
{\scshape G.~{Boxer} {\normalfont \smfandname} V.~{Pilloni}} -- {\og Higher
  coleman theory\fg},  (2020).

\bibitem[Car72]{Carter}
{\scshape R.~W. Carter} -- {\og Conjugacy classes in the weyl group\fg},
  \emph{{Compos. Math.}} \textbf{25} (1972), no.~1, p.~1--59.

\bibitem[Car12]{Caraini_loc_glob_not_p}
{\scshape A.~Caraiani} -- {\og Local-global compatibility and the action of
  monodromy on nearby cycles\fg}, \emph{Duke Math. J.} \textbf{161} (2012),
  no.~12, p.~2311--2413.

\bibitem[CH13]{CH}
{\scshape G.~{Chenevier} {\normalfont \smfandname} M.~{Harris}} -- {\og
  {Construction of automorphic Galois representations. II.}\fg}, \emph{{Camb.
  J. Math.}} \textbf{1} (2013), no.~1, p.~53--73 (English).

\bibitem[{Che}04]{Che1}
{\scshape G.~{Chenevier}} -- {\og {Familles $p$-adiques de formes automorphes
  pour $\text{GL}_n$.}\fg}, \emph{{J. Reine Angew. Math.}} \textbf{570} (2004),
  p.~143--217 (French).

\bibitem[{Che}11]{CheFern}
\bysame , {\og {Sur la foug\`ere infinie des repr\'esentations galoisiennes de
  type unitaire}\fg}, \emph{{Ann. Sci. \'Ec. Norm. Sup\'er. (4)}} \textbf{44}
  (2011), no.~6, p.~963--1019 (English).

\bibitem[{Che}14]{Chedet}
\bysame , {\og {The \(p\)-adic analytic space of pseudocharacters of a
  profinite group and pseudorepresentations over arbitrary rings}\fg}, in
  \emph{{Automorphic forms and Galois representations. Proceedings of the 94th
  London Mathematical Society (LMS) -- EPSRC Durham symposium, Durham, UK, July
  18--28, 2011. Volume 1}}, Cambridge: Cambridge University Press, 2014,
  p.~221--285 (English).

\bibitem[CHLN11]{GRFA}
{\scshape L.~Clozel, M.~Harris, J.-P. Labesse {\normalfont \smfandname} B.-C.
  Ng{\^o}} (\smfedsname) -- \emph{Stabilization of the trace formula, {Shimura}
  varieties, and arithmetic applications. {Volume} 1: {On} the stabilization of
  the trace formula}, Somerville, MA: International Press, 2011 (English).

\bibitem[CHT08]{CHT}
{\scshape L.~{Clozel}, M.~{Harris} {\normalfont \smfandname} R.~{Taylor}} --
  {\og {Automorphy for some $l$-adic lifts of automorphic mod $l$ Galois
  representations. With Appendix A, summarizing unpublished work of Russ Mann,
  and Appendix B by Marie-France Vign\'eras.}\fg}, \emph{{Publ. Math., Inst.
  Hautes \'Etud. Sci.}} \textbf{108} (2008), p.~1--181 (English).

\bibitem[CKM04]{lecturesLfun}
{\scshape J.~W. {Cogdell}, H.~H. {Kim} {\normalfont \smfandname} M.~R. {Murty}}
  -- \emph{{Lectures on automorphic \(L\)-functions}}, vol.~20, Providence, RI:
  American Mathematical Society (AMS), 2004 (English).

\bibitem[Clo90]{Clozel}
{\scshape L.~Clozel} -- {\og Motifs et formes automorphes: applications du
  principe de fonctorialit\'e\fg}, in \emph{Automorphic forms, {S}himura
  varieties, and {$L$}-functions, {V}ol.\ {I} ({A}nn {A}rbor, {MI}, 1988)},
  Perspect. Math., vol.~10, 1990, p.~77--159.

\bibitem[CM98]{CM}
{\scshape R.~{Coleman} {\normalfont \smfandname} B.~{Mazur}} -- {\og {The
  eigencurve.}\fg}, in \emph{{Galois representations in arithmetic algebraic
  geometry. Proceedings of the symposium, Durham, UK, July 9--18, 1996}},
  Cambridge: Cambridge University Press, 1998, p.~1--113 (English).

\bibitem[{Con}99]{ConradIrred}
{\scshape B.~{Conrad}} -- {\og {Irreducible components of rigid spaces}\fg},
  \emph{{Ann. Inst. Fourier}} \textbf{49} (1999), no.~2, p.~473--541 (English).

\bibitem[Eme06]{Eme}
{\scshape M.~Emerton} -- {\og On the interpolation of systems of eigenvalues
  attached to automorphic {Hecke} eigenforms\fg}, \emph{Invent. Math.}
  \textbf{164} (2006), no.~1, p.~1--84 (English).

\bibitem[FP19]{FP}
{\scshape N.~Fakhruddin {\normalfont \smfandname} V.~Pilloni} -- {\og Hecke
  operators and the coherent cohomology of shimura varieties\fg}, \emph{arXiv
  preprint arXiv:1910.03790} (2019).

\bibitem[GK15]{GK}
{\scshape W.~Goldring {\normalfont \smfandname} J.-S. Koskivirta} -- {\og
  Strata hasse invariants, hecke algebras and galois representations\fg},
  \emph{arXiv preprint arXiv:1507.05032} (2015).

\bibitem[GM98]{GouveaMazur}
{\scshape F.~Q. Gouv\^ea {\normalfont \smfandname} B.~Mazur} -- {\og On the
  density of modular representations\fg}, in \emph{Computational perspectives
  on number theory}, AMS/IP Studem. Adv. Math., vol.~7, 1998, p.~127--142.

\bibitem[{Gol}14]{Gold}
{\scshape W.~{Goldring}} -- {\og {Galois representations associated to
  holomorphic limits of discrete series.}\fg}, \emph{{Compos. Math.}}
  \textbf{150} (2014), no.~2, p.~191--228 (English).

\bibitem[Gui20]{Gui}
{\scshape D.-A. Guiraud} -- {\og Unobstructedness of {Galois} deformation rings
  associated to regular algebraic conjugate self-dual cuspidal automorphic
  representations\fg}, \emph{Algebra Number Theory} \textbf{14} (2020), no.~6,
  p.~1331--1380 (English).

\bibitem[{Har}90a]{HarCor}
{\scshape M.~{Harris}} -- {\og {Automorphic forms and the cohomology of vector
  bundles on Shimura varieties.}\fg}, {Automorphic forms, Shimura varieties,
  and L-functions. Vol. II, Proc. Conf., Ann Arbor/MI (USA) 1988, Perspect.
  Math. 11, 41-91 (1990).}, 1990.

\bibitem[{Har}90b]{Har}
\bysame , {\og {Automorphic forms of ${\bar \partial}$-cohomology type as
  coherent cohomology classes.}\fg}, \emph{{J. Differ. Geom.}} \textbf{32}
  (1990), no.~1, p.~1--63 (English).

\bibitem[Her22]{Her4}
{\scshape V.~Hernandez} -- {\og Families of coherent {PEL} automorphic
  forms\fg}, \emph{Doc. Math.} \textbf{27} (2022), p.~213--294 (English).

\bibitem[HLTT16]{HLTT}
{\scshape M.~Harris, K.-W. Lan, R.~Taylor {\normalfont \smfandname} J.~Thorne}
  -- {\og On the rigid cohomology of certain {S}himura varieties\fg},
  \emph{Res. Math. Sci.} \textbf{3} (2016), p.~Paper No. 37, 308.

\bibitem[HMS22]{HMS}
{\scshape E.~Hellmann, C.~Margerin {\normalfont \smfandname} B.~Schraen} --
  {\og Density of automorphic points in deformation rings of polarized global
  galois representations\fg}, \emph{Duke Math. Journal} \textbf{171} (2022),
  no.~13, p.~2699--2752.

\bibitem[HT01]{HT}
{\scshape M.~Harris {\normalfont \smfandname} R.~Taylor} -- \emph{The geometry
  and cohomology of some simple {S}himura varieties}, Annals of Mathematics
  Studies, vol. 151, 2001.

\bibitem[Kan01]{Kane}
{\scshape R.~Kane} -- \emph{Reflection groups and invariant theory}, CMS Books
  in Mathematics/Ouvrages de Math\'{e}matiques de la SMC, vol.~5,
  Springer-Verlag, New York, 2001.

\bibitem[Kis08]{Kisindef}
{\scshape M.~Kisin} -- {\og Potentially semi-stable deformation rings\fg},
  \emph{J. Amer. Math. Soc.} \textbf{21} (2008), no.~2, p.~513--546.

\bibitem[Kis09a]{Kisinfflat}
\bysame , {\og Moduli of finite flat group schemes, and modularity\fg},
  \emph{Ann. of Math. (2)} \textbf{170} (2009), no.~3, p.~1085--1180.

\bibitem[{Kis}09b]{KisinFM}
{\scshape M.~{Kisin}} -- {\og {The Fontaine-Mazur conjecture for
  \(\text{GL}_2\)}\fg}, \emph{{J. Am. Math. Soc.}} \textbf{22} (2009), no.~3,
  p.~641--690 (English).

\bibitem[KPX14]{KPX}
{\scshape K.~S. {Kedlaya}, J.~{Pottharst} {\normalfont \smfandname} L.~{Xiao}}
  -- {\og {Cohomology of arithmetic families of $(\varphi
  ,\Gamma)$-modules.}\fg}, \emph{{J. Am. Math. Soc.}} \textbf{27} (2014),
  no.~4, p.~1043--1115 (English).

\bibitem[Lab11]{Lab}
{\scshape J.-P. Labesse} -- {\og Changement de base {CM} et s\'{e}ries
  discr\`etes\fg}, in \emph{On the stabilization of the trace formula}, Stab.
  Trace Formula Shimura Var. Arith. Appl., vol.~1, Int. Press, Somerville, MA,
  2011, p.~429--470.

\bibitem[Lan89]{LanglandsR}
{\scshape R.~P. Langlands} -- {\og On the classification of irreducible
  representations of real algebraic groups\fg}, Representation theory and
  harmonic analysis on semisimple {Lie} groups, {Math}. {Surv}. {Monogr}. 31,
  101-170 (1989)., 1989.

\bibitem[Lan13]{Lan}
{\scshape K.-W. Lan} -- \emph{Arithmetic compactifications of {PEL}-type
  {S}himura varieties}, London Mathematical Society Monographs Series, vol.~36,
  Princeton University Press, Princeton, NJ, 2013.

\bibitem[LS19]{Labesse-Schwermer}
{\scshape J.-P. Labesse {\normalfont \smfandname} J.~Schwermer} -- {\og Central
  morphisms and cuspidal automorphic representations\fg}, \emph{J. Number
  Theory} \textbf{205} (2019), p.~170--193 (English).

\bibitem[Mok15]{Mok}
{\scshape C.~P. Mok} -- {\og Endoscopic classification of representations of
  quasi-split unitary groups\fg}, \emph{Memoirs of the AMS} \textbf{235}
  (2015), p.~1--248.

\bibitem[{Mor}10]{Morel}
{\scshape S.~{Morel}} -- \emph{{On the cohomology of certain noncompact Shimura
  varieties. With an appendix by Robert Kottwitz}}, vol. 173, Princeton, NJ:
  Princeton University Press, 2010 (English).

\bibitem[MW89]{MoegWald}
{\scshape C.~Moeglin {\normalfont \smfandname} J.-L. Waldspurger} -- {\og Le
  spectre r{\'e}siduel de {GL}(n). ({The} residual spectrum for {GL}(n))\fg},
  \emph{Ann. Sci. {\'E}c. Norm. Sup{\'e}r. (4)} \textbf{22} (1989), no.~4,
  p.~605--674 (French).

\bibitem[NT19]{NT}
{\scshape J.~Newton {\normalfont \smfandname} J.~A. Thorne} -- {\og Adjoint
  selmer groups of automorphic galois representations of unitary type\fg},
  \emph{arXiv preprint arXiv:1912.11265} (2019).

\bibitem[PS16]{PS4}
{\scshape V.~{Pilloni} {\normalfont \smfandname} B.~{Stroh}} -- {\og
  {Cohomologie coh\'erente et repr\'esentations galoisiennes.}\fg}, \emph{{Ann.
  Math. Qu\'e.}} \textbf{40} (2016), no.~1, p.~167--202 (French).

\bibitem[{Rog}92]{Rogmodp}
{\scshape J.~D. {Rogawski}} -- {\og {Analytic expression of the number of
  points mod $p$}\fg}, in \emph{{The zeta functions of Picard modular surfaces.
  Based on lectures, delivered at a CRM workshop in the spring of 1988,
  Montr\'eal, Canada}}, Montr\'eal: Centre de Recherches Math\'ematiques,
  Universit\'e de Montr\'eal, 1992, p.~65--110 (English).

\bibitem[{Sch}15]{SchTor}
{\scshape P.~{Scholze}} -- {\og {On torsion in the cohomology of locally
  symmetric varieties.}\fg}, \emph{{Ann. Math. (2)}} \textbf{182} (2015),
  no.~3, p.~945--1066 (English).

\bibitem[Sen73]{SenLie}
{\scshape S.~Sen} -- {\og Lie algebras of {Galois} groups arising from
  {Hodge}-{Tate} modules\fg}, \emph{Ann. Math. (2)} \textbf{97} (1973),
  p.~160--170 (English).

\bibitem[Ser89]{SerreMcGill}
{\scshape J.-P. Serre} -- \emph{Abelian {{\(\ell\)}}-adic representations and
  elliptic curves.}, reprint \smfedname, Redwood City, CA etc.: Addison-Wesley
  Publishing Company, Inc., 1989 (English).

\bibitem[Shi14]{Shin}
{\scshape S.~W. Shin} -- {\og On the cohomological base change for unitary
  similitude groups (appendix)\fg}, \emph{Compos. Math} \textbf{150} (2014),
  p.~220--225.

\bibitem[SU02]{Su}
{\scshape C.~{Skinner} {\normalfont \smfandname} E.~{Urban}} -- {\og {Sur les
  d\'eformations $p$-adiques des formes de Saito-Kurokawa.}\fg}, \emph{{C. R.,
  Math., Acad. Sci. Paris}} \textbf{335} (2002), no.~7, p.~581--586 (French).

\bibitem[{Tai}16]{Taibi}
{\scshape O.~{Taibi}} -- {\og {Eigenvarieties for classical groups and complex
  conjugations in Galois representations}\fg}, \emph{{Math. Res. Lett.}}
  \textbf{23} (2016), no.~4, p.~1167--1220 (English).

\bibitem[{Urb}11]{Urb}
{\scshape E.~{Urban}} -- {\og {Eigenvarieties for reductive groups.}\fg},
  \emph{{Ann. Math. (2)}} \textbf{174} (2011), no.~3, p.~1685--1784 (English).

\bibitem[Wal84]{Wallach}
{\scshape N.~R. Wallach} -- {\og On the constant term of a square integrable
  automorphic form\fg}, in \emph{Operator algebras and group representations,
  {V}ol. {II} ({N}eptun, 1980)}, Monogr. Stud. Math., vol.~18, Pitman, Boston,
  MA, 1984, p.~227--237.

\end{thebibliography}
\end{document}